\providecommand{\U}[1]{\protect\rule{.1in}{.1in}}
\providecommand{\U}[1]{\protect\rule{.1in}{.1in}}
\providecommand{\U}[1]{\protect\rule{.1in}{.1in}}
\numberwithin{equation}{section}
\newtheorem{theorem}{Theorem}[section]
\newtheorem{lemma}[theorem]{Lemma}
\newtheorem{proposition}[theorem]{Proposition}
\newtheorem{condition}[theorem]{Condition}
\newtheorem{remark}[theorem]{Remark}
\newenvironment{proof}[1][Proof]{\noindent\textbf{#1.} }{\ \rule{0.5em}{0.5em}}
\def\p2{\mathcal A_{\Phi,2\pi}(B)}
\def\0p2{\mathcal A_{\Phi,2\pi}(0)}
\def\sp2{\mathcal A_{\Phi,2\pi,\hbox{\rm SR}}(B)}
\def\beq{\begin{equation}}
\def\ene{\end{equation}}
\def\qed{\ifhmode\unskip\nobreak\fi\ifmmode\ifinner
\else\hskip5pt\fi\fi\hbox{\hskip5pt\vrule width4pt height6pt
depth1.5pt\hskip1pt}}
\def\+out{x^{\rm out}}
\begin{document}

\title{Nonlinear Schr\"{o}dinger equations with exceptional potentials}
\author{Ivan Naumkin\thanks{Electronic Mail: ivannaumkinkaikin@gmail.com} \thanks{We
would like to thank the project ERC-2014-CdG 646.650 SingWave for its
financial support, and the Laboratoire J.A. Dieudonn\'e of the Universit\'e de
Nice Sophia-Antipolis for its kind hospitality.}\\Laboratoire J.A. Dieudonn\'{e}, UMR CNRS 7351, \\Universit\'{e} de Nice Sophia-Antipolis, \\Parc Valrose, 06108 Nice Cedex 02, France}
\date{}
\maketitle

\begin{abstract}
We consider the cubic nonlinear Schr\"{o}dinger equation with an exceptional
potential. We obtain a sharp time decay for the global in time solution and we
get the large time asymptotic profile of small solutions. We prove the
existence of modified scattering for this model, that is, linear scattering
modulated by a phase. Our approach is based on the spectral theorem for the
perturbed linear Schr\"{o}dinger operator and a factorization technique, that
allows us to control the resonant nonlinear term. We make some parity
assumptions in order to control the small-energy behavior of the scattering
coefficients and of the wave functions.

\end{abstract}

%\nocite{*}

\section{\label{S1}Introduction}

In this article, we consider the cubic nonlinear Schr\"{o}dinger equation
\begin{equation}
\left\{
\begin{array}
[c]{c}%
i\partial_{t}u=-\frac{1}{2}\partial_{x}^{2}u+Vu+\lambda\left\vert u\right\vert
^{2}u,\text{ }x\in\mathbb{R},\text{ }t>0,\\
u\left(  0,x\right)  =u_{0}\left(  x\right)  \text{, }x\in\mathbb{R},
\end{array}
\right.  \label{1.1}%
\end{equation}
where the potential $V\left(  x\right)  $ is a real-valued given function and
the parameter $\lambda\in\mathbb{R}$. This equation is related to the
Ginzburg-Landau equation of superconductivity (\cite{deGennes}), it was used
to describe one-dimensional self-modulation of a monochromatic wave
(\cite{Taniuti}, \cite{Yajima}), stationary two-dimensional self-focusing of a
plane wave (\cite{Bespalov}), propagation of a heat pulse in a solid Langmuir
waves in plasmas (\cite{Shimizu}) and the self-trapping phenomena of nonlinear
optics (\cite{Karpman}). For other applications of (\ref{1.1}), we refer to
\cite{Berge}, \cite{Scott}, \cite{Eboli}, \cite{Combes}, \cite{Benney}, and
\cite{Sulem}.

In the case when the external potential $V=0,$ it is well known that the
Cauchy problem
\begin{equation}
\left\{
\begin{array}
[c]{c}%
i\partial_{t}u=-\frac{1}{2}\partial_{x}^{2}u+\lambda\left\vert u\right\vert
^{2}u,\text{ }x\in\mathbb{R},\text{ }t>0,\\
u\left(  0,x\right)  =u_{0}\left(  x\right)  \text{, }x\in\mathbb{R},
\end{array}
\right.  \label{NLS0}%
\end{equation}
is globally well posed in a variety of spaces, for instance in $\mathbf{H}%
^{1}$ or $\mathbf{L}^{2}.$ See e.g. \cite{Kato}. Concerning the asymptotic
behavior of the solutions the cubic nonlinearity in dimension $1$ is a
limiting case. Indeed, if the nonlinearity $\lambda\left\vert u\right\vert
^{2}u$ in (\ref{NLS0}) is replaced by $\lambda\left\vert u\right\vert
^{\alpha}u,$ with $\alpha>2$, there is low energy scattering, i.e. a solution
of~(\ref{NLS0}) with a sufficiently small initial value (in some appropriate
sense) behaves asymptotically free as $\left\vert t\right\vert \rightarrow
\infty$, i.e. similar to the solutions of the linear Schr\"{o}dinger equation
$i\partial_{t}u=-\triangle u$ (\cite{Strauss2, GinibreV1, GinibreV2,
CazenaveW1, GinibreOV, NakanishiO, CN}). On the other hand, as it shown in
\cite[Theorem~3.2 and Example~3.3, p.~68]{Strauss} and~\cite{Barab}, if the
power $\alpha\leq2$, then low energy scattering for (\ref{NLS0}) cannot be
expected. In the case of the cubic power $\alpha=2$, the relevant notion for
equation (\ref{NLS0}) is modified scattering, i.e. standard scattering
modulated by a phase. The existence of modified wave operators was established
in~\cite{Ozawa1}. That is, for all sufficiently small asymptotic state $u^{+}%
$, there exists a solution of~(\ref{NLS0}), which behaves like $e^{i\phi
(t,\cdot)}e^{t\Delta}u^{+},$ as $t\rightarrow\infty$, where the phase $\phi$
is given explicitly in terms of $u^{+}$. (See also~\cite{Carles, HNST,
ShimomuraT}.) Conversely, for small initial values, it was shown
in~\cite{HayashiNau1} that there is such modification in the asymptotic
behavior of the corresponding solution to (\ref{NLS0}). (See
also~\cite{haka1998, HayashiNau2, KitaW, CN1}.)

The well-posedness of the perturbed NLS equation (\ref{1.1}) is also known
for a wide class of potentials $V$ (\cite{Cazenave}). Compared to the case of
the free NLS equation, not so much is known however about the large time
behaviour of the solutions to (\ref{1.1}). The existence of standing wave
solutions to (\ref{1.1}) was investigated in \cite{Floer}. Concerning the
existence of low energy scattering, the first paper in this direction to our
knowledge is \cite{Weder2000}. It was established that for the NLS equation
\begin{equation}
i\partial_{t}u=Hu+f\left(  u\right)  u, \label{1.49}%
\end{equation}
where the power of the nonlinearity $f\left(  u\right)  $ is $5\leq p<\infty,$
the nonlinear scattering operator $\mathbf{S}_{V}$, associated to
(\ref{1.49}), is a homeomorphism from some neighborhood of $0$ in
$\mathbf{L}^{2}$ onto itself. For the power nonlinearities with $3<p<5,$ the
existence of low energy scattering was established in \cite{Cuccagna}. In the
case of the NLS\ equation under a partial quadratic confinement, the existence
of low energy scattering was proved in \cite{Carles3}. The long-time behavior
of solutions to a focusing cubic one-dimensional NLS equation with a Dirac
potential was considered in \cite{deift}.

In our previous paper \cite{Ivan} we considered the cubic nonlinear
Schr\"{o}dinger equation (\ref{1.1}) in the case of generic potentials $V$.
That is, potentials $V$ such that the corresponding Jost solutions $f_{\pm
}\left(  x,k\right)  $ at zero energy ($k=0$) satisfy $\left[  f_{+}\left(
x,0\right)  ,f_{-}\left(  x,0\right)  \right]  \neq0,$ where $\left[
f,g\right]  $ denote the Wronskian of $f$ and $g,$ i.e. $\left[  f,g\right]
=g\partial_{x}f-f\partial_{x}g.$ See Section \ref{S2} for the definition of
the Jost functions. In \cite{Ivan}, we proved the existence of unique
solutions $u\in\mathbf{C}\left(  \left[  0,\infty\right)  ;\mathbf{H}%
^{1}\right)  $ of (\ref{1.1}) which decay for long times as solutions to the
linear free Schr\"{o}dinger equation $i\partial_{t}u+\frac{1}{2}\partial
_{x}^{2}u=0.$ Namely, $u$ satisfies $\left\Vert u\left(  t\right)  \right\Vert
_{\mathbf{L}^{\infty}}\leq C\left(  1+t\right)  ^{-\frac{1}{2}}.$ Moreover, we
obtained the large time asymptotic representation for these solutions, which
have a modified character. Thus, we proved that as in the case of the free NLS
equation (\ref{NLS0}), the cubic nonlinearity in the problem (\ref{1.1}) in
the case of generic potentials $V$ is critical from the point of view of large
time asymptotic behavior.

In the present paper we continue this study and consider the case of
exceptional potentials. We say that a potential $V$ is exceptional if the Jost
solutions at zero energy satisfy $\left[  f_{+}\left(  x,0\right)
,f_{-}\left(  x,0\right)  \right]  =0$. Equivalent definitions of the
exceptional potentials, as well as a comprehensive study of their properties
can be found in papers \cite{aktosun}, \cite{Aktosun2}, \cite{aktosun1}, and
the references there in. Note that the trivial potential $V\left(  x\right)
=0$ is exceptional. If a potential $V$ is non-trivial and positive, it is
generic. As an example of an exceptional case consider the square-well
potential in the depth when a bound state is added to the potential. Observe
that at any other depth the square-well potential is generic. This example
exhibits the unstable nature of the exceptional potentials: a small
perturbation of the exceptional case usually makes the case generic
(\cite{aktosun}).\ For the linear Schr\"{o}dinger equation $i\partial
_{t}u=-\frac{1}{2}\partial_{x}^{2}u+Vu,$ the transmission coefficient $T(k)$
defines the probability $\left\vert T(k)\right\vert ^{2}$ that a particle of
energy $k^{2}$ can tunnel through the potential $V.$ (For the definition of
the transmission and reflection coefficients $T\left(  k\right)  ,$ $R_{\pm
}\left(  k\right)  $ see Section \ref{S2}.) In the case of generic potentials,
the zero-energy transmission coefficient $T\left(  0\right)  $ is zero,
meaning that a particle with no energy cannot tunnel through a non-trivial
potential. This is not longer true for exceptional potentials, since in this
case $T\left(  0\right)  \neq0.$ Also, we observe that in the exceptional
case, the energy zero can be a resonant energy - an energy where the potential
$V$ is "perfectly transparent", that is $R_{\pm}\left(  0\right)  =0.$ We note
that the resonant energies are important for tunneling spectroscopy
(\cite{Wolf}). These special properties of the exceptional potentials make
them particulary interesting.

We now state our main result. First, we define the class of exceptional
potentials we will consider in this paper. For some $N\geq1,$ let us partition
the real axis $\mathbb{R}$ as $-\infty<x_{1}<x_{2}<...<x_{N}<+\infty.$ We
denote $I_{j}=\left(  x_{j-1},x_{j}\right)  $ for $j=1,...,$ $N+1,$ with
$x_{0}=-\infty,$ $x_{N+1}=+\infty.$ We obtain a fragmentation of the potential
$V\in\mathbf{L}^{1,1}$ by setting
\begin{equation}
V\left(  x\right)  =\sum_{j=1}^{N+1}V_{j}\left(  x\right)  , \label{3.23}%
\end{equation}
where
\[
V_{j}\left(  x\right)  =\left\{
\begin{array}
[c]{c}%
V\left(  x\right)  ,\text{ \ }x\in I_{j},\\
0,\text{ \ elsewhere.}%
\end{array}
\right.
\]
We assume the following:

\begin{condition}
\label{Cond1}The potential $V\in\mathbf{L}^{1,3}$ is exceptional, symmetric
and such that the linear Schr\"{o}dinger operator $H=-\frac{1}{2}\frac{d^{2}%
}{dx^{2}}+V$ does not have negative eigenvalues. In addition, for some
$N\geq1,$ there is a partition (\ref{3.23}) such that each part $\left\langle
\cdot\right\rangle ^{2}V_{j}\in W^{1,1}\left(  I_{j}\right)  .$
\end{condition}

\begin{remark}
Of course the trivial potential $V=0$ satisfies Condition \ref{Cond1}. An
example of a non-trivial potential satisfying the above assumptions is%
\begin{equation}
V\left(  x\right)  =V_{1}\left(  x\right)  +V_{2}\left(  x\right)  ,
\label{1.4}%
\end{equation}
where
\[
V_{1}\left(  x\right)  =\left\{
\begin{array}
[c]{c}%
A^{2},\text{ }x\in\left(  0,1\right)  ,\\
-B^{2},\text{ }x\in\left(  1,2\right)  ,\\
0,\text{ \ \ elsewhere,}%
\end{array}
\right.  \text{ \ \ }%
\]%
\[
V_{2}\left(  x\right)  =V_{1}\left(  -x\right)  ,
\]
with $B=\frac{\pi}{4}$ and $A$ satisfying $A\tanh A=\frac{\pi}{4}.$ Indeed, it
follows from Section IV of \cite{aktosun1} that $V_{1}$ is an exceptional
potential without bound states. Then, the same is true for $V_{2}$. Thus, from
Theorem 2.3 of \cite{aktosun} we see that $V$ is exceptional. Moreover, by (i)
on page 4254 of \cite{aktosun1} we conclude that $V$ does not have any eigenvalue.
\end{remark}

Define
\[
M=e^{\frac{ix^{2}}{2t}},
\]
and%
\[
\mathcal{D}_{t}\phi=\left(  it\right)  ^{-\frac{1}{2}}\phi\left(
xt^{-1}\right)  .
\]

\begin{theorem}
\label{Theorem 1.1} Suppose that $V$ satisfies Condition \ref{Cond1}. Let the
initial data $u_{0}\in$ $\mathbf{H}^{1}\cap\mathbf{H}^{0,1}$. In addition, we
suppose that $u_{0}\left(  x\right)  $ are odd if $T\left(  0\right)  =1$ and
$u_{0}\left(  x\right)  $ are even in the case when $T\left(  0\right)  =-1$.
Then, there exists $\varepsilon_{0}>0$ such that for any $0<\varepsilon
<\varepsilon_{0}$ and $\left\Vert u_{0}\right\Vert _{\mathbf{H}^{1}%
}+\left\Vert u_{0}\right\Vert _{\mathbf{H}^{0,1}}\leq\varepsilon,$ there
exists a unique solution $u\in\mathbf{C}\left(  \left[  0,\infty\right)
;\mathbf{H}^{1}\right)  $ of the Cauchy problem (\ref{1.1}) satisfying the
estimate
\begin{equation}
\left\Vert u\left(  t\right)  \right\Vert _{\mathbf{L}^{\infty}}\leq C\left(
1+t\right)  ^{-\frac{1}{2}}, \label{1.3}%
\end{equation}
for any $t\geq0.$ Moreover there exists a unique modified final state
$w_{+}\in\mathbf{L}^{\infty}$ such that the following asymptotics is valid
\begin{equation}
u\left(  t\right)  =M{\mathcal{D}}_{t}w_{+}e^{-i\lambda\left\vert
w_{+}\right\vert ^{2}\log t}+O\left(  \varepsilon t^{\beta-\frac{3}{4}%
}\right)  , \label{1.2}%
\end{equation}
for $t\rightarrow\infty,$ uniformly with respect to $x\in\mathbb{R}$, where
$\beta>0.$
\end{theorem}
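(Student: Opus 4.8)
The plan is to adapt the factorization method of Hayashi--Naumkin~\cite{HayashiNau1} for the free equation~(\ref{NLS0}), and of~\cite{Ivan} for generic potentials, to the perturbed dynamics generated by $H=-\frac12\partial_x^2+V$, replacing the ordinary Fourier transform by the distorted Fourier transform $\mathcal{F}_V$ associated with $H$ through the spectral theorem. First I would construct $\mathcal{F}_V$ from the generalized eigenfunctions $\psi(x,k)$ of $H$ built from the Jost solutions $f_\pm(x,k)$ and the scattering coefficients $T(k),R_\pm(k)$ of Section~\ref{S2}, so that $H$ is carried to multiplication by $k^2/2$ and $e^{-itH}=\mathcal{F}_V^{\ast}\,e^{-itk^{2}/2}\,\mathcal{F}_V$; since Condition~\ref{Cond1} rules out negative eigenvalues and bound states, the point spectrum contributes nothing and only the continuous part survives. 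The essential preliminary work is to establish the small-$k$ behavior of $\psi(x,k)$ and of $\mathcal{F}_V\phi$. Because $V$ is exceptional, $T(0)=\pm1$ and $R_\pm(0)=0$, so the generalized eigenfunctions approach a nonzero zero-energy (half-bound) state of fixed parity as $k\to0$, rather than vanishing as in the generic case. This is exactly where the parity hypothesis enters: taking $u_0$ odd when $T(0)=1$ and even when $T(0)=-1$ makes $u_0$ orthogonal to that half-bound state, so that $\mathcal{F}_V u_0$, and with it the profile introduced below, vanishes at the zero energy $k=0$ and recovers the behavior near $k=0$ required for the stationary-phase estimates.

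Next I would pass to the profile. Setting $u(t)=e^{-itH}\varphi(t)$ and applying $\mathcal{F}_V$, Duhamel's formula yields an evolution equation for $\hat{\varphi}(t,k):=\mathcal{F}_V\varphi(t,k)$ whose source is $\mathcal{F}_V(|u|^2u)$. The core of the argument is to insert the exact free factorization $U(t)=M\mathcal{D}_t\mathcal{F}M$ (with $\mathcal{F}$ the ordinary Fourier transform) and to compare $\mathcal{F}_V$ with $\mathcal{F}$ modulo the scattering data: for large $t$ and away from $k=0$ the distorted kernel is, up to the factors $T$ and $R_\pm$, a finite superposition of free oscillatory exponentials, so stationary phase splits the cubic term into a non-oscillatory resonant part proportional to $t^{-1}|\hat{\varphi}|^2\hat{\varphi}$ and faster-decaying oscillatory remainders. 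This reduces the dynamics to a scalar equation of the form
\[
\partial_t\hat{\varphi}(t,k)=-\frac{i\lambda}{t}\,\bigl|\hat{\varphi}(t,k)\bigr|^{2}\hat{\varphi}(t,k)+\mathcal{R}(t,k),
\]
where $\mathcal{R}$ gathers all the remainder contributions. Since the leading term is purely imaginary, $\partial_t|\hat{\varphi}|^2$ is controlled by $|\mathcal{R}|$ alone; hence $|\hat{\varphi}(t,k)|$ stays bounded and converges as $t\to\infty$ to a limit that defines (via $\mathcal{D}_t$ and $M$) the modified final state $w_+$, while the phase accumulates like $\lambda|\hat{\varphi}|^2\log t$, which is the logarithmic correction appearing in~(\ref{1.2}).

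The sharp decay~(\ref{1.3}) and the asymptotics~(\ref{1.2}) then follow from an a~priori estimate and continuity (bootstrap) argument in a norm combining $\|u(t)\|_{\mathbf{H}^1}$, an $\mathbf{H}^{0,1}$-type weight controlling $\partial_k\hat{\varphi}$, and the time-weighted quantity $\sup_{t\ge0}(1+t)^{1/2}\|u(t)\|_{\mathbf{L}^\infty}$: the $\mathbf{L}^\infty$ decay comes directly from the factorization together with a Sobolev embedding applied to $\hat{\varphi}$, and local existence plus this a~priori bound extend the solution to all of $[0,\infty)$ by the standard contraction in $\mathbf{C}([0,\infty);\mathbf{H}^1)$. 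The error rate $t^{\beta-3/4}$ with small $\beta>0$ is dictated by the size of $\mathcal{R}$. I expect the principal obstacle to be the uniform control of $\mathcal{R}$ down to the zero energy $k=0$: the exceptional-potential singularities of $T$, $R_\pm$ and $\psi$ there, together with the difference between the perturbed and free propagators, are only tamed after the parity cancellation, and showing that no logarithmic loss near $k=0$ spoils the $t^{\beta-3/4}$ estimate is the delicate technical heart of the argument.
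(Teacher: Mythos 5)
Your proposal follows essentially the same route as the paper's proof: the generalized (distorted) Fourier transform $\mathcal{F}$ diagonalizing $H$, the factorization $\mathcal{U}(t)\mathcal{F}^{-1}=M\mathcal{D}_{t}\mathcal{V}(t)$, reduction of the cubic nonlinearity to the resonant ODE $i\partial_{t}w=\lambda t^{-1}|w|^{2}w+\mathcal{R}$, the parity assumption forcing the profile to vanish at zero energy, and a bootstrap in a norm mixing $\left\Vert w\right\Vert _{\mathbf{L}^{\infty}}$ with $\left\langle t\right\rangle ^{-\beta}\left\Vert w\right\Vert _{\mathbf{H}^{1}}$, followed by removal of the resonant phase to produce $w_{+}$. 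The only point to make explicit is that identifying the resonant part as exactly $|w|^{2}w$ is not pure stationary phase: it requires the algebraic cancellation $\left\vert T\left(  \left\vert k\right\vert \right)  w\left(  k\right)  +R\left(  \left\vert k\right\vert \right)  w\left(  -k\right)  \right\vert ^{2}=\left\vert w\left(  k\right)  \right\vert ^{2}$, which combines the unitarity relations (\ref{3.4}) and (\ref{5.24}) with the same parity of $w$ that kills $w(t,0)$.
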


\begin{remark}
Here are some comments on Theorem \ref{Theorem 1.1}.

(i) If the potential is symmetric, $\lim_{x\rightarrow-\infty}f_{+}\left(
x,0\right)  =\pm1$ and then, $T\left(  0\right)  =\pm1$ (see (\ref{7.27}) and
(\ref{3.3})).

(ii) The difference between the generic and exceptional cases becomes relevant
when the small-energy behavior of the scattering coefficients and of the wave
functions is considered (\cite{aktosun}). In the case of Theorem
\ref{Theorem 1.1}\ we need to impose the parity condition on the potential and
on the initial data in order to control these functions. This explains the
extra assumptions in Theorem \ref{Theorem 1.1} compared with Theorem 1.1 of
\cite{Ivan}.

(iii) We do not know if the results of Theorem \ref{Theorem 1.1} remain valid
if we remove the parity assumption. In the case of a generic potential without
any symmetry, an asymptotic expansion of the form (\ref{1.2}) was obtained in
\cite{Ivan}. This asymptotic expansion might be not true for exceptional
potentials if the symmetry condition is removed: there is an evidence that
(\ref{1.2}) need to be modified in the case of more general exceptional
potentials (see Remark \ref{Rem1}).

(iv) As far as we know the case when the potential $V$ has bounded states
remains open (even in the generic case). We aim to consider this situation in
future works.
\end{remark}

As in the generic case \cite{Ivan}, our proof of Theorem \ref{Theorem 1.1} is
based on the following decomposition of the solutions for (\ref{1.1}) (see
(\ref{7.49}))%
\begin{equation}
u\left(  t\right)  =M{\mathcal{D}}_{t}\mathcal{V}\left(  t\right)  w\left(
t\right)  , \label{1.5}%
\end{equation}
where $\mathcal{V}\left(  t\right)  $ is defined by (\ref{2.10}). Using the
asymptotics of $\mathcal{V}\left(  t\right)  $ we translate the problem to
estimating the $\mathbf{L}^{\infty}$ and $\mathbf{H}^{1}$ norms of the new
dependent variable $w$. Namely, we need to control the norm%
\[
\left\Vert u\right\Vert _{\mathbf{X}_{T}}=\sup_{t\in\left[  0,T\right]
}\left(  \left\Vert w\right\Vert _{\mathbf{L}^{\infty}}+\left\langle
t\right\rangle ^{-\beta}\left\Vert w\right\Vert _{\mathbf{H}^{1}}\right)  ,
\]
with $\beta>0$ small enough. Observe that we allow some small growth of the
Sobolev norm $\mathbf{H}^{1}$ of $w,$ as $t\rightarrow\infty.$ Hence, to
estimate $\left\Vert w\right\Vert _{\mathbf{L}^{\infty}}$ it is not enough to
use the Sobolev's embedding theorem. To estimate $\left\Vert u\right\Vert
_{\mathbf{X}_{T}}$ we are forced to control the derivative $\partial
_{x}\mathcal{V}\left(  t\right)  .$ We are able to do so under the parity
assumptions of Theorem \ref{Theorem 1.1} since in this case the scattering
coefficients, as well as the function $w$ have some special symmetries. We
also need to control the function $w$ at zero energy. In the case of generic
potentials, $w\left(  0\right)  =0$ and $\left(  \mathcal{V}\left(  t\right)
w\right)  \left(  0\right)  $ gain some extra decay, as $t\rightarrow\infty$.
This is not true for arbitrary exceptional potentials: in general $w\left(
0\right)  $ may not tend to zero$,$ as $t\rightarrow\infty.$ Fortunately,
under the assumptions of Theorem \ref{Theorem 1.1} $w\left(  t,0\right)  =0$
(see (\ref{7.40})). Also, we prove that $\left(  \mathcal{V}\left(  t\right)
w\right)  \left(  0\right)  $ has an extra decay, when $t\rightarrow\infty$
(see (\ref{5.15})). As it is pointed in Remark \ref{Rem1}, if $\lim
_{t\rightarrow\infty}w\left(  t,0\right)  \neq0,$ the asymptotics of the
solution $u$ may differ from (\ref{1.2}) because of an extra term of the form
$h\left(  t,x\right)  ,$ with $h\in$ $\mathbf{L}^{\infty}$. Once, we control
$\left\Vert u\right\Vert _{\mathbf{X}_{T}},$ via a local existence result, we
extend the solution for all times $t>0.$

The rest of the paper is organized as follows. In Section \ref{S2} we recall
some known properties for the linear Schr\"{o}dinger equations. The proof of
Theorem \ref{Theorem 1.1} is given in Section~\ref{S7}. This proof depends on
several intermediate results, which are stated in Section~\ref{S7}, but whose
proofs are deferred until Sections~\ref{S6} to~\ref{S5}. Finally, in Section
\ref{S3} we present some properties of the Jost solutions in the case of
exceptional potentials that are used in the proof.

Throughout this paper, we use the following notation. The usual Fourier
transforms $\mathcal{F}_{0}$ and $\mathcal{F}_{0}^{-1}$ are defined by
\[
\mathcal{F}_{0}\phi=\frac{1}{\sqrt{2\pi}}\int_{-\infty}^{\infty}e^{-ixk}%
\phi\left(  x\right)  dx
\]
and%
\[
\mathcal{F}_{0}^{-1}\phi=\frac{1}{\sqrt{2\pi}}\int_{-\infty}^{\infty}%
e^{ixk}\phi\left(  k\right)  dk,
\]
respectively. We denote by $\mathbf{L}^{p}=\mathbf{L}^{p}\left(
\Omega\right)  ,$ $1\leq p\leq\infty,$ the Lebesgue space on a domain
$\Omega\subset\mathbb{R}$ and the weighted Lebesgue spaces are given by%
\[
\mathbf{L}^{p,s}=\left\{  \phi:\left\Vert \phi\right\Vert _{\mathbf{L}^{p,s}%
}=\left\Vert \left\langle \cdot\right\rangle ^{s}\phi\left(  \cdot\right)
\right\Vert _{\mathbf{L}^{p}}<\infty\right\}  ,\text{ \ }s\in\mathbb{R}%
\text{,}%
\]
where
\[
\left\langle x\right\rangle =\left(  1+x^{2}\right)  ^{1/2}.
\]
For any $k\in\mathbb{N}\cup\{0\}$ and $p\geq1,$ we denote by $W^{k,p}=\left\{
\phi:\left(  \sum_{j=0}^{k}\left\Vert \phi^{\left(  k\right)  }\right\Vert
_{\mathbf{L}^{p}}^{p}\right)  ^{1/p}<\infty\right\}  $, the Sobolev space of
order $k$ based on $\mathbf{L}^{p}$ (see e.g.~\cite{adams} for the definitions
and properties of these spaces.) For any $r\in\mathbb{R}$, we denote by
$\mathbf{H}^{r}=\mathbf{H}^{r}\left(  \mathbb{R}\right)  $ the Sobolev space
based on $\mathbf{L}^{2}$ consisting of the completion of the Schwartz class
in the norm $\left\Vert \phi\right\Vert _{\mathbf{H}^{r}}=\left\Vert
\mathcal{F}_{0}\phi\right\Vert _{\mathbf{L}^{2,r}}$. Moreover, for any
$r,s\in\mathbb{R},$ we define the weighted Sobolev spaces by
\[
\mathbf{H}^{r,s}=\left\{  \phi:\left\Vert \phi\right\Vert _{\mathbf{H}^{r,s}%
}=\left\Vert \left\langle \cdot\right\rangle ^{s}\phi\left(  \cdot\right)
\right\Vert _{\mathbf{H}^{r}}<\infty\right\}  .
\]
Finally, the same letter $C$ may denote different positive constants which
particular value is irrelevant.

\section{\label{S2}Basic notions.}

\subsection{Free Schr\"{o}dinger equation.}

We consider first the free linear Schr\"{o}dinger equation%
\begin{equation}
\left\{
\begin{array}
[c]{c}%
i\partial_{t}u=H_{0}u,\\
u\left(  0,x\right)  =u_{0}\left(  x\right)
\end{array}
\right.  \label{2.1}%
\end{equation}
where $H_{0}=-\frac{1}{2}\frac{d^{2}}{dx^{2}}$ is the free Schr\"{o}dinger
operator. $H_{0}$ is a self-adjoint operator on $L^{2}$ with domain $D\left(
H_{0}\right)  =\mathbf{H}^{2}.$ The solution to (\ref{2.1}) is given by
$u\left(  t,x\right)  =\mathcal{U}_{0}\left(  t\right)  u_{0},$ where
\begin{equation}
\mathcal{U}_{0}\left(  t\right)  =e^{-itH_{0}}=\mathcal{F}_{0}^{-1}%
e^{-\frac{it}{2}k^{2}}\mathcal{F}_{0} \label{2.13}%
\end{equation}
is the free Schr\"{o}dinger evolution group. The asymptotic representation
formula for $\mathcal{U}_{0}\left(  t\right)  $ is the relation (see
\cite{H-O})%
\begin{equation}
\mathcal{U}_{0}\left(  t\right)  \mathcal{F}_{0}^{-1}=M\mathcal{D}%
_{t}\mathcal{V}_{0}\left(  t\right)  , \label{2.2}%
\end{equation}
where $M=e^{\frac{ix^{2}}{2t}},$ $\mathcal{D}_{t}\phi=\left(  it\right)
^{-\frac{1}{2}}\phi\left(  xt^{-1}\right)  $ and
\begin{equation}
\mathcal{V}_{0}\left(  t\right)  \phi=\mathcal{F}_{0}M\mathcal{F}_{0}^{-1}%
\phi=\sqrt{\frac{it}{2\pi}}%
%TCIMACRO{\dint \limits_{\mathbf{-\infty}}^{\infty}}%
%BeginExpansion
{\displaystyle\int\limits_{\mathbf{-\infty}}^{\infty}}
%EndExpansion
e^{-\frac{it}{2}\left(  x-\xi\right)  ^{2}}\phi\left(  \xi\right)  d\xi.
\label{2.12}%
\end{equation}
Observe that
\begin{equation}
\left\Vert \mathcal{V}_{0}\left(  t\right)  \phi\right\Vert _{\mathbf{L}^{2}%
}=\left\Vert \mathcal{F}_{0}M\mathcal{F}_{0}^{-1}\phi\right\Vert
_{\mathbf{L}^{2}}=\left\Vert \phi\right\Vert _{\mathbf{L}^{2}}, \label{2.11}%
\end{equation}%
\[
\left\Vert \left(  \mathcal{V}_{0}\left(  t\right)  -1\right)  \phi\right\Vert
_{\mathbf{L}^{2}}=\left\Vert \mathcal{F}_{0}\left(  M-1\right)  \mathcal{F}%
_{0}^{-1}\phi\right\Vert _{\mathbf{L}^{2}}\leq C\left\Vert \left(  \frac
{x^{2}}{2t}\right)  ^{\frac{1}{2}}\mathcal{F}_{0}^{-1}\phi\right\Vert
_{\mathbf{L}^{2}}\leq Ct^{-\frac{1}{2}}\left\Vert \partial_{k}\phi\right\Vert
_{\mathbf{L}^{2}}%
\]
and
\[
\left\Vert \partial_{x}\left(  \mathcal{V}_{0}\left(  t\right)  -1\right)
\phi\right\Vert _{\mathbf{L}^{2}}\leq C\left\Vert \partial_{k}\phi\right\Vert
_{\mathbf{L}^{2}}.
\]
Then, using that $\left\Vert \phi\right\Vert _{\mathbf{L}^{\infty}}\leq
C\left\Vert \phi\right\Vert _{\mathbf{L}^{2}}^{\frac{1}{2}}\left\Vert
\partial_{x}\phi\right\Vert _{\mathbf{L}^{2}}^{\frac{1}{2}}$ we obtain the
estimate
\begin{equation}
\left\Vert \left(  \mathcal{V}_{0}\left(  t\right)  -1\right)  \phi\right\Vert
_{\mathbf{L}^{\infty}}\leq Ct^{-\frac{1}{4}}\left\Vert \partial_{k}%
\phi\right\Vert _{\mathbf{L}^{2}}. \label{2.3}%
\end{equation}

\subsection{Linear Schr\"{o}dinger equation with a potential.}

Consider now the linear Schr\"{o}dinger equation with a potential
\begin{equation}
\left\{
\begin{array}
[c]{c}%
i\partial_{t}u=Hu,\text{ }\\
u\left(  0,x\right)  =u_{0}\left(  x\right)  \text{,}%
\end{array}
\right.  \label{2.4}%
\end{equation}
where $H=H_{0}+V\left(  x\right)  $ and the potential $V\left(  x\right)  $ is
a real valued function for all $x\in\mathbb{R}$. If $V\in\mathbf{L}^{1,1},$ it
is known that the operator $H$ is self-adjoint (\cite{Weder2000}), the
absolutely-continuous spectrum of $H$ is given by $\sigma_{\operatorname*{ac}%
}\left(  H\right)  =[0,\infty),$ $H$ has no singular-continuous spectrum, $H$
has no eigenvalues that are positive or equal to $0$ and $H$ has a finite
number of negative eigenvalues that are simple (\cite{DeiftTrubowitz}).

We want to derive an asymptotic representation formula for the evolution group
defined by (\ref{2.4}), similar to (\ref{2.2}). For that purpose, we need the
spectral theorem for the perturbed operator $H$, that follows from the
Weyl-Kodaira-Titchmarsch theory (see, for example, \cite{DeiftTrubowitz}).
Assume that $V\in\mathbf{L}^{1,1}.$ The Jost functions $f_{\pm}\left(
x,k\right)  $ are solutions to the stationary Schr\"{o}dinger equation
\begin{equation}
\left\{
\begin{array}
[c]{c}%
-\frac{1}{2}\frac{d^{2}}{dx^{2}}f_{\pm}+Vf_{\pm}=\frac{k^{2}}{2}f_{\pm},\\
f_{\pm}\left(  x,k\right)  =e^{\pm ikx},\text{ }x\rightarrow\pm\infty.
\end{array}
\right.  \label{2.6}%
\end{equation}
They are solutions to Volterra integral equations and are obtained by
iteration as uniformly convergent series (\cite{DeiftTrubowitz},
\cite{Weder2000}). Since $f_{\pm}\left(  x,k\right)  $ are independent
solutions to (\ref{2.6}) for $k\neq0,$ there are unique coefficients $T\left(
k\right)  $ and $R_{\pm}\left(  k\right)  $ such that%
\begin{equation}
f_{\pm}\left(  x,k\right)  =\frac{R_{\mp}\left(  k\right)  }{T\left(
k\right)  }f_{\mp}\left(  x,k\right)  +\frac{1}{T\left(  k\right)  }f_{\mp
}\left(  x,-k\right)  , \label{2.5}%
\end{equation}
for $k\neq0$ (see page 144 of \cite{DeiftTrubowitz}). The function $T\left(
k\right)  $ is the transmission coefficient, $R_{-}\left(  k\right)  $ is the
reflection coefficient from left to right of the plane wave $e^{ikx}$ and
$R_{+}\left(  k\right)  $ is the reflection coefficient from right to left of
the plane wave $e^{-ikx}.$ They satisfy the identity (the unitarity of the
scattering matrix)
\begin{equation}
\left\vert T\left(  k\right)  \right\vert ^{2}+\left\vert R_{\pm}\left(
k\right)  \right\vert ^{2}=1,\text{ }k\in\mathbb{R}. \label{3.4}%
\end{equation}

We now introduce the generalized Fourier transform $\mathcal{F}$. Let the
Heaviside function $\theta\left(  k\right)  $ be $\theta\left(  k\right)  =1$
for \ $k\geq0,$ and $\theta\left(  k\right)  =0$ for \ $k<0.$ We define%
\begin{equation}
\Psi\left(  x,k\right)  =\theta\left(  k\right)  T\left(  k\right)
f_{+}\left(  x,k\right)  +\theta\left(  -k\right)  T\left(  -k\right)
f_{-}\left(  x,-k\right)  . \label{2.9}%
\end{equation}
The generalized Fourier transform
\begin{equation}
\mathcal{F}\phi=\frac{1}{\sqrt{2\pi}}\int_{-\infty}^{\infty}\overline
{\Psi\left(  x,k\right)  }\phi\left(  x\right)  dx \label{2.15}%
\end{equation}
is unitary from the continuous subspace of $H$ denoted by
$\mathcal{\mathcal{H}}_{c}$ onto $\mathbf{L}^{2}.$ Under the assumption that
$V$ has no negative eigenvalues $\mathcal{F}$ is unitary on $\mathbf{L}^{2}$
and $\mathcal{F}^{-1}$ is given by
\begin{equation}
\mathcal{F}^{-1}\phi=\frac{1}{\sqrt{2\pi}}\int_{-\infty}^{\infty}\Psi\left(
x,k\right)  \phi\left(  k\right)  dk. \label{2.16}%
\end{equation}
Then, the operator $\mathcal{F}H\mathcal{F}^{-1}$ acts as multiplication by
$\frac{k^{2}}{2}$ and the solution to (\ref{2.4}) is given by $u=\mathcal{U}%
\left(  t\right)  u_{0},$ where
\[
\mathcal{U}\left(  t\right)  =e^{-itH}=\mathcal{F}^{-1}e^{-\frac{it}{2}k^{2}%
}\mathcal{F}.
\]

We now deduce an asymptotic representation formula for $\mathcal{U}\left(
t\right)  $. We define
\begin{equation}
\Phi\left(  x,k\right)  =e^{-ikx}\Psi\left(  x,k\right)  . \label{2.8}%
\end{equation}
Then, we have
\[
\mathcal{U}\left(  t\right)  \mathcal{F}^{-1}\phi=\frac{1}{\sqrt{2\pi}}%
\int_{-\infty}^{\infty}e^{-\frac{it}{2}k^{2}}\Psi\left(  x,k\right)
\phi\left(  k\right)  dk=\frac{1}{\sqrt{2\pi}}\int_{-\infty}^{\infty}%
e^{-\frac{it}{2}k^{2}+ikx}\Phi\left(  x,k\right)  \phi\left(  k\right)  dk,
\]
and thus%
\begin{equation}
\left.
\begin{array}
[c]{c}%
\mathcal{U}\left(  t\right)  \mathcal{F}^{-1}\phi=\frac{1}{\sqrt{2\pi}}%
\int_{-\infty}^{\infty}e^{-\frac{it}{2}k^{2}+ikx}\Phi\left(  x,k\right)
\phi\left(  k\right)  dk={\mathcal{D}}_{t}\sqrt{\frac{it}{2\pi}}\int_{-\infty
}^{\infty}e^{-\frac{it}{2}k^{2}+itkx}\Phi\left(  tx,k\right)  \phi\left(
k\right)  dk\\
=\sqrt{\frac{it}{2\pi}}M{\mathcal{D}}_{t}\int_{-\infty}^{\infty}e^{-\frac
{it}{2}x^{2}-\frac{it}{2}k^{2}+itkx}\Phi\left(  tx,k\right)  \phi\left(
k\right)  dk=\sqrt{\frac{it}{2\pi}}M{\mathcal{D}}_{t}\int_{-\infty}^{\infty
}e^{-\frac{it}{2}\left(  k-x\right)  ^{2}}\Phi\left(  tx,k\right)  \phi\left(
k\right)  dk.
\end{array}
\right.  \label{2.7}%
\end{equation}
Denoting
\begin{equation}
\mathcal{V}\left(  t\right)  \phi=\sqrt{\frac{it}{2\pi}}\int_{-\infty}%
^{\infty}e^{-\frac{it}{2}\left(  k-x\right)  ^{2}}\Phi\left(  tx,k\right)
\phi\left(  k\right)  dk={\mathcal{D}}_{t}^{-1}M^{-1}\mathcal{F}^{-1}%
e^{-\frac{it}{2}k^{2}}\phi, \label{2.10}%
\end{equation}
from (\ref{2.7}) we obtain
\begin{equation}
\mathcal{U}\left(  t\right)  \mathcal{F}^{-1}\phi=M{\mathcal{D}}%
_{t}\mathcal{V}\left(  t\right)  \phi, \label{2.17}%
\end{equation}
which is an asymptotic representation formula for the evolution group
$\mathcal{U}\left(  t\right)  $. We also calculate the inverse asymptotic
representation formula%
\begin{equation}
\mathcal{FU}\left(  -t\right)  \phi=\mathcal{V}^{-1}\left(  t\right)
{\mathcal{D}}_{t}^{-1}\overline{M}\phi, \label{2.14}%
\end{equation}
where
\[
\mathcal{V}^{-1}\left(  t\right)  \phi=\sqrt{\frac{t}{2\pi i}}\int_{-\infty
}^{\infty}e^{\frac{it}{2}\left(  k-x\right)  ^{2}}\overline{\Phi\left(
tx,k\right)  }\phi\left(  x\right)  dx=e^{\frac{it}{2}k^{2}}\mathcal{F}%
M{\mathcal{D}}_{t}\phi.
\]

\section{\label{S7}Proof of Theorem \ref{Theorem 1.1}.}

This section is devoted to the proof of the main result. Suppose that the
potential $V\in\mathbf{L}^{1,1}$ is exceptional. Then, the Jost solutions at
$k=0$ satisfy $\left[  f_{+}\left(  x,0\right)  ,f_{-}\left(  x,0\right)
\right]  =0$. In this case, $a=\lim_{x\rightarrow-\infty}f_{+}\left(
x,0\right)  \neq0$ (\cite{Klaus}, \cite{aktosun}, \cite{Weder2000}). We also
assume that the potential $V\left(  x\right)  $ is symmetric, i.e. $V\left(
-x\right)  =V\left(  x\right)  .$ This ensures that the function $f_{+}\left(
x,0\right)  $ is either even or odd (see \cite{Bransden}, page 160). Indeed,
for symmetric potentials%
\begin{equation}
f_{+}\left(  x,k\right)  =f_{-}\left(  -x,k\right)  . \label{7.26}%
\end{equation}
Then, since $V$ is exceptional $f_{+}\left(  x,0\right)  =\alpha f_{+}\left(
-x,0\right)  ,$ for some $\alpha\in\mathbb{R}$. Changing $x\rightarrow-x$ in
the last equation we get $f_{+}\left(  -x,0\right)  =\alpha f_{+}\left(
x,0\right)  .$ The combination of these two equations yields $f_{+}\left(
x,0\right)  =\alpha^{2}f_{+}\left(  x,0\right)  .$ Then, $\alpha=\pm1,$ and
thus, $f_{+}\left(  x,0\right)  $ is even or odd. Therefore,
\begin{equation}
a=\lim_{x\rightarrow-\infty}f_{+}\left(  x,0\right)  =\pm1. \label{7.27}%
\end{equation}
Also, we observe that for symmetric potential $V\left(  x\right)  $ the
problem (\ref{1.1}) conserves the parity condition, i.e. if the\ initial data
$u_{0}\left(  x\right)  $ are even (odd) then the solution $u\left(
t,x\right)  $ is even (odd) for all time. From (\ref{7.26}) it follows that
$\Psi\left(  -x,-k\right)  =\Psi\left(  x,k\right)  ,$ so that if $\phi\left(
x\right)  $ is even (odd) and the potential $V$ is symmetric, then
$\mathcal{F}\phi$ is also even (odd). Then, we suppose additionally that the
initial data $u_{0}\left(  x\right)  $ are odd if $a=1$ and $u_{0}\left(
x\right)  $ are even if $a=-1$. We also note that in the case of symmetric
potentials $R_{+}\left(  k\right)  =R_{-}\left(  k\right)  .$ Then, we denote
\begin{equation}
R\left(  k\right)  =R_{+}\left(  k\right)  =R_{-}\left(  k\right)  .
\label{3.20}%
\end{equation}
The parity assumption on the potential and initial data is made in order to
assure that (\ref{7.27}) is true and $\mathcal{F}\left(  u\left(  t\right)
\right)  $ is either even or odd. We need this extra information in order to
control the small-energy behaviour of the scattering coefficients and the wave functions.

We begin by presenting the results of Sections \ref{S6}, \ref{S4} and \ref{S5}
that are involved in the proof of Theorem \ref{Theorem 1.1}. We note that
under Condition \ref{Cond1}, all the relations of Section \ref{S3} are valid.
Taking into account the commentaries about the properties of the symmetric
potentials made at the beginning of this section, we see that for potentials
satisfying Condition \ref{Cond1}, the results of Sections \ref{S6}, \ref{S4}
and \ref{S5} are true. Therefore, we have the following.

\begin{lemma}
\label{L3}Suppose that $V$ satisfies Condition \ref{Cond1}. If $T\left(
0\right)  =1,$ let $\phi\in\mathbf{H}^{0,1}$ be odd and if $T\left(  0\right)
=-1,$ suppose that $\phi$ is even. Then%
\begin{equation}
\left(  \mathcal{F}\phi\right)  \left(  0\right)  =0. \label{7.50}%
\end{equation}
Moreover
\begin{equation}
\left\Vert \mathcal{F}\phi\right\Vert _{\mathbf{H}^{1}}\leq K_{0}\left\Vert
\phi\right\Vert _{\mathbf{H}^{0,1}}, \label{7.21}%
\end{equation}
for some $K_{0}>0$ is true.\
\end{lemma}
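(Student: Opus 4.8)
The plan is to reduce both assertions to parity properties of the generalized eigenfunctions at zero energy together with the uniform Jost estimates collected in Section \ref{S3}. First I would record the relevant symmetries at $k=0$. There the equation (\ref{2.6}) becomes the real equation $-\frac{1}{2}f_{+}^{\prime\prime}+Vf_{+}=0$ with the real normalization $f_{+}(x,0)\to1$ as $x\to+\infty$, so $f_{+}(\cdot,0)$ is real valued; by the computation preceding (\ref{7.27}), the symmetry of $V$ together with exceptionality forces $f_{+}(\cdot,0)$ to be even when $a=T(0)=1$ and odd when $a=T(0)=-1$. Using (\ref{7.26}) in the form $f_{-}(x,0)=f_{+}(-x,0)$, the function $f_{-}(\cdot,0)$ inherits the same parity as $f_{+}(\cdot,0)$.

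For (\ref{7.50}) I would evaluate the two one-sided limits of $\mathcal{F}\phi$ at the origin directly from (\ref{2.9}) and (\ref{2.15}). As $k\to0^{+}$ the kernel satisfies $\overline{\Psi(x,k)}\to T(0)f_{+}(x,0)$, while as $k\to0^{-}$ it satisfies $\overline{\Psi(x,k)}\to T(0)f_{-}(x,0)$; in both cases the limiting kernel is real and carries the parity of $f_{+}(\cdot,0)$. Under the hypothesis of the lemma $\phi$ has the opposite parity (odd when $T(0)=1$, even when $T(0)=-1$), so each integrand $\overline{\Psi(x,0^{\pm})}\phi(x)$ is odd in $x$ and both one-sided limits vanish. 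Hence $\mathcal{F}\phi$ extends continuously across $k=0$ with $(\mathcal{F}\phi)(0)=0$; this simultaneously shows that the jump of $\Psi$ at $k=0$ present in the case $T(0)=-1$ induces no jump in $\mathcal{F}\phi$, which is what makes $\mathcal{F}\phi\in\mathbf{H}^{1}$ possible.

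For (\ref{7.21}) I would split the $\mathbf{H}^{1}$ norm into its $\mathbf{L}^{2}$ part and its derivative part. The $\mathbf{L}^{2}$ bound is immediate from the unitarity of $\mathcal{F}$: one has $\|\mathcal{F}\phi\|_{\mathbf{L}^{2}}=\|\phi\|_{\mathbf{L}^{2}}\le\|\phi\|_{\mathbf{H}^{0,1}}$ since $\langle x\rangle\ge1$. For the derivative I would insert $\Psi(x,k)=e^{ikx}\Phi(x,k)$ from (\ref{2.8}) into (\ref{2.15}), writing $\mathcal{F}\phi(k)=\frac{1}{\sqrt{2\pi}}\int e^{-ikx}\overline{\Phi(x,k)}\phi(x)\,dx$, and differentiate in $k$ to obtain
\[
\partial_{k}\mathcal{F}\phi(k)=\frac{-i}{\sqrt{2\pi}}\int e^{-ikx}\overline{\Phi(x,k)}\,x\phi(x)\,dx+\frac{1}{\sqrt{2\pi}}\int e^{-ikx}\overline{\partial_{k}\Phi(x,k)}\,\phi(x)\,dx .
\]
Each term is an oscillatory integral against a bounded symbol. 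Splitting $\Phi(x,k)=1+(\Phi(x,k)-1)$, the constant part turns the first integral into $\mathcal{F}_{0}[x\phi]$, bounded in $\mathbf{L}^{2}$ by $\|x\phi\|_{\mathbf{L}^{2}}\le\|\phi\|_{\mathbf{H}^{0,1}}$, while the remainders $\Phi-1$ and $\partial_{k}\Phi$, which decay in $x$ and are controlled uniformly in $k$ by the Jost estimates of Section \ref{S3}, are absorbed by Young/Schur-type bounds against $\|x\phi\|_{\mathbf{L}^{2}}$ and $\|\phi\|_{\mathbf{L}^{2}}$. Combining the two parts gives (\ref{7.21}).

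The main obstacle is the small-energy analysis hidden in the uniform control of $\partial_{k}\Phi(x,k)$ up to $k=0$. For exceptional potentials the coefficients $T(k)$, $R(k)$ and the $k$-derivatives of the Jost functions have delicate behavior near $k=0$, and a priori $\partial_{k}\Phi$ need not be square integrable there; the role of the parity assumptions is exactly to cancel the leading singular contributions (equivalently, to annihilate the $k=0$ jump identified in the proof of (\ref{7.50})), so that the estimates of Section \ref{S3} become applicable. Verifying that these cancellations indeed render $\partial_{k}\Phi$ square integrable in $k$ near the origin is the delicate point, and it is precisely the content imported from Sections \ref{S6}--\ref{S5}.
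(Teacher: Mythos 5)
Your argument for (\ref{7.50}) is correct and is essentially the paper's own proof of that part: the paper computes the same two one-sided limits at $k=0$, written in terms of $m_{\pm}$ together with $m_{+}(x,0)=m_{-}(-x,0)$, $T(0)=\pm1$ and $R_{\pm}(0)=0$, while you phrase it through the parity of $f_{+}(\cdot,0)$; the two formulations are equivalent, and both reduce to the integrand being odd. You also correctly identify the conceptual role of (\ref{7.50}) for the second assertion, namely that the matching of the boundary values at $k=0$ is what allows $\partial_{k}$ to be commuted with the cut-offs $\theta(\pm k)$ in the kernel (this is exactly how the paper uses it).

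The gap is in the derivative estimate for (\ref{7.21}). Your central claim --- that after splitting $\Phi=1+(\Phi-1)$ the remainders $\Phi-1$ and $\partial_{k}\Phi$ ``decay in $x$ and are controlled uniformly in $k$ by the Jost estimates of Section \ref{S3}'' and can then be absorbed by Young/Schur-type bounds --- is false as stated. Every estimate of Section \ref{S3} ((\ref{3.2}), (\ref{3.5}), (\ref{3.6}), (\ref{3.7})) holds only on the half-line $\pm x\geq 0$ adapted to $m_{\pm}$, whereas for $k>0$ one has $\Phi(x,k)=T(k)m_{+}(x,k)$ for \emph{all} $x$. On $x<0$ the scattering relation (\ref{2.5}) (equivalently (\ref{3.1})) gives $T(k)m_{+}(x,k)=R_{-}(k)e^{-2ikx}m_{-}(x,k)+m_{-}(x,-k)$, so $\Phi-1$ contains the reflected-wave term $R_{-}(k)e^{-2ikx}m_{-}(x,k)$, which neither decays in $x$ nor is constant in $x$; no bound on its modulus can rescue a Schur-type argument, since $|R_{-}(k)|$ need not be small. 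Such terms can only be handled by absorbing the oscillation $e^{-2ikx}$ into the Fourier phase, which is precisely what the paper's proof does: it first rewrites the kernel separately on each half-line via (\ref{3.1}) --- formulas (\ref{4.68}) and (\ref{4.69}) --- arriving at the decomposition (\ref{7.13}), $\mathcal{F}\phi=\sum_{l=1}^{6}J_{l}+\mathcal{F}_{0}\phi$, in which only the well-controlled restrictions of $m_{\pm}$ appear and the reflected waves are isolated in $J_{5},J_{6}$ as Fourier-type integrals with coefficients $\overline{R_{\pm}}$; each $\partial_{k}J_{l}$ is then estimated using (\ref{3.2}) with $\delta=0$, (\ref{3.5}) with $\delta>1/2$, (\ref{3.18}), (\ref{3.19}), (\ref{3.8}) and (\ref{3.17}). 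Without this half-line decomposition your scheme does not merely lack detail; the step as described would fail.

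There is also a structural problem: you defer the ``delicate point'' --- square integrability of the differentiated kernel near $k=0$ --- to content ``imported from Sections \ref{S6}--\ref{S5}''. But Lemma \ref{L3} \emph{is} the content of Section \ref{S6} (it is restated and proved there as Lemma \ref{Lemma 7.2}), so nothing can be imported from there without circularity. In fact, once the half-line decomposition is made, no delicate small-$k$ cancellation beyond (\ref{7.50}) is needed: in the exceptional case $T$ and $R_{\pm}$ have bounded derivatives ((\ref{3.8}), (\ref{3.17})), and the only genuine obstruction, the jump of the kernel across $k=0$ when $T(0)=-1$, is exactly what (\ref{7.50}) removes. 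What is missing from your proposal is therefore not a hidden cancellation but the concrete six-term estimate that constitutes the paper's proof.
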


\begin{lemma}
\label{L4}Suppose that $V$ satisfies Condition \ref{Cond1}. Then, for any
$t\in\mathbb{R}$ and $\phi\in\mathbf{H}^{1}\cap\mathbf{H}^{0,1}$ the estimates%
\begin{equation}
\left\Vert \mathcal{U}\left(  t\right)  \phi\right\Vert _{\mathbf{H}^{1}}\leq
K_{0}\left\Vert \phi\right\Vert _{\mathbf{H}^{1}} \label{7.8}%
\end{equation}
and
\begin{equation}
\left\Vert \mathcal{U}\left(  t\right)  \phi\right\Vert _{\mathbf{H}^{0,1}%
}\leq K_{0}\left(  \left\langle t\right\rangle \left\Vert \phi\right\Vert
_{\mathbf{H}^{1}}+\left\Vert \phi\right\Vert _{\mathbf{H}^{0,1}}\right)
\label{7.9}%
\end{equation}
for some $K_{0}>0$ are valid.
\end{lemma}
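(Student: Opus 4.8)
The plan is to deduce both bounds purely from the unitarity of $\mathcal{U}(t)=e^{-itH}$ and the fact that it commutes with every function of $H$; neither the exceptionality, nor the parity, nor the fragmentation hypotheses of Condition \ref{Cond1} are needed here, only $V\in\mathbf{L}^{1}$ (to control the potential as a form perturbation) and the absence of negative eigenvalues (to guarantee $H\ge0$).

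For (\ref{7.8}) the first step is to identify the form domain of $H$ with $\mathbf{H}^{1}$ and to show that $\|(H+1)^{1/2}\phi\|_{\mathbf{L}^{2}}$ is equivalent to $\|\phi\|_{\mathbf{H}^{1}}$. Since $V\in\mathbf{L}^{1}$, the one-dimensional inequality $\|\phi\|_{\mathbf{L}^{\infty}}^{2}\le 2\|\phi\|_{\mathbf{L}^{2}}\|\partial_{x}\phi\|_{\mathbf{L}^{2}}$ gives, for every $\varepsilon>0$,
\[ \Big|\int_{\mathbb{R}}V|\phi|^{2}\,dx\Big|\le\|V\|_{\mathbf{L}^{1}}\|\phi\|_{\mathbf{L}^{\infty}}^{2}\le\varepsilon\|\partial_{x}\phi\|_{\mathbf{L}^{2}}^{2}+C_{\varepsilon}\|\phi\|_{\mathbf{L}^{2}}^{2}, \]
so $V$ is infinitesimally form-bounded relative to $H_{0}$ and $\langle(H+1)\phi,\phi\rangle=\tfrac12\|\partial_{x}\phi\|_{\mathbf{L}^{2}}^{2}+\int_{\mathbb{R}}V|\phi|^{2}\,dx+\|\phi\|_{\mathbf{L}^{2}}^{2}$. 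The upper bound $\langle(H+1)\phi,\phi\rangle\lesssim\|\phi\|_{\mathbf{H}^{1}}^{2}$ is immediate from the last display. For the matching lower bound I would use that Condition \ref{Cond1} forces $\sigma(H)=[0,\infty)$, whence $H\ge0$ and $\langle(H+1)\phi,\phi\rangle\ge\|\phi\|_{\mathbf{L}^{2}}^{2}$; combining this with $\tfrac12\|\partial_{x}\phi\|_{\mathbf{L}^{2}}^{2}\le\langle H\phi,\phi\rangle+\varepsilon\|\partial_{x}\phi\|_{\mathbf{L}^{2}}^{2}+C_{\varepsilon}\|\phi\|_{\mathbf{L}^{2}}^{2}$ and choosing $\varepsilon=\tfrac14$ lets one absorb the gradient term and conclude $\|\phi\|_{\mathbf{H}^{1}}^{2}\lesssim\langle(H+1)\phi,\phi\rangle$. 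With the equivalence $\|(H+1)^{1/2}\phi\|_{\mathbf{L}^{2}}\approx\|\phi\|_{\mathbf{H}^{1}}$ in hand, and since $\mathcal{U}(t)$ commutes with $(H+1)^{1/2}$ and preserves the $\mathbf{L}^{2}$ norm, (\ref{7.8}) follows from
\[ \|\mathcal{U}(t)\phi\|_{\mathbf{H}^{1}}\approx\|(H+1)^{1/2}\mathcal{U}(t)\phi\|_{\mathbf{L}^{2}}=\|\mathcal{U}(t)(H+1)^{1/2}\phi\|_{\mathbf{L}^{2}}=\|(H+1)^{1/2}\phi\|_{\mathbf{L}^{2}}\approx\|\phi\|_{\mathbf{H}^{1}}. \]

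For the weighted estimate (\ref{7.9}) I would commute the multiplication operator $x$ through the flow. Because $V$ is a multiplication operator it commutes with $x$, so $i[H,x]=i[H_{0},x]=-i\partial_{x}$, and differentiating the Heisenberg observable gives $\frac{d}{ds}\big(\mathcal{U}(-s)\,x\,\mathcal{U}(s)\big)=\mathcal{U}(-s)(-i\partial_{x})\mathcal{U}(s)$. Integrating from $0$ to $t$ yields
\[ x\,\mathcal{U}(t)\phi=\mathcal{U}(t)\,x\phi+\int_{0}^{t}\mathcal{U}(t-s)(-i\partial_{x})\mathcal{U}(s)\phi\,ds. \]
Taking $\mathbf{L}^{2}$ norms, using the unitarity of $\mathcal{U}$ together with $\|\partial_{x}\mathcal{U}(s)\phi\|_{\mathbf{L}^{2}}\le\|\mathcal{U}(s)\phi\|_{\mathbf{H}^{1}}\le K_{0}\|\phi\|_{\mathbf{H}^{1}}$ from (\ref{7.8}), I obtain $\|x\,\mathcal{U}(t)\phi\|_{\mathbf{L}^{2}}\le\|x\phi\|_{\mathbf{L}^{2}}+|t|\,K_{0}\|\phi\|_{\mathbf{H}^{1}}$. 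Since $\|\mathcal{U}(t)\phi\|_{\mathbf{H}^{0,1}}\le\|\mathcal{U}(t)\phi\|_{\mathbf{L}^{2}}+\|x\,\mathcal{U}(t)\phi\|_{\mathbf{L}^{2}}$ and $\|\mathcal{U}(t)\phi\|_{\mathbf{L}^{2}}=\|\phi\|_{\mathbf{L}^{2}}\le\|\phi\|_{\mathbf{H}^{0,1}}$, bounding $|t|\le\langle t\rangle$ gives exactly (\ref{7.9}).

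The main obstacles are matters of rigor rather than of concept. The differentiation of $s\mapsto\mathcal{U}(-s)\,x\,\mathcal{U}(s)$ and the operator identity $i[H,x]=-i\partial_{x}$ hold a priori only on a suitable core; I would therefore establish the integral identity first for $\phi$ in the Schwartz class (or in $\mathbf{H}^{2}\cap\mathbf{H}^{0,2}$), where each manipulation is justified and $\mathcal{U}(s)$ keeps $\phi$ in the domains of both $x$ and $\partial_{x}$, and then pass to $\phi\in\mathbf{H}^{1}\cap\mathbf{H}^{0,1}$ by density, the right-hand bounds being continuous in that topology. Likewise, the form-domain identification $\|(H+1)^{1/2}\phi\|_{\mathbf{L}^{2}}\approx\|\phi\|_{\mathbf{H}^{1}}$ is the structural heart of the argument; once it is recorded, the commutation with $\mathcal{U}(t)$ is immediate and makes (\ref{7.8}) the engine that drives the weighted bound (\ref{7.9}). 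Unlike the results of Section \ref{S3}, this lemma uses no special feature of exceptional potentials beyond what Condition \ref{Cond1} already supplies.
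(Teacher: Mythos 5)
Your proof is correct, but it takes a genuinely different route from the paper. The paper stays entirely inside its generalized Fourier transform framework: for (\ref{7.8}) it writes $\mathcal{U}(t)=\mathcal{F}^{-1}e^{-\frac{it}{2}k^{2}}\mathcal{F}$ and reduces the claim to the two mapping bounds $\left\Vert \partial_{x}\mathcal{F}^{-1}\phi\right\Vert _{\mathbf{L}^{2}}\leq C\left\Vert \phi\right\Vert _{\mathbf{H}^{0,1}}$ and $\left\Vert \mathcal{F}\phi\right\Vert _{\mathbf{H}^{0,1}}\leq C\left\Vert \phi\right\Vert _{\mathbf{H}^{1}}$, and for (\ref{7.9}) it expands $x\,\mathcal{F}^{-1}\bigl(e^{-\frac{it}{2}k^{2}}-1\bigr)\mathcal{F}\phi$ and integrates by parts in $k$ (which is where its factor $\langle t\rangle$ arises), all of this resting on the Jost-function representation of $\Phi\left(x,k\right)$, the estimates of Section \ref{S3}, and the bound (\ref{7.21}) of Lemma \ref{L3}. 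You replace all of that by abstract spectral theory: the KLMN form equivalence $\left\Vert (H+1)^{1/2}\phi\right\Vert _{\mathbf{L}^{2}}\approx\left\Vert \phi\right\Vert _{\mathbf{H}^{1}}$ (valid since $V\in\mathbf{L}^{1}$ and $H\geq0$) together with commutation of $(H+1)^{1/2}$ with the group for (\ref{7.8}), and the Heisenberg identity $i[H,x]=-i\partial_{x}$ integrated over $[0,t]$ for (\ref{7.9}), which is where your $\langle t\rangle$ comes from. Your route buys brevity and generality: it needs only $V\in\mathbf{L}^{1}$ and $H\geq0$, none of the exceptional, parity, or fragmentation structure, and in particular it proves the lemma exactly as stated, for \emph{all} $\phi\in\mathbf{H}^{1}\cap\mathbf{H}^{0,1}$ --- whereas the paper's own proof of (\ref{7.9}) invokes (\ref{7.21}), which Lemma \ref{L3} only establishes under a parity hypothesis on $\phi$. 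What the paper's route buys is coherence: the mapping properties of $\mathcal{F}$, $\mathcal{F}^{-1}$ it develops here are of the same kind it needs anyway for $\mathcal{V}$, $\mathcal{V}^{-1}$ in Sections \ref{S4} and \ref{S5}. One caution about your rigor paragraph: asserting that on a core ``$\mathcal{U}(s)$ keeps $\phi$ in the domains of both $x$ and $\partial_{x}$'' is mildly circular, since invariance of $D(x)$ under the flow is precisely the qualitative content of (\ref{7.9}); the standard repair is to replace $x$ by a bounded truncation $x_{R}=x\chi\left(x/R\right)$, for which $[H,x_{R}]=-x_{R}^{\prime}\partial_{x}-\tfrac{1}{2}x_{R}^{\prime\prime}$ with $x_{R}^{\prime}$ bounded uniformly in $R$ and $x_{R}^{\prime\prime}=O(1/R)$, derive your inequality uniformly in $R$ from (\ref{7.8}), and let $R\rightarrow\infty$; the density step is then exactly as you describe.
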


\begin{lemma}
\label{L1}Suppose that $V$ satisfies Condition \ref{Cond1}$.$ Then, if
$T\left(  0\right)  =1,$ the estimate
\begin{equation}
\left.  \left\Vert \mathcal{V}\left(  t\right)  w-T\left(  \left\vert
x\right\vert \right)  w\left(  x\right)  -R\left(  \left\vert x\right\vert
\right)  w\left(  -x\right)  \right\Vert _{\mathbf{L}^{\infty}}\leq
C\left\vert w\left(  0\right)  \right\vert +Ct^{-1/4}\left\Vert w\right\Vert
_{\mathbf{H}^{1}}\right.  \label{L1.1}%
\end{equation}
is true for all $t\geq1.$ Moreover, in the case $T\left(  0\right)  =-1,$
\begin{equation}
\left.  \left\Vert \mathcal{V}\left(  t\right)  w-T\left(  \left\vert
x\right\vert \right)  w\left(  x\right)  -R\left(  \left\vert x\right\vert
\right)  w\left(  -x\right)  -\sqrt{\frac{2i}{\pi}}w\left(  0\right)
%TCIMACRO{\dint \limits_{\sqrt{t}x}^{\infty}}%
%BeginExpansion
{\displaystyle\int\limits_{\sqrt{t}x}^{\infty}}
%EndExpansion
e^{-\frac{ik^{2}}{2}}dk\right\Vert _{\mathbf{L}^{\infty}}\leq Ct^{-1/4}%
\left\Vert w\right\Vert _{\mathbf{H}^{1}}\right.  \label{L1.2}%
\end{equation}
Furthermore, if $T\left(  0\right)  =1,$ let $w\in\mathbf{H}^{1}$ be odd and
if $T\left(  0\right)  =-1,$ suppose that $w$ is even. Also suppose that $w$
can be represented as $w=\mathcal{F}\psi,$ for some $\psi\in\mathbf{H}^{0,1}.$
Then, the estimate
\begin{equation}
\left\Vert \partial_{x}\mathcal{V}\left(  t\right)  w\right\Vert
_{\mathbf{L}^{2}}\leq C\left\Vert w\right\Vert _{\mathbf{L}^{\infty}}%
\log\left\langle t\right\rangle +C\left\Vert w\right\Vert _{\mathbf{H}^{1}}
\label{L1.3}%
\end{equation}
is true for all $t\geq1$.
\end{lemma}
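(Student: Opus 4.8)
\emph{Proof strategy.} The plan is to analyse the oscillatory integral in the representation (\ref{2.10}),
\[
\mathcal{V}\left(  t\right)  w=\sqrt{\frac{it}{2\pi}}\int_{-\infty}^{\infty}e^{-\frac{it}{2}\left(  k-x\right)  ^{2}}\Phi\left(  tx,k\right)  w\left(  k\right)  dk ,
\]
by stationary phase, feeding in the large-argument behaviour of the Jost functions collected in Section \ref{S3}. First I would insert the asymptotics of $\Phi\left(  tx,k\right)  =e^{-ikx}\Psi\left(  tx,k\right)  $ as $\left\vert tx\right\vert \rightarrow\infty$. Combining the scattering relation (\ref{2.5}) with the normalisations (\ref{2.6}) and $R_{+}=R_{-}=R$ from (\ref{3.20}), one obtains, modulo a remainder decaying in $\left\vert tx\right\vert $,
\[
\Phi\left(  tx,k\right)  =\theta\left(  k\right)  T\left(  k\right)  +\theta\left(  -k\right)  \left(  1+R\left(  -k\right)  e^{-2iktx}\right)  +\cdots\qquad\left(  x>0\right)  ,
\]
together with the mirror expression for $x<0$. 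The integrand thus splits into a non-oscillatory part, with amplitude $T\left(  k\right)  $ or $1$ and Gaussian phase $-\frac{t}{2}\left(  k-x\right)  ^{2}$ stationary at $k=x$, and a reflected part carrying the extra oscillation $e^{-2iktx}$.

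The key algebraic observation is that completing the square removes this extra oscillation: since $-\frac{t}{2}\left(  k-x\right)  ^{2}-2ktx=-\frac{t}{2}\left(  k+x\right)  ^{2}$, the reflected part is again an approximate-identity integral of the type (\ref{2.12}), but now concentrated at $k=-x$. Each of the two pieces is therefore a $\mathcal{V}_{0}$-like convolution whose kernel localises at $k=x$, respectively $k=-x$; applying (\ref{2.3}) (more precisely, that $\sqrt{it/2\pi}\int e^{-\frac{it}{2}\left(  k-x\right)  ^{2}}g\left(  k\right)  dk$ reproduces $g\left(  x\right)  $ up to $Ct^{-1/4}\left\Vert \partial_{k}g\right\Vert _{\mathbf{L}^{2}}$) extracts the leading terms $T\left(  \left\vert x\right\vert \right)  w\left(  x\right)  $ and $R\left(  \left\vert x\right\vert \right)  w\left(  -x\right)  $, with errors of size $Ct^{-1/4}\left\Vert w\right\Vert _{\mathbf{H}^{1}}$. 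The decaying Jost remainder and the non-stationary cross terms (e.g. the constant amplitude $1$ integrated against a kernel localised away from $k=0$) are disposed of by integration by parts in $k$ and join the $t^{-1/4}\left\Vert w\right\Vert _{\mathbf{H}^{1}}$ error.

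I expect the delicate point, and the source of the dichotomy between (\ref{L1.1}) and (\ref{L1.2}), to be the behaviour near zero energy. The amplitude $\Phi\left(  tx,\cdot\right)  $ has a jump across $k=0$: evaluating the boundary values of $f_{\pm}$ at $\pm\infty$ gives $\Phi\left(  tx,0^{-}\right)  \rightarrow1$ while $\Phi\left(  tx,0^{+}\right)  \rightarrow T\left(  0\right)  $, so the jump equals $T\left(  0\right)  -1$. As $x\rightarrow0$ the two stationary points $k=\pm x$ merge at $k=0$ precisely where this jump sits, so the endpoint contributions from integrating the non-stationary pieces by parts, of size $\sim\left(  it\left(  k-x\right)  \right)  ^{-1}w\left(  k\right)  $ at $k=0$, can no longer be discarded. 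When $T\left(  0\right)  =1$ the jump vanishes and these contributions are merely bounded, giving the term $C\left\vert w\left(  0\right)  \right\vert $ in (\ref{L1.1}); when $T\left(  0\right)  =-1$ the jump is $-2$, and resolving the merging stationary points on the scale $k\sim x\sim t^{-1/2}$ produces, after the substitution $k\mapsto\sqrt{t}\,k$, exactly the Fresnel profile $\sqrt{2i/\pi}\,w\left(  0\right)  \int_{\sqrt{t}x}^{\infty}e^{-ik^{2}/2}dk$ subtracted in (\ref{L1.2}), the balance being $O(t^{-1/4}\left\Vert w\right\Vert _{\mathbf{H}^{1}})$.

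Finally, for (\ref{L1.3}) I would differentiate (\ref{2.10}) in $x$. The phase derivative gives $it\left(  k-x\right)  e^{-\frac{it}{2}\left(  k-x\right)  ^{2}}=-\partial_{k}e^{-\frac{it}{2}\left(  k-x\right)  ^{2}}$, which upon integration by parts lands on $\Phi w$, producing $\Phi w^{\prime}$ (bounded by $\left\Vert w\right\Vert _{\mathbf{H}^{1}}$) and $\left(  \partial_{k}\Phi\right)  w$; the amplitude derivative gives $t\left(  \partial_{y}\Phi\right)  \left(  tx,k\right)  $, whose non-oscillatory part decays in $tx$ but whose reflected part $\sim-2ik\,R\,e^{-2iktx}$ does not, so here one must complete the square and exploit the oscillation. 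Writing $-2ikt\,e^{-\frac{it}{2}\left(  k+x\right)  ^{2}}=\frac{2k}{k+x}\partial_{k}e^{-\frac{it}{2}\left(  k+x\right)  ^{2}}$ and integrating by parts creates the singular factor $\left(  k+x\right)  ^{-1}$ at the stationary point $k=-x$; the resulting $\int\left\vert k+x\right\vert ^{-1}dk$, cut off on the scale $t^{-1/2}$, yields the logarithmic loss $C\left\Vert w\right\Vert _{\mathbf{L}^{\infty}}\log\left\langle t\right\rangle $. The parity hypotheses enter decisively: since $w=\mathcal{F}\psi$ with $\psi$ of the parity opposite to that dictated by $T\left(  0\right)  $, Lemma \ref{L3} gives $w\left(  0\right)  =0$, which kills the zero-energy boundary terms and prevents any growth worse than logarithmic. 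The main obstacle is precisely this zero-energy analysis: pinning down the Fresnel correction when $T\left(  0\right)  =-1$, and taming the non-decaying oscillatory amplitude-derivative in (\ref{L1.3}) down to logarithmic growth, both resting on $w\left(  0\right)  =0$ and on the small-$k$ expansions of $T$, $R$, and $f_{\pm}$ from Section \ref{S3}.
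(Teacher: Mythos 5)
Your strategy is essentially the paper's own proof (Section \ref{S4}): there too, $\Phi(tx,k)$ is expanded through the Jost functions and the relation (\ref{3.1}) separately on the two half-lines $x\gtrless 0$; the reflected oscillation $e^{-2iktx}$ is absorbed by exactly your completing-the-square observation (implemented as the substitution $k\mapsto -k$ between (\ref{4.19}) and (\ref{4.15}), which turns the reflected piece into a Gaussian average of $w(-k)$ at the same stationary point $k=x$); the leading terms $T\left(\left\vert x\right\vert\right)w(x)+R\left(\left\vert x\right\vert\right)w(-x)$ are extracted by the approximate-identity argument with $O\left(t^{-1/4}\left\Vert w\right\Vert_{\mathbf{H}^{1}}\right)$ errors; the Fresnel profile in (\ref{L1.2}) comes from the jump of the Heaviside amplitude at $k=0$ when $T(0)=-1$; and for (\ref{L1.3}) the logarithm arises from an integration by parts producing a $(k-x)^{-1}$ singularity near the stationary point, the boundary term at $k=0$ being killed by $w(0)=0$, i.e.\ by parity together with $w=\mathcal{F}\psi$ and (\ref{7.60}).

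The one step that would fail as written is your disposal of the Jost remainders. You assert that the terms carrying $m_{\pm}(tx,\cdot)-1$ ``decay in $\left\vert tx\right\vert$'' and can be integrated by parts into the $Ct^{-1/4}\left\Vert w\right\Vert_{\mathbf{H}^{1}}$ error. That decay is not uniform in $x$: by (\ref{3.2}), $m_{+}(tx,x)-1=O\left(\left\langle tx\right\rangle^{-1}\right)$ is of order one for $\left\vert x\right\vert\lesssim t^{-1}$, and its Gaussian average contributes a term $\left(m_{+}(tx,x)-1\right)w(x)$, whose size is $\left\vert m_{+}(tx,x)-1\right\vert\left\vert w(0)\right\vert+O\left(t^{-1/2}\left\Vert \partial_{k}w\right\Vert_{\mathbf{L}^{2}}\right)$, i.e.\ $O\left(\left\vert w(0)\right\vert\right)$ and no better uniformly in $x$. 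In the paper this term --- not the endpoint boundary terms you invoke --- is the actual source of the $C\left\vert w(0)\right\vert$ in (\ref{L1.1}); see (\ref{4.17}) and the lines following it. The point matters most when $T(0)=-1$: estimate (\ref{L1.2}) tolerates no $\left\vert w(0)\right\vert$ term at all, so this contribution must there be shown to be $O\left(t^{-1/2}\left\Vert w\right\Vert_{\mathbf{H}^{1}}\right)$, which requires the cancellation between $m_{+}(tx,x)-1=O\left(\left\langle tx\right\rangle^{-1}\right)$ and the odd Heaviside amplitude $\theta(-k)-\theta(k)$, whose Gaussian average is $O\left(\min\left(1,\sqrt{t}\left\vert x\right\vert\right)\right)$: the product is then uniformly $O\left(t^{-1/2}\right)$, which is precisely the paper's estimate (\ref{4.12}). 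Skipping this cancellation would leave a spurious $C\left\vert w(0)\right\vert$ term in (\ref{L1.2}), and the lemma would then fail to deliver the decay (\ref{5.15}) of $\left(\mathcal{V}(t)w\right)(0)$ on which the nonlinear argument rests.
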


\begin{remark}
\label{Rem1}It can be proved that in general the following asymptotic formula
holds
\[
\left.  \left\Vert \mathcal{V}\left(  t\right)  w-T\left(  \left\vert
x\right\vert \right)  w\left(  x\right)  -R\left(  \left\vert x\right\vert
\right)  w\left(  -x\right)  -h\left(  t,x\right)  w\left(  0\right)
\right\Vert _{\mathbf{L}^{\infty}}\leq Ct^{-1/4}\left\Vert w\right\Vert
_{\mathbf{H}^{1}},\right.
\]
for some function $h\in\mathbf{L}^{\infty}\left(  \left(  0,\infty\right)
\times\mathbb{R}\right)  .$ In principle $h$ might be not identically zero.
Thus, if $\lim_{t\rightarrow\infty}w\left(  t,0\right)  \neq0,$ the above
expansion together with (\ref{1.5}) suggest a different behaviour for
$u\left(  t\right)  ,$ as $t\rightarrow\infty,$ compared with the asymptotics
in the generic case obtained in \cite{Ivan} or (\ref{1.2}).
\end{remark}

\begin{lemma}
\label{L2}If $V$ satisfies Condition \ref{Cond1}, the estimates%
\begin{equation}
\left\Vert \mathcal{V}^{-1}\left(  t\right)  \phi-\overline{T\left(
\left\vert k\right\vert \right)  }\phi\left(  k\right)  -\overline{R\left(
\left\vert k\right\vert \right)  }\phi\left(  -k\right)  \right\Vert
_{\mathbf{L}^{\infty}}\leq C\left\vert \phi\left(  0\right)  \right\vert
+Ct^{-\frac{1}{4}}\left\Vert \phi\right\Vert _{\mathbf{H}^{1}}, \label{L2.1}%
\end{equation}
and
\begin{equation}
\left\Vert \partial_{k}\mathcal{V}^{-1}\left(  t\right)  \phi\right\Vert
_{\mathbf{L}^{2}}\leq Ct^{\frac{1}{2}}\left\vert \phi\left(  0\right)
\right\vert +C\left\Vert \phi\right\Vert _{\mathbf{H}^{1}} \label{L2.2}%
\end{equation}
are valid for all $t\geq1$.
\end{lemma}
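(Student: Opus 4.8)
The plan is to prove both bounds by a stationary-phase analysis of the oscillatory integral
\[
(\mathcal{V}^{-1}(t)\phi)(k)=\sqrt{\tfrac{t}{2\pi i}}\int_{-\infty}^{\infty}e^{\frac{it}{2}(k-x)^{2}}\,\overline{\Phi(tx,k)}\,\phi(x)\,dx
\]
from (\ref{2.10}), paralleling the treatment of $\mathcal{V}(t)$ in Lemma \ref{L1}. The phase $\frac{t}{2}(k-x)^{2}$ has a stationary point at $x=k$, where the first argument $tx=tk$ of $\Phi$ tends to $\pm\infty$ with the sign of $k$; the large-argument asymptotics of the Jost functions give $\overline{\Phi(tk,k)}\to\overline{T(|k|)}$, producing the main term $\overline{T(|k|)}\phi(k)$. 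The reflection term arises from the oscillatory part of $\Phi$: for $k>0$ and $tx\to-\infty$ the relation (\ref{2.5}) yields $\overline{\Phi(tx,k)}=1+\overline{R_{-}(k)}e^{2iktx}+(\text{decaying})$, and the factor $e^{2iktx}$ shifts the stationary point of the combined phase $\tfrac{t}{2}(k-x)^{2}+2ktx=\tfrac{t}{2}(k+x)^{2}$ to $x=-k$, yielding $\overline{R(|k|)}\phi(-k)$; the case $k<0$ is symmetric. So I expect the two asserted main terms to come from the two stationary points $x=\pm k$.

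To make (\ref{L2.1}) rigorous I would split $\overline{\Phi(tx,k)}$ into these plane-wave main parts plus a remainder $r(tx,k)$ that decays as $|tx|\to\infty$, using the estimates on the normalized Jost functions from Section \ref{S3}. For the main parts I would complete the square and compare with the free evolution, invoking the isometry (\ref{2.11}) and the asymptotic bound (\ref{2.3}) applied to $\phi$ after removing its value at the coalescence point: writing $\phi(x)=\phi(0)+(\phi(x)-\phi(0))$ and using the one-dimensional embedding $|\phi(x)-\phi(0)|\le C|x|^{1/2}\|\phi\|_{\mathbf{H}^{1}}$ produces the $C|\phi(0)|$ term (from the constant part over the degenerate region near $k=0$) and the $Ct^{-1/4}\|\phi\|_{\mathbf{H}^{1}}$ term. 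The remainder integral $\int e^{\frac{it}{2}(k-x)^{2}}r(tx,k)\phi(x)\,dx$ is non-stationary off $x=\pm k$ and is controlled by integration by parts together with the decay of $r$.

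For (\ref{L2.2}) I would differentiate under the integral sign; $\partial_{k}$ either falls on $\overline{\Phi(tx,k)}$, giving a bounded contribution estimated in $\mathbf{L}^{2}$ by $\|\phi\|_{\mathbf{H}^{1}}$ after integration by parts in $x$ using $it(k-x)e^{\frac{it}{2}(k-x)^{2}}=-\partial_{x}e^{\frac{it}{2}(k-x)^{2}}$, or it falls on the Gaussian, bringing down the factor $it(k-x)$ which I again remove by the same integration by parts, transferring the derivative onto $\overline{\Phi(tx,k)}\phi(x)$ and thereby generating the $\phi'$ term responsible for $\|\phi\|_{\mathbf{H}^{1}}$. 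The growing factor $t^{1/2}|\phi(0)|$ is the derivative of the near-origin Fresnel-type contribution (the analogue of the explicit term in (\ref{L1.2})): that term has the form $\phi(0)\,G(\sqrt{t}\,k)$ with $G$ of constant modulus, so $\partial_{k}$ multiplies by $\sqrt{t}$ and, measured in $\mathbf{L}^{2}$ over the unit-scale band in $k$ where this approximation is active, contributes $Ct^{1/2}|\phi(0)|$.

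The main obstacle, as in Lemma \ref{L1}, is the uniform control near $k=0$, where the two stationary points $x=k$ and $x=-k$ coalesce and the scattering data must be handled in the exceptional case. This is precisely where the symmetry hypotheses of Condition \ref{Cond1} enter through Section \ref{S3}: they fix the parities of $T$, $R$ and of the Jost functions and pin down their small-energy behaviour ($T(0)=\pm1$, $R(0)=0$), which is what lets the degenerate contribution be absorbed into the single boundary term $|\phi(0)|$ rather than into an uncontrolled error. Carrying out the remainder estimate uniformly across this coalescence region, rather than the comparatively routine off-diagonal stationary-phase bounds, is the delicate part of the argument.
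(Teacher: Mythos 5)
Your plan for (\ref{L2.1}) is essentially the paper's own proof (Lemma \ref{Lemma 5.1}): one uses (\ref{3.1}) to rewrite $\overline{\Phi(tx,k)}$ for $x\gtrless 0$ as transmitted and reflected plane-wave parts plus $(\overline{m_{\pm}}-1)$ remainders; your observation that the combined phase $\frac{t}{2}(k-x)^{2}+2ktx=\frac{t}{2}(k+x)^{2}$ produces the reflected term is exactly how $\overline{R(|k|)}\phi(-k)$ arises in (\ref{4.73}); the main parts are reduced to the free operator $\mathcal{V}_{0}(-t)$ and treated with (\ref{2.11}) and (\ref{2.3}) after subtracting $\phi(0)$ (needed because $\theta(\pm x)\phi(\pm x)\notin\mathbf{H}^{1}$ unless $\phi(0)=0$; the constant part yields the $C|\phi(0)|$ term for all $k$, not only near $k=0$); and the remainder decays like $t^{-1/2}$ by (\ref{3.2}) with $\delta>0$. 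No objection there.

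The genuine gap is in your treatment of (\ref{L2.2}). The kernel $\overline{\Phi(tx,k)}$ carries the Heaviside factors $\theta(\pm k)$, so it is only piecewise smooth in $k$: writing $\mathcal{V}^{-1}(t)\phi=\theta(k)I_{+}(k)+\theta(-k)I_{-}(k)$ as in (\ref{5.1}), differentiation under the integral sign produces, besides $\theta(k)\partial_{k}I_{+}+\theta(-k)\partial_{k}I_{-}$, the singular term $\left(I_{+}(0)-I_{-}(0)\right)\delta(k)$. When $T(0)=-1$ one has $\Phi(x,0^{+})=-\Phi(x,0^{-})$, and a short computation (using $T(0)=a$, $R_{\pm}(0)=0$, $m_{+}(x,0)=m_{-}(-x,0)$) gives $I_{+}(0)-I_{-}(0)=(a-1)\sqrt{\tfrac{t}{2\pi i}}\int_{0}^{\infty}e^{\frac{it}{2}x^{2}}\overline{m_{+}(tx,0)}\left(\phi(x)-\phi(-x)\right)dx$, which is nonzero for generic $\phi\in\mathbf{H}^{1}$; then $\partial_{k}\mathcal{V}^{-1}(t)\phi$ is not even square integrable and (\ref{L2.2}) fails. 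This is precisely what the paper's proof (Lemma \ref{Lemma 5.2}) spends its key step on: using the parity of $\phi$ (odd if $T(0)=1$, even if $T(0)=-1$) together with (\ref{3.3}), (\ref{3.12}) and the symmetry of the Jost functions, it proves $I_{\pm}(0)=0$ in (\ref{5.25})--(\ref{5.26}), after which the cutoffs may be differentiated harmlessly. Your sketch never imposes or uses any parity of $\phi$ and never checks continuity of $\mathcal{V}^{-1}(t)\phi$ at $k=0$, so this indispensable step is missing (the lemma as quoted also omits the hypothesis, but the version proved in Section \ref{S5} includes it, and some such assumption is unavoidable when $T(0)=-1$). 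Two further points you would need to make precise: integrating by parts in $x$ transfers a factor of $t$ onto $\partial_{x}\overline{\Phi(tx,k)}$, which is admissible only thanks to (\ref{3.6}) (so that $t\langle tx\rangle^{-2}$ has bounded $\mathbf{L}_{x}^{1}$-norm), and because the kernel has different expressions on $x>0$ and $x<0$ this integration by parts leaves boundary terms at $x=0$ proportional to $\phi(0)$; in the paper these boundary terms (estimated with (\ref{3.19}) in (\ref{5.12})) and the $\Theta_{3}^{(+)}$ contribution in (\ref{5.11}) are the actual source of $Ct^{1/2}|\phi(0)|$ --- your Fresnel-band heuristic predicts the right size but does not identify these terms.
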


We postpone the proof of Lemmas \ref{L3}, \ref{L4}, \ref{L1} and \ref{L2} to
Sections \ref{S6}, \ref{S4} and \ref{S5}. We now use these results to prove
some estimates.

\begin{lemma}
\label{Lemma 6.1}Suppose that $V$ satisfies Condition \ref{Cond1}. If
$T\left(  0\right)  =1,$ let $w\in\mathbf{H}^{1}$ be odd and if $T\left(
0\right)  =-1,$ suppose that $w$ is even. Also suppose that $w$ can be
represented as $w=\mathcal{F}\psi,$ for some $\psi\in\mathbf{H}^{0,1}.$ Then,
for any $0<\beta<\frac{1}{4},$ the asymptotics%
\begin{equation}
\left.
\begin{array}
[c]{c}%
\mathcal{V}^{-1}\left(  t\right)  \left(  \left\vert \mathcal{V}\left(
t\right)  w\right\vert ^{2}\mathcal{V}\left(  t\right)  w\right)  =\left\vert
w\left(  k\right)  \right\vert ^{2}w\left(  k\right) \\
+O\left(  \left(  \left\vert w\left(  0\right)  \right\vert +t^{-\frac{1}{4}%
}\left\Vert w\right\Vert _{\mathbf{H}^{1}}\right)  \left(  \left\Vert
w\right\Vert _{\mathbf{L}^{\infty}}+t^{-\beta}\left\Vert w\right\Vert
_{\mathbf{H}^{1}}\right)  ^{2}\right)  +O\left(  t^{\beta-\frac{1}{4}}\left(
\left\Vert w\right\Vert _{\mathbf{L}^{\infty}}+t^{-\beta}\left\Vert
w\right\Vert _{\mathbf{H}^{1}}\right)  ^{3}\right)
\end{array}
\right.  \label{5.18}%
\end{equation}
holds for all $t\geq1$.
\end{lemma}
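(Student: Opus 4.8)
The plan is to linearize both the inner $\mathcal{V}(t)$ and the outer $\mathcal{V}^{-1}(t)$ around the leading profiles furnished by Lemmas \ref{L1} and \ref{L2}, and to verify that the resonant part of the cubic term survives this round trip undistorted, collapsing to exactly $\left\vert w(k)\right\vert^{2}w(k)$. Throughout I write $g=\mathcal{V}(t)w$ and let $P(x)=T(\left\vert x\right\vert)w(x)+R(\left\vert x\right\vert)w(-x)$ be the profile in (\ref{L1.1}). By (\ref{L1.1}) we have $g=P+r_{1}$ with $\Vert r_{1}\Vert_{\mathbf{L}^{\infty}}\leq C\left\vert w(0)\right\vert+Ct^{-1/4}\Vert w\Vert_{\mathbf{H}^{1}}$, while $\Vert P\Vert_{\mathbf{L}^{\infty}}\leq C\Vert w\Vert_{\mathbf{L}^{\infty}}$ because $\left\vert T\right\vert,\left\vert R\right\vert\leq1$ by (\ref{3.4}). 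Since $t^{-1/4}\leq t^{-\beta}$ for $t\geq1$ and $0<\beta<\tfrac{1}{4}$, and $\left\vert w(0)\right\vert\leq\Vert w\Vert_{\mathbf{L}^{\infty}}$, these combine into $\Vert g\Vert_{\mathbf{L}^{\infty}}\leq C(\Vert w\Vert_{\mathbf{L}^{\infty}}+t^{-\beta}\Vert w\Vert_{\mathbf{H}^{1}})$.

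First I would expand the cubic around $P$. As $z\mapsto\left\vert z\right\vert^{2}z$ is locally Lipschitz, $\left\vert g\right\vert^{2}g=\left\vert P\right\vert^{2}P+E_{1}$ with $\left\vert E_{1}\right\vert\leq C(\left\vert P\right\vert+\left\vert r_{1}\right\vert)^{2}\left\vert r_{1}\right\vert$ pointwise, so that
\[
\Vert E_{1}\Vert_{\mathbf{L}^{\infty}}\leq C\big(\Vert w\Vert_{\mathbf{L}^{\infty}}+t^{-\beta}\Vert w\Vert_{\mathbf{H}^{1}}\big)^{2}\big(\left\vert w(0)\right\vert+t^{-1/4}\Vert w\Vert_{\mathbf{H}^{1}}\big),
\]
which is already the first error term of (\ref{5.18}). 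Next I apply Lemma \ref{L2} to $\phi=\left\vert g\right\vert^{2}g$, obtaining $\mathcal{V}^{-1}(t)\phi=\overline{T(\left\vert k\right\vert)}\,\phi(k)+\overline{R(\left\vert k\right\vert)}\,\phi(-k)+O(\left\vert\phi(0)\right\vert)+O(t^{-1/4}\Vert\phi\Vert_{\mathbf{H}^{1}})$. Substituting $\phi=\left\vert P\right\vert^{2}P+E_{1}$ and using $\left\vert T\right\vert,\left\vert R\right\vert\leq1$ once more, the leading contribution is $\overline{T}\,(\left\vert P\right\vert^{2}P)(k)+\overline{R}\,(\left\vert P\right\vert^{2}P)(-k)$, up to a further $O(\Vert E_{1}\Vert_{\mathbf{L}^{\infty}})$; here $T,R$ stand for the even functions $T(\left\vert k\right\vert),R(\left\vert k\right\vert)$.

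The crux is the evaluation of this leading contribution. From $P(k)=Tw(k)+Rw(-k)$ and $P(-k)=Tw(-k)+Rw(k)$ together with the parity of $w$, one finds in the odd case ($T(0)=1$, $w(-k)=-w(k)$) that $P(k)=(T-R)w(k)$ and $P(-k)=-(T-R)w(k)$, and in the even case ($T(0)=-1$, $w(-k)=w(k)$) that $P(k)=P(-k)=(T+R)w(k)$. Substituting and factoring, the combination collapses to $\left\vert T\mp R\right\vert^{4}\left\vert w(k)\right\vert^{2}w(k)$, with the upper sign in the odd case. Now the symmetry $R_{+}=R_{-}$ (cf.\ (\ref{3.20})) and the off-diagonal unitarity of the scattering matrix give $\operatorname{Re}(T\overline{R})=0$, whence $\left\vert T\pm R\right\vert^{2}=\left\vert T\right\vert^{2}+\left\vert R\right\vert^{2}\pm2\operatorname{Re}(T\overline{R})=1$ by (\ref{3.4}). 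Therefore the leading contribution equals exactly $\left\vert w(k)\right\vert^{2}w(k)$, with no spurious coefficient. I expect this cancellation to be the main obstacle: it is precisely the place where the parity hypothesis on $w$ and the relation $\left\vert T\pm R\right\vert=1$ are indispensable, and without them a multiplicative scattering factor would contaminate the resonant term.

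It remains to absorb the two error terms generated by Lemma \ref{L2}. For $\left\vert\phi(0)\right\vert=\left\vert g(0)\right\vert^{3}$ I would evaluate (\ref{L1.1}) at $x=0$: since $\left\vert P(0)\right\vert\leq(\left\vert T(0)\right\vert+\left\vert R(0)\right\vert)\left\vert w(0)\right\vert\leq C\left\vert w(0)\right\vert$, we get $\left\vert g(0)\right\vert\leq C(\left\vert w(0)\right\vert+t^{-1/4}\Vert w\Vert_{\mathbf{H}^{1}})$, and bounding the remaining two factors of $g(0)$ by $\Vert g\Vert_{\mathbf{L}^{\infty}}$ reproduces the first error term of (\ref{5.18}). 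For $\Vert\phi\Vert_{\mathbf{H}^{1}}$ I would use $\left\vert\partial_{x}(\left\vert g\right\vert^{2}g)\right\vert\leq3\left\vert g\right\vert^{2}\left\vert\partial_{x}g\right\vert$ to obtain $\Vert\phi\Vert_{\mathbf{H}^{1}}\leq C\Vert g\Vert_{\mathbf{L}^{\infty}}^{2}\Vert g\Vert_{\mathbf{H}^{1}}$, then estimate $\Vert g\Vert_{\mathbf{H}^{1}}\leq\Vert w\Vert_{\mathbf{L}^{2}}+\Vert\partial_{x}g\Vert_{\mathbf{L}^{2}}\leq C\Vert w\Vert_{\mathbf{L}^{\infty}}\log\langle t\rangle+C\Vert w\Vert_{\mathbf{H}^{1}}$, using that $\mathcal{V}(t)$ is an $\mathbf{L}^{2}$-isometry by the factorization (\ref{2.10}) and the unitarity of $\mathcal{F}$, and estimate (\ref{L1.3}). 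Finally, the elementary inequality $\log\langle t\rangle\leq C_{\beta}t^{\beta}$ converts $t^{-1/4}\Vert\phi\Vert_{\mathbf{H}^{1}}$ into $Ct^{\beta-1/4}(\Vert w\Vert_{\mathbf{L}^{\infty}}+t^{-\beta}\Vert w\Vert_{\mathbf{H}^{1}})^{3}$, the second error term of (\ref{5.18}), which completes the proof.
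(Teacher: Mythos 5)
Your proposal is correct and takes essentially the same route as the paper's proof: apply (\ref{L2.1}) to the cubic, replace $\mathcal{V}\left(  t\right)  w$ by the profile $T\left(  \left\vert x\right\vert \right)  w\left(  x\right)  +R\left(  \left\vert x\right\vert \right)  w\left(  -x\right)$ via (\ref{L1.1})--(\ref{L1.2}), collapse the leading term using the parity of $w$ together with $\left\vert T\right\vert ^{2}+\left\vert R\right\vert ^{2}=1$ and $T\overline{R}+\overline{T}R=0$, and absorb the remainders with (\ref{L1.3}), the $\mathbf{L}^{2}$-isometry of $\mathcal{V}\left(  t\right)$, and $\log\left\langle t\right\rangle \leq C_{\beta}t^{\beta}$. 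The only deviations are bookkeeping: the paper first sharpens $\left\vert \left(  \mathcal{V}\left(  t\right)  w\right)  \left(  0\right)  \right\vert \leq Ct^{-1/4}\left\Vert w\right\Vert _{\mathbf{H}^{1}}$ (its (\ref{5.15})) so the $\left\vert \phi\left(  0\right)  \right\vert$-error lands in the second error term while you retain $\left\vert w\left(  0\right)  \right\vert$ and place it in the first (both admissible given the statement), and in the even case your decomposition $g=P+r_{1}$ really rests on (\ref{L1.2}) rather than (\ref{L1.1}), with the uniformly bounded Fresnel term absorbed into $r_{1}$ as $O\left(  \left\vert w\left(  0\right)  \right\vert \right)$.
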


\begin{proof}
It follows from (\ref{L2.1}) that
\begin{equation}
\mathcal{V}^{-1}\left(  t\right)  \left(  \left\vert \mathcal{V}\left(
t\right)  w\right\vert ^{2}\mathcal{V}\left(  t\right)  w\right)
=\overline{T\left(  \left\vert k\right\vert \right)  }\left(  \left\vert
\mathcal{V}\left(  t\right)  w\right\vert ^{2}\mathcal{V}\left(  t\right)
w\right)  \left(  k\right)  +\overline{R\left(  \left\vert k\right\vert
\right)  }\left(  \left\vert \mathcal{V}\left(  t\right)  w\right\vert
^{2}\mathcal{V}\left(  t\right)  w\right)  \left(  -k\right)  +\mathbf{R}_{1},
\label{5.21}%
\end{equation}
where
\begin{equation}
\left\vert \mathbf{R}_{1}\right\vert \leq C\left\vert \left(  \mathcal{V}%
\left(  t\right)  w\right)  \left(  0\right)  \right\vert ^{3}+C\left\langle
t\right\rangle ^{-\frac{1}{4}}\left\Vert \mathcal{V}\left(  t\right)
w\right\Vert _{\mathbf{L}^{\infty}}^{2}\left\Vert \mathcal{V}\left(  t\right)
w\right\Vert _{\mathbf{H}^{1}}. \label{5.14}%
\end{equation}
If $T\left(  0\right)  =1,$ $w$ is odd. In particular, $w\left(  0\right)
=0.$ If $T\left(  0\right)  =-1,$ then%
\[
T\left(  0\right)  w\left(  0\right)  +R\left(  0\right)  w\left(  0\right)
+\sqrt{\frac{2i}{\pi}}w\left(  0\right)
%TCIMACRO{\dint \limits_{0}^{\infty}}%
%BeginExpansion
{\displaystyle\int\limits_{0}^{\infty}}
%EndExpansion
e^{-\frac{ik^{2}}{2}}dk=0.
\]
Hence, for $T\left(  0\right)  =\pm1,$ by (\ref{L1.1}) and (\ref{L1.2}) we
get
\begin{equation}
\left\vert \left(  \mathcal{V}\left(  t\right)  w\right)  \left(  0\right)
\right\vert \leq Ct^{-\frac{1}{4}}\left\Vert w\right\Vert _{\mathbf{H}^{1}}
\label{5.15}%
\end{equation}
and via (\ref{3.4})
\begin{equation}
\left\Vert \mathcal{V}\left(  t\right)  w\right\Vert _{\mathbf{L}^{\infty}%
}\leq C\left\Vert w\right\Vert _{\mathbf{L}^{\infty}}+Ct^{-1/4}\left\Vert
w\right\Vert _{\mathbf{H}^{1}}. \label{5.17}%
\end{equation}
Using (\ref{5.15}), (\ref{5.17}) and (\ref{L1.3}) in (\ref{5.14}), as
$\left\Vert \mathcal{V}\left(  t\right)  w\right\Vert _{\mathbf{L}^{2}%
}=\left\Vert w\right\Vert _{\mathbf{L}^{2}}$ we estimate%
\[
\left\vert \mathbf{R}_{1}\right\vert \leq Ct^{\beta-\frac{1}{4}}\left(
\left\Vert w\right\Vert _{\mathbf{L}^{\infty}}+t^{-\beta}\left\Vert
w\right\Vert _{\mathbf{H}^{1}}\right)  ^{3}.
\]
By (\ref{L1.1}) and (\ref{L1.2}) we get%
\[
\mathcal{V}\left(  t\right)  w=T\left(  \left\vert x\right\vert \right)
w\left(  x\right)  +R\left(  \left\vert x\right\vert \right)  w\left(
-x\right)  +O\left(  \left\vert w\left(  0\right)  \right\vert +t^{-\frac
{1}{4}}\left\Vert w\right\Vert _{\mathbf{H}^{1}}\right)  .
\]
Then, using (\ref{3.4}) and (\ref{5.17}), from (\ref{5.21}) we derive%
\begin{equation}
\left.
\begin{array}
[c]{c}%
\mathcal{V}^{-1}\left(  t\right)  \left(  \left\vert \mathcal{V}\left(
t\right)  w\right\vert ^{2}\mathcal{V}\left(  t\right)  w\right)
=\overline{T\left(  \left\vert k\right\vert \right)  }\left(  \left\vert
T\left(  \left\vert k\right\vert \right)  w\left(  k\right)  +R\left(
\left\vert k\right\vert \right)  w\left(  -k\right)  \right\vert ^{2}\left(
T\left(  \left\vert k\right\vert \right)  w\left(  k\right)  +R\left(
\left\vert k\right\vert \right)  w\left(  -k\right)  \right)  \right) \\
+\overline{R\left(  \left\vert k\right\vert \right)  }\left\vert T\left(
\left\vert k\right\vert \right)  w\left(  -k\right)  +R\left(  \left\vert
k\right\vert \right)  w\left(  k\right)  \right\vert ^{2}\left(  T\left(
\left\vert k\right\vert \right)  w\left(  -k\right)  +R\left(  \left\vert
k\right\vert \right)  w\left(  k\right)  \right) \\
+O\left(  \left(  \left\vert w\left(  0\right)  \right\vert +t^{-\frac{1}{4}%
}\left\Vert w\right\Vert _{\mathbf{H}^{1}}\right)  \left(  \left\Vert
w\right\Vert _{\mathbf{L}^{\infty}}+t^{-\beta}\left\Vert w\right\Vert
_{\mathbf{H}^{1}}\right)  ^{2}\right)  +O\left(  t^{\beta-\frac{1}{4}}\left(
\left\Vert w\right\Vert _{\mathbf{L}^{\infty}}+t^{-\beta}\left\Vert
w\right\Vert _{\mathbf{H}^{1}}\right)  ^{3}\right)  .
\end{array}
\right.  \label{5.19}%
\end{equation}
Note that (see (\ref{3.20}) and (\ref{i1}) below)
\begin{equation}
T\left(  k\right)  \overline{R\left(  k\right)  }+\overline{T\left(  k\right)
}R\left(  k\right)  =0. \label{5.24}%
\end{equation}
Then, using that $w$ is odd for $T\left(  0\right)  =1$ and even for $T\left(
0\right)  =-1,$ in view of (\ref{5.24}) and (\ref{3.4}) we obtain%
\begin{align*}
&  \left\vert T\left(  \left\vert k\right\vert \right)  w\left(  k\right)
+R\left(  \left\vert k\right\vert \right)  w\left(  -k\right)  \right\vert
^{2}=\left(  T\left(  \left\vert k\right\vert \right)  \mp R\left(  \left\vert
k\right\vert \right)  \right)  \left(  \overline{T\left(  \left\vert
k\right\vert \right)  }\mp\overline{R\left(  \left\vert k\right\vert \right)
}\right)  \left\vert w\left(  k\right)  \right\vert ^{2}\\
&  =\left(  \left\vert T\left(  \left\vert k\right\vert \right)  \right\vert
^{2}+\left\vert R\left(  \left\vert k\right\vert \right)  \right\vert ^{2}%
\mp\left(  T\left(  \left\vert k\right\vert \right)  \overline{R\left(
\left\vert k\right\vert \right)  }+\overline{T\left(  \left\vert k\right\vert
\right)  }R\left(  \left\vert k\right\vert \right)  \right)  \right)
\left\vert w\left(  k\right)  \right\vert ^{2}=\left\vert w\left(  k\right)
\right\vert ^{2},
\end{align*}
for $T\left(  0\right)  =\pm1.$ Hence, using again (\ref{5.24}) and
(\ref{3.4}) from (\ref{5.19}) we obtain (\ref{5.18}).
\end{proof}

Also, we present the estimate for the derivative of the nonlinear term.

\begin{lemma}
\label{Lemma 6.2}Suppose that $V$ satisfies Condition \ref{Cond1}. If
$T\left(  0\right)  =1,$ let $w\in\mathbf{H}^{1}$ be odd and if $T\left(
0\right)  =-1,$ suppose that $w$ is even. Also suppose that $w$ can be
represented as $w=\mathcal{F}\psi,$ for some $\psi\in\mathbf{H}^{0,1}.$ Then
the estimate%
\begin{equation}
\left\Vert \partial_{k}\mathcal{V}^{-1}\left(  t\right)  \left(  \left\vert
\mathcal{V}\left(  t\right)  w\right\vert ^{2}\mathcal{V}\left(  t\right)
w\right)  \right\Vert _{\mathbf{L}^{2}}\leq C\left\Vert w\right\Vert
_{\mathbf{L}^{\infty}}^{3}\log\left\langle t\right\rangle +Ct^{-1/4}\left\Vert
w\right\Vert _{\mathbf{H}^{1}}^{3}+C\left\Vert w\right\Vert _{\mathbf{L}%
^{\infty}}^{2}\left\Vert w\right\Vert _{\mathbf{H}^{1}} \label{5.20}%
\end{equation}
is valid for all $t\geq1$.
\end{lemma}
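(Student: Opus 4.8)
The plan is to apply the derivative estimate (\ref{L2.2}) of Lemma \ref{L2} directly to the nonlinear term $\phi=\left\vert \mathcal{V}\left(t\right)w\right\vert ^{2}\mathcal{V}\left(t\right)w$, and then to control the two resulting contributions, $t^{1/2}\left\vert \phi\left(0\right)\right\vert$ and $\left\Vert \phi\right\Vert _{\mathbf{H}^{1}}$, using the pointwise and derivative bounds for $\mathcal{V}\left(t\right)w$ already available. Writing $v=\mathcal{V}\left(t\right)w$, so that the term in question is $g:=\left\vert v\right\vert ^{2}v=v^{2}\overline{v}$, estimate (\ref{L2.2}) gives
\[
\left\Vert \partial_{k}\mathcal{V}^{-1}\left(t\right)g\right\Vert _{\mathbf{L}^{2}}\leq Ct^{1/2}\left\vert g\left(0\right)\right\vert +C\left\Vert g\right\Vert _{\mathbf{H}^{1}}.
\]

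The main obstacle is the first term, because of the dangerous factor $t^{1/2}$: it is precisely here that the extra decay of $v$ at the origin, which rests on the parity hypotheses, is needed. Since $g\left(0\right)=\left\vert v\left(0\right)\right\vert ^{2}v\left(0\right)$, one has $\left\vert g\left(0\right)\right\vert =\left\vert \left(\mathcal{V}\left(t\right)w\right)\left(0\right)\right\vert ^{3}$, and the estimate (\ref{5.15}) yields $\left\vert \left(\mathcal{V}\left(t\right)w\right)\left(0\right)\right\vert \leq Ct^{-1/4}\left\Vert w\right\Vert _{\mathbf{H}^{1}}$. Hence $t^{1/2}\left\vert g\left(0\right)\right\vert \leq Ct^{1/2}t^{-3/4}\left\Vert w\right\Vert _{\mathbf{H}^{1}}^{3}=Ct^{-1/4}\left\Vert w\right\Vert _{\mathbf{H}^{1}}^{3}$, which is exactly the second term on the right-hand side of (\ref{5.20}).

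It then remains to bound $\left\Vert g\right\Vert _{\mathbf{H}^{1}}$, comparable to $\left\Vert g\right\Vert _{\mathbf{L}^{2}}+\left\Vert \partial_{x}g\right\Vert _{\mathbf{L}^{2}}$. For the $\mathbf{L}^{2}$ part I would use $\left\Vert g\right\Vert _{\mathbf{L}^{2}}\leq\left\Vert v\right\Vert _{\mathbf{L}^{\infty}}^{2}\left\Vert v\right\Vert _{\mathbf{L}^{2}}$, the isometry $\left\Vert \mathcal{V}\left(t\right)w\right\Vert _{\mathbf{L}^{2}}=\left\Vert w\right\Vert _{\mathbf{L}^{2}}$, and the sup-norm bound (\ref{5.17}); since $\left\Vert w\right\Vert _{\mathbf{L}^{2}}\leq\left\Vert w\right\Vert _{\mathbf{H}^{1}}$, this produces $C\left\Vert w\right\Vert _{\mathbf{L}^{\infty}}^{2}\left\Vert w\right\Vert _{\mathbf{H}^{1}}$ together with terms carrying factors $t^{-1/4}$ or $t^{-1/2}$ that fold into $Ct^{-1/4}\left\Vert w\right\Vert _{\mathbf{H}^{1}}^{3}$ for $t\geq1$. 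For the derivative part, the product rule gives $\partial_{x}g=2\left\vert v\right\vert ^{2}\partial_{x}v+v^{2}\overline{\partial_{x}v}$, so $\left\Vert \partial_{x}g\right\Vert _{\mathbf{L}^{2}}\leq C\left\Vert v\right\Vert _{\mathbf{L}^{\infty}}^{2}\left\Vert \partial_{x}v\right\Vert _{\mathbf{L}^{2}}$, and I would insert (\ref{5.17}) for $\left\Vert v\right\Vert _{\mathbf{L}^{\infty}}$ and the key derivative estimate (\ref{L1.3}) for $\left\Vert \partial_{x}\mathcal{V}\left(t\right)w\right\Vert _{\mathbf{L}^{2}}$. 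Expanding $\left(\left\Vert w\right\Vert _{\mathbf{L}^{\infty}}+t^{-1/4}\left\Vert w\right\Vert _{\mathbf{H}^{1}}\right)^{2}\left(\left\Vert w\right\Vert _{\mathbf{L}^{\infty}}\log\left\langle t\right\rangle +\left\Vert w\right\Vert _{\mathbf{H}^{1}}\right)$ gives the leading term $C\left\Vert w\right\Vert _{\mathbf{L}^{\infty}}^{3}\log\left\langle t\right\rangle$ plus mixed terms; each of the latter carries at least one factor $t^{-1/4}$ or $t^{-1/2}$ against at most one $\log\left\langle t\right\rangle$, so using the Sobolev embedding $\left\Vert w\right\Vert _{\mathbf{L}^{\infty}}\leq C\left\Vert w\right\Vert _{\mathbf{H}^{1}}$ and the boundedness of $t^{-1/4}\log\left\langle t\right\rangle$ for $t\geq1$, they are all dominated by $Ct^{-1/4}\left\Vert w\right\Vert _{\mathbf{H}^{1}}^{3}+C\left\Vert w\right\Vert _{\mathbf{L}^{\infty}}^{2}\left\Vert w\right\Vert _{\mathbf{H}^{1}}$. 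Collecting every contribution yields (\ref{5.20}). Only routine bookkeeping of these mixed terms remains; the essential input is the cancellation of the $t^{1/2}$ factor through (\ref{5.15}), which is precisely what the parity conditions guarantee.
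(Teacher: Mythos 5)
Your proposal is correct and follows essentially the same route as the paper: apply estimate (\ref{L2.2}) to the cubic term, kill the dangerous $t^{1/2}$ factor via the zero-energy decay (\ref{5.15}) (so that $t^{1/2}\left\vert \left(\mathcal{V}\left(t\right)w\right)\left(0\right)\right\vert^{3}\leq Ct^{-1/4}\left\Vert w\right\Vert_{\mathbf{H}^{1}}^{3}$), and bound $\left\Vert \left\vert \mathcal{V}\left(t\right)w\right\vert^{2}\mathcal{V}\left(t\right)w\right\Vert_{\mathbf{H}^{1}}$ by $C\left\Vert \mathcal{V}\left(t\right)w\right\Vert_{\mathbf{L}^{\infty}}^{2}\left\Vert \mathcal{V}\left(t\right)w\right\Vert_{\mathbf{H}^{1}}$ using (\ref{5.17}), (\ref{L1.3}) and the isometry $\left\Vert \mathcal{V}\left(t\right)w\right\Vert_{\mathbf{L}^{2}}=\left\Vert w\right\Vert_{\mathbf{L}^{2}}$. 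The only difference is that you spell out the bookkeeping of the mixed terms (via Sobolev embedding and boundedness of $t^{-1/4}\log\left\langle t\right\rangle$ for $t\geq1$), which the paper leaves implicit.
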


\begin{proof}
(\ref{L2.2}) implies%
\begin{align*}
&  \left\Vert \partial_{k}\mathcal{V}^{-1}\left(  t\right)  \left(  \left\vert
\mathcal{V}\left(  t\right)  w\right\vert ^{2}\mathcal{V}\left(  t\right)
w\right)  \right\Vert _{\mathbf{L}^{2}}\\
&  \leq Ct^{\frac{1}{2}}\left\vert \left(  \left\vert \mathcal{V}\left(
t\right)  w\right\vert ^{2}\mathcal{V}\left(  t\right)  w\right)  \left(
0\right)  \right\vert +C\left\Vert \left\vert \mathcal{V}\left(  t\right)
w\right\vert ^{2}\mathcal{V}\left(  t\right)  w\right\Vert _{\mathbf{H}^{1}}\\
&  \leq Ct^{\frac{1}{2}}\left\vert \left(  \left\vert \mathcal{V}\left(
t\right)  w\right\vert ^{2}\mathcal{V}\left(  t\right)  w\right)  \left(
0\right)  \right\vert +C\left\Vert \mathcal{V}\left(  t\right)  w\right\Vert
_{\mathbf{L}^{\infty}}^{2}\left\Vert \mathcal{V}\left(  t\right)  w\right\Vert
_{\mathbf{H}^{1}}.
\end{align*}
By (\ref{L1.3})%
\[
\left\Vert \partial_{x}\mathcal{V}\left(  t\right)  \phi\right\Vert
_{\mathbf{L}^{2}}\leq C\left\Vert w\right\Vert _{\mathbf{L}^{\infty}}%
\log\left\langle t\right\rangle +C\left\Vert w\right\Vert _{\mathbf{H}^{1}}.
\]
Hence, using (\ref{5.15}), (\ref{5.17}) and $\left\Vert \mathcal{V}\left(
t\right)  w\right\Vert _{\mathbf{L}^{2}}=\left\Vert w\right\Vert
_{\mathbf{L}^{2}}$ we deduce (\ref{5.20}).
\end{proof}

We are now in position to prove the main result. Using (\ref{2.17}) we
represent the solution to (\ref{1.1}) as
\begin{equation}
u\left(  t\right)  =\mathcal{U}\left(  t\right)  \mathcal{F}^{-1}%
w=M{\mathcal{D}}_{t}\mathcal{V}\left(  t\right)  w\left(  t\right)  ,
\label{7.49}%
\end{equation}
Then, by (\ref{L1.1}) and (\ref{L1.2})
\begin{equation}
u\left(  t\right)  =M{\mathcal{D}}_{t}\left(  T\left(  \left\vert x\right\vert
\right)  w\left(  x\right)  +R\left(  \left\vert x\right\vert \right)
w\left(  -x\right)  \right)  +O\left(  \frac{\left\vert w\left(  0\right)
\right\vert }{\sqrt{t}}\right)  +O\left(  t^{-\frac{3}{4}}\left\Vert
w\right\Vert _{\mathbf{H}^{1}}\right)  , \label{7.28}%
\end{equation}
as $t\rightarrow\infty,$ uniformly with respect to $x\in\mathbb{R}.$ This
expression shows that in order to estimate the solution in the uniform norm
$\left\Vert u\left(  t\right)  \right\Vert _{\mathbf{L}^{\infty}},$ we need to
control $\left\Vert w\right\Vert _{\mathbf{L}^{\infty}}$ and $\left\Vert
w\right\Vert _{\mathbf{H}^{1}}.$ Therefore, given $u\left(  t\right)  $, we
introduce a new dependent variable by letting $w\left(  t\right)
=\mathcal{FU}\left(  -t\right)  u\left(  t\right)  $ and define the following
space%
\[
\mathbf{X}_{T}=\left\{  u\in\mathbf{C}\left(  \left[  0,T\right]
;\mathbf{H}^{1}\right)  ;\left\Vert u\right\Vert _{\mathbf{X}_{T}}%
<\infty\right\}  ,\text{ \ }T>0,
\]
where
\[
\left\Vert u\right\Vert _{\mathbf{X}_{T}}=\sup_{t\in\left[  0,T\right]
}\left(  \left\Vert w\right\Vert _{\mathbf{L}^{\infty}}+\left\langle
t\right\rangle ^{-\beta}\left\Vert w\right\Vert _{\mathbf{H}^{1}}\right)  ,
\]
with $\beta>0$.

For the local well-posedness of equation (\ref{1.1}) in $\mathbf{H}^{1}$ we
refer to \cite{Cazenave} (see Theorem 4.3.1 and Corollary 4.3.3). We now prove
a local existence result which is adapted to our purposes. We have the following.

\begin{theorem}
\label{Theorem 7.1}Suppose that $V$ satisfies Condition \ref{Cond1}. Let the
initial data $u_{0}\in$ $\mathbf{H}^{1}\cap\mathbf{H}^{0,1}.$ In addition,
suppose that $u_{0}\left(  x\right)  $ are odd if $T\left(  0\right)  =1$ and
$u_{0}\left(  x\right)  $ are even in the case when $T\left(  0\right)  =-1$.
Then, for some $T>0,$ the Cauchy problem for (\ref{1.1}) has a unique solution
$u\in$ $\mathbf{C}\left(  \left[  0,T\right]  ;\mathbf{H}^{1}\cap
\mathbf{H}^{0,1}\right)  ,$ such that
\begin{equation}
\mathcal{U}\left(  -t\right)  u\in\mathbf{C}\left(  \left[  0,T\right]
;\mathbf{H}^{1}\cap\mathbf{H}^{0,1}\right)  \label{7.4}%
\end{equation}
with
\begin{equation}
\left\Vert u\right\Vert _{\mathbf{X}_{T}}+\sup_{t\in\left(  0,T\right)
}\left(  \left\langle t\right\rangle ^{-1-\beta}\left\Vert u\left(  t\right)
\right\Vert _{\mathbf{H}^{0,1}}\right)  +\sup_{t\in\left(  0,T\right)
}\left(  \left\langle t\right\rangle ^{-\beta}\left\Vert u\left(  t\right)
\right\Vert _{\mathbf{H}^{1}}\right)  \leq C\left(  \left\Vert u_{0}%
\right\Vert _{\mathbf{H}^{1}}+\left\Vert u_{0}\right\Vert _{\mathbf{H}^{0,1}%
}\right)  . \label{7.5}%
\end{equation}
Moreover, if $\left\Vert u_{0}\right\Vert _{\mathbf{H}^{1}}+\left\Vert
u_{0}\right\Vert _{\mathbf{H}^{0,1}}\leq\varepsilon$, there is $\varepsilon
_{0}>0,$ such that for every $0<\varepsilon<\varepsilon_{0}$ the existence
time $T>1.$ Furthermore, $u$ can be extended on a maximal existence interval
$[0,T_{\max}),$ to a solution $u\in$ $\mathbf{C}\left(  [0,T_{\max
});\mathbf{H}^{1}\cap\mathbf{H}^{0,1}\right)  ,$ and if $T_{\max}<\infty,$
then
\begin{equation}
\sup_{\tau\in\left(  0,t\right)  }\left(  \left\langle \tau\right\rangle
^{-1-\beta}\left\Vert u\left(  \tau\right)  \right\Vert _{\mathbf{H}^{0,1}%
}\right)  +\sup_{\tau\in\left(  0,t\right)  }\left(  \left\langle
\tau\right\rangle ^{-\beta}\left\Vert u\left(  \tau\right)  \right\Vert
_{\mathbf{H}^{1}}\right)  \rightarrow\infty,\text{ as }t\uparrow T_{\max}.
\label{7.41}%
\end{equation}

\end{theorem}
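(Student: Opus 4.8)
The plan is to prove Theorem~\ref{Theorem 7.1} by a standard contraction mapping argument, but carried out in the variable $w(t)=\mathcal{FU}(-t)u(t)$ rather than directly for $u$, since the norm $\left\Vert\cdot\right\Vert_{\mathbf{X}_{T}}$ is defined through $w$. First I would set up the integral (Duhamel) formulation of~(\ref{1.1}): using the variation-of-constants formula, a solution satisfies
\begin{equation}
u(t)=\mathcal{U}(t)u_{0}-i\lambda\int_{0}^{t}\mathcal{U}(t-\tau)\left(\left\vert u(\tau)\right\vert^{2}u(\tau)\right)d\tau. \label{plan.duhamel}
\end{equation}
Applying $\mathcal{FU}(-t)$ and writing everything in terms of $w$, the nonlinear term becomes $\left\vert u\right\vert^{2}u=\left\vert M\mathcal{D}_{t}\mathcal{V}(t)w\right\vert^{2}M\mathcal{D}_{t}\mathcal{V}(t)w$; since $M$ has modulus one and $\mathcal{D}_{t}$ is (up to a scalar) an isometry that commutes with taking modulus, this reduces to the factored expression $\mathcal{V}^{-1}(t)\left(\left\vert\mathcal{V}(t)w\right\vert^{2}\mathcal{V}(t)w\right)$ that Lemmas~\ref{Lemma 6.1} and~\ref{Lemma 6.2} are tailored to control. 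The fixed-point map $\Phi$ is then defined on a ball in the space defined by the left-hand side of~(\ref{7.5}), and I would verify it is a contraction there.

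The key analytic estimates come from the lemmas already stated. To bound $\left\Vert w\right\Vert_{\mathbf{H}^{1}}$ of the image I would use Lemma~\ref{Lemma 6.2} to estimate $\left\Vert\partial_{k}\mathcal{V}^{-1}(t)(\cdots)\right\Vert_{\mathbf{L}^{2}}$ of the nonlinear term, together with the $\mathbf{L}^{2}$ bound $\left\Vert\mathcal{V}(t)w\right\Vert_{\mathbf{L}^{2}}=\left\Vert w\right\Vert_{\mathbf{L}^{2}}$ and~(\ref{5.17}); integrating in $\tau$ over $[0,t]$ produces at worst a factor $\left\langle t\right\rangle^{1/2}$ or a logarithm, which is absorbed by the weight $\left\langle t\right\rangle^{-\beta}$ in $\left\Vert\cdot\right\Vert_{\mathbf{X}_{T}}$ for $\beta>0$. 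To bound $\left\Vert w\right\Vert_{\mathbf{L}^{\infty}}$ I would use Lemma~\ref{Lemma 6.1}, whose remainder terms carry powers $t^{\beta-1/4}$ that are time-integrable provided $0<\beta<1/4$. The $\mathbf{H}^{0,1}$ bound on $u$ itself follows from Lemma~\ref{L4}, specifically the growth estimate~(\ref{7.9}) which forces the weight $\left\langle t\right\rangle^{-1-\beta}$ appearing in~(\ref{7.5}). Throughout, the parity hypotheses must be propagated: since $V$ is symmetric the flow preserves evenness/oddness (as noted around~(\ref{7.26})), so $w(t)$ retains the parity needed to keep applying the lemmas and, crucially, keeps $w(t,0)=0$ via Lemma~\ref{L3}.

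The main obstacle is \emph{closing the estimates in the right norms simultaneously} so that the map is a genuine contraction rather than merely bounded. The difficulty is that $\left\Vert w\right\Vert_{\mathbf{H}^{1}}$ is allowed to grow like $\left\langle t\right\rangle^{\beta}$, so one cannot control $\left\Vert w\right\Vert_{\mathbf{L}^{\infty}}$ through Sobolev embedding; instead one must play the three quantities in~(\ref{7.5}) against each other, using that the factors $\left(\left\Vert w\right\Vert_{\mathbf{L}^{\infty}}+t^{-\beta}\left\Vert w\right\Vert_{\mathbf{H}^{1}}\right)$ appearing in~(\ref{5.18}) and~(\ref{5.20}) are exactly $\left\Vert u\right\Vert_{\mathbf{X}_{T}}$-controlled. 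Verifying that the time integrals of all the error terms close for small data requires the interplay $0<\beta<1/4$ and the smallness $\left\Vert u_{0}\right\Vert_{\mathbf{H}^{1}}+\left\Vert u_{0}\right\Vert_{\mathbf{H}^{0,1}}\leq\varepsilon$, which yields $T>1$ and ultimately drives the contraction constant below one. For the difference estimates needed for uniqueness and contraction, one would rerun Lemmas~\ref{Lemma 6.1} and~\ref{Lemma 6.2} with the algebraic identity $\left\vert a\right\vert^{2}a-\left\vert b\right\vert^{2}b$ expanded bilinearly in $a-b$; these are routine once the a priori bounds are in place. Finally, the blow-up alternative~(\ref{7.41}) follows from the standard continuation argument: if $T_{\max}<\infty$ but the displayed quantity stayed bounded, one could restart the local construction past $T_{\max}$, contradicting maximality.
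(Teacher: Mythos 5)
There is a genuine gap: your entire contraction scheme is built on the factorized equation $i\partial_{t}w=\lambda t^{-1}\mathcal{V}^{-1}\left(  t\right)  \left(  \left\vert \mathcal{V}\left(  t\right)  w\right\vert ^{2}\mathcal{V}\left(  t\right)  w\right)  $ and on Lemmas \ref{Lemma 6.1} and \ref{Lemma 6.2}, but those lemmas (and Lemmas \ref{L1}, \ref{L2} behind them) are only valid for $t\geq1$; the operators $\mathcal{D}_{t}$ and $\mathcal{V}\left(  t\right)  $ degenerate as $t\rightarrow0$, and the transformed Duhamel integral carries the factor $\tau^{-1}$, which is not even integrable at $\tau=0$. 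Theorem \ref{Theorem 7.1} is a \emph{local} existence statement: it must produce a solution on some $\left[  0,T\right]  $ with possibly $T\leq1$, and for data of arbitrary size (smallness is invoked only to guarantee $T>1$). A fixed-point argument that can only invoke its key estimates for $t\geq1$, and whose error terms like $t^{\beta-\frac{5}{4}}$ blow up non-integrably at the origin, cannot get started on such an interval; your remark that the remainders are ``time-integrable provided $0<\beta<1/4$'' is true at infinity but false at $t=0$, which is exactly where the local theory lives.

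The paper avoids this by keeping the local theory entirely in the variable $u$: it runs the contraction on $\mathcal{E}_{T}\subset\mathbf{C}\left(  \left[  0,T\right]  ;\mathbf{H}^{1}\cap\mathbf{H}^{0,1}\right)  $ for the integral equation (\ref{7.22}), using only the linear bounds (\ref{7.8})--(\ref{7.9}) and the elementary product estimate $\left\Vert \left\vert u\right\vert ^{2}u\right\Vert _{\mathbf{H}^{1}}\leq3\left\Vert u\right\Vert _{\mathbf{L}^{\infty}}^{2}\left\Vert u\right\Vert _{\mathbf{H}^{1}}$, with $T$ chosen small via (\ref{7.30}); no asymptotic machinery for $\mathcal{V}$ is needed. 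The bound on $\left\Vert u\right\Vert _{\mathbf{X}_{T}}$ in (\ref{7.5}) is then recovered \emph{a posteriori} from the crude inequality (\ref{7.24}), i.e. $\left\Vert w\left(  t\right)  \right\Vert _{\mathbf{H}^{1}}\leq K_{0}^{2}\left(  \left\langle t\right\rangle \left\Vert u\left(  t\right)  \right\Vert _{\mathbf{H}^{1}}+\left\Vert u\left(  t\right)  \right\Vert _{\mathbf{H}^{0,1}}\right)  $, which combines (\ref{7.21}) with (\ref{7.9}), followed by Sobolev embedding for $\left\Vert w\right\Vert _{\mathbf{L}^{\infty}}$; the refined structure of the nonlinearity, (\ref{5.18}) and (\ref{5.20}), is reserved for the a priori estimate of Lemma \ref{Lemma 7.1}, which operates only on $t\geq1$ where those lemmas apply. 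Your treatment of the blow-up alternative (\ref{7.41}) by the standard continuation argument does agree with the paper, but to repair the proposal you would need to replace the $w$-based fixed point by a direct argument of this kind (or otherwise establish analogues of Lemmas \ref{Lemma 6.1}--\ref{Lemma 6.2} uniformly down to $t=0$, which the factorization does not support).
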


\begin{proof}
We consider the integral equation corresponding to (\ref{1.1}), that is%
\begin{equation}
u\left(  t\right)  =\mathcal{U}\left(  t\right)  u_{0}-i\lambda\int_{0}%
^{t}\mathcal{U}\left(  t-\tau\right)  \left(  \left\vert u\right\vert
^{2}u\right)  \left(  \tau\right)  d\tau. \label{7.22}%
\end{equation}
Using (\ref{7.8}) we estimate from (\ref{7.22})
\begin{equation}
\left\Vert u\left(  t\right)  \right\Vert _{\mathbf{H}^{1}}\leq\left\Vert
\mathcal{U}\left(  t\right)  u_{0}\right\Vert _{\mathbf{H}^{1}}+\left\vert
\lambda\right\vert \int_{0}^{t}\left\Vert \mathcal{U}\left(  t-\tau\right)
\left(  \left\vert u\right\vert ^{2}u\right)  \left(  \tau\right)  \right\Vert
_{\mathbf{H}^{1}}d\tau\leq K_{0}\left\Vert u_{0}\right\Vert _{\mathbf{H}^{1}%
}+K_{0}\left\vert \lambda\right\vert \int_{0}^{t}\left\Vert \left(  \left\vert
u\right\vert ^{2}u\right)  \left(  \tau\right)  \right\Vert _{\mathbf{H}^{1}%
}d\tau. \label{7.44}%
\end{equation}
Then%
\begin{equation}
\left\Vert u\left(  t\right)  \right\Vert _{\mathbf{H}^{1}}\leq K_{0}%
\left\Vert u_{0}\right\Vert _{\mathbf{H}^{1}}+3K_{0}\left\vert \lambda
\right\vert \int_{0}^{t}\left\Vert u\left(  \tau\right)  \right\Vert
_{\mathbf{L}^{\infty}}^{2}\left\Vert u\left(  \tau\right)  \right\Vert
_{\mathbf{H}^{1}}d\tau. \label{7.46}%
\end{equation}
Moreover, (\ref{7.22}) and (\ref{7.9}) imply
\begin{equation}
\left.
\begin{array}
[c]{c}%
\left\Vert u\left(  t\right)  \right\Vert _{\mathbf{H}^{0,1}}\leq\left\Vert
\mathcal{U}\left(  t\right)  u_{0}\right\Vert _{\mathbf{H}^{0,1}}+\left\vert
\lambda\right\vert \int_{0}^{t}\left\Vert \mathcal{U}\left(  t-\tau\right)
\left(  \left\vert u\right\vert ^{2}u\right)  \left(  \tau\right)  \right\Vert
_{\mathbf{H}^{0,1}}d\tau\\
\leq K_{0}\left(  \left\langle t\right\rangle \left\Vert u_{0}\right\Vert
_{\mathbf{H}^{1}}+\left\Vert u_{0}\right\Vert _{\mathbf{H}^{0,1}}\right)
+K_{0}\left\vert \lambda\right\vert \int_{0}^{t}\left(  \left\langle
t\right\rangle \left\Vert \left(  \left\vert u\right\vert ^{2}u\right)
\left(  \tau\right)  \right\Vert _{\mathbf{H}^{1}}+\left\Vert \left(
\left\vert u\right\vert ^{2}u\right)  \left(  \tau\right)  \right\Vert
_{\mathbf{H}^{0,1}}\right)  d\tau\\
\leq K_{0}\left(  \left\langle t\right\rangle \left\Vert u_{0}\right\Vert
_{\mathbf{H}^{1}}+\left\Vert u_{0}\right\Vert _{\mathbf{H}^{0,1}}\right)
+3K_{0}\left\vert \lambda\right\vert \int_{0}^{t}\left\Vert u\left(
\tau\right)  \right\Vert _{\mathbf{L}^{\infty}}^{2}\left(  \left\langle
t\right\rangle \left\Vert u\left(  \tau\right)  \right\Vert _{\mathbf{H}^{1}%
}+\left\Vert u\left(  \tau\right)  \right\Vert _{\mathbf{H}^{0,1}}\right)
d\tau.
\end{array}
\right.  \label{7.45}%
\end{equation}
The existence of a unique solution $u\in\mathbf{C}\left(  \left[  0,T\right]
;\mathbf{H}^{1}\cap\mathbf{H}^{0,1}\right)  $ follows from a contraction
mapping argument on the space
\[
\left.  \mathcal{E}_{T}\mathcal{=\{}u\in\mathbf{C}\left(  \left[  0,T\right]
;\mathbf{H}^{1}\cap\mathbf{H}^{0,1}\right)  ;\left\Vert u\left(  t\right)
\right\Vert _{L^{\infty}\left(  \left(  0,T\right)  ,\mathbf{H}^{1}\right)
}\leq M\text{ and }\left\Vert u\left(  t\right)  \right\Vert _{L^{\infty
}\left(  \left(  0,T\right)  ,\mathbf{H}^{0,1}\right)  }\leq M\}\right.
\]
where%
\[
M=\left(  K_{0}+1\right)  \left(  \left\langle T\right\rangle \left\Vert
u_{0}\right\Vert _{\mathbf{H}^{1}}+\left\Vert u_{0}\right\Vert _{\mathbf{H}%
^{0,1}}\right)
\]
with $T>0$ satisfying%
\begin{equation}
12\left\vert \lambda\right\vert \left(  K_{0}+1\right)  ^{2}M^{2}\left\langle
T\right\rangle T<1. \label{7.30}%
\end{equation}
The inclusion (\ref{7.4}) is consequence of (\ref{7.8}), (\ref{7.9}) and
$u\in\mathbf{C}\left(  \left[  0,T\right]  ;\mathbf{H}^{1}\cap\mathbf{H}%
^{0,1}\right)  .$ Using (\ref{7.21}) and (\ref{7.9}) we get
\begin{equation}
\left\Vert w\left(  t\right)  \right\Vert _{\mathbf{H}^{1}}=\left\Vert
\mathcal{FU}\left(  -t\right)  u\left(  t\right)  \right\Vert _{\mathbf{H}%
^{1}}\leq K_{0}\left\Vert \mathcal{U}\left(  -t\right)  u\left(  t\right)
\right\Vert _{\mathbf{H}^{0,1}}\leq K_{0}^{2}\left(  \left\langle
t\right\rangle \left\Vert u\left(  t\right)  \right\Vert _{\mathbf{H}^{1}%
}+\left\Vert u\left(  t\right)  \right\Vert _{\mathbf{H}^{0,1}}\right)  .
\label{7.24}%
\end{equation}
As $\left\Vert u\left(  t\right)  \right\Vert _{L^{\infty}\left(  \left(
0,T\right)  ,\mathbf{H}^{1}\right)  }+\left\Vert u\left(  t\right)
\right\Vert _{L^{\infty}\left(  \left(  0,T\right)  ,\mathbf{H}^{0,1}\right)
}\leq2\left(  K_{0}+1\right)  \left(  \left\langle T\right\rangle \left\Vert
u_{0}\right\Vert _{\mathbf{H}^{1}}+\left\Vert u_{0}\right\Vert _{\mathbf{H}%
^{0,1}}\right)  ,$ via Sobolev's inequality we have
\[
\left\Vert u\right\Vert _{\mathbf{X}_{T}}\leq C\left(  \left\Vert
u_{0}\right\Vert _{\mathbf{H}^{1}}+\left\Vert u_{0}\right\Vert _{\mathbf{H}%
^{0,1}}\right)  .
\]
Since $u\in$ $\mathcal{E}_{T},$ $\left\Vert u\left(  t\right)  \right\Vert
_{L^{\infty}\left(  \left(  0,T\right)  ,\mathbf{H}^{1}\right)  }\leq\left(
K_{0}+1\right)  \left\langle T\right\rangle \left\Vert u_{0}\right\Vert
_{\mathbf{H}^{1}}.$ Thus, we deduce (\ref{7.5}). If $\left\Vert u_{0}%
\right\Vert _{\mathbf{H}^{1}}+\left\Vert u_{0}\right\Vert _{\mathbf{H}^{0,1}%
}\leq\varepsilon,$ then from (\ref{7.30}) we conclude that there is
$\varepsilon_{0}>0,$ such that for every $0<\varepsilon<\varepsilon_{0}$ the
existence time $T>1.$ Finally, we prove the blowup alternative (\ref{7.41}).
Assume by contradiction that $T_{\max}<\infty,$ and that there exist $B>0$ and
a sequence $\left(  t_{n}\right)  _{n\geq1}$ such that $t_{n}\uparrow T_{\max
}$ and
\[
\sup_{\tau\in\left(  0,t_{n}\right)  }\left(  \left\langle \tau\right\rangle
^{-1-\beta}\left\Vert u\left(  \tau\right)  \right\Vert _{\mathbf{H}^{0,1}%
}\right)  +\sup_{\tau\in\left(  0,t_{n}\right)  }\left(  \left\langle
\tau\right\rangle ^{-\beta}\left\Vert u\left(  \tau\right)  \right\Vert
_{\mathbf{H}^{1}}\right)  \leq B.
\]
We consider the space $\mathcal{E}_{T}$ with
\[
M=\left(  2K_{0}\left\langle T_{\max}\right\rangle ^{2+\beta}+1\right)  B.
\]
We fix $T_{\max}<\tau_{1}<\infty$, then we choose $0<T<\tau_{1}-T_{\max}$
sufficiently small so that%
\[
12K_{0}\left\vert \lambda\right\vert M^{2}\left(  2K_{0}\left\langle T_{\max
}\right\rangle ^{2+\beta}+1\right)  \left\langle T\right\rangle T<1.
\]
Then, by a contraction mapping argument, for all $n\geq1$ there exists
$u_{n}\in\mathbf{C}\left(  \left[  0,T\right]  ;\mathbf{H}^{1}\cap
\mathbf{H}^{0,1}\right)  $ which is a solution to the equation%
\[
u_{n}\left(  t\right)  =\mathcal{U}\left(  t\right)  u\left(  t_{n}\right)
-i\lambda\int_{0}^{t}\mathcal{U}\left(  t-\tau\right)  \left(  \left\vert
u_{n}\right\vert ^{2}u_{n}\right)  \left(  \tau\right)  d\tau.
\]
Setting now
\[
v_{n}\left(  t\right)  =\left\{
\begin{array}
[c]{c}%
u\left(  t\right)  \text{ \ \ }0\leq t\leq t_{n}\\
u_{n}\left(  t-t_{n}\right)  \text{ }t_{n}\leq t\leq t_{n}+T
\end{array}
\right.
\]
we see that $v_{n}\in\mathbf{C}\left(  \left[  0,t_{n}+T\right]
;\mathbf{H}^{1}\cap\mathbf{H}^{0,1}\right)  $ is a solution of (\ref{1.1}) on
$\left[  0,t_{n}+T\right]  .$ Since $t_{n}+T>T_{\max,},$ for $n$ big enough,
this yields a contradiction. Hence, (\ref{7.41}) holds.
\end{proof}

We now prove an a priori estimate for the solutions of the Cauchy problem
(\ref{1.1}). Let us define
\[
K=13\left(  K_{0}+1\right)  ^{3},
\]
where $K_{0}>0$ is given by Lemma \ref{L3} and \ref{L4}. We have the following.

\begin{lemma}
\label{Lemma 7.1}Suppose that $V$ satisfies Condition \ref{Cond1}. Let the
initial data $u_{0}\in$ $\mathbf{H}^{1}\cap\mathbf{H}^{0,1}$. In addition, we
suppose that $u_{0}\left(  x\right)  $ are odd if $T\left(  0\right)  =1$ and
$u_{0}\left(  x\right)  $ are even in the case when $T\left(  0\right)  =-1$.
Let $T>0$ be given by Theorem \ref{Theorem 7.1}. Then, there exists
$\varepsilon_{0}>0$ such that for any $0<\varepsilon<\varepsilon_{0}$ and
$\left\Vert u_{0}\right\Vert _{\mathbf{H}^{1}}+\left\Vert u_{0}\right\Vert
_{\mathbf{H}^{0,1}}\leq\varepsilon,$ the estimate
\begin{equation}
\left\Vert u\right\Vert _{\mathbf{X}_{T}}+\sup_{\tau\in\left(  0,T\right)
}\left(  \left\langle \tau\right\rangle ^{-1-\beta}\left\Vert u\left(
\tau\right)  \right\Vert _{\mathbf{H}^{0,1}}\right)  +\sup_{\tau\in\left(
0,T\right)  }\left(  \left\langle \tau\right\rangle ^{-\beta}\left\Vert
u\left(  \tau\right)  \right\Vert _{\mathbf{H}^{1}}\right)  <K\varepsilon
\label{7.23}%
\end{equation}
holds for the solutions$\ u$ of the Cauchy problem (\ref{1.1}).
\end{lemma}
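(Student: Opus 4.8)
The plan is to close an a priori bound for the solution produced by Theorem~\ref{Theorem 7.1} by a continuity (bootstrap) argument on
\[
\mathcal{M}(T)=\left\Vert u\right\Vert _{\mathbf{X}_{T}}+\sup_{\tau\in(0,T)}\left\langle \tau\right\rangle ^{-1-\beta}\left\Vert u(\tau)\right\Vert _{\mathbf{H}^{0,1}}+\sup_{\tau\in(0,T)}\left\langle \tau\right\rangle ^{-\beta}\left\Vert u(\tau)\right\Vert _{\mathbf{H}^{1}},
\]
the left side of (\ref{7.23}), which is finite and continuous in $T$ by Theorem~\ref{Theorem 7.1}. First I would derive the evolution equation for the profile $w=\mathcal{F}\mathcal{U}(-t)u$. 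Applying $\mathcal{F}\mathcal{U}(-t)$ to the Duhamel formula (\ref{7.22}) and using $\mathcal{F}\mathcal{U}(-t)\mathcal{U}(t-\tau)=\mathcal{F}\mathcal{U}(-\tau)$ gives $w(t)=\mathcal{F}u_{0}-i\lambda\int_{0}^{t}\mathcal{F}\mathcal{U}(-\tau)(|u|^{2}u)(\tau)\,d\tau$. Since $u=M\mathcal{D}_{\tau}\mathcal{V}(\tau)w$ and $|M|=1$, a direct computation yields $|u|^{2}u=\tau^{-1}M\mathcal{D}_{\tau}(|\mathcal{V}(\tau)w|^{2}\mathcal{V}(\tau)w)$, so by the inverse representation (\ref{2.14}) one gets $\mathcal{F}\mathcal{U}(-\tau)(|u|^{2}u)=\tau^{-1}\mathcal{V}^{-1}(\tau)(|\mathcal{V}(\tau)w|^{2}\mathcal{V}(\tau)w)$ and, differentiating, $\partial_{t}w=-i\lambda t^{-1}\mathcal{V}^{-1}(t)(|\mathcal{V}(t)w|^{2}\mathcal{V}(t)w)$ for $t\geq1$. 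Note that $w(t)=\mathcal{F}\psi(t)$ with $\psi(t)=\mathcal{U}(-t)u(t)\in\mathbf{H}^{0,1}$ of the parity required by the hypotheses, so Lemma~\ref{L3} gives $w(t,0)=0$ and Lemmas~\ref{L1},~\ref{L2},~\ref{Lemma 6.1} and~\ref{Lemma 6.2} all apply to $w(t)$.

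I would then assume the bootstrap hypothesis $\mathcal{M}(T)\leq K\varepsilon$ and split the time axis at $t=1$. On $[0,1]$ the local theory (Theorem~\ref{Theorem 7.1}) together with Sobolev's embedding give $\left\Vert w(1)\right\Vert _{\mathbf{L}^{\infty}}+\left\Vert w(1)\right\Vert _{\mathbf{H}^{1}}\leq C\varepsilon$, and the contribution of $[0,1]$ to every Duhamel integral below is $O(\varepsilon^{3})$. For $t\geq1$ I would feed the bootstrap bounds $\left\Vert w\right\Vert _{\mathbf{L}^{\infty}}\leq K\varepsilon$ and $\left\Vert w\right\Vert _{\mathbf{H}^{1}}\leq K\varepsilon\left\langle t\right\rangle ^{\beta}$ into the remainder terms of Lemmas~\ref{Lemma 6.1} and~\ref{Lemma 6.2}.

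The decisive and hardest step is the $\mathbf{L}^{\infty}$ bound on $w$: the resonant term $t^{-1}|w|^{2}w$ is not integrable in time, and since $\left\Vert w\right\Vert _{\mathbf{H}^{1}}$ is only controlled up to growth $\left\langle t\right\rangle ^{\beta}$ the Sobolev embedding is useless here. Using Lemma~\ref{Lemma 6.1} (with $w(t,0)=0$) I would write $\partial_{t}w(t,k)=-i\lambda t^{-1}|w(t,k)|^{2}w(t,k)+E(t,k)$, where the bootstrap gives $\left\Vert E(t,\cdot)\right\Vert _{\mathbf{L}^{\infty}}\leq Ct^{\beta-\frac{5}{4}}(K\varepsilon)^{3}$. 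The key cancellation is that, in $\partial_{t}|w|^{2}=2\operatorname{Re}(\overline{w}\,\partial_{t}w)$, the main term contributes $-2\lambda t^{-1}\operatorname{Re}(i|w|^{4})=0$, leaving $\partial_{t}|w|^{2}\leq Ct^{\beta-\frac{5}{4}}(K\varepsilon)^{4}$, which is integrable for $\beta<\tfrac14$. Integrating from $1$ yields $\left\Vert w(t)\right\Vert _{\mathbf{L}^{\infty}}^{2}\leq\left\Vert w(1)\right\Vert _{\mathbf{L}^{\infty}}^{2}+C(K\varepsilon)^{4}$, hence $\left\Vert w(t)\right\Vert _{\mathbf{L}^{\infty}}\leq C\varepsilon+C(K\varepsilon)^{2}$. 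This modulus-conservation mechanism is precisely the factorization device advertised in the introduction, and it is where $w(t,0)=0$ and the decay (\ref{5.15}) of $(\mathcal{V}(t)w)(0)$ are indispensable.

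For the remaining pieces I would proceed by routine integration. Differentiating the factorized Duhamel formula in $k$, applying Lemma~\ref{Lemma 6.2} and the $\mathbf{L}^{2}$-isometry of $\mathcal{V},\mathcal{V}^{-1}$, and integrating the resulting $\tau^{-1}$-weighted bounds against the bootstrap data (whose slowly growing factors are dominated by $\left\langle t\right\rangle ^{\beta}$) gives $\left\langle t\right\rangle ^{-\beta}\left\Vert w(t)\right\Vert _{\mathbf{H}^{1}}\leq K_{0}\varepsilon+C\beta^{-1}(K\varepsilon)^{3}$. For the $u$-norms I would insert $\left\Vert u(t)\right\Vert _{\mathbf{L}^{\infty}}=t^{-\frac12}\left\Vert \mathcal{V}(t)w\right\Vert _{\mathbf{L}^{\infty}}\leq Ct^{-\frac12}K\varepsilon$ (from (\ref{5.17}) and $u=M\mathcal{D}_{t}\mathcal{V}(t)w$) into (\ref{7.46}) and (\ref{7.45}); the time integrals there generate exactly the weights $\left\langle t\right\rangle ^{\beta}$ and $\left\langle t\right\rangle ^{1+\beta}$, giving $\left\langle t\right\rangle ^{-\beta}\left\Vert u\right\Vert _{\mathbf{H}^{1}}\leq K_{0}\varepsilon+C(K\varepsilon)^{3}$ and $\left\langle t\right\rangle ^{-1-\beta}\left\Vert u\right\Vert _{\mathbf{H}^{0,1}}\leq 2K_{0}\varepsilon+C(K\varepsilon)^{3}$. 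Summing the four estimates, the linear contributions total a fixed multiple of $(K_{0}+1)\varepsilon$ that lies strictly below $K\varepsilon=13(K_{0}+1)^{3}\varepsilon$; choosing $\varepsilon_{0}$ small (depending on $\beta,K_{0},\lambda$) absorbs every cubic remainder, so the hypothesis $\mathcal{M}(T)\leq K\varepsilon$ is improved to the strict inequality $\mathcal{M}(T)<K\varepsilon$. Since $\mathcal{M}(0^{+})<K\varepsilon$ and $T\mapsto\mathcal{M}(T)$ is continuous, this strict improvement propagates over the whole existence interval and yields (\ref{7.23}). I expect the only genuine difficulty to be the $\mathbf{L}^{\infty}$ step above; the rest is bookkeeping of integrable time weights.
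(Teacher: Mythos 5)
Your proposal is correct and follows essentially the same route as the paper: the same evolution equation for $w=\mathcal{F}\,\mathcal{U}(-t)u$, the parity-based vanishing $w(t,0)=0$, the cancellation of the resonant term in $\partial_{t}\vert w\vert^{2}$ via Lemma \ref{Lemma 6.1}, the $\mathbf{H}^{1}$ growth control via Lemma \ref{Lemma 6.2}, and the transfer to the $u$-norms through (\ref{7.28}), (\ref{7.46}) and (\ref{7.45}). The only difference is presentational: you run a direct continuity/bootstrap argument improving $\mathcal{M}(T)\leq K\varepsilon$ to a strict inequality, while the paper phrases the identical mechanism as a contradiction at the first time $T_{1}$ where equality (\ref{7.25}) holds.
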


\begin{proof}
We prove (\ref{7.23}) by a contradiction argument. Suppose that the statement
of the lemma is not true. Similarly to (\ref{7.24}) from inclusion $u\in$
$\mathbf{C}\left(  \left[  0,T\right]  ;\mathbf{H}^{1}\cap\mathbf{H}%
^{0,1}\right)  $ and (\ref{7.4}) it follows that the left-hand side in
(\ref{7.23}) is continuous as function of $T>0.$ Then, for any $\varepsilon>0$
and initial data $\left\Vert u_{0}\right\Vert _{\mathbf{H}^{1}}+\left\Vert
u_{0}\right\Vert _{\mathbf{H}^{0,1}}\leq\varepsilon,$ we can find $T_{1}\leq
T,$ with the property
\begin{equation}
\left\Vert u\right\Vert _{\mathbf{X}_{T_{1}}}+\sup_{\tau\in\left(
0,T_{1}\right)  }\left(  \left\langle \tau\right\rangle ^{-1-\beta}\left\Vert
u\left(  \tau\right)  \right\Vert _{\mathbf{H}^{0,1}}\right)  +\sup_{\tau
\in\left(  0,T_{1}\right)  }\left(  \left\langle \tau\right\rangle ^{-\beta
}\left\Vert u\left(  \tau\right)  \right\Vert _{\mathbf{H}^{1}}\right)
=K\varepsilon.\label{7.25}%
\end{equation}
Let us show that this yields a contradiction. Theorem \ref{Theorem 7.1}
guarantees that there is $\varepsilon_{0}>0$ such that for any $0<\varepsilon
\leq\varepsilon_{0}$ and $\left\Vert u_{0}\right\Vert _{\mathbf{H}^{1}%
}+\left\Vert u_{0}\right\Vert _{\mathbf{H}^{0,1}}\leq\varepsilon$ the
existence time $T>1.$ In particular, as $u\in$ $\mathcal{E}_{1},$ it follows
that
\begin{equation}
\sup_{t\in\left(  0,1\right)  }\left\Vert u\left(  t\right)  \right\Vert
_{\mathbf{H}^{1}}\leq\left(  K_{0}+1\right)  \left(  \sqrt{2}\left\Vert
u_{0}\right\Vert _{\mathbf{H}^{1}}+\left\Vert u_{0}\right\Vert _{\mathbf{H}%
^{0,1}}\right)  \leq\sqrt{2}\left(  K_{0}+1\right)  \varepsilon\label{7.51}%
\end{equation}
and%
\[
\sup_{t\in\left(  0,1\right)  }\left\Vert u\left(  t\right)  \right\Vert
_{\mathbf{H}^{0,1}}\leq\sqrt{2}\left(  K_{0}+1\right)  \varepsilon.
\]
Then, using (\ref{7.24}) we have%
\begin{equation}
\sup_{t\in\left(  0,1\right)  }\left\Vert w\left(  t\right)  \right\Vert
_{\mathbf{H}^{1}}\leq\sqrt{2}K_{0}^{2}\left(  \sup_{t\in\left(  0,1\right)
}\left\Vert u\left(  t\right)  \right\Vert _{\mathbf{H}^{1}}+\sup_{t\in\left(
0,1\right)  }\left\Vert u\left(  t\right)  \right\Vert _{\mathbf{H}^{0,1}%
}\right)  \leq4K_{0}^{2}\left(  K_{0}+1\right)  \varepsilon.\label{7.33}%
\end{equation}
Moreover,
\begin{equation}
\sup_{t\in\left(  0,1\right)  }\left\Vert w\left(  t\right)  \right\Vert
_{\mathbf{L}^{\infty}}\leq\sqrt{2}\sup_{t\in\left(  0,1\right)  }\left\Vert
w\left(  t\right)  \right\Vert _{\mathbf{H}^{1}}\leq4\sqrt{2}K_{0}^{2}\left(
K_{0}+1\right)  \varepsilon.\label{7.34}%
\end{equation}
Hence,
\begin{equation}
\left\Vert u\right\Vert _{\mathbf{X}_{1}}\leq\sup_{t\in\left(  0,1\right)
}\left\Vert w\right\Vert _{\mathbf{L}^{\infty}}+\sup_{t\in\left(  0,1\right)
}\left\Vert w\right\Vert _{\mathbf{H}^{1}}\leq4\left(  1+\sqrt{2}\right)
K_{0}^{2}\left(  K_{0}+1\right)  \varepsilon.\label{7.52}%
\end{equation}
Suppose now that $t\geq1.$ We represent the solution $u\left(  t\right)
=\mathcal{U}\left(  t\right)  \mathcal{F}^{-1}w=M{\mathcal{D}}_{t}%
\mathcal{V}\left(  t\right)  w\left(  t\right)  \ $and use (\ref{7.28}) to
reduce the problem to estimate $w.$ As mentioned at the beginning of this
Section, if the potential is symmetric and the\ initial data $u_{0}\left(
x\right)  $ are even or odd, then $u\left(  t,x\right)  $ is even or odd,
respectively, for all $t\in\lbrack0,T].$ Then, since $\mathcal{F}$ preserves
the parity,\ by the assumptions of Theorem \ref{Theorem 1.1}, $w\left(
k\right)  $ is odd if $T\left(  0\right)  =1,$ and $w\left(  k\right)  $ is
even if $T\left(  0\right)  =-1.$ To derive the equation for $w\left(
t\right)  $ we apply the operator $\mathcal{FU}\left(  -t\right)  $ to
equation (\ref{1.1}). We find%
\[
i\partial_{t}\left(  \mathcal{FU}\left(  -t\right)  u\right)  =\lambda
\mathcal{FU}\left(  -t\right)  \left(  \left\vert u\right\vert ^{2}u\right)  .
\]
Hence, substituting $u\left(  t\right)  =\mathcal{U}\left(  t\right)
\mathcal{F}^{-1}w$ we obtain the equation for $w,$
\[
i\partial_{t}w=\lambda\mathcal{FU}\left(  -t\right)  \left(  \left\vert
\mathcal{U}\left(  t\right)  \mathcal{F}^{-1}w\right\vert ^{2}\mathcal{U}%
\left(  t\right)  \mathcal{F}^{-1}w\right)  .
\]
By virtue of operators $\mathcal{V}\left(  t\right)  $ and $\mathcal{V}%
^{-1}\left(  t\right)  $ we find
\begin{align*}
&  \lambda\mathcal{FU}\left(  -t\right)  \left(  \left\vert \mathcal{U}\left(
t\right)  \mathcal{F}^{-1}w\right\vert ^{2}\mathcal{U}\left(  t\right)
\mathcal{F}^{-1}w\right)  \\
&  =\lambda\mathcal{V}^{-1}\left(  t\right)  {\mathcal{D}}_{t}^{-1}%
\overline{M}\left(  \left\vert M{\mathcal{D}}_{t}\mathcal{V}\left(  t\right)
w\right\vert ^{2}M{\mathcal{D}}_{t}\mathcal{V}\left(  t\right)  w\right)  \\
&  =\lambda\mathcal{V}^{-1}\left(  t\right)  {\mathcal{D}}_{t}^{-1}\left(
\left\vert {\mathcal{D}}_{t}\mathcal{V}\left(  t\right)  w\right\vert
^{2}{\mathcal{D}}_{t}\mathcal{V}\left(  t\right)  w\right)  =\lambda
t^{-1}\mathcal{V}^{-1}\left(  t\right)  \left(  \left\vert \mathcal{V}\left(
t\right)  w\right\vert ^{2}\mathcal{V}\left(  t\right)  w\right)  .
\end{align*}
Thus we arrive to the equation
\begin{equation}
i\partial_{t}w=\lambda t^{-1}\mathcal{V}^{-1}\left(  t\right)  \left(
\left\vert \mathcal{V}\left(  t\right)  w\right\vert ^{2}\mathcal{V}\left(
t\right)  w\right)  .\label{7.1}%
\end{equation}
Applying Lemma \ref{Lemma 6.1} we get
\begin{equation}
\left.  i\partial_{t}w=\lambda t^{-1}\left\vert w\left(  k\right)  \right\vert
^{2}w\left(  k\right)  +O\left(  t^{-1}\left(  \left\vert w\left(  0\right)
\right\vert +t^{-\frac{1}{4}}\left\Vert w\right\Vert _{\mathbf{H}^{1}}\right)
\left(  \left\Vert w\right\Vert _{\mathbf{L}^{\infty}}+t^{-\beta}\left\Vert
w\right\Vert _{\mathbf{H}^{1}}\right)  ^{2}\right)  +O\left(  t^{\beta
-\frac{5}{4}}\left(  \left\Vert w\right\Vert _{\mathbf{L}^{\infty}}+t^{-\beta
}\left\Vert w\right\Vert _{\mathbf{H}^{1}}\right)  ^{3}\right)  .\right.
\label{7.2}%
\end{equation}
We claim that
\begin{equation}
w\left(  t,0\right)  =0.\label{7.40}%
\end{equation}
Indeed, we have $w\left(  t,0\right)  =\left(  \mathcal{F}u\left(  t\right)
\right)  \left(  0\right)  .$ Since $u\left(  t,x\right)  $ is odd if
$T\left(  0\right)  =1$ and it is even when $T\left(  0\right)  =-1$,
(\ref{7.40}) follows from (\ref{7.50}). Going back to (\ref{7.2}) and using
(\ref{7.25}) we obtain%
\begin{equation}
i\partial_{t}w=\lambda t^{-1}\left\vert w\left(  k\right)  \right\vert
^{2}w\left(  k\right)  +O\left(  \varepsilon^{3}t^{\beta-\frac{5}{4}}\right)
.\label{7.32}%
\end{equation}
Multiplying (\ref{7.32}) by $\overline{w\left(  k\right)  }$, taking the
imaginary part and using (\ref{7.25}) we find
\begin{equation}
\partial_{t}\left\vert w\left(  t,k\right)  \right\vert ^{2}=O\left(
\varepsilon^{3}\left\langle t\right\rangle ^{\beta-\frac{5}{4}}\left\Vert
w\right\Vert _{\mathbf{L}^{\infty}}\right)  =O\left(  \varepsilon
^{3}\left\langle t\right\rangle ^{\beta-\frac{5}{4}}\right)  .\label{7.3}%
\end{equation}
By (\ref{7.34})
\[
\left\Vert w\left(  1\right)  \right\Vert _{\mathbf{L}^{\infty}}\leq4\sqrt
{2}K_{0}^{2}\left(  K_{0}+1\right)  \varepsilon.
\]
Then, equation (\ref{7.3}) shows that
\begin{equation}
\sup_{t\in\left(  1,T_{1}\right)  }\left\Vert w\left(  t\right)  \right\Vert
_{\mathbf{L}^{\infty}}\leq4\sqrt{2}K_{0}^{2}\left(  K_{0}+1\right)
\varepsilon+C\varepsilon^{3/2}.\label{7.37}%
\end{equation}
To estimate the derivative $\left\Vert \partial_{k}w\right\Vert _{\mathbf{L}%
^{2}},$ we differentiate equation (\ref{7.1}). We get
\[
i\partial_{t}w_{k}=\lambda t^{-1}\partial_{k}\mathcal{V}^{-1}\left(  t\right)
\left(  \left\vert \mathcal{V}\left(  t\right)  w\right\vert ^{2}%
\mathcal{V}\left(  t\right)  w\right)  .
\]
Taking the $\mathbf{L}^{2}$-norm in the last equation, using the estimate of
Lemma \ref{Lemma 6.2} and relation (\ref{7.25}) we get
\[
\frac{d}{dt}\left\Vert \partial_{k}w\right\Vert _{\mathbf{L}^{2}}\leq
Ct^{-1}\log\left\langle t\right\rangle \left\Vert w\right\Vert _{\mathbf{L}%
^{\infty}}^{3}+Ct^{-\frac{5}{4}}\left\Vert w\right\Vert _{\mathbf{H}^{1}}%
^{3}+Ct^{-1}\left\Vert w\right\Vert _{\mathbf{L}^{\infty}}^{2}\left\Vert
w\right\Vert _{\mathbf{H}^{1}}\leq C\varepsilon^{3}t^{\beta-1},
\]
for $0<\beta\leq\frac{1}{8}.$ Integrating the last inequality we obtain%
\[
\left\Vert \partial_{k}w\right\Vert _{\mathbf{L}^{2}}\leq\left\Vert
\partial_{k}w\left(  1\right)  \right\Vert _{\mathbf{L}^{2}}+C\varepsilon
^{3}\left\langle t\right\rangle ^{\beta}.
\]
By (\ref{7.33}) $\left\Vert \partial_{k}w\left(  1\right)  \right\Vert
_{\mathbf{L}^{2}}\leq4K_{0}^{2}\left(  K_{0}+1\right)  \varepsilon.$ Then,
\begin{equation}
\left\Vert \partial_{k}w\right\Vert _{\mathbf{L}^{2}}\leq\left\langle
t\right\rangle ^{\beta}\left(  4K_{0}^{2}\left(  K_{0}+1\right)
\varepsilon+C\varepsilon^{3}\right)  .\label{7.35}%
\end{equation}
Moreover, as $\left\Vert u\left(  t\right)  \right\Vert _{\mathbf{L}^{2}%
}=\left\Vert u_{0}\right\Vert _{\mathbf{L}^{2}},$ for all $t\in\lbrack0,T],$
and $\mathcal{F}$ and $\mathcal{U}\left(  -t\right)  $ are unitary on
$\mathbf{L}^{2}$, we get%
\begin{equation}
\left\Vert w\right\Vert _{\mathbf{L}^{2}}=\left\Vert \mathcal{FU}\left(
-t\right)  u\left(  t\right)  \right\Vert _{\mathbf{L}^{2}}=\left\Vert
u\left(  t\right)  \right\Vert _{\mathbf{L}^{2}}=\left\Vert u_{0}\right\Vert
_{\mathbf{L}^{2}}\leq\varepsilon\leq\varepsilon\left\langle t\right\rangle
^{\beta}.\label{7.36}%
\end{equation}
(\ref{7.35}) and (\ref{7.36}) imply
\begin{equation}
\sup_{t\in\left(  1,T_{1}\right)  }\left(  \left\langle t\right\rangle
^{-\beta}\left\Vert w\left(  t\right)  \right\Vert _{\mathbf{H}^{1}}\right)
\leq4K_{0}^{2}\left(  K_{0}+1\right)  \varepsilon+\varepsilon+C\varepsilon
^{3}.\label{7.38}%
\end{equation}
Hence, by (\ref{7.37}) and (\ref{7.38})
\begin{equation}
\sup_{t\in\left(  1,T_{1}\right)  }\left(  \left\Vert w\right\Vert
_{\mathbf{L}^{\infty}}+\left\langle t\right\rangle ^{-\beta}\left\Vert
w\right\Vert _{\mathbf{H}^{1}}\right)  \leq\left(  4+4\sqrt{2}\right)
K_{0}^{2}\left(  K_{0}+1\right)  \varepsilon+\varepsilon+C\varepsilon
^{3/2}\leq\left(  5+4\sqrt{2}\right)  \left(  K_{0}+1\right)  ^{3}%
\varepsilon+C\varepsilon^{3/2}.\label{7.47}%
\end{equation}
From (\ref{7.52}) and (\ref{7.47}) we deduce
\begin{equation}
\left\Vert u\right\Vert _{\mathbf{X}_{T_{1}}}\leq\left(  5+4\sqrt{2}\right)
\left(  K_{0}+1\right)  ^{3}\varepsilon+C\varepsilon^{3/2}.\label{7.55}%
\end{equation}
Using (\ref{3.4}), (\ref{7.28}), (\ref{7.25}) we estimate
\begin{equation}
\left\Vert u\left(  t\right)  \right\Vert _{\mathbf{L}^{\infty}}\leq
C\varepsilon\left\langle t\right\rangle ^{-1/2},\text{ \ }t\in\lbrack
0,T_{1}].\label{7.42}%
\end{equation}
Then, from (\ref{7.46}) and (\ref{7.25})
\[
\left\Vert u\left(  t\right)  \right\Vert _{\mathbf{H}^{1}}\leq K_{0}%
\varepsilon+C\varepsilon^{3}\int_{0}^{t}\left\langle t\right\rangle
^{-1+\beta}d\tau\leq K_{0}\varepsilon+C\varepsilon^{3}\left\langle
t\right\rangle ^{\beta}.
\]
Thus,%
\begin{equation}
\sup_{\tau\in\left(  0,T_{1}\right)  }\left(  \left\langle \tau\right\rangle
^{-\beta}\left\Vert u\left(  \tau\right)  \right\Vert _{\mathbf{H}^{1}%
}\right)  \leq K_{0}\varepsilon+C\varepsilon^{3}.\label{7.48}%
\end{equation}
Similarly, from (\ref{7.45}), (\ref{7.42}), (\ref{7.25}) we get%
\begin{equation}
\left.  \left\Vert u\left(  t\right)  \right\Vert _{\mathbf{H}^{0,1}}\leq
K_{0}\left\langle t\right\rangle \varepsilon+C\varepsilon^{3}\left\langle
t\right\rangle \int_{0}^{t}\left\langle \tau\right\rangle ^{-1+\beta}d\tau
\leq\left(  K_{0}\varepsilon+C\varepsilon^{3}\right)  \left\langle
t\right\rangle ^{1+\beta},\right.  \label{7.56}%
\end{equation}
for all $t\in\lbrack0,T_{1}].$ By (\ref{7.55}), (\ref{7.48}) and (\ref{7.56})
we see that%
\begin{equation}
\left\Vert u\right\Vert _{\mathbf{X}_{T_{1}}}+\sup_{\tau\in\left(
0,T_{1}\right)  }\left(  \left\langle \tau\right\rangle ^{-1-\beta}\left\Vert
u\left(  \tau\right)  \right\Vert _{\mathbf{H}^{0,1}}\right)  +\sup_{\tau
\in\left(  0,T_{1}\right)  }\left(  \left\langle \tau\right\rangle ^{-\beta
}\left\Vert u\left(  \tau\right)  \right\Vert _{\mathbf{H}^{1}}\right)
\leq\left(  7+4\sqrt{2}\right)  \left(  K_{0}+1\right)  ^{3}\varepsilon
+C\varepsilon^{3/2}<K\varepsilon,\label{7.54}%
\end{equation}
for all $\varepsilon>0$ small enough. This contradicts (\ref{7.25}).
Therefore, there is $\varepsilon_{1}>0,$ such that for all $0<\varepsilon
\leq\varepsilon_{1},$ the estimate (\ref{7.23}) is valid.
\end{proof}

\begin{proof}
[Proof of Theorem \ref{Theorem 1.1}]From Theorem \ref{Theorem 7.1} and Lemma
\ref{Lemma 7.1} it follows that there is a global solution $u\in$
$\mathbf{C}\left(  [0,\infty);\mathbf{H}^{1}\right)  $ to (\ref{1.1})
satisfying the estimate
\begin{equation}
\left\Vert u\right\Vert _{\mathbf{X}_{\infty}}<K\varepsilon. \label{7.39}%
\end{equation}
In particular, from (\ref{7.28}) via (\ref{3.4}) we get the uniform estimate
(\ref{1.3}). We now study the asymptotics of $w\left(  t,k\right)  $ as
$t\rightarrow\infty$. Integrating (\ref{7.3}) in time we get
\[
\left\vert w\left(  t,k\right)  \right\vert ^{2}-\left\vert w\left(
s,k\right)  \right\vert ^{2}=O\left(  \varepsilon^{3}s^{\beta-\frac{1}{4}%
}\right)
\]
for all $t>s>1.$ The last estimate shows that $\left\vert w\left(  t,k\right)
\right\vert $ is a Cauchy sequence. Then, there exists a limit $\Xi
\in\mathbf{L}^{\infty}$ with the property
\[
\Xi\left(  k\right)  -\left\vert w\left(  t,k\right)  \right\vert
^{2}=O\left(  \varepsilon^{3}t^{\beta-\frac{1}{4}}\right)  .
\]
Since $w\left(  t,k\right)  $ is even or odd, $\Xi\left(  k\right)  $ is an
even function. Next, we rewrite equation (\ref{7.32}) as%
\begin{align*}
i\partial_{t}w  &  =\lambda t^{-1}\Xi\left(  k\right)  w\left(  k\right)
+O\left(  \varepsilon^{3}\left\Vert w\left(  k\right)  \right\Vert
_{\mathbf{L}^{\infty}}t^{\beta-\frac{5}{4}}\right)  +O\left(  \varepsilon
^{3}t^{\beta-\frac{5}{4}}\right) \\
&  =\lambda t^{-1}\Xi\left(  k\right)  w\left(  k\right)  +O\left(
\varepsilon^{3}t^{\beta-\frac{5}{4}}\right)  ,
\end{align*}
where in the last equality we used (\ref{7.39}) to control $\left\Vert
w\left(  k\right)  \right\Vert _{\mathbf{L}^{\infty}}.$ Putting $w\left(
t,k\right)  =v\left(  t,k\right)  e^{-i\lambda\Xi\left(  k\right)  \log t},$
we exclude the resonant nonlinear term $\lambda t^{-1}\Xi\left(  k\right)
w\left(  k\right)  ,$ and we are left with the equation for $v$
\[
i\partial_{t}v=O\left(  \varepsilon^{3}t^{\beta-\frac{5}{4}}\right)  .
\]
Integrating in time the last equation we get
\[
v\left(  t,k\right)  -v\left(  s,k\right)  =O\left(  \varepsilon^{3}%
s^{\beta-\frac{1}{4}}\right)
\]
for all $t>s>1.$ Thus $v\left(  t,k\right)  $ is a Cauchy sequence, and hence,
there exists $v_{+}\in\mathbf{L}^{\infty}$ such that $v_{+}\left(  k\right)
-v\left(  t,k\right)  =O\left(  \varepsilon^{3}t^{\beta-\frac{1}{4}}\right)
.$ Therefore, we get the asymptotics for $w$
\[
w\left(  t,k\right)  =v_{+}\left(  k\right)  e^{-i\lambda\Xi\left(  k\right)
\log t}+O\left(  \varepsilon t^{\beta-\frac{1}{4}}\right)  ,
\]
as $t\rightarrow\infty,$ uniformly on $k\in\mathbb{R}$. Moreover, noting that
$\Xi\left(  k\right)  =\lim_{t\rightarrow\infty}\left\vert w\left(
t,k\right)  \right\vert ^{2}=\lim_{t\rightarrow\infty}\left\vert v\left(
t,k\right)  \right\vert ^{2}=\left\vert v_{+}\right\vert ^{2},$ we obtain%
\[
w\left(  t,k\right)  =v_{+}\left(  k\right)  e^{-i\lambda\left\vert
v_{+}\right\vert ^{2}\log t}+O\left(  \varepsilon t^{\beta-\frac{1}{4}%
}\right)  ,
\]
Since $\Xi\left(  k\right)  $ is even, $v$ has the same parity as $w,$ and
thus $v_{+}$ is either even or odd. We now conclude as follows. Relations
(\ref{7.28}), (\ref{7.40}) and (\ref{7.39}) imply%
\[
u\left(  t\right)  =M{\mathcal{D}}_{t}\left(  T\left(  \left\vert x\right\vert
\right)  w\left(  x\right)  +R\left(  \left\vert x\right\vert \right)
w\left(  -x\right)  \right)  +O\left(  \varepsilon t^{\beta-\frac{3}{4}%
}\right)  ,
\]
as $t\rightarrow\infty,$ uniformly with respect to $x\in\mathbb{R}.$ Hence
\[
u\left(  t\right)  =M{\mathcal{D}}_{t}w_{+}e^{-i\lambda\left\vert
v_{+}\right\vert ^{2}\log t}+O\left(  \varepsilon t^{\beta-\frac{3}{4}%
}\right)  ,
\]
with $w_{+}=\left(  T\left(  \left\vert x\right\vert \right)  v_{+}\left(
x\right)  +R\left(  \left\vert x\right\vert \right)  v_{+}\left(  -x\right)
\right)  $. Also we note that by (\ref{3.4}) and (\ref{5.24}) $\left\vert
v_{+}\right\vert ^{2}=\left\vert w_{+}\right\vert ^{2}.$ Therefore, the
asymptotic formula (\ref{1.2}) follows. Theorem \ref{Theorem 1.1} is proved.
\end{proof}

\section{\label{S3}Jost solutions.}

In this Section we expose some properties and estimates for the Jost solutions
that are involved in the proof of the main result. Assume that $V\in
\mathbf{L}^{1,1}.$ Let
\[
m_{\pm}\left(  x,k\right)  =e^{\mp ikx}f_{\pm}\left(  x,k\right)  .
\]
By Lemma 1 of \cite{DeiftTrubowitz} (see page 130), the functions $m_{\pm
}\left(  x,k\right)  $ are the unique solutions of the Volterra integral
equations%
\begin{equation}
m_{+}\left(  x,k\right)  =1+\int_{x}^{\infty}K\left(  y-x,k\right)  V\left(
y\right)  m_{+}\left(  y,k\right)  dy \label{3.24}%
\end{equation}
and
\[
m_{-}\left(  x,k\right)  =1+\int_{-\infty}^{x}K\left(  x-y,k\right)  V\left(
y\right)  m_{-}\left(  y,k\right)  dy,
\]
respectively, where $K\left(  x,k\right)  =\int_{0}^{x}e^{2ikz}dz.$ We need
the following estimates (see \cite{DeiftTrubowitz}, \cite{Ivan}).

\begin{proposition}
\label{Proposition 3.1} Suppose that $V\in\mathbf{L}^{1,1}\left(
\mathbb{R}\right)  .\ $Then,
\begin{equation}
\left\vert m_{\pm}\left(  x,k\right)  -1\right\vert \leq C\left\langle
k\right\rangle ^{-1}\left\langle x\right\rangle ,\text{ for all }%
x,k\in\mathbb{R}\text{.} \label{2.44}%
\end{equation}
If $V\in\mathbf{L}^{1,2+\delta}\left(  \mathbb{R}\right)  ,$ for some
$\delta\geq0,$ then, for all $k\in\mathbb{R}$ and $\pm x\geq0,$ we have%
\begin{equation}
\left\vert m_{\pm}\left(  x,k\right)  -1\right\vert \leq C\left\langle
k\right\rangle ^{-1}\left\langle x\right\rangle ^{-1-\delta}, \label{3.2}%
\end{equation}%
\begin{equation}
\left\vert \partial_{k}m_{\pm}\left(  x,k\right)  \right\vert \leq
C\left\langle k\right\rangle ^{-1}\left\langle x\right\rangle ^{-\delta}.
\label{3.5}%
\end{equation}

\end{proposition}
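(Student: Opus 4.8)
The plan is to solve the Volterra equation \eqref{3.24} by Picard iteration and to read off all three bounds from explicit estimates on the kernel $K(z,k)=\int_0^z e^{2ikt}\,dt$. First I would reduce $m_-$ to $m_+$: the substitution $x\mapsto -x$, $y\mapsto -y$ turns the integral equation for $m_-$ associated with $V$ into the equation \eqref{3.24} for $m_+$ associated with the reflected potential $\tilde V(x)=V(-x)$, evaluated at $-x$. Since $\|\tilde V\|_{\mathbf{L}^{1,s}}=\|V\|_{\mathbf{L}^{1,s}}$ and the hypothesis ``$+x\ge0$'' for $m_+$ becomes ``$-x\ge0$'' for $m_-$, every estimate proved for $m_+$ transfers verbatim. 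The two kernel bounds I would establish at the outset are, for $z\ge0$,
\[
|K(z,k)|\le\min\!\Big(z,\tfrac1{|k|}\Big)\le\frac{\sqrt2\,\langle z\rangle}{\langle k\rangle},\qquad |\partial_kK(z,k)|\le\min\!\Big(z^2,\tfrac{z}{|k|}+\tfrac1{k^2}\Big)\le\frac{C\,z\langle z\rangle}{\langle k\rangle};
\]
the first follows from $|K|\le z$ and $|K|=|(e^{2ikz}-1)/2ik|\le1/|k|$, the second from $\partial_kK(z,k)=\int_0^z 2it\,e^{2ikt}\,dt$ together with one integration by parts. The passage from the $\min$ to the $\langle k\rangle^{-1}$ form is where uniformity down to $k=0$ is secured, and it is the only place the two competing bounds on $K$ are both used.

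For \eqref{2.44} I would write $m_+=\sum_{n\ge0}g_n$ with $g_0\equiv1$ and $g_{n+1}(x,k)=\int_x^\infty K(y-x,k)V(y)g_n(y,k)\,dy$. When $x\ge0$ one has $y-x\le y$, hence $\langle y-x\rangle\le\langle y\rangle$, and the inductive bound $|g_n(x,k)|\le(n!)^{-1}\Phi(x)^n$ with $\Phi(x)=\frac{\sqrt2}{\langle k\rangle}\int_x^\infty\langle y\rangle|V(y)|\,dy$ closes by the telescoping identity $\int_x^\infty(-\Phi'(y))\Phi(y)^n/n!\,dy=\Phi(x)^{n+1}/(n+1)!$; summing gives $|m_+-1|\le e^{\Phi(x)}-1\le C\langle k\rangle^{-1}\|V\|_{\mathbf{L}^{1,1}}$, which is \eqref{2.44} for $x\ge0$. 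For $x<0$ I would split the $y$-integration at $0$: on $y\ge0$ the iterate is already uniformly bounded, so the single Peetre factor $\langle y-x\rangle\le 2\langle x\rangle\langle y\rangle$ produces exactly one power of $\langle x\rangle$ while keeping the $\mathbf{L}^{1,1}$ weight; on $x\le y<0$ one has $0\le y-x\le|x|$, whence $\langle y-x\rangle\le\langle x\rangle$, so again only one power of $\langle x\rangle$ appears and $g_n$ supplies the integrable weight $\langle y\rangle$. Tracking this through the induction yields $|g_n(x,k)|\le C^n\langle x\rangle(n!)^{-1}\langle k\rangle^{-n}\|V\|_{\mathbf{L}^{1,1}}^n$ and hence \eqref{2.44}.

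The improved decay \eqref{3.2} comes from the same series once $x\ge0$ (for $m_+$): in every tail integral I would extract decay through $\langle y\rangle\le\langle y\rangle^{2+\delta}\langle x\rangle^{-1-\delta}$ for $y\ge x\ge0$, so that $\Phi(x)\le C\langle x\rangle^{-1-\delta}\langle k\rangle^{-1}\|V\|_{\mathbf{L}^{1,2+\delta}}$ and already the leading term $g_1$ carries the full factor $\langle k\rangle^{-1}\langle x\rangle^{-1-\delta}$, the higher terms being lower order. In particular this gives the uniform bound $|m_+|\le C$ for $x\ge0$.

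For the derivative \eqref{3.5} I would differentiate \eqref{3.24} in $k$, obtaining the Volterra equation
\[
\partial_km_+(x,k)=I(x,k)+\int_x^\infty K(y-x,k)V(y)\,\partial_km_+(y,k)\,dy,\qquad I(x,k)=\int_x^\infty\partial_kK(y-x,k)V(y)m_+(y,k)\,dy,
\]
with the \emph{same} kernel as before. Using $|m_+|\le C$ for $x\ge0$, the bound on $\partial_kK$ above, and $z\langle z\rangle\le\langle y\rangle^2$ for $z=y-x\le y$, I would estimate $|I(x,k)|\le C\langle k\rangle^{-1}\langle x\rangle^{-\delta}\|V\|_{\mathbf{L}^{1,2+\delta}}$ exactly as for \eqref{3.2}; the homogeneous part is then controlled by the identical telescoping iteration, whose contraction factor carries an extra $\langle k\rangle^{-1}$, giving \eqref{3.5}. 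I expect the main obstacle to be the weight bookkeeping rather than any single inequality: securing the uniform $\langle k\rangle^{-1}$ through $k=0$ forces one to alternate between the two bounds on $K$ and on $\partial_kK$, while keeping the power of $\langle x\rangle$ equal to one in \eqref{2.44}, and extracting genuine decay in \eqref{3.2} and \eqref{3.5}, requires the region split at $y=0$ together with the moment-trading $\langle y\rangle^{a}\le\langle y\rangle^{2+\delta}\langle x\rangle^{a-2-\delta}$, so that no spurious higher moment of $V$ beyond the stated ones is consumed.
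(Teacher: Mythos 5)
Your proof is correct: the kernel bounds $|K(z,k)|\le C\langle z\rangle\langle k\rangle^{-1}$ and $|\partial_k K(z,k)|\le Cz\langle z\rangle\langle k\rangle^{-1}$, the factorial telescoping of the Neumann series, the split at $y=0$ with $\langle y-x\rangle\le\langle x\rangle$ on $x\le y<0$ and the Peetre inequality on $y\ge 0$ (which is what keeps a single power of $\langle x\rangle$ in (\ref{2.44})), the moment trade $\langle y\rangle^{a}\le\langle y\rangle^{2+\delta}\langle x\rangle^{a-2-\delta}$ for $0\le x\le y$, and the differentiated Volterra equation for (\ref{3.5}) all hold as claimed. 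Note that the paper gives no proof of Proposition \ref{Proposition 3.1} at all — it imports these estimates from \cite{DeiftTrubowitz} and \cite{Ivan} — and your argument is precisely the standard Volterra-iteration proof found in those references (Lemma 1 of \cite{DeiftTrubowitz} for (\ref{2.44}), with the weighted refinements for (\ref{3.2}) and (\ref{3.5})), so it follows essentially the same route the paper relies on.
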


We also need the following.

\begin{proposition}
Assume that $V\in\mathbf{L}^{1,2+\delta}\left(  \mathbb{R}\right)  .$ In
addition, suppose that there is a partition (\ref{3.23}) such that each part
$\left\langle \cdot\right\rangle ^{2+\delta}V_{j}\in W^{1,1}\left(
I_{j}\right)  .$ Then, for all $k\in\mathbb{R}$ and $\pm x\geq0,$ estimates%
\begin{equation}
\left\vert \partial_{x}m_{\pm}\left(  x,k\right)  \right\vert \leq
C\left\langle k\right\rangle ^{-1}\left\langle x\right\rangle ^{-2-\delta
},\text{ \ } \label{3.6}%
\end{equation}
and%
\begin{equation}
\left\vert \partial_{k}\partial_{x}m_{\pm}\left(  x,k\right)  \right\vert \leq
C\left\langle k\right\rangle ^{-1}\left\langle x\right\rangle ^{-1-\delta}.
\label{3.7}%
\end{equation}
are valid.
\end{proposition}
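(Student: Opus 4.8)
The plan is to treat $m_+$ on $x\ge 0$ (the bound for $m_-$ on $x\le 0$ follows by repeating the computation on its own Volterra equation). First I would differentiate the integral equation (\ref{3.24}) in $x$. Since $K(0,k)=0$, differentiating the lower limit of integration produces no boundary term, and $\partial_x K(y-x,k)=-e^{2ik(y-x)}$, so that $\partial_x m_+(x,k)=-\int_x^\infty e^{2ik(y-x)}V(y)m_+(y,k)\,dy$. The whole argument then rests on estimating this oscillatory integral and its $k$-derivative.

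The key preliminary observation is that the hypothesis $\langle\cdot\rangle^{2+\delta}V_j\in W^{1,1}(I_j)$ gives more than weighted integrability: by the one-dimensional embedding $W^{1,1}\hookrightarrow\mathbf{L}^\infty$, each $\langle\cdot\rangle^{2+\delta}V_j$ is bounded, whence the pointwise bound $|V(y)|\le C\langle y\rangle^{-2-\delta}$; moreover, writing $\langle y\rangle^{2+\delta}\partial_y V_j=\partial_y(\langle y\rangle^{2+\delta}V_j)-(2+\delta)\tfrac{y}{\langle y\rangle^2}\langle y\rangle^{2+\delta}V_j$ shows $\langle\cdot\rangle^{2+\delta}\partial_y V\in\mathbf{L}^1$ on each $I_j$. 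For (\ref{3.6}) I would then split into $|k|\le 1$ and $|k|>1$. For $|k|\le 1$ the factor $\langle k\rangle^{-1}$ is bounded below, so it suffices to bound the integral directly: using $|m_+|\le C$ (from (\ref{3.2}) for $x\ge 0$) and $\int_x^\infty|V|\le\langle x\rangle^{-2-\delta}\int_x^\infty\langle y\rangle^{2+\delta}|V|\le C\langle x\rangle^{-2-\delta}$ for $x\ge 0$, one gets $|\partial_x m_+|\le C\langle x\rangle^{-2-\delta}$. For $|k|>1$ I would gain the factor $|k|^{-1}$ by integrating by parts in $y$, writing $e^{2ik(y-x)}=(2ik)^{-1}\partial_y e^{2ik(y-x)}$, piecewise on the partition intervals. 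The boundary term at $y=x$ is $-(2ik)^{-1}V(x)m_+(x,k)$, which the pointwise bound on $V$ controls by $C|k|^{-1}\langle x\rangle^{-2-\delta}$; the finitely many jump terms at the $x_j>x$ are bounded by $C|k|^{-1}$ and are absorbed into $\langle x\rangle^{-2-\delta}$ on the compact set $[0,x_N]$ (and are absent once $x>x_N$); and the remaining integral of $(2ik)^{-1}\partial_y(Vm_+)$ splits into a $\partial_y V$ piece, controlled by the weighted $\mathbf{L}^1$ bound above, and a $V\partial_y m_+$ piece, controlled by the already-established $k$-free bound $|\partial_y m_+|\le C\langle y\rangle^{-2-\delta}$. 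All pieces are $\le C|k|^{-1}\langle x\rangle^{-2-\delta}$.

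For (\ref{3.7}) I would differentiate the formula for $\partial_x m_+$ in $k$, obtaining $\partial_k\partial_x m_+=-\int_x^\infty\big(2i(y-x)e^{2ik(y-x)}Vm_+ + e^{2ik(y-x)}V\partial_k m_+\big)\,dy$. The second term already carries a $\langle k\rangle^{-1}$: by (\ref{3.5}) and $\int_x^\infty|V|\langle y\rangle^{-\delta}\le C\langle x\rangle^{-2-\delta}$ it is $\le C\langle k\rangle^{-1}\langle x\rangle^{-1-\delta}$. The first term I would again treat by cases. Directly, using $0\le y-x\le\langle y\rangle$ and $\int_x^\infty\langle y\rangle|V|\le C\langle x\rangle^{-1-\delta}$, it is $\le C\langle x\rangle^{-1-\delta}$, which settles $|k|\le 1$; for $|k|>1$ I integrate by parts once more, now with the extra weight $y-x$, which makes the $y=x$ boundary term vanish, and distribute $\partial_y$ across $(y-x)$, $V$ and $m_+$, each resulting integral being $\le C|k|^{-1}\langle x\rangle^{-1-\delta}$ by the same weighted bounds together with (\ref{3.6}).

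I expect the main obstacle to be the bookkeeping of the boundary contributions in the integration by parts: because $V$ is only piecewise $W^{1,1}$ and may jump at the partition points, one cannot integrate by parts globally. The resolution, and the reason Condition \ref{Cond1} is phrased through a partition, is precisely that the $W^{1,1}$ regularity per piece both supplies the pointwise decay of $V$ needed to control the boundary term at $y=x$ and confines all jump contributions to the finite set $\{x_j\}$, which lives in a compact region where the target weight $\langle x\rangle^{-2-\delta}$ is bounded below. One must also respect the order of proof---establishing the $k$-free bound on $\partial_x m_+$ first, then using it inside the integration by parts for the sharp (\ref{3.6}), and using (\ref{3.6}) in turn for (\ref{3.7})---to avoid circularity.
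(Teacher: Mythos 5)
Your proposal is correct and follows the same overall strategy as the paper: differentiate the Volterra equation (\ref{3.24}) to get $\partial_x m_+(x,k)=-\int_x^\infty e^{2ik(y-x)}V(y)m_+(y,k)\,dy$, estimate directly for $|k|\le 1$, gain the factor $|k|^{-1}$ for $|k|\ge 1$ by piecewise integration by parts on the partition (with Sobolev embedding $W^{1,1}(I_j)\hookrightarrow\mathbf{L}^\infty(I_j)$ controlling the boundary and jump terms), and then differentiate in $k$ and reuse the established bounds to obtain (\ref{3.7}).

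The one organizational difference is where the integration by parts is applied. The paper first splits $m_+=1+(m_+-1)$ and integrates by parts only on the bare-potential integrals $\int e^{2ik(y-x)}V(y)\,dy$ and $\int(y-x)e^{2ik(y-x)}V(y)\,dy$ (this is estimate (\ref{3.28})), disposing of the $(m_+-1)$ and $\partial_k m_+$ pieces directly via (\ref{3.2}) and (\ref{3.5}), which already carry the factor $\langle k\rangle^{-1}$; in this way the $y$-derivative never falls on $m_+$. You instead integrate by parts on the full product $Vm_+$, so the derivative does fall on $m_+$, and you compensate by first proving the $k$-uniform bound $|\partial_y m_+(y,k)|\le C\langle y\rangle^{-2-\delta}$ and feeding it back in --- you correctly flag this ordering as the way to avoid circularity, and your extraction of $\langle\cdot\rangle^{2+\delta}\partial_y V_j\in\mathbf{L}^1(I_j)$ from the hypothesis is exactly what makes the $\partial_y V$ piece work. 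Both routes are valid and of comparable length; the paper's splitting is slightly cleaner in that it needs no bootstrap and no differentiation of $m_+$, while yours is more self-contained in that it uses only (\ref{3.2}), (\ref{3.5}) and the equation itself, at the cost of the extra preliminary step.
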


\begin{proof}
First we prove (\ref{3.6}) for the upper sign. The other case is considered
similarly. From (\ref{3.24}) we obtain%
\begin{equation}
\partial_{x}m_{+}\left(  x,k\right)  =-\int_{x}^{\infty}e^{2ik\left(
y-x\right)  }V\left(  y\right)  m_{+}\left(  y,k\right)  dy. \label{3.25}%
\end{equation}
For $\left\vert k\right\vert \leq1,$ by using (\ref{3.2}) we get
\begin{equation}
\left\vert \partial_{x}m_{+}\left(  x,k\right)  \right\vert \leq C\left\langle
x\right\rangle ^{-2-\delta}\int_{x}^{\infty}\left\langle y\right\rangle
^{2+\delta}\left\vert V\left(  y\right)  \right\vert dy\leq C\left\langle
k\right\rangle ^{-1}\left\langle x\right\rangle ^{-2-\delta}\int_{x}^{\infty
}\left\langle y\right\rangle ^{2+\delta}\left\vert V\left(  y\right)
\right\vert dy. \label{3.29}%
\end{equation}
and hence (\ref{3.6}) follows. Let now $\left\vert k\right\vert \geq1.$ From
(\ref{3.25}) we have%
\begin{equation}
\partial_{x}m_{+}\left(  x,k\right)  =-\int_{x}^{\infty}e^{2ik\left(
y-x\right)  }V\left(  y\right)  dy-\int_{x}^{\infty}e^{2ik\left(  y-x\right)
}V\left(  y\right)  \left(  m_{+}\left(  y,k\right)  -1\right)  dy.
\label{3.26}%
\end{equation}
Using (\ref{3.2}) we estimate
\begin{equation}
\left\vert \int_{x}^{\infty}e^{2ik\left(  y-x\right)  }V\left(  y\right)
\left(  m_{+}\left(  y,k\right)  -1\right)  dy\right\vert \leq C\left\langle
k\right\rangle ^{-1}\left\langle x\right\rangle ^{-2-\delta}\int_{x}^{\infty
}\left\langle y\right\rangle \left\vert V\left(  y\right)  \right\vert dy\leq
C\left\langle k\right\rangle ^{-1}\left\langle x\right\rangle ^{-2-\delta}.
\label{3.27}%
\end{equation}
To control the first term in the right-hand side of (\ref{3.26}) we use
(\ref{3.23}). Suppose that $x\in I_{l},$ for some $l=1,...,N+1.$ Then,
\[
\int_{x}^{\infty}e^{2ik\left(  y-x\right)  }V\left(  y\right)  dy=\int
_{x}^{x_{l}}e^{2ik\left(  y-x\right)  }V_{l}\left(  y\right)  dy+\sum
_{j=l+1}^{N+1}\int_{x_{j-1}}^{x_{j}}e^{2ik\left(  y-x\right)  }V_{j}\left(
y\right)  dy.
\]
Integrating by parts we get
\begin{align*}
\int_{x}^{\infty}e^{2ik\left(  y-x\right)  }V\left(  y\right)  dy  &
=\frac{1}{2ik}\left(  \left.  e^{2ik\left(  y-x\right)  }V_{l}\left(
y\right)  \right\vert _{x}^{x_{l}}-\int_{x}^{x_{l}}e^{2ik\left(  y-x\right)
}\partial_{y}V_{l}\left(  y\right)  dy\right. \\
&  \left.  +\sum_{j=l+1}^{N+1}\left(  \left.  e^{2ik\left(  y-x\right)  }%
V_{j}\left(  y\right)  \right\vert _{x_{j-1}}^{x_{j}}-\int_{x_{j-1}}^{x_{j}%
}e^{2ik\left(  y-x\right)  }\partial_{y}V_{j}\left(  y\right)  dy\right)
\right)  .
\end{align*}
Hence, by Sobolev embedding theorem,
\begin{equation}
\left.
\begin{array}
[c]{c}%
\left\vert \int_{x}^{\infty}e^{2ik\left(  y-x\right)  }V\left(  y\right)
dy\right\vert \leq C\left\langle k\right\rangle ^{-1}\left\langle
x\right\rangle ^{-a}%
%TCIMACRO{\dsum _{j=l}^{N+1}}%
%BeginExpansion
{\displaystyle\sum_{j=l}^{N+1}}
%EndExpansion
\left\Vert \left\langle \cdot\right\rangle ^{a}V_{j}\right\Vert _{\mathbf{L}%
^{\infty}\left(  I_{j}\right)  }+C\left\langle k\right\rangle ^{-1}%
\left\langle x\right\rangle ^{-a}%
%TCIMACRO{\dsum _{j=l}^{N+1}}%
%BeginExpansion
{\displaystyle\sum_{j=l}^{N+1}}
%EndExpansion
\left\Vert \left\langle \cdot\right\rangle ^{a}V_{j}\right\Vert _{W^{1,1}%
\left(  I_{j}\right)  }\\
\leq C\left\langle k\right\rangle ^{-1}\left\langle x\right\rangle ^{-a}%
%TCIMACRO{\dsum _{j=l}^{N+1}}%
%BeginExpansion
{\displaystyle\sum_{j=l}^{N+1}}
%EndExpansion
\left\Vert \left\langle \cdot\right\rangle ^{a}V_{j}\right\Vert _{W^{1,1}%
\left(  I_{j}\right)  }\leq C\left\langle k\right\rangle ^{-1}\left\langle
x\right\rangle ^{-a},
\end{array}
\right.  \label{3.28}%
\end{equation}
for $0\leq a\leq2+\delta.$ Using (\ref{3.27}) and (\ref{3.28}) with
$a=2+\delta$ in (\ref{3.26}) we get (\ref{3.6}) for $\left\vert k\right\vert
\geq1.$

Let us prove (\ref{3.7}) for the upper sign. Differentiating (\ref{3.25}) with
respect to $k$ we have%
\begin{equation}
\left.
\begin{array}
[c]{c}%
\partial_{k}\partial_{x}m_{+}\left(  x,k\right)  =-2i%
%TCIMACRO{\dint \limits_{x}^{\infty}}%
%BeginExpansion
{\displaystyle\int\limits_{x}^{\infty}}
%EndExpansion
e^{2ik\left(  y-x\right)  }\left(  y-x\right)  V\left(  y\right)  \left(
m_{+}\left(  y,k\right)  -1\right)  dy-\\
-%
%TCIMACRO{\dint \limits_{x}^{\infty}}%
%BeginExpansion
{\displaystyle\int\limits_{x}^{\infty}}
%EndExpansion
e^{2ik\left(  y-x\right)  }V\left(  y\right)  \left(  \partial_{k}m_{+}\left(
y,k\right)  \right)  dy-2i%
%TCIMACRO{\dint \limits_{x}^{\infty}}%
%BeginExpansion
{\displaystyle\int\limits_{x}^{\infty}}
%EndExpansion
e^{2ik\left(  y-x\right)  }\left(  y-x\right)  V\left(  y\right)  dy.
\end{array}
\right.  \label{2.71}%
\end{equation}
Using (\ref{3.2}) and (\ref{3.5}) we estimate the first two terms in the
right-hand side of (\ref{2.71}) by $C\left\langle k\right\rangle
^{-1}\left\langle x\right\rangle ^{-2-\delta}.$ Using (\ref{3.28}) with
$V\left(  x\right)  $ replaced with $\left(  y-x\right)  V\left(  y\right)  $,
we control the last term in the right-hand side of (\ref{2.71}) by
$C\left\langle k\right\rangle ^{-1}\left\langle x\right\rangle ^{-1-\delta}.$
Hence, we deduce (\ref{3.7}) in the case $x\geq0.$ The case $x\leq0$ is
treated similarly.
\end{proof}

Equation (\ref{2.5}) can be rewritten in terms of $m_{\pm}\left(  x,k\right)
$ as follows
\begin{equation}
T\left(  k\right)  m_{\mp}\left(  x,k\right)  =R_{\pm}\left(  k\right)
e^{\pm2ikx}m_{\pm}\left(  x,k\right)  +m_{\pm}\left(  x,-k\right)  .
\label{3.1}%
\end{equation}
The coefficients $T\left(  k\right)  $ and $R_{\pm}\left(  k\right)  $ satisfy
the following relations (\cite{DeiftTrubowitz}, pages 144-146):%
\[
\overline{T\left(  k\right)  }=T\left(  -k\right)  ,
\]%
\[
\overline{R_{\pm}\left(  k\right)  }=R_{\pm}\left(  -k\right)
\]%
\begin{equation}
T\left(  k\right)  \overline{R_{-}\left(  k\right)  }+\overline{T\left(
k\right)  }R_{+}\left(  k\right)  =0. \label{i1}%
\end{equation}
\ Moreover, the integral representations%
\begin{equation}
\frac{1}{T\left(  k\right)  }=1-\frac{1}{2ik}\int_{-\infty}^{\infty}V\left(
y\right)  m_{+}\left(  y,k\right)  dy \label{3.14}%
\end{equation}
and%
\begin{equation}
\frac{\text{\ }R_{\pm}\left(  k\right)  }{T\left(  k\right)  }=\frac{1}%
{2ik}\int_{-\infty}^{\infty}e^{\mp2iky}V\left(  y\right)  m_{\mp}\left(
y,k\right)  dy \label{3.13}%
\end{equation}
hold. We observe that if $V\in\mathbf{L}^{1,1}\left(  \mathbb{R}\right)  ,$ by
(\ref{2.44}), (\ref{3.4}), (\ref{3.14}), (\ref{3.13})%
\begin{equation}
\left\vert T\left(  k\right)  -1\right\vert \leq\frac{C}{\left\langle
k\right\rangle } \label{3.18}%
\end{equation}
and%
\begin{equation}
\left\vert R_{\pm}\left(  k\right)  \right\vert \leq\frac{C}{\left\langle
k\right\rangle } \label{3.19}%
\end{equation}
Indeed, we have%
\[
\left\vert T\left(  k\right)  -1\right\vert \leq\frac{\left\vert T\left(
k\right)  \right\vert }{\left\vert k\right\vert }\int_{-\infty}^{\infty
}\left\langle y\right\rangle \left\vert V\left(  y\right)  \right\vert
dy\leq\frac{C}{\left\vert k\right\vert },\text{ for }\left\vert k\right\vert
\geq1,
\]
and%
\[
\left\vert R_{\pm}\left(  k\right)  \right\vert \leq\frac{\left\vert T\left(
k\right)  \right\vert }{\left\vert k\right\vert }\int_{-\infty}^{\infty
}\left\langle y\right\rangle \left\vert V\left(  y\right)  \right\vert
dy\leq\frac{C}{\left\vert k\right\vert },\text{ for }\left\vert k\right\vert
\geq1.
\]
In the following Proposition, we expose some estimates for the coefficients
$T\left(  k\right)  $ and $R_{\pm}\left(  k\right)  $ (see Theorem 2.3 of
\cite{Weder2000}).

\begin{proposition}
\label{Proposition 3.3}Suppose that $V\in\mathbf{L}^{1,3}$ is exceptional.
Then, the estimates
\begin{align}
\left\vert \frac{d}{dk}T\left(  k\right)  \right\vert  &  \leq C\left\langle
k\right\rangle ^{-1}\label{3.8}\\
\left\vert T\left(  k\right)  -T\left(  0\right)  \right\vert  &  \leq
C\left\vert k\right\vert ,\text{ }k\rightarrow0,\label{3.9}\\
\left\vert R_{\pm}\left(  k\right)  -R_{\pm}\left(  0\right)  \right\vert  &
\leq C\left\vert k\right\vert \text{, \ \ }k\rightarrow0, \label{3.10}%
\end{align}
are valid.
\end{proposition}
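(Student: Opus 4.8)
The plan is to extract all three estimates from the integral representations (\ref{3.14}) and (\ref{3.13}); the only genuine difficulty is the prefactor $\frac{1}{2ik}$, which is singular at $k=0$ and must be shown to be removable. Set $A(k)=\int_{-\infty}^{\infty}V(y)m_{+}(y,k)\,dy$ and $B_{\pm}(k)=\int_{-\infty}^{\infty}e^{\mp2iky}V(y)m_{\mp}(y,k)\,dy$, so that $1/T(k)=1-\frac{A(k)}{2ik}$ and $R_{\pm}(k)/T(k)=\frac{B_{\pm}(k)}{2ik}$. The decisive first step is to use exceptionality to prove $A(0)=B_{\pm}(0)=0$. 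At $k=0$ one has $m_{\pm}(y,0)=f_{\pm}(y,0)$, and the stationary equation (\ref{2.6}) gives $f_{\pm}''(\cdot,0)=2Vf_{\pm}(\cdot,0)$, whence $\int V f_{\pm}(\cdot,0)\,dy=\frac12\big[f_{\pm}'(y,0)\big]_{y=-\infty}^{y=+\infty}$. Because $V$ is exceptional, $f_{+}(\cdot,0)$ is bounded and converges to the constants $1$ and $a\neq0$ as $y\to+\infty$ and $y\to-\infty$ (and symmetrically for $f_{-}$); combined with (\ref{3.6}) this forces $f_{\pm}'(y,0)\to0$ at both ends, so $A(0)=\int V f_{+}(\cdot,0)\,dy=0$ and $B_{\pm}(0)=\int V f_{\mp}(\cdot,0)\,dy=0$. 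In the generic case these integrals are nonzero, which is precisely why $T(0)=0$ there; here they vanish, so both singular quotients are regular at the origin.

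Next I would record the reductions among the three bounds. Estimate (\ref{3.9}) follows from (\ref{3.8}) by integration: since $|T'(k)|\le C\langle k\rangle^{-1}\le C$ near $0$, we get $|T(k)-T(0)|\le C|k|$. Likewise (\ref{3.10}) follows once the companion bound $|\frac{d}{dk}R_{\pm}(k)|\le C$ for $|k|\le1$ is established. So everything reduces to differentiating (\ref{3.14}) and (\ref{3.13}) and estimating the derivatives of $T$ and $R_{\pm}$ in the two regimes $|k|\ge1$ and $|k|\le1$. Writing $g:=1/T$, which is finite and nonzero for all real $k$ (using (\ref{3.4}), the independence of the Jost solutions for $k\neq0$, and $T(0)\neq0$ from exceptionality), the bound $|T|\le1$ gives $|T'|=|g'|\,|T|^{2}\le|g'|$, so it suffices to control $g'$.

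For $|k|\ge1$ I would differentiate (\ref{3.14}) to obtain $g'(k)=\frac{A(k)}{2ik^{2}}-\frac{A'(k)}{2ik}$ with $A'(k)=\int V\,\partial_{k}m_{+}\,dy$. The factor $A(k)$ is bounded, since by (\ref{2.44}) the correction $m_{+}-1$ is $O(\langle k\rangle^{-1}\langle y\rangle)$ and $V\in\mathbf{L}^{1,3}$, so $k^{-2}A(k)=O(|k|^{-1})$. For $A'(k)$ I split the $y$-integral: on $y\ge0$ the sharp estimate (\ref{3.5}) (with $\delta=1$) gives $|\partial_{k}m_{+}|\le C\langle k\rangle^{-1}\langle y\rangle^{-1}$, while on $y<0$ a coarse bound obtained by differentiating the Volterra equation (\ref{3.24}) in $k$ and iterating yields at worst polynomial growth in $\langle y\rangle$, which is absorbed by the weight $\langle y\rangle^{3}$ of $\mathbf{L}^{1,3}$; thus $A'$ is bounded and $k^{-1}A'(k)=O(|k|^{-1})$. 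Hence $|T'(k)|\le C|k|^{-1}\le C\langle k\rangle^{-1}$ on $|k|\ge1$. The same computation applied to (\ref{3.13}), using the integration by parts over the fragmentation of $V$ that produced the gain in (\ref{3.28}), gives $|\frac{d}{dk}R_{\pm}(k)|\le C\langle k\rangle^{-1}$ on $|k|\ge1$.

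For $|k|\le1$, using $A(0)=0$ I rewrite $g(k)=1-\frac{1}{2i}\int_{0}^{1}\!\int_{-\infty}^{\infty}V(y)\,\partial_{k}m_{+}(y,sk)\,dy\,ds$; differentiating in $k$ produces $g'(k)=-\frac{1}{2i}\int_{0}^{1}\!\int_{-\infty}^{\infty}s\,V(y)\,\partial_{k}^{2}m_{+}(y,sk)\,dy\,ds$, which is bounded provided $\partial_{k}^{2}m_{+}$ is integrable against $V$. This follows from a second-order-in-$k$ Volterra estimate, analogous to (\ref{3.5}) but costing one further power of $\langle y\rangle$ (hence of size $\langle y\rangle^{1-\delta}=1$ on the good half-line and polynomially growing on the other), together with $V\in\mathbf{L}^{1,3}$; so $|T'(k)|\le C$ on $|k|\le1$. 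Combining the two regimes gives (\ref{3.8}) and therefore (\ref{3.9}). An identical treatment of $B_{\pm}(k)/(2ik)$, now using $B_{\pm}(0)=0$ and noting that the factor $e^{\mp2iky}$ contributes the extra term $\mp2iy\,V(y)m_{\mp}$, again controlled by the weight $\langle y\rangle^{3}$, yields $|\frac{d}{dk}R_{\pm}(k)|\le C$ on $|k|\le1$ and hence (\ref{3.10}). I expect the main obstacle to be exactly this small-$k$ analysis: removing the $\frac{1}{2ik}$ singularity (where exceptionality enters, through $A(0)=B_{\pm}(0)=0$) and controlling the first and second $k$-derivatives of $m_{\pm}$ uniformly down to $k=0$ on the half-line where only the coarse bound (\ref{2.44}) is available — it is here that the full weight $\mathbf{L}^{1,3}$, rather than merely $\mathbf{L}^{1,1}$, is required to make every integral converge.
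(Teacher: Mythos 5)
The paper offers no proof of Proposition \ref{Proposition 3.3} at all: it is quoted from Theorem 2.3 of \cite{Weder2000}. So your proposal cannot be compared with ``the paper's proof''; what it can be compared with is the paper's own proof of the companion estimate (\ref{3.17}), and there the match is close. You use the same three ingredients: removal of the $\frac{1}{2ik}$ singularity in (\ref{3.14}) and (\ref{3.13}) via the exceptionality identity $\int_{-\infty}^{\infty}V(y)m_{\mp}(y,0)\,dy=0$; control of one and two $k$-derivatives of $m_{\pm}$ against the weight $\langle y\rangle^{3}$ of $\mathbf{L}^{1,3}$ (your ``coarse Volterra bounds'' are exactly (\ref{3.50})--(\ref{3.51})); and elementary reductions ($|T'|=|g'|\,|T|^{2}\le|g'|$ for $g=1/T$, the representation $A(k)/(2ik)=\frac{1}{2i}\int_{0}^{1}A'(sk)\,ds$ once $A(0)=0$, splitting into $|k|\le1$ and $|k|\ge1$). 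All of this is sound, and it yields a self-contained proof where the paper has only a citation.

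The one genuine defect is in your justification of the key identity $A(0)=B_{\pm}(0)=0$. You invoke (\ref{3.6}), but (\ref{3.6}) --- like (\ref{3.28}), which you also invoke for $R_{\pm}$ at $|k|\ge1$ --- is proved only under the additional fragmentation hypothesis $\langle\cdot\rangle^{2+\delta}V_{j}\in W^{1,1}(I_{j})$, which Proposition \ref{Proposition 3.3} does not assume; worse, (\ref{3.6}) controls $\partial_{x}m_{+}$ only on the half-line $x\ge0$, whereas the boundary term you must kill in $\int Vf_{+}(\cdot,0)\,dy=\frac{1}{2}\left[f_{+}'(y,0)\right]_{y=-\infty}^{y=+\infty}$ is the one at $y\to-\infty$, precisely where (\ref{3.6}) says nothing about $m_{+}$. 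The repair needs no extra hypothesis: from (\ref{3.24}) at $k=0$ one gets $f_{+}'(x,0)=-\int_{x}^{\infty}V(y)f_{+}(y,0)\,dy$, which converges to $-A(0)$ as $x\to-\infty$ (the integrand is in $\mathbf{L}^{1}$ by (\ref{2.44})); if $A(0)\neq0$ then $f_{+}(x,0)$ would grow linearly as $x\to-\infty$, contradicting $f_{+}(x,0)\to a$ in the exceptional case, so $A(0)=0$, and likewise $B_{\pm}(0)=0$ --- this is also exactly the fact the paper quotes from \cite{aktosun} when proving (\ref{3.17}). The appeal to (\ref{3.28}) is simply unnecessary: $|B_{\pm}(k)|\le C$ and $|B_{\pm}'(k)|\le C$, which follow from (\ref{2.44}) and the crude bound $|\partial_{k}m_{\mp}(y,k)|\le C\langle y\rangle^{2}$ absorbed by $V\in\mathbf{L}^{1,3}$, already give $\left|\frac{d}{dk}R_{\pm}(k)\right|\le C\langle k\rangle^{-1}$ for $|k|\ge1$ by the product rule applied to $R_{\pm}=T\cdot B_{\pm}/(2ik)$. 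With these two repairs your argument is complete and stays within the stated hypothesis $V\in\mathbf{L}^{1,3}$ exceptional.
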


We also need to calculate the limit of $T\left(  k\right)  $ and $R_{\pm
}\left(  k\right)  ,$ as $k\rightarrow0.$ Let
\[
a=\lim_{x\rightarrow-\infty}f_{+}\left(  x,0\right)  .
\]
We have the following result (see Theorem 2.1 of \cite{Klaus} or
\cite{Weder2000}, page 52).

\begin{proposition}
\label{Proposition 3.2}Suppose that the potential $V\in\mathbf{L}^{1,3}\left(
\mathbb{R}\right)  $ is exceptional. Then, we get%
\begin{equation}
T\left(  k\right)  =\frac{2a}{1+a^{2}}+O\left(  k\right)  ,\text{\ as
}k\rightarrow0, \label{3.3}%
\end{equation}%
\begin{equation}
R_{\pm}\left(  k\right)  =\pm\frac{1-a^{2}}{1+a^{2}}+O\left(  k\right)
,\ \text{ as }k\rightarrow0. \label{3.12}%
\end{equation}

\end{proposition}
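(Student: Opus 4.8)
The plan is to reduce the statement to the determination of the two constants $T(0):=\lim_{k\to0}T(k)$ and $R_\pm(0):=\lim_{k\to0}R_\pm(k)$: the limits exist by Proposition \ref{Proposition 3.3} (its bounds (\ref{3.9}) and (\ref{3.10}) express exactly the convergence $T(k)\to T(0)$, $R_\pm(k)\to R_\pm(0)$), and once their values are known the $O(k)$ remainders in (\ref{3.3}) and (\ref{3.12}) are nothing but (\ref{3.9}) and (\ref{3.10}) themselves. First I note two elementary facts that make the subsequent algebra legitimate: the relations $\overline{T(k)}=T(-k)$ and $\overline{R_\pm(k)}=R_\pm(-k)$ (displayed just before (\ref{i1})) force $T(0)$ and $R_\pm(0)$ to be \emph{real}, and for an exceptional potential $T(0)\neq0$, so that dividing by $T(0)$ is allowed.

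The first substantive step exploits the exceptional hypothesis to pin down the zero-energy Jost functions. Since $[f_+(\cdot,0),f_-(\cdot,0)]=0$ and both $f_\pm(\cdot,0)$ solve the same second-order equation (\ref{2.6}) at $k=0$ without vanishing identically, they are proportional: $f_-(x,0)=c\,f_+(x,0)$ for a constant $c$. Letting $x\to-\infty$, where $f_-(x,0)\to1$ by the normalization in (\ref{2.6}) while $f_+(x,0)\to a$, gives $1=ca$, hence
\[
f_-(x,0)=\tfrac{1}{a}\,f_+(x,0).
\]
It is precisely here that $a\neq0$ is needed.

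Next I pass to the limit $k\to0$ in the scattering relations (\ref{2.5}), which is justified because the Jost functions are continuous in $k$ (via the Volterra series underlying Proposition \ref{Proposition 3.1}, so $f_\pm(x,\pm k)\to f_\pm(x,0)$) and the coefficients converge as above. The upper-sign relation gives $f_+(x,0)=\frac{R_-(0)+1}{T(0)}f_-(x,0)$, and substituting the proportionality yields $aT(0)=1+R_-(0)$; the lower-sign relation gives $f_-(x,0)=\frac{R_+(0)+1}{T(0)}f_+(x,0)$, yielding $T(0)=a\bigl(1+R_+(0)\bigr)$. Evaluating (\ref{i1}) at $k=0$ with all quantities real gives $T(0)\bigl(R_-(0)+R_+(0)\bigr)=0$, so $R_-(0)=-R_+(0)$ since $T(0)\neq0$. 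Writing $r:=R_+(0)=-R_-(0)$, the two scattering identities read $aT(0)=1-r$ and $T(0)=a(1+r)$; eliminating $T(0)$ gives $a^{2}(1+r)=1-r$, whence $r=\frac{1-a^{2}}{1+a^{2}}$ (this is (\ref{3.12})) and then $T(0)=a(1+r)=\frac{2a}{1+a^{2}}$ (this is (\ref{3.3})). Combining with (\ref{3.9})--(\ref{3.10}) produces the claimed expansions. (As a consistency check, $|T(0)|^{2}+|R_\pm(0)|^{2}=\frac{4a^{2}+(1-a^{2})^{2}}{(1+a^{2})^{2}}=1$, in agreement with (\ref{3.4}).)

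I expect the main obstacle to be the justification in the third step rather than the algebra: one must argue carefully that the identities (\ref{2.5}), which hold only for $k\neq0$, admit a genuine $k\to0$ limit, i.e. that $f_\pm(x,\pm k)\to f_\pm(x,0)$ and that $R_\mp(k)/T(k)$, $1/T(k)$ converge — the latter resting on the existence of the limits and on $T(0)\neq0$. Conceptually, the crux is the proportionality $f_-(\cdot,0)=a^{-1}f_+(\cdot,0)$, which is where the exceptional assumption enters and which is exactly what turns the two scattering relations into a solvable $2\times2$ linear system for $T(0)$ and $r$.
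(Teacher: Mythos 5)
You are in the ``correct but different route'' case almost by default: the paper offers no proof of Proposition \ref{Proposition 3.2} at all --- it is quoted from Theorem 2.1 of \cite{Klaus} (or \cite{Weder2000}, p.~52) --- so your self-contained derivation is genuinely new relative to the text. Its structure and algebra are correct: the vanishing Wronskian makes the two zero-energy Jost solutions proportional, and the normalizations in (\ref{2.6}) fix the constant, $f_-(x,0)=a^{-1}f_+(x,0)$ (in fact this step \emph{proves} that $a$ exists and is nonzero, since $f_-(x,0)=c\,f_+(x,0)$ with $c\neq0$ forces $f_+(x,0)\to 1/c$ as $x\to-\infty$; you do not need to assume it). Letting $k\to0$ in the scattering relations --- cleanest in the multiplied-through form (\ref{3.1}), i.e. $T(k)f_\mp(x,k)=R_\pm(k)f_\pm(x,k)+f_\pm(x,-k)$, which avoids dividing by $T(k)$ at that stage --- gives $aT(0)=1+R_-(0)$ and $T(0)=a\left(1+R_+(0)\right)$; reality of the limits plus (\ref{i1}) and $T(0)\neq0$ give $R_-(0)=-R_+(0)$, and the resulting $2\times2$ system yields exactly the stated values, with Proposition \ref{Proposition 3.3} supplying both the existence of the limits and the $O(k)$ rates.

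The one point you should not gloss over is the input $T(0)\neq0$. It is doing essential work: without it, the system consisting of the two limit relations, unitarity (\ref{3.4}) and (\ref{i1}) still admits the spurious solution $T(0)=0$, $R_\pm(0)=-1$, which is precisely the generic-case behavior, so no amount of algebra can eliminate that branch. Moreover, in \cite{Klaus} and \cite{Weder2000} the nonvanishing of $T(0)$ is established as part of the very theorem you are reproving, so as written your argument borrows a qualitative piece of its own conclusion. The gap is small and can be closed with ingredients already in the paper: by (\ref{3.14}), $1/T(k)=1-\frac{1}{2ik}\int V(y)m_+(y,k)\,dy$; in the exceptional case $\int V(y)m_+(y,0)\,dy=0$ (this identity, cited from \cite{aktosun}, is used in the paper's proof of (\ref{3.17}), and it also follows from your proportionality step, since boundedness of $m_+(x,0)$ as $x\to-\infty$ forces that integral to vanish in the Volterra equation (\ref{3.24})); then the bound (\ref{3.50}) gives $\left\vert\frac{1}{2ik}\int V(y)\left(m_+(y,k)-m_+(y,0)\right)dy\right\vert\leq C\int\left\vert V(y)\right\vert\left\langle y\right\rangle^{2}dy<\infty$, so $1/T(k)$ stays bounded near $k=0$ and hence $T(0)\neq0$. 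With that inserted, your proof is complete.
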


Finally, we present the following proposition.

\begin{proposition}
Suppose that the potential $V\in\mathbf{L}^{1,3}\left(  \mathbb{R}\right)  $
is exceptional. Then,
\begin{equation}
\left\vert \frac{d}{dk}R_{\pm}\left(  k\right)  \right\vert \leq C\left\langle
k\right\rangle ^{-1}. \label{3.17}%
\end{equation}

\end{proposition}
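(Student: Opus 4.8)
The plan is to exploit the factorization $R_\pm(k)=T(k)\cdot\frac{R_\pm(k)}{T(k)}$ together with the integral representation (\ref{3.13}), and to differentiate by the product rule
\[
\frac{d}{dk}R_\pm(k)=\left(\frac{d}{dk}T(k)\right)\frac{R_\pm(k)}{T(k)}+T(k)\,\frac{d}{dk}\!\left(\frac{R_\pm(k)}{T(k)}\right).
\]
For the first term I would use (\ref{3.8}) to bound $\left|\frac{d}{dk}T(k)\right|\le C\langle k\rangle^{-1}$, the fact that $|T(k)|\ge c>0$ for all $k\in\mathbb{R}$ (which holds since $T(k)\ne0$ for $k\ne0$, $T(0)=\frac{2a}{1+a^2}\ne0$ by (\ref{3.3}) in the exceptional case, $T$ is continuous, and $T\to1$ at infinity by (\ref{3.18})), and (\ref{3.19}), giving $\left|\frac{R_\pm}{T}\right|\le C\langle k\rangle^{-1}$. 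Hence the first term is $O(\langle k\rangle^{-2})$. For the second term, since $|T(k)|\le C$ by (\ref{3.18}), the whole problem reduces to proving $\left|\frac{d}{dk}\left(\frac{R_\pm}{T}\right)\right|\le C\langle k\rangle^{-1}$.

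Writing $H_\pm(k)=\int_{-\infty}^{\infty}e^{\mp2iky}V(y)m_\mp(y,k)\,dy$, so that $\frac{R_\pm}{T}=\frac{H_\pm(k)}{2ik}$ by (\ref{3.13}), I would treat large and small $k$ separately. For $|k|\ge1$ I would differentiate under the integral sign,
\[
H_\pm'(k)=\mp2i\int_{-\infty}^{\infty}y\,e^{\mp2iky}V(y)m_\mp(y,k)\,dy+\int_{-\infty}^{\infty}e^{\mp2iky}V(y)\,\partial_k m_\mp(y,k)\,dy,
\]
and bound $H_\pm$ and $H_\pm'$ uniformly, using (\ref{2.44}) and (\ref{3.5}) together with the moment bounds afforded by $V\in\mathbf{L}^{1,3}$; then $\frac{d}{dk}\frac{H_\pm}{2ik}=\frac{H_\pm'}{2ik}-\frac{H_\pm}{2ik^2}$ is $O(|k|^{-1})$ on $|k|\ge1$.

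The delicate part is the region $|k|\le1$, where the factors $1/k$ and $1/k^2$ must be shown harmless. The key observation is that in the exceptional case $T(0)\ne0$, so $R_\pm(0)/T(0)$ is finite; since $\frac{R_\pm}{T}=\frac{H_\pm(k)}{2ik}$, this forces the cancellation $H_\pm(0)=\int_{-\infty}^{\infty}V(y)m_\mp(y,0)\,dy=0$. Writing $H_\pm(k)=\int_0^k H_\pm'(s)\,ds$ then gives
\[
\frac{d}{dk}\!\left(\frac{H_\pm(k)}{2ik}\right)=\frac{1}{2ik^2}\int_0^k\bigl(H_\pm'(k)-H_\pm'(s)\bigr)\,ds,
\]
so a Lipschitz bound on $H_\pm'$ near the origin (equivalently, a bound on $H_\pm''$) yields the required boundedness, and hence (\ref{3.17}) on $|k|\le1$ as well.

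I expect the main obstacle to be exactly this last step. Establishing that $H_\pm'$ is Lipschitz at the origin requires differentiating $H_\pm$ twice, and thus bounds on $\partial_k m_\mp$ and $\partial_k^2 m_\mp$ valid on the \emph{whole} line (the estimates (\ref{3.2}), (\ref{3.5}) are stated only for $\pm x\ge0$), controlled together with the $y$- and $y^2$-weighted integrands by the moment conditions $\int_{-\infty}^{\infty}\langle y\rangle^3|V(y)|\,dy<\infty$ coming from $V\in\mathbf{L}^{1,3}$. This is precisely where the exceptional hypothesis is indispensable: in the generic case $T(0)=0$ and $H_\pm(0)\ne0$, the cancellation fails, and $R_\pm/T$ is genuinely singular at $k=0$.
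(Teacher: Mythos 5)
Your proposal is correct and follows essentially the same route as the paper: both factor $R_\pm$ through (\ref{3.13}), exploit the exceptional-case cancellation $\int_{-\infty}^{\infty}V(y)m_{\mp}(y,0)\,dy=0$ to tame the $1/k$ singularity, and reduce everything to uniform bounds on two $k$-derivatives of $\int e^{\mp2iky}V(y)m_{\mp}(y,k)\,dy$, which require whole-line estimates $\left\vert \partial_{k}m_{+}\right\vert \leq C\left\langle x\right\rangle ^{2}$ and $\left\vert \partial_{k}^{2}m_{+}\right\vert \leq C\left\langle x\right\rangle ^{3}$ obtained from the Volterra iteration as in Deift--Trubowitz. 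The only cosmetic differences are that you derive the cancellation from the finiteness of $R_\pm(0)/T(0)$ (the paper cites it directly) and you bound the first product-rule term via $\inf\vert T\vert>0$ rather than via the cancellation itself; both are sound.
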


\begin{proof}
Similarly to the proof of (\ref{2.44}) given in Lemma 1
of~\cite{DeiftTrubowitz}, we show%
\begin{equation}
\left\vert \partial_{k}m_{+}\left(  x,k\right)  \right\vert \leq C\left\langle
x\right\rangle ^{2} \label{3.50}%
\end{equation}
and%
\begin{equation}
\left\vert \partial_{k}^{2}m_{+}\left(  x,k\right)  \right\vert \leq
C\left\langle x\right\rangle ^{3}, \label{3.51}%
\end{equation}
for any $x\in\mathbb{R}$. Using (\ref{2.44}), (\ref{3.50}) and (\ref{3.51}) we
see that
\begin{equation}
\left\vert \frac{d^{j}}{dk^{j}}\int_{-\infty}^{\infty}e^{\mp2iky}V\left(
y\right)  m_{\mp}\left(  y,k\right)  dy\right\vert \leq C,\text{ \ }j=0,1,2.
\label{3.53}%
\end{equation}
In the case of exceptional potentials $\int_{-\infty}^{\infty}V\left(
y\right)  m_{\mp}\left(  y,0\right)  dy=0$ (see (\cite{aktosun})). Then,
\begin{equation}
\left\vert \frac{d}{dk}\left(  \frac{1}{k}\int_{-\infty}^{\infty}e^{\mp
2iky}V\left(  y\right)  m_{\mp}\left(  y,k\right)  dy\right)  \right\vert
\leq\frac{C}{\left\langle k\right\rangle },\text{ \ }k\in\mathbb{R}\text{.}
\label{3.16}%
\end{equation}
Hence, multiplying equation (\ref{3.13}) by $T\left(  k\right)  ,$ derivating
the resulting relation and using (\ref{3.4}), (\ref{3.8}), (\ref{3.53}) and
(\ref{3.16}) we prove (\ref{3.17}).
\end{proof}

\section{\label{S6}Regularity and weighted estimates for $\mathcal{F}$ and
$\mathcal{F}^{-1}$.}

This section is dedicated to the proof of Lemmas \ref{L3} and \ref{L4}. In all
of the following results we only ask the properties of the Jost solutions that
we use in the proof to be true.\ It is straightforward to check that under
Condition \ref{Cond1} all the required properties (results of Section
\ref{S3}) are satisfied.

We need to control the $\mathbf{L}^{2}-$norm of the expressions $\partial
_{k}\mathcal{F}\phi,$ $x\mathcal{U}\left(  t\right)  \phi$ and $\partial
_{x}\left(  \mathcal{U}\left(  t\right)  \phi\right)  .$ We prove the following.

\begin{lemma}
\label{Lemma 7.2}Suppose that the estimates (\ref{3.4}), (\ref{3.2}) with
$\delta=0,$ (\ref{3.5}) with $\delta>1/2,$ (\ref{3.18}), (\ref{3.19}),
(\ref{3.8}), (\ref{3.3}), (\ref{3.12}) and (\ref{3.17}) hold. Also, assume
that $m_{+}\left(  x,0\right)  =m_{-}\left(  -x,0\right)  $. If $a=1,$ let
$\phi\in\mathbf{H}^{0,1}$ be odd and if $a=-1,$ suppose that $\phi$ is even.
Then, relation
\begin{equation}
\left(  \mathcal{F}\phi\right)  \left(  0\right)  =0. \label{7.60}%
\end{equation}
Furthermore
\begin{equation}
\left\Vert \mathcal{F}\phi\right\Vert _{\mathbf{H}^{1}}\leq K_{0}\left\Vert
\phi\right\Vert _{\mathbf{H}^{0,1}}, \label{7.61}%
\end{equation}
for some $K_{0}>0$ is true.\
\end{lemma}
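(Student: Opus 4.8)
The plan is to prove the two assertions separately: the vanishing \eqref{7.60} will follow purely from the parity structure, while the Sobolev bound \eqref{7.61} will come from unitarity of $\mathcal{F}$ on $\mathbf{L}^2$ together with a term-by-term estimate of $\partial_k\mathcal{F}\phi$.

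First I would establish \eqref{7.60}. Recall that $f_\pm(x,0)=m_\pm(x,0)$ solves the real equation $-\tfrac12 f''+Vf=0$ with real data, hence $f_+(\cdot,0)$ is real-valued. As recalled after \eqref{7.26}, for a symmetric exceptional potential $f_+(x,0)=\alpha f_+(-x,0)$ with $\alpha=\pm1$; comparing the limits $x\to+\infty$ (where $f_+\to1$) and $x\to-\infty$ (where $f_+\to a$) gives $\alpha=a$, so $f_+(\cdot,0)$ is even when $a=1$ and odd when $a=-1$. Using the hypothesis $m_+(x,0)=m_-(-x,0)$, that is $f_-(x,0)=f_+(-x,0)$, I take the one-sided limits of $\Psi$ at $k=0$: $\Psi(x,0^+)=T(0)f_+(x,0)$ and $\Psi(x,0^-)=T(0)f_-(x,0)=T(0)f_+(-x,0)$, with $T(0)=a$ real by \eqref{3.3}. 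In either parity case the products $f_+(x,0)\phi(x)$ and $f_+(-x,0)\phi(x)$ are odd functions of $x$, so both one-sided limits of $(\mathcal{F}\phi)(k)$ vanish. This proves \eqref{7.60} and, what is essential for the next step, shows that $\mathcal{F}\phi$ is continuous across $k=0$ with value $0$.

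For \eqref{7.61} I would bound $\|\mathcal{F}\phi\|_{\mathbf{L}^2}$ and $\|\partial_k\mathcal{F}\phi\|_{\mathbf{L}^2}$ separately. The first is immediate: $\mathcal{F}$ is unitary on $\mathbf{L}^2$, so $\|\mathcal{F}\phi\|_{\mathbf{L}^2}=\|\phi\|_{\mathbf{L}^2}\le\|\phi\|_{\mathbf{H}^{0,1}}$. For the derivative I work on $k>0$ (the region $k<0$ is identical with $m_-$ and $T(-k)$), where $\mathcal{F}\phi(k)=(2\pi)^{-1/2}\,\overline{T(k)}\int e^{-ikx}\overline{m_+(x,k)}\phi(x)\,dx$. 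Differentiating under the integral, which is justified by the decay estimates dominating the integrand, produces three groups according to whether $\partial_k$ falls on $\overline{T(k)}$, on $e^{-ikx}$, or on $\overline{m_+}$. The first group carries $\overline{T'(k)}$, bounded by $C\langle k\rangle^{-1}$ via \eqref{3.8}, and the third carries $\partial_k\overline{m_+}$, bounded by $C\langle k\rangle^{-1}\langle x\rangle^{-\delta}$ with $\delta>\tfrac12$ via \eqref{3.5}; writing $m_+=1+(m_+-1)$, using \eqref{3.2} with $\delta=0$, and applying Cauchy–Schwarz against $\langle x\rangle\phi$, each such term is pointwise $\le C\langle k\rangle^{-1}\|\phi\|_{\mathbf{H}^{0,1}}$, hence in $\mathbf{L}^2_k$. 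The only genuinely weight-demanding term is the one where $\partial_k$ hits the exponential, producing a factor $-ix$: its $1$-part equals $-i\,\overline{T(k)}\,\mathcal{F}_0(x\phi)(k)$, controlled in $\mathbf{L}^2_k$ by $\|x\phi\|_{\mathbf{L}^2}\le\|\phi\|_{\mathbf{H}^{0,1}}$ through Plancherel, while its $(m_+-1)$-part is handled using $|x(m_+-1)|\le C\langle k\rangle^{-1}$ from \eqref{3.2} together with Cauchy–Schwarz. Summing the contributions, and repeating verbatim on $k<0$, yields \eqref{7.61}.

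The delicate point, and the reason both for the parity hypotheses and for restricting to exceptional potentials, is the behaviour at the spectral threshold $k=0$. Since $\Psi$ is in general discontinuous there when $a=-1$ (its two half-line limits differ by a sign), a naive derivative of $\mathcal{F}\phi$ would contain a Dirac mass at $k=0$; it is exactly the vanishing \eqref{7.60} that cancels this jump and makes $\partial_k\mathcal{F}\phi$ a genuine $\mathbf{L}^2$ function rather than a distribution. Moreover, all the kernel ingredients stay $C^1$ up to $k=0$ only because the potential is exceptional: $T(0)=a\neq0$, and the bounds \eqref{3.3}, \eqref{3.8}, \eqref{3.9}, \eqref{3.10}, \eqref{3.12}, \eqref{3.17} keep $T$, $R_\pm$ and their derivatives bounded near the origin, so no singularity is generated upon differentiating. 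Thus the hard part will be verifying that the $k>0$ and $k<0$ pieces glue into a single $\mathbf{H}^1$ function through $k=0$, and this is resolved precisely by combining \eqref{7.60} with these small-energy estimates.
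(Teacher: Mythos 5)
Your treatment of (\ref{7.60}) is correct and is essentially the paper's argument in different clothing: the paper evaluates the half-line representation (\ref{7.43}) at $k=0^{\pm}$ using $T(0)=\pm1$, $R_{\pm}(0)=0$ and $m_{-}(x,0)=m_{+}(-x,0)$, while you take the one-sided limits of $\Psi$ directly and invoke the parity of $f_{+}(\cdot,0)$; your observation that the vanishing at $k=0$ is what prevents a Dirac mass when differentiating across the cut-off is exactly the paper's remark that $\partial_{k}$ may be commuted with $\theta(\pm k)$. (Two small caveats: the lemma does not assume $V$ symmetric, only $m_{+}(x,0)=m_{-}(-x,0)$, so the parity of $f_{+}(\cdot,0)$ should be derived from that hypothesis together with $T(0)\neq0$; and the interchange of the limit $k\to0^{\pm}$ with the $x$-integral needs the half-line representation, since on the whole line the only available bound is (\ref{2.44}) and $\langle x\rangle|\phi|$ need not be integrable for $\phi\in\mathbf{H}^{0,1}$.)

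The proof of (\ref{7.61}), however, has a genuine gap. For $k>0$ you differentiate $\mathcal{F}\phi(k)=(2\pi)^{-1/2}\,\overline{T(k)}\int_{\mathbb{R}}e^{-ikx}\overline{m_{+}(x,k)}\phi(x)\,dx$ and estimate term by term using (\ref{3.2}) with $\delta=0$ and (\ref{3.5}) with $\delta>1/2$. But these estimates hold only on the half-line $\pm x\geq0$: for $x<0$ one only has (\ref{2.44}), and in fact $m_{+}(x,k)-1$ does \emph{not} decay as $x\to-\infty$; by (\ref{3.1}) it behaves like $R_{-}(k)T(k)^{-1}e^{-2ikx}+T(k)^{-1}-1$. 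Consequently your bound $|x(m_{+}(x,k)-1)|\leq C\langle k\rangle^{-1}$ is false on $x<0$, and the contribution $\int_{-\infty}^{0}e^{-ikx}\left(\overline{m_{+}(x,k)}-1\right)x\phi(x)\,dx$ cannot be controlled by your method at all (note also that $x\phi\in\mathbf{L}^{2}$ but in general $x\phi\notin\mathbf{L}^{1}$ for $\phi\in\mathbf{H}^{0,1}$, so brute-force absolute bounds are unavailable). The missing idea is the paper's splitting of the $x$-integration at the origin combined with the scattering relation (\ref{3.1}): on $x<0$, $k>0$ one rewrites $T(k)m_{+}(x,k)=R_{-}(k)e^{-2ikx}m_{-}(x,k)+m_{-}(x,-k)$, so that only $m_{-}$ (well behaved on $x\leq0$) appears, at the price of reflection terms carrying the oscillation $e^{-2ikx}$; these are precisely the terms $J_{5},J_{6}$ in the paper's decomposition (\ref{7.13}), estimated via (\ref{3.19}), (\ref{3.17}) and Plancherel. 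Your sketch never performs this quadrant decomposition (you split only in $k$, not in $x$), never produces the reflection terms, and lists (\ref{3.19}), (\ref{3.17}) without actually using them; without these steps the derivative estimate does not close.
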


\begin{proof}
Recall that
\begin{equation}
\mathcal{F}\phi=\frac{1}{\sqrt{2\pi}}\int_{-\infty}^{\infty}e^{-ikx}%
\overline{\Phi\left(  x,k\right)  }\phi\left(  x\right)  dx \label{7.62}%
\end{equation}
where
\[
\overline{\Phi\left(  x,k\right)  }=\theta\left(  k\right)  \overline{T\left(
k\right)  }\overline{m_{+}\left(  x,k\right)  }+\theta\left(  -k\right)
\overline{T\left(  -k\right)  }\overline{m_{-}\left(  x,-k\right)  }.
\]
In the domain $x>0$ we express $\Phi\left(  x,k\right)  $ in terms of $m_{+}$
using (\ref{3.1}). We find%
\begin{equation}
\overline{\Phi\left(  x,k\right)  }=\theta\left(  k\right)  \overline{T\left(
k\right)  }\overline{m_{+}\left(  x,k\right)  }+\theta\left(  -k\right)
\left(  e^{2ikx}\overline{R_{+}\left(  -k\right)  }\overline{m_{+}\left(
x,-k\right)  }+\overline{m_{+}\left(  x,k\right)  }\right)  . \label{4.68}%
\end{equation}
and in the domain $x<0$ we express $\Phi\left(  x,k\right)  $ in terms of
$m_{-}$%
\begin{equation}
\overline{\Phi\left(  x,k\right)  }=\theta\left(  k\right)  \left(
e^{2ikx}\overline{R_{-}\left(  k\right)  }\overline{m_{-}\left(  x,k\right)
}+\overline{m_{-}\left(  x,-k\right)  }\right)  +\theta\left(  -k\right)
\overline{T\left(  -k\right)  }\overline{m_{-}\left(  x,-k\right)  }.
\label{4.69}%
\end{equation}
Using (\ref{2.15}), (\ref{4.68}) and (\ref{4.69}) in (\ref{7.62}) we have%
\begin{equation}
\left.
\begin{array}
[c]{c}%
\left(  \mathcal{F}\phi\right)  \left(  k\right)  =\frac{\theta\left(
k\right)  }{\sqrt{2\pi}}\left(  \int_{0}^{\infty}e^{-ikx}\overline{T\left(
k\right)  }\overline{m_{+}\left(  x,k\right)  }\phi\left(  x\right)
dx+\int_{-\infty}^{0}e^{-ikx}\left(  e^{2ikx}\overline{R_{-}\left(  k\right)
}\overline{m_{-}\left(  x,k\right)  }+\overline{m_{-}\left(  x,-k\right)
}\right)  \phi\left(  x\right)  dx\right) \\
+\frac{\theta\left(  -k\right)  }{\sqrt{2\pi}}\left(  \int_{-\infty}%
^{0}e^{-ikx}\left(  \overline{T\left(  -k\right)  }\overline{m_{-}\left(
x,-k\right)  }\right)  \phi\left(  x\right)  dx+\int_{0}^{\infty}%
e^{-ikx}\left(  \left(  e^{2ikx}\overline{R_{+}\left(  -k\right)  }%
\overline{m_{+}\left(  x,-k\right)  }+\overline{m_{+}\left(  x,k\right)
}\right)  \right)  \phi\left(  x\right)  dx\right)  .
\end{array}
\right.  \label{7.43}%
\end{equation}
Note that if $a=\pm1,$ by (\ref{3.3}) $T\left(  0\right)  =\pm1$ and by
(\ref{3.12}) $R_{\pm}\left(  0\right)  =0.$ Also, by assumption $m_{+}\left(
x,0\right)  =m_{-}\left(  -x,0\right)  .$ Then, using that $\phi\ $is odd when
$a=1$ and $\phi$ is even if $a=-1,$ we get%
\[
\int_{0}^{\infty}\overline{T\left(  0\right)  }\overline{m_{+}\left(
x,0\right)  }\phi\left(  x\right)  dx+\int_{-\infty}^{0}\overline{m_{-}\left(
x,0\right)  }\phi\left(  x\right)  dx=0
\]
and%
\[
\int_{-\infty}^{0}\left(  \overline{T\left(  0\right)  }\overline{m_{-}\left(
x,0\right)  }\right)  \phi\left(  x\right)  dx+\int_{0}^{\infty}%
\overline{m_{+}\left(  x,0\right)  }\phi\left(  x\right)  dx=0.
\]
In particular, these equalities prove (\ref{7.60}). Moreover, we can commute
the derivative $\partial_{k}$ and the cut-off function $\theta$ in
(\ref{7.43}). Since $\mathcal{F}$ is unitary on $\mathbf{L}^{2}$ $\left\Vert
\mathcal{F}\phi\right\Vert _{\mathbf{L}^{2}}=\left\Vert \phi\right\Vert
_{\mathbf{L}^{2}}.$ We estimate $\partial_{k}\mathcal{F}\phi.$ Using
(\ref{7.43}) we decompose
\begin{equation}
\left.  \mathcal{F}\phi=\sum_{l=1}^{6}J_{l}+\mathcal{F}_{0}\phi,\right.
\label{7.13}%
\end{equation}
where%
\[
J_{1}=\frac{1}{\sqrt{2\pi}}\int_{0}^{\infty}e^{-ikx}\theta\left(  k\right)
\left(  \overline{T\left(  k\right)  }\overline{m_{+}\left(  x,k\right)
}-1\right)  \phi\left(  x\right)  dx,
\]%
\[
J_{2}=\frac{1}{\sqrt{2\pi}}\int_{0}^{\infty}e^{-ikx}\theta\left(  -k\right)
\left(  \overline{m_{+}\left(  x,k\right)  }-1\right)  \phi\left(  x\right)
dx,
\]%
\[
J_{3}=\frac{1}{\sqrt{2\pi}}\int_{-\infty}^{0}e^{-ikx}\theta\left(  -k\right)
\left(  \overline{T\left(  -k\right)  }\overline{m_{-}\left(  x,-k\right)
}-1\right)  \phi\left(  x\right)  dx,
\]%
\[
J_{4}=\frac{1}{\sqrt{2\pi}}\int_{-\infty}^{0}e^{-ikx}\theta\left(  k\right)
\left(  \overline{m_{-}\left(  x,-k\right)  }-1\right)  \phi\left(  x\right)
dx,
\]%
\[
J_{5}=\frac{\theta\left(  -k\right)  \overline{R_{+}\left(  -k\right)  }%
}{\sqrt{2\pi}}\int_{0}^{\infty}e^{ikx}\overline{m_{+}\left(  x,-k\right)
}\phi\left(  x\right)  dx
\]
and
\[
J_{6}=\frac{\theta\left(  k\right)  \overline{R_{-}\left(  k\right)  }}%
{\sqrt{2\pi}}\int_{-\infty}^{0}e^{ikx}\overline{m_{-}\left(  x,k\right)  }%
\phi\left(  x\right)  dx.
\]
We estimate $\left\Vert \partial_{k}J_{1}\right\Vert .$ We have
\begin{align*}
\left\Vert \partial_{k}J_{1}\right\Vert _{\mathbf{L}^{2}}  &  \leq C\left\Vert
\left(  \overline{T\left(  k\right)  }-1\right)  \int_{0}^{\infty}%
e^{-ikx}\left(  \overline{m_{+}\left(  x,k\right)  }-1\right)  x\phi\left(
x\right)  dx\right\Vert _{\mathbf{L}^{2}}+C\left\Vert \left\vert
\overline{T\left(  k\right)  }-1\right\vert \mathcal{F}_{0}\left(
\theta\left(  x\right)  x\phi\left(  x\right)  \right)  \right\Vert
_{\mathbf{L}^{2}}\\
&  +C\left\Vert \int_{0}^{\infty}e^{-ikx}\left(  \overline{m_{+}\left(
x,k\right)  }-1\right)  x\phi\left(  x\right)  dx\right\Vert _{\mathbf{L}^{2}%
}+C\left\Vert \partial_{k}\overline{T\left(  k\right)  }\int_{0}^{\infty
}e^{-ikx}\left(  \overline{m_{+}\left(  x,k\right)  }-1\right)  \phi\left(
x\right)  dx\right\Vert _{\mathbf{L}^{2}}\\
&  +C\left\Vert \partial_{k}\overline{T\left(  k\right)  }\mathcal{F}%
_{0}\left(  \theta\left(  x\right)  \phi\left(  x\right)  \right)  \right\Vert
_{\mathbf{L}^{2}}+C\left\Vert \overline{T\left(  k\right)  }\int_{0}^{\infty
}e^{-ikx}\partial_{k}\overline{m_{+}\left(  x,k\right)  }\phi\left(  x\right)
dx\right\Vert _{\mathbf{L}^{2}}.
\end{align*}
Then, using (\ref{3.4}), (\ref{3.2}) with $\delta=0,$ (\ref{3.5}) with
$\delta>1/2,$ (\ref{3.18}), (\ref{3.8}) and (\ref{3.17}) we obtain $\left\Vert
\partial_{k}J_{1}\right\Vert _{\mathbf{L}^{2}}\leq\left\Vert \phi\right\Vert
_{\mathbf{H}^{0,1}}.$ Similarly, we show that $\left\Vert \partial_{k}%
J_{l}\right\Vert _{\mathbf{L}^{2}}\leq\left\Vert \phi\right\Vert
_{\mathbf{H}^{0,1}},$ for $l=2,3,4,5,6,$ and hence, (\ref{7.61}) follows.
\end{proof}

\begin{lemma}
Suppose that the estimates (\ref{3.4}), (\ref{3.2}) with $\delta=0,$
(\ref{3.5}) with $\delta>1/2,$ (\ref{3.6}) with $\delta=0,$ (\ref{3.18}),
(\ref{3.19}), (\ref{3.8}) and (\ref{3.17}) hold. Suppose that (\ref{7.61}) is
true. Then, for any $t\in\mathbb{R}$, the estimates (\ref{7.8}) and
(\ref{7.9}) are satisfied.
\end{lemma}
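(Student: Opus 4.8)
The plan is to reduce both estimates to the spectral representation $\mathcal{U}\left(t\right)\phi=\mathcal{F}^{-1}e^{-\frac{it}{2}k^{2}}\mathcal{F}\phi=\frac{1}{\sqrt{2\pi}}\int_{-\infty}^{\infty}\Psi\left(x,k\right)e^{-\frac{it}{2}k^{2}}\left(\mathcal{F}\phi\right)\left(k\right)dk$ and to produce the weights $x$ and the derivative $\partial_{x}$ by differentiating the generalized eigenfunction $\Psi\left(x,k\right)=e^{ikx}\Phi\left(x,k\right)$ in $x$ and in $k$. Since $\mathcal{U}\left(t\right)$ is unitary on $\mathbf{L}^{2}$ one always has $\left\Vert \mathcal{U}\left(t\right)\phi\right\Vert_{\mathbf{L}^{2}}=\left\Vert \phi\right\Vert_{\mathbf{L}^{2}}$, so (\ref{7.8}) reduces to bounding $\left\Vert \partial_{x}\mathcal{U}\left(t\right)\phi\right\Vert_{\mathbf{L}_{x}^{2}}$ and, using $\left\Vert \langle x\rangle\,\psi\right\Vert_{\mathbf{L}^{2}}\leq\left\Vert \psi\right\Vert_{\mathbf{L}^{2}}+\left\Vert x\psi\right\Vert_{\mathbf{L}^{2}}$, (\ref{7.9}) reduces to bounding $\left\Vert x\,\mathcal{U}\left(t\right)\phi\right\Vert_{\mathbf{L}_{x}^{2}}$.

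For (\ref{7.8}) I would write $\partial_{x}\Psi=ik\Psi+e^{ikx}\partial_{x}\Phi$. The first piece gives $\mathcal{U}\left(t\right)\mathcal{F}^{-1}\left(ik\mathcal{F}\phi\right)$, whose $\mathbf{L}^{2}$-norm is $\left\Vert k\mathcal{F}\phi\right\Vert_{\mathbf{L}^{2}}=\langle\phi,2H\phi\rangle^{1/2}=\left(\left\Vert \partial_{x}\phi\right\Vert_{\mathbf{L}^{2}}^{2}+2\int V\left\vert \phi\right\vert^{2}\right)^{1/2}\leq C\left\Vert \phi\right\Vert_{\mathbf{H}^{1}}$, because $V\in\mathbf{L}^{1,3}\subset\mathbf{L}^{1}$ is infinitesimally form-bounded with respect to $-\partial_{x}^{2}$ in one dimension (via $\left\Vert \phi\right\Vert_{\mathbf{L}^{\infty}}^{2}\leq C\left\Vert \phi\right\Vert_{\mathbf{L}^{2}}\left\Vert \partial_{x}\phi\right\Vert_{\mathbf{L}^{2}}$). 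For the second piece I split $\mathbb{R}=\{x>0\}\cup\{x<0\}$ and express $\Phi$ in terms of $m_{+}$ on $\{x>0\}$ and of $m_{-}$ on $\{x<0\}$ exactly as in (\ref{4.68})--(\ref{4.69}). Then $\partial_{x}\Phi$ is made of transmission/reflection terms carrying $\partial_{x}m_{\pm}$, bounded by $C\langle k\rangle^{-1}\langle x\rangle^{-2}$ by (\ref{3.6}) with $\delta=0$, together with the terms coming from $\partial_{x}e^{\mp2ikx}=\mp2ik\,e^{\mp2ikx}$ times $R_{\mp}m_{\pm}$, where $\left\vert kR_{\mp}\left(\mp k\right)\right\vert\leq C$ by (\ref{3.19}). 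For the $x$-independent leading parts I replace $m_{\pm}$ by $1$ and use Plancherel in $x$ (the factors $e^{\pm ikx}e^{\mp2ikx}$ are unimodular), bounding the $\mathbf{L}_{x}^{2}$-norm by $C\left\Vert k\mathcal{F}\phi\right\Vert_{\mathbf{L}^{2}}\leq C\left\Vert \phi\right\Vert_{\mathbf{H}^{1}}$; for the remainders $m_{\pm}-1$, which decay like $\langle x\rangle^{-1}$ by (\ref{3.2}), a Cauchy--Schwarz estimate in $k$ gives an $x$-integrable bound by $C\left\Vert \phi\right\Vert_{\mathbf{L}^{2}}$. No $t$ appears, since every $k$-integral only sees the unimodular factor $e^{-\frac{it}{2}k^{2}}$, giving (\ref{7.8}).

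For (\ref{7.9}) I would instead use $\partial_{k}\Psi=ix\Psi+e^{ikx}\partial_{k}\Phi$, i.e. $x\Psi=-i\partial_{k}\Psi+ie^{ikx}\partial_{k}\Phi$, and integrate by parts in $k$ in the term with $\partial_{k}\Psi$. This yields three contributions: (A) the one where $\partial_{k}$ lands on $e^{-\frac{it}{2}k^{2}}$, equal to $t\,\mathcal{U}\left(t\right)\mathcal{F}^{-1}\left(k\mathcal{F}\phi\right)$, with norm $\left\vert t\right\vert\left\Vert k\mathcal{F}\phi\right\Vert_{\mathbf{L}^{2}}\leq C\langle t\rangle\left\Vert \phi\right\Vert_{\mathbf{H}^{1}}$; (B) the one where $\partial_{k}$ lands on $\mathcal{F}\phi$, equal to $i\,\mathcal{U}\left(t\right)\mathcal{F}^{-1}\left(\partial_{k}\mathcal{F}\phi\right)$, with norm $\left\Vert \partial_{k}\mathcal{F}\phi\right\Vert_{\mathbf{L}^{2}}\leq\left\Vert \mathcal{F}\phi\right\Vert_{\mathbf{H}^{1}}\leq K_{0}\left\Vert \phi\right\Vert_{\mathbf{H}^{0,1}}$ by the hypothesis (\ref{7.61}); and (C) the integral of $e^{ikx}\partial_{k}\Phi$, treated by the same half-line splitting as above but now through the $k$-derivative estimates (\ref{3.5}) with $\delta>1/2$, (\ref{3.8}) and (\ref{3.17}). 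The decay $\langle x\rangle^{-\delta}$ with $\delta>1/2$ in (\ref{3.5}) is exactly what makes the $\partial_{k}m_{\pm}$ contributions square-integrable in $x$.

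The hard part will be the two genuinely non-free features hidden in (C) and in the integration by parts. First, the $k$-integration by parts produces a boundary contribution at $k=0$ proportional to $\left(\Psi\left(x,0^{+}\right)-\Psi\left(x,0^{-}\right)\right)\left(\mathcal{F}\phi\right)\left(0\right)=T\left(0\right)\left(f_{+}\left(x,0\right)-f_{-}\left(x,0\right)\right)\left(\mathcal{F}\phi\right)\left(0\right)$; since $\Psi\left(x,0^{\pm}\right)$ is bounded but does not decay in $x$, this term would destroy the $\mathbf{L}_{x}^{2}$-bound unless it vanishes. It does vanish: for $T\left(0\right)=1$ one has $f_{+}\left(\cdot,0\right)=f_{-}\left(\cdot,0\right)$ by the parity of the Jost functions recalled in (\ref{7.26})--(\ref{7.27}), so $\Psi$ is continuous at $k=0$, whereas for $T\left(0\right)=-1$ the jump is nonzero but $\left(\mathcal{F}\phi\right)\left(0\right)=0$ by (\ref{7.60}). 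Second, differentiating the reflection phase $e^{\mp2ikx}$ inside $\partial_{k}\Phi$ creates an explicit factor $x$, so the corresponding part of (C) again carries a weight; I would remove it by a second integration by parts in $k$ using $x\,e^{\mp ikx}=\pm i\,\partial_{k}e^{\mp ikx}$, which moves the derivative onto $R_{\mp}$, onto $e^{-\frac{it}{2}k^{2}}$ and onto $\mathcal{F}\phi$, and reproduces precisely the split $C\langle t\rangle\left\Vert \phi\right\Vert_{\mathbf{H}^{1}}+C\left\Vert \phi\right\Vert_{\mathbf{H}^{0,1}}$ through (\ref{3.17}), (\ref{3.19}) and (\ref{7.61}). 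Keeping the bookkeeping of these $k=0$ and $x$-weighted terms consistent, rather than any single estimate, is the delicate point.
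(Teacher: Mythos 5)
Your treatment of (\ref{7.8}) is sound and is essentially the paper's route: the paper reduces (\ref{7.8}) to the two bounds (\ref{7.11}) and (\ref{7.12}) and proves them by the same half-line splitting of $\Phi$ and the same use of (\ref{3.6}), (\ref{3.19}), (\ref{3.2}) and Plancherel that you describe. Your derivation of $\left\Vert k\mathcal{F}\phi\right\Vert _{\mathbf{L}^{2}}\leq C\left\Vert \phi\right\Vert _{\mathbf{H}^{1}}$ from the quadratic form of $H$ together with the form-boundedness of $V$ is a legitimate (and shorter) substitute for the paper's integration-by-parts proof of (\ref{7.12}).

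The gap is in (\ref{7.9}). You integrate by parts in $k$ against the full multiplier $e^{-\frac{it}{2}k^{2}}$, and, as you yourself note, this produces the boundary contribution $\left(  \Psi\left(  x,0^{+}\right)  -\Psi\left(  x,0^{-}\right)  \right)  \left(  \mathcal{F}\phi\right)  \left(  0\right)$ at $k=0$ (plus analogous boundary terms, proportional to $R_{\pm}\left(  0\right)  \left(  \mathcal{F}\phi\right)  \left(  0\right)$, from your second integration by parts in the reflection terms), whose coefficient is bounded but non-decaying in $x$ and hence not in $\mathbf{L}^{2}$. You dispose of these terms by invoking (\ref{7.26})--(\ref{7.27}) when $T\left(  0\right)  =1$, and $\left(  \mathcal{F}\phi\right)  \left(  0\right)  =0$, i.e. (\ref{7.60}), when $T\left(  0\right)  =-1$. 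But neither the symmetry of $V$, nor (\ref{3.3})/(\ref{3.12}), nor any parity of $\phi$, nor (\ref{7.60}) is among the hypotheses of this lemma: it is stated --- and used in the paper inside the contraction-mapping argument of Theorem \ref{Theorem 7.1}, where no parity of the iterates is imposed --- for arbitrary $\phi\in\mathbf{H}^{1}\cap\mathbf{H}^{0,1}$. So your argument proves a strictly weaker statement. The paper's proof avoids the obstruction altogether by first writing, as in (\ref{3.22}), $x\,\mathcal{U}\left(  t\right)  \phi=x\mathcal{F}^{-1}\left(  e^{-\frac{it}{2}k^{2}}-1\right)  \mathcal{F}\phi+x\phi$: the term $x\phi$ is bounded directly by $\left\Vert \phi\right\Vert _{\mathbf{H}^{0,1}}$, and in the remaining integral the factor $e^{-\frac{it}{2}k^{2}}-1$ vanishes at $k=0$, so every integration by parts in $k$ (across the jump of $\theta\left(  k\right)  T\left(  k\right)  +\theta\left(  -k\right)$ and in the reflection terms) is boundary-free, whatever the value of $\left(  \mathcal{F}\phi\right)  \left(  0\right)$. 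Equivalently: the boundary terms your scheme generates are exactly those carried by the multiplier $1$, and their sum recombines into $x\phi$; unless you exhibit this cancellation explicitly, your proof genuinely needs the parity hypotheses and does not yield the lemma as stated.
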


\begin{proof}
Let us prove (\ref{7.8}). Due to the unitarity of $\mathcal{F}$ on
$\mathbf{L}^{2}$ $\left\Vert \mathcal{U}\left(  t\right)  \phi\right\Vert
_{\mathbf{L}^{2}}=\left\Vert \phi\right\Vert _{\mathbf{L}^{2}}.$ Since
$\mathcal{U}\left(  t\right)  \phi=\mathcal{F}^{-1}e^{-\frac{it}{2}k^{2}%
}\mathcal{F}\phi,$ to estimate $\left\Vert \partial_{x}\mathcal{U}\left(
t\right)  \phi\right\Vert _{\mathbf{L}^{2}}$ it suffices to prove that%
\begin{equation}
\left\Vert \partial_{x}\mathcal{F}^{-1}\phi\right\Vert _{\mathbf{L}^{2}}\leq
C\left\Vert \phi\right\Vert _{\mathbf{H}^{0,1}}\text{ } \label{7.11}%
\end{equation}
and%
\begin{equation}
\left\Vert \mathcal{F}\phi\right\Vert _{\mathbf{H}^{0,1}}\leq C\left\Vert
\phi\right\Vert _{\mathbf{H}^{1}}. \label{7.12}%
\end{equation}
First, we consider $\partial_{x}\mathcal{F}^{-1}\phi.$ By definition%
\begin{equation}
\partial_{x}\mathcal{F}^{-1}\phi=i\mathcal{F}^{-1}\left(  k\phi\left(
k\right)  \right)  +\frac{1}{\sqrt{2\pi}}\int_{-\infty}^{\infty}e^{ikx}\left(
\partial_{x}\Phi\left(  x,k\right)  \right)  \phi\left(  k\right)  dk.
\label{7.63}%
\end{equation}
By the unitarity of $\mathcal{F}^{-1}$, the first term in the right-hand side
of (\ref{7.63}) is estimated by $K\left\Vert \phi\right\Vert _{\mathbf{H}%
^{0,1}}.$ So, we turn to the second term. Using (\ref{2.9}) and (\ref{2.8}) we
get%
\[
\Phi\left(  x,k\right)  =e^{-ikx}\left(  \theta\left(  k\right)  T\left(
k\right)  f_{+}\left(  x,k\right)  +\theta\left(  -k\right)  T\left(
-k\right)  f_{-}\left(  x,-k\right)  \right)  .
\]
Substituting $m_{\pm}\left(  x,k\right)  =e^{\mp ikx}f_{\pm}\left(
x,k\right)  $ we get%
\begin{equation}
\Phi\left(  x,k\right)  =\theta\left(  k\right)  T\left(  k\right)
m_{+}\left(  x,k\right)  +\theta\left(  -k\right)  T\left(  -k\right)
m_{-}\left(  x,-k\right)  . \label{4.21}%
\end{equation}
Moreover, using (\ref{3.1}) for the upper sign we have%
\begin{equation}
\Phi\left(  x,k\right)  =\theta\left(  k\right)  T\left(  k\right)
m_{+}\left(  x,k\right)  +\theta\left(  -k\right)  m_{+}\left(  x,k\right)
+\theta\left(  -k\right)  R_{+}\left(  -k\right)  e^{-2ikx}m_{+}\left(
x,-k\right)  , \label{4.42}%
\end{equation}
for $x\geq0.$ Using (\ref{3.1}) for the lower sign in (\ref{4.21}) we get%
\begin{equation}
\Phi\left(  x,k\right)  =\theta\left(  k\right)  R_{-}\left(  k\right)
e^{-2ikx}m_{-}\left(  x,k\right)  +\theta\left(  k\right)  m_{-}\left(
x,-k\right)  +\theta\left(  -k\right)  T\left(  -k\right)  m_{-}\left(
x,-k\right)  \label{4.43}%
\end{equation}
in the case $x\leq0.$ Using (\ref{4.42}) and (\ref{4.43}) we have
\[
\partial_{x}\Phi\left(  x,k\right)  =\left(  \theta\left(  k\right)  T\left(
k\right)  +\theta\left(  -k\right)  \right)  \partial_{x}m_{+}\left(
x,k\right)  +\theta\left(  -k\right)  R_{+}\left(  -k\right)  \partial
_{x}\left(  e^{-2ikx}m_{+}\left(  x,-k\right)  \right)
\]
for $x\geq0$ and
\[
\partial_{x}\Phi\left(  x,k\right)  =\theta\left(  k\right)  R_{-}\left(
k\right)  \partial_{x}\left(  e^{-2ikx}m_{-}\left(  x,k\right)  \right)
+\left(  \theta\left(  k\right)  +\theta\left(  -k\right)  T\left(  -k\right)
\right)  \partial_{x}m_{-}\left(  x,-k\right)
\]
for $x\leq0.$ Then,
\begin{equation}
\int_{-\infty}^{\infty}e^{ikx}\left(  \partial_{x}\Phi\left(  x,k\right)
\right)  \phi\left(  k\right)  dk=\sum_{j=1}^{6}I_{j}, \label{7.10}%
\end{equation}
where%
\[
I_{1}=\theta\left(  x\right)  \int_{-\infty}^{\infty}e^{ikx}\left(
\theta\left(  k\right)  T\left(  k\right)  +\theta\left(  -k\right)  \right)
\partial_{x}m_{+}\left(  x,k\right)  \phi\left(  k\right)  dk
\]%
\[
I_{2}=\theta\left(  x\right)  \int_{-\infty}^{\infty}e^{-ikx}\theta\left(
-k\right)  R_{+}\left(  -k\right)  \partial_{x}m_{+}\left(  x,-k\right)
\phi\left(  k\right)  dk
\]%
\[
I_{3}=\theta\left(  -x\right)  \int_{-\infty}^{\infty}e^{-ikx}\theta\left(
k\right)  R_{-}\left(  k\right)  \partial_{x}m_{-}\left(  x,k\right)
\phi\left(  k\right)  dk
\]%
\[
I_{4}=\theta\left(  -x\right)  \int_{-\infty}^{\infty}e^{ikx}\left(
\theta\left(  k\right)  +\theta\left(  -k\right)  T\left(  -k\right)  \right)
\partial_{x}m_{-}\left(  x,-k\right)  \phi\left(  k\right)  dk
\]%
\[
I_{5}=-2i\theta\left(  x\right)  \int_{-\infty}^{\infty}e^{-ikx}\theta\left(
-k\right)  R_{+}\left(  -k\right)  m_{+}\left(  x,-k\right)  k\phi\left(
k\right)  dk
\]
and%
\[
I_{6}=-2i\theta\left(  -x\right)  \int_{-\infty}^{\infty}e^{-ikx}\theta\left(
k\right)  R_{-}\left(  k\right)  m_{-}\left(  x,k\right)  k\phi\left(
k\right)  dk.
\]
Using (\ref{3.4}) and (\ref{3.6}) with $\delta=0$ we estimate%
\[
\int_{-\infty}^{\infty}\left\vert \left(  \theta\left(  k\right)  T\left(
k\right)  +\theta\left(  -k\right)  \right)  \right\vert \left\vert
\theta\left(  x\right)  \partial_{x}m_{+}\left(  x,k\right)  \right\vert
\left\vert \phi\left(  k\right)  \right\vert dk\leq C\left\langle
x\right\rangle ^{-2}\int_{-\infty}^{\infty}\left\langle k\right\rangle
^{-1}\left\vert \phi\left(  k\right)  \right\vert dk.
\]
Then, by Cauchy-Schwartz inequality we get%
\[
\left\Vert I_{1}\right\Vert _{\mathbf{L}^{2}}\leq C\left\Vert \left\langle
x\right\rangle ^{-2}\right\Vert _{\mathbf{L}^{2}}\left\Vert \left\langle
k\right\rangle ^{-1}\right\Vert _{\mathbf{L}^{2}}\left\Vert \phi\right\Vert
_{\mathbf{L}^{2}}\leq C\left\Vert \phi\right\Vert _{\mathbf{L}^{2}}.
\]
Similarly we show%
\[
\left\Vert I_{j}\right\Vert _{\mathbf{L}^{2}}\leq C\left\Vert \phi\right\Vert
_{\mathbf{L}^{2}},
\]
for $j=2,3,4.$ We write $I_{5}$ as%
\begin{align*}
I_{5}  &  =-2i\theta\left(  x\right)  \int_{-\infty}^{\infty}e^{-ikx}%
\theta\left(  -k\right)  R_{+}\left(  -k\right)  \left(  m_{+}\left(
x,-k\right)  -1\right)  k\phi\left(  k\right)  dk\\
&  -2\sqrt{2\pi}i\theta\left(  x\right)  \mathcal{F}_{0}\left(  \theta\left(
-k\right)  R_{+}\left(  -k\right)  k\phi\left(  k\right)  \right)  .
\end{align*}
By (\ref{3.4}) and (\ref{3.2}) with $\delta=0$ we obtain%
\[
\int_{-\infty}^{\infty}\left\vert \theta\left(  -k\right)  R_{+}\left(
-k\right)  \right\vert \left\vert \theta\left(  x\right)  \left(  m_{+}\left(
x,-k\right)  -1\right)  \right\vert \left\vert k\phi\left(  k\right)
\right\vert dk\leq C\left\langle x\right\rangle ^{-1}\int_{-\infty}^{\infty
}\left\langle k\right\rangle ^{-1}\left\vert k\phi\left(  k\right)
\right\vert dk.
\]
Then, via Cauchy-Schwartz inequality
\[
\left\Vert \int_{-\infty}^{\infty}e^{-ikx}\theta\left(  -k\right)
R_{+}\left(  -k\right)  \theta\left(  x\right)  \left(  m_{+}\left(
x,-k\right)  -1\right)  k\phi\left(  k\right)  dk\right\Vert _{\mathbf{L}^{2}%
}\leq C\left\Vert \left\langle x\right\rangle ^{-1}\right\Vert _{\mathbf{L}%
^{2}}\left\Vert \left\langle k\right\rangle ^{-1}\right\Vert _{\mathbf{L}^{2}%
}\left\Vert k\phi\right\Vert _{\mathbf{L}^{2}}\leq C\left\Vert \phi\right\Vert
_{\mathbf{H}^{0,1}}.
\]
Since by (\ref{3.4}) and Parseval's identity $\left\Vert \theta\left(
x\right)  \mathcal{F}_{0}\left(  \theta\left(  -k\right)  R_{+}\left(
-k\right)  k\phi\left(  k\right)  \right)  \right\Vert _{\mathbf{L}^{2}}%
\leq\left\Vert k\phi\left(  k\right)  \right\Vert _{\mathbf{L}^{2}}%
\leq\left\Vert \phi\right\Vert _{\mathbf{H}^{0,1}},$ we deduce%
\[
\left\Vert I_{5}\right\Vert _{\mathbf{L}^{2}}\leq C\left\Vert \phi\right\Vert
_{\mathbf{H}^{0,1}}.
\]
Similarly, we show that $\left\Vert I_{6}\right\Vert _{\mathbf{L}^{2}}\leq
C\left\Vert \phi\right\Vert _{\mathbf{H}^{0,1}}.$ Gathering together the
estimates for $I_{j},$ $j=1,2,3,4,5,6,$ from (\ref{7.10}) we prove
(\ref{7.11}). We now consider (\ref{7.12}). We use (\ref{7.13}). We write
$kJ_{1}$ as
\begin{equation}
\left.  kJ_{1}=\theta\left(  k\right)  k\left(  \overline{T\left(  k\right)
}-1\right)  \mathcal{F}_{0}\left(  \theta\left(  x\right)  \phi\left(
x\right)  \right)  +\frac{\theta\left(  k\right)  \overline{T\left(  k\right)
}}{\sqrt{2\pi}}k\int_{0}^{\infty}e^{-ikx}\left(  \overline{m_{+}\left(
x,k\right)  }-1\right)  \phi\left(  x\right)  dx.\right.  \label{7.14}%
\end{equation}
Using (\ref{3.18}) we estimate%
\begin{equation}
\left\Vert \theta\left(  k\right)  k\left(  \overline{T\left(  k\right)
}-1\right)  \mathcal{F}_{0}\left(  \theta\left(  x\right)  \phi\left(
x\right)  \right)  \right\Vert _{\mathbf{L}^{2}}\leq C\left\Vert
\mathcal{F}_{0}\left(  \theta\left(  x\right)  \phi\left(  x\right)  \right)
\right\Vert _{\mathbf{L}^{2}}\leq C\left\Vert \phi\left(  x\right)
\right\Vert _{\mathbf{L}^{2}}. \label{7.16}%
\end{equation}
Integrating by parts in the second term of the right-hand side of (\ref{7.14})
we get%
\[
k\int_{0}^{\infty}e^{-ikx}\left(  \overline{m_{+}\left(  x,k\right)
}-1\right)  \phi\left(  x\right)  dx=-i\left.  \left(  \overline{m_{+}\left(
x,k\right)  }-1\right)  \phi\left(  x\right)  \right\vert _{x=0}-i\int
_{0}^{\infty}e^{-ikx}\partial_{x}\left(  \left(  \overline{m_{+}\left(
x,k\right)  }-1\right)  \phi\left(  x\right)  \right)  dx.
\]
Then, using (\ref{3.4}), (\ref{3.2}) and (\ref{3.6}) with $\delta=0$ we
estimate%
\begin{equation}
\left.
\begin{array}
[c]{c}%
\left\Vert \frac{\theta\left(  k\right)  \overline{T\left(  k\right)  }}%
{\sqrt{2\pi}}k\int_{0}^{\infty}e^{-ikx}\left(  \overline{m_{+}\left(
x,k\right)  }-1\right)  \phi\left(  x\right)  dx\right\Vert _{\mathbf{L}^{2}%
}\\
\leq C\left\Vert \left\langle k\right\rangle ^{-1}\right\Vert _{\mathbf{L}%
^{2}}\left\vert \phi\left(  0\right)  \right\vert +C\left\Vert \left\langle
k\right\rangle ^{-1}\int_{0}^{\infty}\left\langle x\right\rangle ^{-1}\left(
\left\vert \phi\left(  x\right)  \right\vert +\left\vert \partial_{x}%
\phi\left(  x\right)  \right\vert \right)  dx\right\Vert _{\mathbf{L}^{2}}\leq
C\left\Vert \phi\right\Vert _{\mathbf{H}^{1}}.
\end{array}
\right.  \label{7.17}%
\end{equation}
Using (\ref{7.16}) and (\ref{7.17}) in (\ref{7.14}) we show%
\[
\left\Vert J_{1}\right\Vert _{\mathbf{H}^{0,1}}\leq C\left\Vert \phi
\right\Vert _{\mathbf{H}^{1}}.
\]
Similarly we prove that $kJ_{2},$ $kJ_{3}$ and $kJ_{4}$ in (\ref{7.13}) are
controlled by $C\left\Vert \phi\right\Vert _{\mathbf{H}^{1}}.$ We write
$kJ_{5}$ as
\[
\left.
\begin{array}
[c]{c}%
kJ_{5}=k\frac{\theta\left(  -k\right)  \overline{R_{+}\left(  -k\right)  }%
}{\sqrt{2\pi}}\int_{0}^{\infty}e^{ikx}\left(  \overline{m_{+}\left(
x,-k\right)  }-1\right)  \phi\left(  x\right)  dx-i\frac{\theta\left(
-k\right)  \overline{R_{+}\left(  -k\right)  }}{\sqrt{2\pi}}\int_{0}^{\infty
}\partial_{x}e^{ikx}\phi\left(  x\right)  dx\\
=k\frac{\theta\left(  -k\right)  \overline{R_{+}\left(  -k\right)  }}%
{\sqrt{2\pi}}\int_{0}^{\infty}e^{ikx}\left(  \overline{m_{+}\left(
x,-k\right)  }-1\right)  \phi\left(  x\right)  dx+i\frac{\theta\left(
-k\right)  \overline{R_{+}\left(  -k\right)  }}{\sqrt{2\pi}}\phi\left(
0\right)  +i\frac{\theta\left(  -k\right)  \overline{R_{+}\left(  -k\right)
}}{\sqrt{2\pi}}\int_{0}^{\infty}e^{ikx}\partial_{x}\phi\left(  x\right)  dx.
\end{array}
\right.
\]
Then, using (\ref{3.2}) with $\delta=0$ and (\ref{3.19}) we show that%
\[
\left\Vert J_{5}\right\Vert _{\mathbf{H}^{0,1}}\leq C\left\Vert \phi
\right\Vert _{\mathbf{H}^{1}}.
\]
Similarly, we estimate $J_{6}$. Therefore, as $k\mathcal{F}_{0}\phi
=i\mathcal{F}_{0}\left(  \partial_{x}\phi\right)  ,$ from (\ref{7.13}) we
attain (\ref{7.12}).

We now turn to (\ref{7.9}). Observe that%
\begin{equation}
x\mathcal{U}\left(  t\right)  \phi=x\mathcal{F}^{-1}\left(  e^{-\frac{it}%
{2}k^{2}}-1\right)  \mathcal{F}\phi+x\phi=L+x\phi\label{3.22}%
\end{equation}
with%
\[
L=\frac{x}{\sqrt{2\pi}}\int_{-\infty}^{\infty}e^{ikx}\left(  \left(
e^{-\frac{it}{2}k^{2}}-1\right)  \Phi\left(  x,k\right)  \left(
\mathcal{F}\phi\right)  \left(  k\right)  \right)  dk.
\]
Using (\ref{4.42}) and (\ref{4.43}) we get%
\[
L=\theta\left(  x\right)  \left(  L_{1}+L_{2}\right)  +\theta\left(
-x\right)  \left(  L_{3}+L_{4}\right)  ,
\]
where%
\[
L_{1}=\frac{x}{\sqrt{2\pi}}\int_{-\infty}^{\infty}e^{ikx}\left(  e^{-\frac
{it}{2}k^{2}}-1\right)  \left(  \theta\left(  k\right)  T\left(  k\right)
+\theta\left(  -k\right)  \right)  m_{+}\left(  x,k\right)  \left(
\mathcal{F}\phi\right)  \left(  k\right)  dk
\]%
\[
L_{2}=\frac{x}{\sqrt{2\pi}}\int_{-\infty}^{\infty}e^{-ikx}\left(
e^{-\frac{it}{2}k^{2}}-1\right)  \theta\left(  -k\right)  R_{+}\left(
-k\right)  m_{+}\left(  x,-k\right)  \left(  \mathcal{F}\phi\right)  \left(
k\right)  dk
\]%
\[
L_{3}=\frac{x}{\sqrt{2\pi}}\int_{-\infty}^{\infty}e^{-ikx}\left(
e^{-\frac{it}{2}k^{2}}-1\right)  \theta\left(  k\right)  R_{-}\left(
k\right)  m_{-}\left(  x,k\right)  \left(  \mathcal{F}\phi\right)  \left(
k\right)  dk
\]%
\[
L_{4}=\frac{x}{\sqrt{2\pi}}\int_{-\infty}^{\infty}e^{ikx}\left(  e^{-\frac
{it}{2}k^{2}}-1\right)  \left(  \theta\left(  k\right)  +\theta\left(
-k\right)  T\left(  -k\right)  \right)  m_{-}\left(  x,-k\right)  \left(
\mathcal{F}\phi\right)  \left(  k\right)  dk.
\]
Integrating by parts in $L_{1}$ we have%
\begin{equation}
L_{1}=L_{11}+L_{12}+L_{13}+L_{14}, \label{3.21}%
\end{equation}
where%
\[
L_{11}=\frac{t}{\sqrt{2\pi}}\int_{-\infty}^{\infty}e^{ikx}e^{-\frac{it}%
{2}k^{2}}\left(  \theta\left(  k\right)  T\left(  k\right)  +\theta\left(
-k\right)  \right)  m_{+}\left(  x,k\right)  k\left(  \mathcal{F}\phi\right)
\left(  k\right)  dk,
\]%
\[
L_{12}=i\frac{1}{\sqrt{2\pi}}\int_{-\infty}^{\infty}e^{ikx}\left(
e^{-\frac{it}{2}k^{2}}-1\right)  \theta\left(  k\right)  \partial_{k}T\left(
k\right)  m_{+}\left(  x,k\right)  \left(  \mathcal{F}\phi\right)  \left(
k\right)  dk,
\]%
\[
L_{13}=i\frac{1}{\sqrt{2\pi}}\int_{-\infty}^{\infty}e^{ikx}\left(
e^{-\frac{it}{2}k^{2}}-1\right)  \left(  \theta\left(  k\right)  T\left(
k\right)  +\theta\left(  -k\right)  \right)  \partial_{k}m_{+}\left(
x,k\right)  \left(  \mathcal{F}\phi\right)  \left(  k\right)  dk
\]
and%
\[
L_{14}=i\frac{1}{\sqrt{2\pi}}\int_{-\infty}^{\infty}e^{ikx}\left(
e^{-\frac{it}{2}k^{2}}-1\right)  \left(  \theta\left(  k\right)  T\left(
k\right)  +\theta\left(  -k\right)  \right)  m_{+}\left(  x,k\right)
\partial_{k}\left(  \mathcal{F}\phi\right)  \left(  k\right)  dk.
\]
We estimate $L_{11}$ by using (\ref{3.4}) and (\ref{3.2}) with $\delta=0$
\begin{align*}
\left\vert L_{11}\right\vert  &  \leq\left\vert \frac{t}{\sqrt{2\pi}}%
\int_{-\infty}^{\infty}e^{ikx}e^{-\frac{it}{2}k^{2}}\left(  \theta\left(
k\right)  T\left(  k\right)  +\theta\left(  -k\right)  \right)  \left(
m_{+}\left(  x,k\right)  -1\right)  k\left(  \mathcal{F}\phi\right)  \left(
k\right)  dk\right\vert \\
&  +\left\vert t\mathcal{F}_{0}^{-1}\left(  e^{-\frac{it}{2}k^{2}}\left(
\theta\left(  k\right)  T\left(  k\right)  +\theta\left(  -k\right)  \right)
k\left(  \mathcal{F}\phi\right)  \left(  k\right)  \right)  dk\right\vert \\
&  \leq C\left\vert \frac{t}{\sqrt{2\pi}}\left\langle x\right\rangle ^{-1}%
\int_{-\infty}^{\infty}\left\langle k\right\rangle ^{-1}k\left(
\mathcal{F}\phi\right)  \left(  k\right)  dk\right\vert +C\left\vert
t\mathcal{F}_{0}^{-1}\left(  e^{-\frac{it}{2}k^{2}}\left(  \theta\left(
k\right)  T\left(  k\right)  +\theta\left(  -k\right)  \right)  k\left(
\mathcal{F}\phi\right)  \left(  k\right)  \right)  dk\right\vert .
\end{align*}
Then, via Cauchy-Schwartz inequality and (\ref{7.12}) we get%
\begin{equation}
\left.  \left\Vert L_{11}\right\Vert _{\mathbf{L}^{2}}\leq Ct\left(
\left\Vert \left\langle x\right\rangle ^{-1}\right\Vert _{\mathbf{L}^{2}%
}+1\right)  \left\Vert \mathcal{F}\phi\right\Vert _{\mathbf{H}^{0,1}}\leq
Ct\left\Vert \phi\right\Vert _{\mathbf{H}^{1}}.\right.  \label{7.18}%
\end{equation}
Similarly, by using the estimates (\ref{3.4}), (\ref{3.2}) with $\delta=0,$
(\ref{3.5}) with $\delta>1/2,$ and (\ref{3.8}) we prove that
\begin{equation}
\left\Vert L_{12}\right\Vert _{\mathbf{L}^{2}}+\left\Vert L_{13}\right\Vert
_{\mathbf{L}^{2}}\leq C\left\Vert \phi\right\Vert _{\mathbf{H}^{1}}.
\label{7.19}%
\end{equation}
We now study $L_{14}.$ We write
\begin{align*}
L_{14}  &  =i\mathcal{F}_{0}^{-1}\left(  \left(  e^{-\frac{it}{2}k^{2}%
}-1\right)  \left(  \theta\left(  k\right)  T\left(  k\right)  +\theta\left(
-k\right)  \right)  \partial_{k}\left(  \mathcal{F}\phi\right)  \left(
k\right)  \right) \\
&  +i\frac{1}{\sqrt{2\pi}}\int_{-\infty}^{\infty}e^{ikx}\left(  e^{-\frac
{it}{2}k^{2}}-1\right)  \left(  \theta\left(  k\right)  T\left(  k\right)
+\theta\left(  -k\right)  \right)  \left(  m_{+}\left(  x,k\right)  -1\right)
\partial_{k}\left(  \mathcal{F}\phi\right)  \left(  k\right)  dk.
\end{align*}
Then,
\begin{align*}
\left\vert L_{14}\right\vert  &  \leq\left\vert \mathcal{F}_{0}^{-1}\left(
\left(  e^{-\frac{it}{2}k^{2}}-1\right)  \left(  \theta\left(  k\right)
T\left(  k\right)  +\theta\left(  -k\right)  \right)  \partial_{k}\left(
\mathcal{F}\phi\right)  \left(  k\right)  \right)  \right\vert \\
&  +C\int_{-\infty}^{\infty}\left(  \left\vert T\left(  k\right)  \right\vert
+1\right)  \left\vert m_{+}\left(  x,k\right)  -1\right\vert \left\vert
\partial_{k}\left(  \mathcal{F}\phi\right)  \left(  k\right)  \right\vert dk.
\end{align*}
Thus, via (\ref{3.4}) and (\ref{3.2}) with $\delta=0$ we have%
\begin{align*}
\left\Vert L_{14}\right\Vert _{\mathbf{L}^{2}}  &  \leq\left\Vert \partial
_{k}\left(  \mathcal{F}\phi\right)  \left(  k\right)  \right\Vert
_{\mathbf{L}^{2}}+C\left\Vert \left\langle x\right\rangle ^{-1}\int_{-\infty
}^{\infty}\left\langle k\right\rangle ^{-1}\left\vert \partial_{k}\left(
\mathcal{F}\phi\right)  \left(  k\right)  \right\vert dk\right\Vert
_{\mathbf{L}^{2}}\\
&  \leq\left\Vert \partial_{k}\left(  \mathcal{F}\phi\right)  \left(
k\right)  \right\Vert _{\mathbf{L}^{2}}+C\left\Vert \left\langle
x\right\rangle ^{-1}\right\Vert _{\mathbf{L}^{2}}^{2}\left\Vert \partial
_{k}\left(  \mathcal{F}\phi\right)  \left(  k\right)  \right\Vert
_{\mathbf{L}^{2}}.
\end{align*}
Hence, using (\ref{7.61}) we estimate%
\begin{equation}
\left\Vert L_{14}\right\Vert _{\mathbf{L}^{2}}\leq C\left\Vert \phi\right\Vert
_{\mathbf{H}^{0,1}}. \label{7.20}%
\end{equation}
Relations (\ref{3.21}),\ (\ref{7.18}), (\ref{7.19}) and (\ref{7.20}) imply
\[
\left\Vert L_{1}\right\Vert _{\mathbf{L}^{2}}\leq C\left(  \left\langle
t\right\rangle \left\Vert \phi\right\Vert _{\mathbf{H}^{1}}+\left\Vert
\phi\right\Vert _{\mathbf{H}^{0,1}}\right)
\]
In the same spirit we estimate $L_{2},L_{3}$ and $L_{4}$. Hence, from
(\ref{3.22}) we see that
\[
\left\Vert x\mathcal{U}\left(  t\right)  \phi\right\Vert _{\mathbf{L}^{2}}\leq
C\left(  \left\langle t\right\rangle \left\Vert \phi\right\Vert _{\mathbf{H}%
^{1}}+\left\Vert \phi\right\Vert _{\mathbf{H}^{0,1}}\right)  .
\]
Using that $\left\Vert \mathcal{U}\left(  t\right)  \phi\right\Vert
_{\mathbf{L}^{2}}=\left\Vert \phi\right\Vert _{\mathbf{L}^{2}}$, we attain
(\ref{7.9}).
\end{proof}

\section{\label{S4}Estimates for dilatation $\mathcal{V}$.}

This section is devoted to the proof of Lemma \ref{L1}. In the following lemma
we study the large-time asymptotics for $\mathcal{V}\left(  t\right)  .$ We
denote by%
\begin{equation}
\Lambda\left(  x\right)  =\theta\left(  x\right)  R_{+}\left(  x\right)
+\theta\left(  -x\right)  R_{-}\left(  -x\right)  . \label{lamda}%
\end{equation}
We prove the following:

\begin{lemma}
Let $a=\pm1.$ Suppose that (\ref{3.4}), (\ref{3.2}) and (\ref{3.5}) with
$\delta=0$, (\ref{3.8}), (\ref{3.3}), (\ref{3.12}) and (\ref{3.17}) hold.
Then, if $a=1,$ the estimate
\begin{equation}
\left.  \left\Vert \mathcal{V}\left(  t\right)  \phi-T\left(  \left\vert
x\right\vert \right)  \phi\left(  x\right)  -\Lambda\left(  x\right)
\phi\left(  -x\right)  \right\Vert _{\mathbf{L}^{\infty}}\leq C\left\vert
\phi\left(  0\right)  \right\vert +Ct^{-1/4}\left\Vert \phi\right\Vert
_{\mathbf{H}^{1}}\right.  \label{4.20}%
\end{equation}
is true for all $t\geq1.$ Moreover, in the case $a=-1,$
\begin{equation}
\left.  \left\Vert \mathcal{V}\left(  t\right)  \phi-T\left(  \left\vert
x\right\vert \right)  \phi\left(  x\right)  -\Lambda\left(  x\right)
\phi\left(  -x\right)  -\sqrt{\frac{2i}{\pi}}\phi\left(  0\right)
%TCIMACRO{\dint \limits_{\sqrt{t}x}^{\infty}}%
%BeginExpansion
{\displaystyle\int\limits_{\sqrt{t}x}^{\infty}}
%EndExpansion
e^{-\frac{ik^{2}}{2}}dk\right\Vert _{\mathbf{L}^{\infty}}\leq Ct^{-1/4}%
\left\Vert \phi\right\Vert _{\mathbf{H}^{1}}\right.  \label{4.20a}%
\end{equation}
holds for all $t\geq1.$
\end{lemma}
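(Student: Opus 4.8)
The starting point is the explicit integral representation
\[
\mathcal{V}\left(  t\right)  \phi=\sqrt{\frac{it}{2\pi}}\int_{-\infty}^{\infty
}e^{-\frac{it}{2}\left(  k-x\right)  ^{2}}\Phi\left(  tx,k\right)  \phi\left(
k\right)  dk,
\]
together with the decompositions of $\Phi\left(  tx,k\right)$ given in
(\ref{4.42}) and (\ref{4.43}), which replace $\Phi$ by expressions involving
only $m_{+}\left(  tx,k\right)$ (when $x\geq0$) or $m_{-}\left(  tx,k\right)$
(when $x\leq 0$), weighted by the transmission and reflection coefficients.
The heuristic driving the proof is that for large $t$ the Gaussian kernel
$\sqrt{it/2\pi}\,e^{-\frac{it}{2}(k-x)^{2}}$ concentrates around $k=x$ and acts
asymptotically as a delta function $\delta(k-x)$. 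Thus the plan is to insert
the splitting $\Phi=\Phi(0)+\left(\Phi(tx,k)-\Phi(0)\right)$ and to isolate the
leading stationary-phase contribution at $k=x$, which produces the principal
terms $T\left(\left\vert x\right\vert\right)\phi(x)$ and
$\Lambda(x)\phi(-x)$ (the latter coming from the reflection pieces, whose
phase $e^{\mp 2iktx}$ combines with $e^{-\frac{it}{2}(k-x)^{2}}$ to shift the
stationary point from $k=x$ to $k=-x$).

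The second step is to control the error. First I would write each
contribution as a free operator $\mathcal{V}_{0}(t)$ acting on
$\phi$ multiplied by a smooth coefficient, plus a remainder coming from the
factor $\left(m_{\pm}(tx,k)-1\right)$ and from the $k$-dependence of $T$ and
$R_{\pm}$. For the leading piece I would invoke the free estimate (\ref{2.3}),
namely $\left\Vert\left(\mathcal{V}_{0}(t)-1\right)\psi\right\Vert
_{\mathbf{L}^{\infty}}\leq Ct^{-1/4}\left\Vert\partial_{k}\psi\right\Vert
_{\mathbf{L}^{2}}$, to replace $\mathcal{V}_{0}(t)\psi$ by $\psi(x)$ up to the
desired $Ct^{-1/4}\left\Vert\phi\right\Vert_{\mathbf{H}^{1}}$. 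The decay of
$m_{\pm}-1$ supplied by (\ref{3.2}) with $\delta=0$, together with (\ref{3.5}),
guarantees that the factors $m_{\pm}(tx,k)-1$ and their $k$-derivatives are
integrable and contribute only lower-order terms; similarly (\ref{3.8}) and
(\ref{3.17}) bound $\partial_{k}T$ and $\partial_{k}R_{\pm}$, so differentiating
the smooth coefficients in $k$ stays harmless. The regularity needed to apply
(\ref{2.3}) is exactly why the $\mathbf{H}^{1}$ norm of $\phi$ appears on the
right.

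The genuinely delicate point is the small-energy behaviour near $k=0$, and
this is where the dichotomy $a=\pm1$ enters. When $a=1$ one has $T(0)=1$ and
$R_{\pm}(0)=0$ by (\ref{3.3}) and (\ref{3.12}), so the reflection coefficients
vanish at the origin and the bound (\ref{4.20}) carries only the harmless
$C\left\vert\phi(0)\right\vert$ term. When $a=-1$, however, $T(0)=-1$ while
$R_{\pm}(0)=\mp\frac{1-a^{2}}{1+a^{2}}$; here the coefficient combinations no
longer cancel at zero energy, and the contribution of $\phi(0)$ does not decay.
The plan is to extract this non-decaying part explicitly: I would add and
subtract $\phi(0)$ inside the integral, handle $\phi(k)-\phi(0)$ by the
stationary-phase argument above (the difference vanishes at $k=0$ and so is
controlled in $\mathbf{H}^{1}$), and compute the residual
$\phi(0)$-integral in closed form. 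Changing variables $k\mapsto\sqrt{t}\,k$ in
$\sqrt{it/2\pi}\int e^{-\frac{it}{2}(k-x)^{2}}dk$ produces precisely the Fresnel
integral $\sqrt{\tfrac{2i}{\pi}}\int_{\sqrt{t}x}^{\infty}e^{-\frac{ik^{2}}{2}}dk$
appearing in (\ref{4.20a}). The main obstacle is thus the careful bookkeeping
near $k=0$ in the case $a=-1$: one must verify that, after subtracting the
principal terms $T\left(\left\vert x\right\vert\right)\phi(x)$,
$\Lambda(x)\phi(-x)$, and the explicit Fresnel term, every remaining piece
genuinely decays like $t^{-1/4}\left\Vert\phi\right\Vert_{\mathbf{H}^{1}}$
uniformly in $x$, including the transition region $\left\vert x\right\vert
\sim t^{-1/2}$ where the Gaussian and the oscillation compete.
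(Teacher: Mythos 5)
Your outline follows the same route as the paper: start from the representation (\ref{2.10}), substitute the decompositions (\ref{4.42})--(\ref{4.43}), freeze the coefficients at the stationary points $k=\pm x$, and control the remainders by integration by parts using (\ref{3.2}), (\ref{3.5}), (\ref{3.8}), (\ref{3.17}) and the H\"older bound $\left\vert \phi(k)-\phi(x)\right\vert \leq C\left\vert k-x\right\vert ^{1/2}\left\Vert \partial_{k}\phi\right\Vert _{\mathbf{L}^{2}}$. However, your diagnosis of the $a=\pm1$ dichotomy is mathematically wrong, and the error sits exactly at the delicate point. By (\ref{3.12}), $R_{\pm}(0)=\pm\frac{1-a^{2}}{1+a^{2}}=0$ for \emph{both} $a=1$ and $a=-1$, since $1-a^{2}=0$; the reflection coefficients vanish at zero energy in either case, so the Fresnel term in (\ref{4.20a}) cannot come from the reflection pieces, contrary to what you assert. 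Its true source is the multiplier $\theta(k)T(k)+\theta(-k)$ of $m_{+}(tx,k)\phi(k)$ in (\ref{4.42}): if $T(0)=1$ this equals $1+\theta(k)\left(  T(k)-1\right)  $, which is continuous at $k=0$, whereas if $T(0)=-1$ it equals $\theta(k)\left(  T(k)+1\right)  +\left(  \theta(-k)-\theta(k)\right)  $ and has a jump discontinuity at $k=0$. The Gaussian average of the step $\theta(-k)-\theta(k)$, added to the constant contribution, is precisely $\sqrt{2i/\pi}\int_{\sqrt{t}x}^{\infty}e^{-ik^{2}/2}dk$. This misidentification is fatal for your estimating scheme as stated: you propose to write each piece as a smooth coefficient times $\phi$ and invoke (\ref{2.3}), but (\ref{2.3}) requires the $\mathbf{L}^{2}$ norm of a $k$-derivative, and the derivative of a jump is a delta, not an $\mathbf{L}^{2}$ function. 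So the one piece that generates the new term in (\ref{4.20a}) is exactly the piece your main tool cannot treat; the paper instead integrates by parts separately on the half-lines $k>0$ and $k<0$ (with boundary terms at $k=0$ controlled by the H\"older bound) and evaluates the step's contribution in closed form.

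A second, related gap: your claim that the factors $m_{\pm}(tx,k)-1$ contribute only lower-order terms is false in the transition region you yourself flag. Frozen at the stationary point they leave $\left(  m_{+}(tx,x)-1\right)  \phi(x)$, and by (\ref{3.2}) with $\delta=0$ this is merely bounded, not small, when $\left\vert x\right\vert \lesssim t^{-1}$. Writing $\phi(x)=\phi(0)+\left(  \phi(x)-\phi(0)\right)  $ gives $\left(  m_{+}(tx,x)-1\right)  \phi(0)+O\left(  t^{-1/2}\left\Vert \phi\right\Vert _{\mathbf{H}^{1}}\right)  $, and the first summand is exactly the origin of the $C\left\vert \phi(0)\right\vert $ term in (\ref{4.20}); it is leading order, not an error term. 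Moreover, for $a=-1$ the bound (\ref{4.20a}) contains no $\left\vert \phi(0)\right\vert $ at all, so there you must additionally verify that every occurrence of $m_{+}(tx,x)-1$ is paired with a factor vanishing at zero energy --- either $T(x)+1=O\left(  \left\vert x\right\vert \right)  $ by (\ref{3.3}), or the step integral, which is $O\left(  \min\left(  \sqrt{t}\left\vert x\right\vert ,1\right)  \right)  $ --- yielding genuine $O\left(  t^{-1/2}\right)  $ decay. Without this bookkeeping your argument either produces a spurious $\left\vert \phi(0)\right\vert $ in (\ref{4.20a}) or leaves the $C\left\vert \phi(0)\right\vert $ in (\ref{4.20}) unexplained.
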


\begin{proof}
We consider first the case $x\geq0$. Using (\ref{4.42}) in (\ref{2.10}) we
have%
\begin{equation}
\mathcal{V}\left(  t\right)  \phi=\sqrt{\frac{it}{2\pi}}\int_{-\infty}%
^{\infty}e^{-\frac{it}{2}\left(  k-x\right)  ^{2}}\left(  \theta\left(
k\right)  T\left(  k\right)  m_{+}\left(  tx,k\right)  +\theta\left(
-k\right)  m_{+}\left(  tx,k\right)  +\theta\left(  -k\right)  R_{+}\left(
-k\right)  e^{-2itkx}m_{+}\left(  tx,-k\right)  \right)  \phi\left(  k\right)
dk. \label{4.19}%
\end{equation}
Let us denote by $\mathcal{V}^{+}\left(  t\right)  $ to $\mathcal{V}\left(
t\right)  $ in the case when $a=1$ and by $\mathcal{V}^{-}\left(  t\right)  $
to $\mathcal{V}\left(  t\right)  $ in the case when $a=-1.$ We decompose
\begin{equation}
\left.  \mathcal{V}^{\pm}\left(  t\right)  \phi=\sum_{j=1}^{3}\mathcal{V}%
_{j}^{\pm}\left(  t\right)  ,\right.  \label{4.15}%
\end{equation}
where%
\[
\mathcal{V}_{1}^{\pm}\left(  t\right)  =\sqrt{\frac{it}{2\pi}}%
%TCIMACRO{\dint \limits_{-\infty}^{\infty}}%
%BeginExpansion
{\displaystyle\int\limits_{-\infty}^{\infty}}
%EndExpansion
e^{-\frac{it}{2}\left(  k-x\right)  ^{2}}\theta\left(  k\right)  \left(
T\left(  k\right)  \mp1\right)  m_{+}\left(  tx,k\right)  \phi\left(
k\right)  dk,
\]%
\[
\mathcal{V}_{2}^{\pm}\left(  t\right)  =\sqrt{\frac{it}{2\pi}}%
%TCIMACRO{\dint \limits_{-\infty}^{\infty}}%
%BeginExpansion
{\displaystyle\int\limits_{-\infty}^{\infty}}
%EndExpansion
e^{-\frac{it}{2}\left(  k-x\right)  ^{2}}\theta\left(  k\right)  R_{+}\left(
k\right)  m_{+}\left(  tx,k\right)  \phi\left(  -k\right)  dk,
\]%
\[
\mathcal{V}_{3}^{\pm}\left(  t\right)  =\sqrt{\frac{it}{2\pi}}%
%TCIMACRO{\dint \limits_{-\infty}^{\infty}}%
%BeginExpansion
{\displaystyle\int\limits_{-\infty}^{\infty}}
%EndExpansion
e^{-\frac{it}{2}\left(  k-x\right)  ^{2}}\left(  \theta\left(  -k\right)
\pm\theta\left(  k\right)  \right)  m_{+}\left(  tx,k\right)  \phi\left(
k\right)  dk.
\]
We first study $\mathcal{V}_{1}^{\pm}\left(  t\right)  .$ Using that
$\int_{-\infty}^{\infty}e^{-\frac{it}{2}k^{2}}dk=\sqrt{\frac{2\pi}{it}},$ we
decompose $\mathcal{V}_{1}^{\pm}\left(  t\right)  $ as
\begin{equation}
\mathcal{V}_{1}^{\pm}\left(  t\right)  =\left(  T\left(  x\right)
\mp1\right)  \phi\left(  x\right)  +\mathcal{V}_{11}^{\pm}\left(  t\right)
+\mathcal{V}_{12}^{\pm}\left(  t\right)  , \label{4.4}%
\end{equation}
with%
\[
\mathcal{V}_{11}^{\pm}\left(  t\right)  =\left(  T\left(  x\right)
\mp1\right)  \left(  m_{+}\left(  tx,x\right)  -1\right)  \phi\left(
x\right)  ,
\]%
\[
\mathcal{V}_{12}^{\pm}\left(  t\right)  =\sqrt{\frac{it}{2\pi}}%
%TCIMACRO{\dint \limits_{-\infty}^{\infty}}%
%BeginExpansion
{\displaystyle\int\limits_{-\infty}^{\infty}}
%EndExpansion
e^{-\frac{it}{2}\left(  k-x\right)  ^{2}}\Phi_{1}\left(  x,k\right)  dk
\]
and%
\[
\Phi_{1}\left(  x,k\right)  =\theta\left(  k\right)  \left(  T\left(
k\right)  \mp1\right)  m_{+}\left(  tx,k\right)  \phi\left(  k\right)
-\left(  T\left(  x\right)  \mp1\right)  m_{+}\left(  tx,x\right)  \phi\left(
x\right)  .
\]
From (\ref{3.2}) $\delta=0$ and (\ref{3.3}) it follows%
\begin{equation}
\left\vert \mathcal{V}_{11}^{\pm}\left(  t\right)  \right\vert \leq
C\left\vert x\right\vert \left\langle tx\right\rangle ^{-1}\left\Vert
\phi\right\Vert _{\mathbf{L}^{\infty}}\leq\frac{C}{t}\left\Vert \phi
\right\Vert _{\mathbf{H}^{1}}. \label{4.2}%
\end{equation}
Now, we estimate $\mathcal{V}_{12}^{\pm}\left(  t\right)  .$ Using the
identity%
\begin{equation}
e^{-\frac{it}{2}\left(  k-x\right)  ^{2}}=B\partial_{k}\left(  \left(
k-x\right)  e^{-\frac{it}{2}\left(  k-x\right)  ^{2}}\right)  \label{4.7}%
\end{equation}
with $B:=\left(  1-it\left(  k-x\right)  ^{2}\right)  ^{-1},$ integrating by
parts in the definition of $\mathcal{V}_{12}\left(  t\right)  $ and using
$\left(  k-x\right)  \partial_{k}B=2it\left(  k-x\right)  ^{2}B^{2}$ we have%
\[
\left.  \mathcal{V}_{12}^{\pm}\left(  t\right)  =\sqrt{\frac{it}{2\pi}}%
%TCIMACRO{\dint \limits_{-\infty}^{\infty}}%
%BeginExpansion
{\displaystyle\int\limits_{-\infty}^{\infty}}
%EndExpansion
e^{-\frac{it}{2}\left(  k-x\right)  ^{2}}\left(  k-x\right)  B\partial_{k}%
\Phi_{1}\left(  x,k\right)  dk+2it\sqrt{\frac{it}{2\pi}}%
%TCIMACRO{\dint \limits_{-\infty}^{\infty}}%
%BeginExpansion
{\displaystyle\int\limits_{-\infty}^{\infty}}
%EndExpansion
e^{-\frac{it}{2}\left(  k-x\right)  ^{2}}\left(  k-x\right)  ^{2}B^{2}\Phi
_{1}\left(  x,k\right)  dk.\right.
\]
It follows from (\ref{3.4}), (\ref{3.2}), (\ref{3.5}), (\ref{3.8}) and
\begin{equation}
\left\vert \phi\left(  k\right)  -\phi\left(  x\right)  \right\vert \leq
C\left\vert k-x\right\vert ^{\frac{1}{2}}\left\Vert \partial_{k}%
\phi\right\Vert _{\mathbf{L}^{2}} \label{4.8}%
\end{equation}
that%
\[
\left\vert \Phi_{1}\left(  x,k\right)  \right\vert \leq C\left\vert
k-x\right\vert \left\vert \phi\left(  k\right)  \right\vert +C\left\vert
k-x\right\vert ^{\frac{1}{2}}\left\Vert \partial_{k}\phi\right\Vert
_{\mathbf{L}^{2}}%
\]
and
\[
\left\vert \partial_{k}\Phi_{1}\left(  x,k\right)  \right\vert \leq C\left(
\left\vert \phi\left(  k\right)  \right\vert +\left\vert \partial_{k}%
\phi\left(  k\right)  \right\vert \right)  .
\]
Then, we estimate%
\[
\left.
\begin{array}
[c]{c}%
\left\vert \mathcal{V}_{12}^{\pm}\left(  t\right)  \right\vert \leq C\sqrt{t}%
%TCIMACRO{\dint \limits_{-\infty}^{\infty}}%
%BeginExpansion
{\displaystyle\int\limits_{-\infty}^{\infty}}
%EndExpansion
\frac{\left\vert k-x\right\vert \left(  \left\vert \phi\left(  k\right)
\right\vert +\left\vert \partial_{k}\phi\left(  k\right)  \right\vert \right)
}{1+t\left(  k-x\right)  ^{2}}dk+Ct\sqrt{t}%
%TCIMACRO{\dint \limits_{-\infty}^{\infty}}%
%BeginExpansion
{\displaystyle\int\limits_{-\infty}^{\infty}}
%EndExpansion
\frac{\left(  k-x\right)  ^{2}\left(  \left\vert k-x\right\vert \left\vert
\phi\left(  k\right)  \right\vert +\left\vert k-x\right\vert ^{\frac{1}{2}%
}\left\Vert \partial_{k}\phi\right\Vert _{\mathbf{L}^{2}}\right)  }{\left(
1+t\left(  k-x\right)  ^{2}\right)  ^{2}}dk\\
\leq C\sqrt{t}%
%TCIMACRO{\dint \limits_{-\infty}^{\infty}}%
%BeginExpansion
{\displaystyle\int\limits_{-\infty}^{\infty}}
%EndExpansion
\frac{\left\vert k-x\right\vert \left(  \left\vert \phi\left(  k\right)
\right\vert +\left\vert \partial_{k}\phi\left(  k\right)  \right\vert \right)
}{1+t\left(  k-x\right)  ^{2}}dk+C\sqrt{t}\left\Vert \phi\right\Vert
_{\mathbf{H}^{1}}%
%TCIMACRO{\dint \limits_{-\infty}^{\infty}}%
%BeginExpansion
{\displaystyle\int\limits_{-\infty}^{\infty}}
%EndExpansion
\frac{\left\vert k-x\right\vert ^{\frac{1}{2}}}{1+t\left(  k-x\right)  ^{2}%
}dk+C%
%TCIMACRO{\dint \limits_{-\infty}^{\infty}}%
%BeginExpansion
{\displaystyle\int\limits_{-\infty}^{\infty}}
%EndExpansion
\frac{\left\vert \phi\left(  k\right)  \right\vert }{\left(  1+t\left(
k-x\right)  ^{2}\right)  ^{1/2}}dk.
\end{array}
\right.
\]
Hence, via Cauchy-Schwartz inequality we obtain%
\begin{equation}
\left.
\begin{array}
[c]{c}%
\left\vert \mathcal{V}_{12}^{\pm}\left(  t\right)  \right\vert \leq C\sqrt
{t}\left(
%TCIMACRO{\dint \limits_{-\infty}^{\infty}}%
%BeginExpansion
{\displaystyle\int\limits_{-\infty}^{\infty}}
%EndExpansion
\frac{\left(  k-x\right)  ^{2}}{\left(  1+t\left(  k-x\right)  ^{2}\right)
^{2}}dk\right)  ^{1/2}\left(
%TCIMACRO{\dint \limits_{-\infty}^{\infty}}%
%BeginExpansion
{\displaystyle\int\limits_{-\infty}^{\infty}}
%EndExpansion
\left(  \left\vert \phi\left(  k\right)  \right\vert ^{2}+\left\vert
\partial_{k}\phi\left(  k\right)  \right\vert ^{2}\right)  dk\right)  ^{1/2}\\
+C\left\Vert \phi\right\Vert _{\mathbf{H}^{1}}\sqrt{t}%
%TCIMACRO{\dint \limits_{-\infty}^{\infty}}%
%BeginExpansion
{\displaystyle\int\limits_{-\infty}^{\infty}}
%EndExpansion
\frac{\left\vert k\right\vert ^{\frac{1}{2}}}{1+tk^{2}}dk+C\left(
%TCIMACRO{\dint \limits_{-\infty}^{\infty}}%
%BeginExpansion
{\displaystyle\int\limits_{-\infty}^{\infty}}
%EndExpansion
\frac{dk}{1+tk^{2}}\right)  ^{\frac{1}{2}}\left\Vert \phi\right\Vert
_{\mathbf{L}^{2}}\\
\leq C\left\Vert \phi\right\Vert _{\mathbf{H}^{1}}\sqrt{t}\left(  \left(
%TCIMACRO{\dint \limits_{-\infty}^{\infty}}%
%BeginExpansion
{\displaystyle\int\limits_{-\infty}^{\infty}}
%EndExpansion
\frac{k^{2}}{\left(  1+tk^{2}\right)  ^{2}}dk\right)  ^{1/2}+%
%TCIMACRO{\dint \limits_{-\infty}^{\infty}}%
%BeginExpansion
{\displaystyle\int\limits_{-\infty}^{\infty}}
%EndExpansion
\frac{\left\vert k\right\vert ^{\frac{1}{2}}}{1+tk^{2}}dk\right)
+Ct^{-1/4}\left\Vert \phi\right\Vert _{\mathbf{L}^{2}}\leq Ct^{-1/4}\left\Vert
\phi\right\Vert _{\mathbf{H}^{1}}.
\end{array}
\right.  \label{4.3}%
\end{equation}
Using (\ref{4.2}) and (\ref{4.3}) in (\ref{4.4}) we get
\begin{equation}
\left\vert \mathcal{V}_{1}^{\pm}\left(  t\right)  -\left(  T\left(  x\right)
\mp1\right)  \phi\left(  x\right)  \right\vert \leq Ct^{-1/4}\left\Vert
\phi\right\Vert _{\mathbf{H}^{1}}. \label{4.5}%
\end{equation}
Next, we decompose $\mathcal{V}_{2}^{\pm}\left(  t\right)  $ as%
\[
\mathcal{V}_{2}^{\pm}\left(  t\right)  =R_{+}\left(  x\right)  \phi\left(
-x\right)  +\mathcal{V}_{21}\left(  t\right)  +\mathcal{V}_{22}\left(
t\right)  ,
\]
with%
\[
\mathcal{V}_{21}^{\pm}\left(  t\right)  =R_{+}\left(  x\right)  \left(
m_{+}\left(  tx,x\right)  -1\right)  \phi\left(  -x\right)
\]
and%
\[
\mathcal{V}_{22}^{\pm}\left(  t\right)  =\sqrt{\frac{it}{2\pi}}%
%TCIMACRO{\dint \limits_{-\infty}^{\infty}}%
%BeginExpansion
{\displaystyle\int\limits_{-\infty}^{\infty}}
%EndExpansion
e^{-\frac{it}{2}\left(  k-x\right)  ^{2}}\left(  \theta\left(  k\right)
R_{+}\left(  k\right)  m_{+}\left(  tx,k\right)  \phi\left(  -k\right)
-R_{+}\left(  x\right)  m_{+}\left(  tx,x\right)  \phi\left(  -x\right)
\right)  dk.
\]
Then, by using (\ref{3.4}), (\ref{3.2}), (\ref{3.5}), (\ref{3.12}),
(\ref{3.17}) and (\ref{4.8}), similarly to (\ref{4.5}) we deduce%
\begin{equation}
\left\vert \mathcal{V}_{2}^{\pm}\left(  t\right)  -R_{+}\left(  x\right)
\phi\left(  -x\right)  \right\vert \leq Ct^{-1/4}\left\Vert \phi\right\Vert
_{\mathbf{H}^{1}}. \label{4.6}%
\end{equation}
We now turn to $\mathcal{V}_{3}^{\pm}\left(  t\right)  .$ For $\mathcal{V}%
_{3}^{+}$ we write
\[
\mathcal{V}_{3}^{+}\left(  t\right)  =m_{+}\left(  tx,x\right)  \phi\left(
x\right)  +\mathcal{V}_{31}^{+}%
\]
with%
\[
\mathcal{V}_{31}^{+}=\sqrt{\frac{it}{2\pi}}%
%TCIMACRO{\dint \limits_{-\infty}^{\infty}}%
%BeginExpansion
{\displaystyle\int\limits_{-\infty}^{\infty}}
%EndExpansion
e^{-\frac{it}{2}\left(  k-x\right)  ^{2}}\left(  m_{+}\left(  tx,k\right)
\phi\left(  k\right)  -m_{+}\left(  tx,x\right)  \phi\left(  x\right)
\right)  dk.
\]
For $\mathcal{V}_{3}^{-}$ we have%
\begin{equation}
\mathcal{V}_{3}^{-}\left(  t\right)  =\phi\left(  x\right)  \sqrt{\frac
{it}{2\pi}}%
%TCIMACRO{\dint \limits_{-\infty}^{\infty}}%
%BeginExpansion
{\displaystyle\int\limits_{-\infty}^{\infty}}
%EndExpansion
e^{-\frac{it}{2}\left(  k-x\right)  ^{2}}\left(  \theta\left(  -k\right)
-\theta\left(  k\right)  \right)  dk+\mathcal{V}_{31}^{-}\left(  t\right)
+\mathcal{V}_{32}^{-}\left(  t\right)  +\mathcal{V}_{33}^{-}\left(  t\right)
, \label{4.13}%
\end{equation}
where%
\[
\mathcal{V}_{31}^{-}\left(  t\right)  =\left(  m_{+}\left(  tx,x\right)
-1\right)  \phi\left(  x\right)  \sqrt{\frac{it}{2\pi}}%
%TCIMACRO{\dint \limits_{-\infty}^{\infty}}%
%BeginExpansion
{\displaystyle\int\limits_{-\infty}^{\infty}}
%EndExpansion
e^{-\frac{it}{2}\left(  k-x\right)  ^{2}}\left(  \theta\left(  -k\right)
-\theta\left(  k\right)  \right)  dk
\]%
\[
\mathcal{V}_{32}^{-}\left(  t\right)  =-\sqrt{\frac{it}{2\pi}}%
%TCIMACRO{\dint \limits_{0}^{\infty}}%
%BeginExpansion
{\displaystyle\int\limits_{0}^{\infty}}
%EndExpansion
e^{-\frac{it}{2}\left(  k-x\right)  ^{2}}\left(  m_{+}\left(  tx,k\right)
\phi\left(  k\right)  -m_{+}\left(  tx,x\right)  \phi\left(  x\right)
\right)  dk
\]
and%
\[
\mathcal{V}_{33}^{-}\left(  t\right)  =\sqrt{\frac{it}{2\pi}}%
%TCIMACRO{\dint \limits_{-\infty}^{0}}%
%BeginExpansion
{\displaystyle\int\limits_{-\infty}^{0}}
%EndExpansion
e^{-\frac{it}{2}\left(  k-x\right)  ^{2}}\left(  m_{+}\left(  tx,k\right)
\phi\left(  k\right)  -m_{+}\left(  tx,x\right)  \phi\left(  x\right)
\right)  dk.
\]
Similarly to (\ref{4.3}) we show that
\[
\left\vert \mathcal{V}_{31}^{+}\left(  t\right)  \right\vert \leq
Ct^{-1/4}\left\Vert \phi\right\Vert _{\mathbf{H}^{1}},
\]
and then,
\begin{equation}
\left\vert \mathcal{V}_{3}^{+}\left(  t\right)  -m_{+}\left(  tx,x\right)
\phi\left(  x\right)  \right\vert \leq Ct^{-1/4}\left\Vert \phi\right\Vert
_{\mathbf{H}^{1}}. \label{4.16}%
\end{equation}
Using (\ref{3.2}) we get%
\begin{equation}
\left\vert \mathcal{V}_{31}^{-}\left(  t\right)  \right\vert \leq C\left\vert
\left(  m_{+}\left(  tx,x\right)  -1\right)
%TCIMACRO{\dint \limits_{-\sqrt{t}x}^{\sqrt{t}x}}%
%BeginExpansion
{\displaystyle\int\limits_{-\sqrt{t}x}^{\sqrt{t}x}}
%EndExpansion
e^{-\frac{i}{2}k^{2}}dk\right\vert \left\Vert \phi\right\Vert _{\mathbf{H}%
^{1}}\leq\frac{C}{\sqrt{t}}\left\Vert \phi\right\Vert _{\mathbf{H}^{1}}.
\label{4.12}%
\end{equation}
Taking into account identity (\ref{4.7}) and integrating by parts in
$\mathcal{V}_{32}^{-}\left(  t\right)  $ we have
\begin{align*}
\mathcal{V}_{32}^{-}\left(  t\right)   &  =-\sqrt{\frac{it}{2\pi}}%
\frac{xe^{-\frac{it}{2}x^{2}}}{1-itx^{2}}\left(  m_{+}\left(  tx,0\right)
\phi\left(  0\right)  -m_{+}\left(  tx,x\right)  \phi\left(  x\right)  \right)
\\
&  +\sqrt{\frac{it}{2\pi}}%
%TCIMACRO{\dint \limits_{0}^{\infty}}%
%BeginExpansion
{\displaystyle\int\limits_{0}^{\infty}}
%EndExpansion
\left(  k-x\right)  e^{-\frac{it}{2}\left(  k-x\right)  ^{2}}\partial
_{k}\left(  B\left(  m_{+}\left(  tx,k\right)  \phi\left(  k\right)
-m_{+}\left(  tx,x\right)  \phi\left(  x\right)  \right)  \right)  dk.
\end{align*}
Using (\ref{3.5}) and (\ref{4.8}), with $k=0,$ we estimate%
\[
\left\vert m_{+}\left(  tx,0\right)  \phi\left(  0\right)  -m_{+}\left(
tx,x\right)  \phi\left(  x\right)  \right\vert \leq C\left\vert x\right\vert
^{\frac{1}{2}}\left\Vert \phi\right\Vert _{\mathbf{H}^{1}}.
\]
Then,%
\[
\left\vert \sqrt{\frac{it}{2\pi}}\frac{xe^{-\frac{it}{2}x^{2}}}{1-itx^{2}%
}\left(  m_{+}\left(  tx,0\right)  \phi\left(  0\right)  -m_{+}\left(
tx,x\right)  \phi\left(  x\right)  \right)  \right\vert \leq C\left\Vert
\phi\right\Vert _{\mathbf{H}^{1}}\frac{\sqrt{t}\left\vert x\right\vert ^{3/2}%
}{\left(  1+tx^{2}\right)  ^{3/4}}\leq Ct^{-1/4}\left\Vert \phi\right\Vert
_{\mathbf{H}^{1}}.
\]
Moreover, similarly to (\ref{4.3}) we control
\[
\left\vert \sqrt{\frac{it}{2\pi}}%
%TCIMACRO{\dint \limits_{0}^{\infty}}%
%BeginExpansion
{\displaystyle\int\limits_{0}^{\infty}}
%EndExpansion
\left(  k-x\right)  e^{-\frac{it}{2}\left(  k-x\right)  ^{2}}\partial
_{k}\left(  B\left(  m_{+}\left(  tx,k\right)  \phi\left(  k\right)
-m_{+}\left(  tx,x\right)  \phi\left(  x\right)  \right)  \right)
dk\right\vert \leq Ct^{-1/4}\left\Vert \phi\right\Vert _{\mathbf{H}^{1}},
\]
and thus,
\begin{equation}
\left\vert \mathcal{V}_{32}^{-}\left(  t\right)  \right\vert \leq
Ct^{-1/4}\left\Vert \phi\right\Vert _{\mathbf{H}^{1}}. \label{4.10}%
\end{equation}
Similarly we get%
\begin{equation}
\left\vert \mathcal{V}_{33}^{-}\left(  t\right)  \right\vert \leq
Ct^{-1/4}\left\Vert \phi\right\Vert _{\mathbf{H}^{1}}. \label{4.11}%
\end{equation}
Using (\ref{4.12}), (\ref{4.10}) and (\ref{4.11}) in (\ref{4.13}) we get%
\begin{equation}
\left\vert \mathcal{V}_{3}^{-}\left(  t\right)  -\phi\left(  x\right)
\sqrt{\frac{it}{2\pi}}%
%TCIMACRO{\dint \limits_{-\infty}^{\infty}}%
%BeginExpansion
{\displaystyle\int\limits_{-\infty}^{\infty}}
%EndExpansion
e^{-\frac{it}{2}\left(  k-x\right)  ^{2}}\left(  \theta\left(  -k\right)
-\theta\left(  k\right)  \right)  dk\right\vert \leq Ct^{-1/4}\left\Vert
\phi\right\Vert _{\mathbf{H}^{1}}. \label{4.14}%
\end{equation}
Introducing (\ref{4.5}), (\ref{4.6}), (\ref{4.16}) and (\ref{4.14}) into
(\ref{4.15}) we obtain%
\begin{equation}
\left.  \left\vert \mathcal{V}^{+}\left(  t\right)  \phi-T\left(  x\right)
\phi\left(  x\right)  -R_{+}\left(  x\right)  \phi\left(  -x\right)  -\left(
m_{+}\left(  tx,x\right)  -1\right)  \phi\left(  x\right)  \right\vert \leq
Ct^{-1/4}\left\Vert \phi\right\Vert _{\mathbf{H}^{1}}\right.  \label{4.17}%
\end{equation}
and%
\begin{equation}
\left\vert \mathcal{V}^{-}\left(  t\right)  \phi-T\left(  x\right)
\phi\left(  x\right)  -R_{+}\left(  x\right)  \phi\left(  -x\right)
-\phi\left(  x\right)  \left(  1+\sqrt{\frac{it}{2\pi}}%
%TCIMACRO{\dint \limits_{-\infty}^{\infty}}%
%BeginExpansion
{\displaystyle\int\limits_{-\infty}^{\infty}}
%EndExpansion
e^{-\frac{it}{2}\left(  k-x\right)  ^{2}}\left(  \theta\left(  -k\right)
-\theta\left(  k\right)  \right)  dk\right)  \right\vert \leq Ct^{-1/4}%
\left\Vert \phi\right\Vert _{\mathbf{H}^{1}}. \label{4.18}%
\end{equation}
Using (\ref{3.2}) and (\ref{4.8}) with $k=0,$ we get%
\[
\left.  \left\vert \left(  m_{+}\left(  tx,x\right)  -1\right)  \left(
\phi\left(  x\right)  -\phi\left(  0\right)  \right)  \right\vert \leq\frac
{C}{\sqrt{t}}\left\Vert \partial_{k}\phi\right\Vert _{\mathbf{L}^{2}}.\right.
\]
Also, by (\ref{3.2}) $\left\vert m_{+}\left(  tx,x\right)  -1\right\vert \leq
C.$ Using the last two inequalities in (\ref{4.17}), we arrive to (\ref{4.20})
in the case $x\geq0.$ Using (\ref{4.8}), with $k=0,$ to estimate%
\[
\left\vert \phi\left(  x\right)  -\phi\left(  0\right)  \right\vert \left\vert
1+\sqrt{\frac{it}{2\pi}}%
%TCIMACRO{\dint \limits_{-\infty}^{\infty}}%
%BeginExpansion
{\displaystyle\int\limits_{-\infty}^{\infty}}
%EndExpansion
e^{-\frac{it}{2}\left(  k-x\right)  ^{2}}\left(  \theta\left(  -k\right)
-\theta\left(  k\right)  \right)  dk\right\vert \leq Ct^{-\frac{1}{4}%
}\left\Vert \phi\right\Vert _{\mathbf{H}^{1}},
\]
and noting that%
\[
1+\sqrt{\frac{it}{2\pi}}%
%TCIMACRO{\dint \limits_{-\infty}^{\infty}}%
%BeginExpansion
{\displaystyle\int\limits_{-\infty}^{\infty}}
%EndExpansion
e^{-\frac{it}{2}\left(  k-x\right)  ^{2}}\left(  \theta\left(  -k\right)
-\theta\left(  k\right)  \right)  dk=\sqrt{\frac{2i}{\pi}}%
%TCIMACRO{\dint \limits_{\sqrt{t}x}^{\infty}}%
%BeginExpansion
{\displaystyle\int\limits_{\sqrt{t}x}^{\infty}}
%EndExpansion
e^{-\frac{it}{2}k^{2}}dk,
\]
we deduce (\ref{4.20a}) in the case $x\geq0.$\

We consider now the case $x\leq0.$ Introducing (\ref{4.43}) into (\ref{2.10})
we have%
\begin{align*}
\mathcal{V}\left(  t\right)  \phi &  =\sqrt{\frac{it}{2\pi}}\int_{-\infty
}^{\infty}e^{-\frac{it}{2}\left(  k-\left\vert x\right\vert \right)  ^{2}%
}\theta\left(  k\right)  T\left(  k\right)  m_{-}\left(  tx,k\right)
\phi\left(  -k\right)  dk\\
&  +\sqrt{\frac{it}{2\pi}}\int_{-\infty}^{\infty}e^{-\frac{it}{2}\left(
k-\left\vert x\right\vert \right)  ^{2}}\theta\left(  k\right)  R_{-}\left(
k\right)  m_{-}\left(  tx,k\right)  \phi\left(  k\right)  dk\\
&  +\sqrt{\frac{it}{2\pi}}\int_{-\infty}^{\infty}e^{-\frac{it}{2}\left(
k-\left\vert x\right\vert \right)  ^{2}}\theta\left(  -k\right)  m_{-}\left(
tx,k\right)  \phi\left(  -k\right)  dk
\end{align*}
Therefore, proceeding similarly to the case of $x\geq0$, we obtain the result
for $x\leq0.$
\end{proof}

Next we estimate the derivative $\partial_{x}\mathcal{V}\left(  t\right)  .$

\begin{lemma}
\label{Lemma 4.2}Suppose that the estimates (\ref{3.5}) with $\delta>\frac
{1}{2},$ (\ref{3.6}) and (\ref{3.7}) with $\delta=0$ are true. Also, asume
that (\ref{3.4}), (\ref{3.2}), (\ref{3.8}), (\ref{3.3}), (\ref{3.12}) and
(\ref{3.17}), are satisfied. Moreover, if $a=1,$ let $\phi\in\mathbf{H}^{1}$
be odd and if $a=-1,$ suppose that $\phi$ is even. Moreover, suppose that
$\phi$ can be represented as $\phi=\mathcal{F}\psi,$ for some $\psi
\in\mathbf{H}^{0,1}.$ Then, the estimate
\begin{equation}
\left\Vert \partial_{x}\mathcal{V}\left(  t\right)  \phi\right\Vert
_{\mathbf{L}^{2}}\leq C\left\Vert \phi\right\Vert _{\mathbf{L}^{\infty}}%
\log\left\langle t\right\rangle +C\left\Vert \phi\right\Vert _{\mathbf{H}^{1}}
\label{4.66}%
\end{equation}
is true for all $t\geq1$.
\end{lemma}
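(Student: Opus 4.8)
The plan is to reduce everything to the half-line $x\geq 0$ (the case $x\leq 0$ being identical after replacing $m_{+}$ by $m_{-}$ and using (\ref{4.43}) in place of (\ref{4.42})) and to differentiate the oscillatory representation (\ref{2.10}) directly. Writing $\Phi\left(  tx,k\right)$ through (\ref{4.42}), the $x$-derivative splits into two groups: one in which $\partial_{x}$ falls on the Gaussian phase, producing the factor $it\left(  k-x\right)$, and one in which it falls on the amplitude, producing $t\left(  \partial_{x}\Phi\right)\left(  tx,k\right)$. Each group is, by itself, only of size $t^{1/2}$ in $\mathbf{L}^{2}$, so the whole point is that the leading $O\left(  t\right)$ parts must cancel. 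The cleanest way to expose this is to use $it\left(  k-x\right)  e^{-\frac{it}{2}\left(  k-x\right)  ^{2}}=-\partial_{k}e^{-\frac{it}{2}\left(  k-x\right)  ^{2}}$ and integrate by parts in $k$ inside the first group. After this step $\partial_{x}\mathcal{V}\left(  t\right)  \phi$ becomes a sum of the genuinely lower-order object $\sqrt{\frac{it}{2\pi}}\int e^{-\frac{it}{2}\left(  k-x\right)  ^{2}}\partial_{k}\left(  \Phi\left(  tx,k\right)  \phi\left(  k\right)  \right)  dk$, the residual amplitude integral $t\sqrt{\frac{it}{2\pi}}\int e^{-\frac{it}{2}\left(  k-x\right)  ^{2}}\left(  \partial_{x}\Phi\right)  \left(  tx,k\right)  \phi\left(  k\right)  dk$, and boundary contributions created at $k=0$ by the jumps of $\theta\left(  \pm k\right)$.

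The boundary contributions at $k=0$ all carry a prefactor of the form $T\left(  0\right)  \mp1$, $R_{\pm}\left(  0\right)$ times the value $\phi\left(  0\right)$. By (\ref{3.3}) and (\ref{3.12}), $a=\pm1$ forces $T\left(  0\right)  =\pm1$ and $R_{\pm}\left(  0\right)  =0$, while the hypothesis $\phi=\mathcal{F}\psi$ with $\psi$ of the prescribed parity gives $\phi\left(  0\right)  =0$ through (\ref{7.60}); hence every such boundary term vanishes. This is exactly where the two extra assumptions (parity and $\phi\in\mathcal{F}\left(  \mathbf{H}^{0,1}\right)$) are spent: without $\phi\left(  0\right)  =0$ a term of size $t^{1/2}\phi\left(  0\right)  e^{-\frac{it}{2}x^{2}}$, which is not even in $\mathbf{L}^{2}$, would survive when $a=-1$. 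In the two remaining integrals the dangerous large prefactors are the $\sim tx$ coming from $\partial_{k}$ of the reflection phase $e^{-2iktx}$ in $\Phi$ and the $\sim tk$ coming from $\partial_{x}$ of the same factor; a direct check shows that they occur only in the combination $t\left(  k+x\right)$, which equals $\mp i\partial_{k}$ of the reflected Gaussian $e^{-\frac{it}{2}\left(  k+x\right)  ^{2}}$, so a further integration by parts in $k$ removes the factor of $t$ for the reflected part. What is then left are amplitudes built from $T,R_{\pm}$ and their first derivatives (controlled by (\ref{3.4}), (\ref{3.8}), (\ref{3.17}), (\ref{3.18}), (\ref{3.19})), from $\partial_{k}m_{\pm}$ (controlled by (\ref{3.5}) with $\delta>\frac{1}{2}$), from $\partial_{x}m_{\pm}$ and $\partial_{k}\partial_{x}m_{\pm}$ (controlled by (\ref{3.6}) and (\ref{3.7}) with $\delta=0$), and from $\partial_{k}\phi$.

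The estimation then proceeds term by term. The piece $\Phi\left(  tx,k\right)  \partial_{k}\phi\left(  k\right)$ reproduces $\mathcal{V}\left(  t\right)  \left(  \partial_{k}\phi\right)$, so, since $\mathcal{V}\left(  t\right)$ preserves the $\mathbf{L}^{2}$ norm, its contribution is bounded by $\left\Vert \partial_{k}\phi\right\Vert _{\mathbf{L}^{2}}\leq\left\Vert \phi\right\Vert _{\mathbf{H}^{1}}$. The pieces carrying $\partial_{k}m_{\pm}$, $\partial_{k}T$ or $\partial_{k}R_{\pm}$ are absolutely convergent in $k$ against the $t^{1/2}$-normalized Gaussian — here the decay $\delta>\frac{1}{2}$ in (\ref{3.5}) is what makes $\left\langle k\right\rangle ^{-1}\left\langle tx\right\rangle ^{-\delta}$ integrable in the right way — and give $O\left(  \left\Vert \phi\right\Vert _{\mathbf{H}^{1}}\right)$. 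The genuinely delicate pieces are those from the transmitted part that are still proportional to $t\,\partial_{x}m_{\pm}\left(  tx,\cdot\right)$: I localize the $k$-integral around the stationary point $k=\pm x$ and bound the far part by one more integration by parts. Near $k=\pm x$ the amplitude is only $O\left(  t\left\langle tx\right\rangle ^{-2}\right)$, which by itself would integrate to a fatal $t^{1/2}$ coming from the region $x=O\left(  1/t\right)$; this is defeated by writing $\phi\left(  \pm x\right)  =\phi\left(  \pm x\right)  -\phi\left(  0\right)$ and invoking the Hölder bound (\ref{4.8}) together with $\phi\left(  0\right)  =0$, which supplies the missing $\left\vert x\right\vert ^{1/2}$ and restores integrability, again leaving $O\left(  \left\Vert \phi\right\Vert _{\mathbf{H}^{1}}\right)$.

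I expect the main obstacle to be precisely the bookkeeping of these cancellations: one must arrange the two integrations by parts so that the factors $t$, $tx$ and $tk$ are never estimated in isolation but always in the combinations $t\left(  k\pm x\right)$ that are phase derivatives, and one must keep the $k=0$ boundary terms harmless through $\phi\left(  0\right)  =0$. After all the absolutely convergent contributions have been absorbed into $\left\Vert \phi\right\Vert _{\mathbf{H}^{1}}$, a single borderline term remains whose $k$-integral is only logarithmically convergent; estimating its amplitude by $\left\Vert \phi\right\Vert _{\mathbf{L}^{\infty}}$ and carrying out the resulting integral of type $\int^{t}\frac{dr}{r}$ produces exactly the factor $\log\left\langle t\right\rangle$, which is the source of the first term on the right of (\ref{4.66}). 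Combining the $x\geq0$ and $x\leq0$ estimates yields (\ref{4.66}).
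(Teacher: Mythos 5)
Your skeleton for the first half of the argument agrees with the paper's proof: the splitting (\ref{4.74}) of $\partial_{x}\mathcal{V}\left(  t\right)  \phi$, the integration by parts in $k$ converting the factor $it\left(  k-x\right)  $ into $\partial_{k}$, the observation that the jump terms at $k=0$ carry the factors $T\left(  0\right)  \mp1$, $R_{\pm}\left(  0\right)  $ and $\phi\left(  0\right)  $ and hence vanish by (\ref{3.3}), (\ref{3.12}) and (\ref{7.60}), and the estimation of the $\partial_{k}$-amplitude terms (the paper's $I_{1}$) by an isometry argument plus the decay of $m_{+}-1$, $\partial_{k}m_{+}$, $\partial_{k}T$, $\partial_{k}R_{\pm}$; your identification of a borderline logarithmic $k$-integral as the source of $\log\left\langle t\right\rangle $ is also correct in spirit. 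The genuine gap is in the terms proportional to $t\left(  \partial_{x}m_{+}\right)  \left(  tx,\cdot\right)  $ (the paper's $I_{2}$). You spend the parity hypothesis only through $\phi\left(  0\right)  =0$ and the H\"older bound (\ref{4.8}), which yields a gain of $\left\vert k\right\vert ^{1/2}$ at the origin. The paper needs, and uses, strictly more: the pointwise identity $\phi\left(  -k\right)  =\mp\phi\left(  k\right)  $ is used to pair the transmitted contribution from $k<0$ (after the reflection $k\mapsto2x-k$) with the one from $k>0$, so that the amplitude takes the form (\ref{4.37}), whose coefficients $T\left(  k\right)  \mp1$ and $R_{+}\left(  k\right)  $ vanish \emph{linearly} at $k=0$ by (\ref{3.3}), (\ref{3.12}), and whose remaining piece is the difference $\left(  \partial_{x}m_{+}\right)  \left(  tx,2x-k\right)  -\left(  \partial_{x}m_{+}\right)  \left(  tx,k\right)  $, which gains the factor $\left\vert k-x\right\vert $ through (\ref{3.7}) (estimate (\ref{4.39})). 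Although you list (\ref{3.7}) among your tools, you never invoke the difference structure it is actually needed for.

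These linear gains are not cosmetic. However you split near/far around the stationary point $k=x$, the boundary (or cutoff-transition) term produced by your ``one more integration by parts'' in the far region has size $\sim t^{1/2}\rho^{-1}\left\langle tx\right\rangle ^{-2}\left\vert \mathrm{coeff}\left(  k\right)  \phi\left(  k\right)  \right\vert $ at $\left\vert k-x\right\vert =\rho$. With the paper's coefficient bound $O\left(  \left\vert k\right\vert +\left\vert k-x\right\vert \right)  $ and $\rho\sim\left\vert x\right\vert $ this is $\lesssim t^{1/2}\left\langle tx\right\rangle ^{-1}\left\Vert \phi\right\Vert _{\mathbf{L}^{\infty}}$, square-integrable in $x$ uniformly in $t$ (this is exactly how (\ref{4.57}) works); with your $O\left(  \left\vert k\right\vert ^{1/2}\right)  $ gain it is $\sim t^{1/2}x^{-1/2}\left\langle tx\right\rangle ^{-2}$, and its squared $\mathbf{L}^{2}$ norm $\int_{0}^{\infty}t\,x^{-1}\left\langle tx\right\rangle ^{-4}dx$ diverges at $x=0$ --- a true divergence, not a $\log t$. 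Your near-region fix also fails in the very regime you single out: for $x=O\left(  1/t\right)  $ and window $\left\vert k-x\right\vert \leq t^{-1/2}$, the bound (\ref{4.8}) gives only $\left\vert \phi\left(  k\right)  \right\vert \lesssim\left(  \left\vert x\right\vert +t^{-1/2}\right)  ^{1/2}\left\Vert \phi\right\Vert _{\mathbf{H}^{1}}\approx t^{-1/4}\left\Vert \phi\right\Vert _{\mathbf{H}^{1}}$, not $\left\vert x\right\vert ^{1/2}\left\Vert \phi\right\Vert _{\mathbf{H}^{1}}$, leaving an $\mathbf{L}^{2}$-in-$x$ contribution of order $t^{1/4}$. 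The paper avoids this altogether by taking the near region to be $\left(  0,2x\right)  $, of width $\left\vert x\right\vert $ rather than $t^{-1/2}$, where the crude bound $\left\Vert \phi\right\Vert _{\mathbf{L}^{\infty}}$ already gives the square-integrable majorant $t^{3/2}\left\vert x\right\vert \left\langle tx\right\rangle ^{-2}$ (term $I_{21}$, estimate (\ref{4.59})), with no vanishing of $\phi$ at the origin used there. So the proof cannot be completed with H\"older continuity at the origin alone; you must import the parity pairing (\ref{4.37})--(\ref{4.41}).
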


\begin{proof}
We consider the case of $x\geq0.$ We depart from relation (\ref{4.19}). We
denote%
\[
\Theta\left(  tx,k\right)  =\theta\left(  k\right)  m_{+}\left(  tx,k\right)
\left(  T\left(  k\right)  \phi\left(  k\right)  +R_{+}\left(  k\right)
\phi\left(  -k\right)  \right)  +\theta\left(  -k\right)  m_{+}\left(
tx,k\right)  \phi\left(  k\right)  .
\]
Then,
\begin{equation}
\mathcal{V}\left(  t\right)  \phi=\sqrt{\frac{it}{2\pi}}\int_{-\infty}%
^{\infty}e^{-\frac{it}{2}\left(  k-x\right)  ^{2}}\Theta\left(  tx,k\right)
dk. \label{4.22}%
\end{equation}
Taking the derivative with respect to $x$ in the last relation we obtain%
\begin{equation}
\left.  \partial_{x}\mathcal{V}\left(  t\right)  \phi=-\sqrt{\frac{it}{2\pi}%
}\int_{-\infty}^{\infty}\partial_{k}e^{-\frac{it}{2}\left(  k-x\right)  ^{2}%
}\Theta\left(  tx,k\right)  dk+\sqrt{\frac{it}{2\pi}}\int_{-\infty}^{\infty
}e^{-\frac{it}{2}\left(  k-x\right)  ^{2}}\partial_{x}\left(  \Theta\left(
tx,k\right)  \right)  dk.\right.  \label{4.74}%
\end{equation}
Integrating by parts in the first term of the right-hand side of (\ref{4.74})
and using (\ref{7.60}) we get%
\begin{equation}
\partial_{x}\mathcal{V}\left(  t\right)  \phi=I_{1}+I_{2}, \label{4.65}%
\end{equation}
where%
\[
I_{1}=\sqrt{\frac{it}{2\pi}}\int_{-\infty}^{\infty}e^{-\frac{it}{2}\left(
x-k\right)  ^{2}}\partial_{k}\Theta\left(  tx,k\right)  dk
\]
and%
\[
I_{2}=\sqrt{\frac{it}{2\pi}}\int_{-\infty}^{\infty}e^{-\frac{it}{2}\left(
x-k\right)  ^{2}}\partial_{x}\left(  \Theta\left(  tx,k\right)  \right)  dk.
\]
We begin by estimating $I_{1}.$ We split $I_{1}$ as follows%
\begin{equation}
I_{1}=\sum_{j=1}^{4}I_{1j}, \label{4.36}%
\end{equation}
where%
\[
I_{1j}=\sqrt{\frac{it}{2\pi}}\int_{-\infty}^{\infty}e^{-\frac{it}{2}\left(
x-k\right)  ^{2}}\Theta_{j}\left(  k\right)  dk=\mathcal{V}_{0}\left(
t\right)  \Theta_{j},
\]
\ $j=1,2,$ and%
\[
I_{1j}=\sqrt{\frac{it}{2\pi}}\int_{-\infty}^{\infty}e^{-\frac{it}{2}\left(
x-k\right)  ^{2}}\Theta_{j}\left(  tx,k\right)  dk,
\]
\ $j=3,4,$ with%
\[
\Theta_{1}\left(  k\right)  =\theta\left(  k\right)  \left(  \left(
\partial_{k}T\left(  k\right)  \right)  \phi\left(  k\right)  +\partial
_{k}R_{+}\left(  k\right)  \phi\left(  -k\right)  \right)  ,
\]%
\[
\Theta_{2}\left(  k\right)  =\theta\left(  k\right)  T\left(  k\right)
\partial_{k}\phi\left(  k\right)  +\theta\left(  k\right)  R_{+}\left(
k\right)  \partial_{k}\phi\left(  -k\right)  +\theta\left(  -k\right)
\partial_{k}\phi\left(  k\right)  ,
\]%
\[
\Theta_{3}\left(  tx,k\right)  =\left(  \partial_{k}m_{+}\left(  tx,k\right)
\right)  \left(  \theta\left(  k\right)  T\left(  k\right)  \phi\left(
k\right)  +\theta\left(  k\right)  R_{+}\left(  k\right)  \phi\left(
-k\right)  +\theta\left(  -k\right)  \phi\left(  k\right)  \right)  +\left(
m_{+}\left(  tx,k\right)  -1\right)  \Theta_{1}\left(  k\right)  ,
\]
and%
\[
\Theta_{4}\left(  tx,k\right)  =\left(  m_{+}\left(  tx,k\right)  -1\right)
\Theta_{2}\left(  k\right)  .
\]
By using (\ref{2.11}) we estimate
\begin{equation}
\left\Vert I_{1j}\right\Vert _{\mathbf{L}^{2}\left(  \mathbb{R}^{+}\right)
}=\left\Vert \mathcal{V}_{0}\left(  t\right)  \Theta_{j}\right\Vert
_{\mathbf{L}^{2}\left(  \mathbb{R}^{+}\right)  }\leq C\left\Vert \Theta
_{j}\right\Vert _{\mathbf{L}^{2}}, \label{4.30}%
\end{equation}
for $j=1,2.$ Relations (\ref{3.8}) and (\ref{3.17}) imply
\begin{equation}
\left\Vert \Theta_{1}\right\Vert _{\mathbf{L}^{2}}\leq C\left(  \left\Vert
\partial_{k}T\right\Vert _{\mathbf{L}^{\infty}}+\left\Vert \partial
_{k}R\right\Vert _{\mathbf{L}^{\infty}}\right)  \left\Vert \phi\right\Vert
_{\mathbf{L}^{2}}\leq C\left\Vert \phi\right\Vert _{\mathbf{L}^{2}}.
\label{4.31}%
\end{equation}
Moreover, using (\ref{3.4}) we estimate
\begin{equation}
\left\Vert \Theta_{2}\right\Vert _{\mathbf{L}^{2}}\leq C\left(  1+\left\Vert
T\right\Vert _{\mathbf{L}^{\infty}}+\left\Vert R\right\Vert _{\mathbf{L}%
^{\infty}}\right)  \left\Vert \partial_{k}\phi\right\Vert _{\mathbf{L}^{2}%
}\leq C\left\Vert \partial_{k}\phi\right\Vert _{\mathbf{L}^{2}}. \label{4.32}%
\end{equation}
Hence, it follows from (\ref{4.30}), (\ref{4.31}), (\ref{4.32}) that%
\begin{equation}
\left\Vert I_{1j}\right\Vert _{\mathbf{L}^{2}\left(  \mathbb{R}^{+}\right)
}\leq C\left\Vert \phi\right\Vert _{\mathbf{H}^{1}}, \label{4.33}%
\end{equation}
for $j=1,2.$ We use (\ref{3.4}), (\ref{3.2}), (\ref{3.5}) with $\delta
>\frac{1}{2}$, (\ref{3.8}) and (\ref{3.17}) to control $\Theta_{3}.$ We obtain%
\begin{align*}
&  \left\vert \Theta_{3}\left(  tx,k\right)  \right\vert \leq C\left\vert
\partial_{k}m_{+}\left(  tx,k\right)  \right\vert \left(  \left\Vert
T\right\Vert _{\mathbf{L}^{\infty}}+\left\Vert R\right\Vert _{\mathbf{L}%
^{\infty}}+1\right)  \left(  \left\vert \phi\left(  k\right)  \right\vert
+\left\vert \phi\left(  -k\right)  \right\vert \right) \\
&  +C\left\vert m_{+}\left(  tx,k\right)  -1\right\vert \left(  \left\Vert
\partial_{k}T\right\Vert _{\mathbf{L}^{\infty}}+\left\Vert \partial
_{k}R\right\Vert _{\mathbf{L}^{\infty}}\right)  \left(  \left\vert \phi\left(
k\right)  \right\vert +\left\vert \phi\left(  -k\right)  \right\vert \right)
\\
&  \leq C\left\langle tx\right\rangle ^{-\delta}\left\langle k\right\rangle
^{-1}\left(  \left\vert \phi\left(  k\right)  \right\vert +\left\vert
\phi\left(  -k\right)  \right\vert \right)  .
\end{align*}
Hence, via Cauchy-Schwartz inequality we derive%
\begin{equation}
\left.
\begin{array}
[c]{c}%
\left\Vert I_{13}\right\Vert _{\mathbf{L}^{2}\left(  \mathbb{R}^{+}\right)
}=\left\Vert \sqrt{\frac{it}{2\pi}}\int_{-\infty}^{\infty}e^{-\frac{it}%
{2}\left(  x-k\right)  ^{2}}\Theta_{3}\left(  tx,k\right)  dk\right\Vert
_{\mathbf{L}^{2}\left(  \mathbb{R}^{+}\right)  }\leq Ct^{\frac{1}{2}%
}\left\Vert \left\langle tx\right\rangle ^{-\delta}\int_{-\infty}^{\infty
}\left\langle k\right\rangle ^{-1}\left(  \left\vert \phi\left(  k\right)
\right\vert +\left\vert \phi\left(  -k\right)  \right\vert \right)
dk\right\Vert _{\mathbf{L}^{2}\left(  \mathbb{R}^{+}\right)  }\\
\leq Ct^{\frac{1}{2}}\left\Vert \left\langle tx\right\rangle ^{-\delta
}\right\Vert _{\mathbf{L}^{2}\left(  \mathbb{R}^{+}\right)  }\left\Vert
\left\langle k\right\rangle ^{-1}\right\Vert _{\mathbf{L}^{2}}\left\Vert
\phi\right\Vert _{\mathbf{L}^{2}}\leq C\left\Vert \phi\right\Vert
_{\mathbf{L}^{2}}.
\end{array}
\right.  \label{4.34}%
\end{equation}
In the same manner,\ by using (\ref{3.4}) and (\ref{3.2}) we find%
\begin{align*}
\left\vert \Theta_{4}\left(  x,k\right)  \right\vert  &  \leq C\left\vert
m_{+}\left(  tx,k\right)  -1\right\vert \left(  \left\Vert T\right\Vert
_{\mathbf{L}^{\infty}}+\left\Vert R\right\Vert _{\mathbf{L}^{\infty}%
}+1\right)  \left(  \left\vert \partial_{k}\phi\left(  k\right)  \right\vert
+\left\vert \partial_{k}\phi\left(  -k\right)  \right\vert \right) \\
&  \leq C\left\langle tx\right\rangle ^{-1}\left\langle k\right\rangle
^{-1}\left(  \left\vert \partial_{k}\phi\left(  k\right)  \right\vert
+\left\vert \partial_{k}\phi\left(  -k\right)  \right\vert \right)  ,
\end{align*}
and thus,
\begin{equation}
\left.
\begin{array}
[c]{c}%
\left\Vert I_{14}\right\Vert _{\mathbf{L}^{2}\left(  \mathbb{R}^{+}\right)
}=Ct^{\frac{1}{2}}\left\Vert \int_{-\infty}^{\infty}e^{-\frac{it}{2}\left(
x-k\right)  ^{2}}\Theta_{4}\left(  tx,k\right)  dk\right\Vert _{\mathbf{L}%
^{2}\left(  \mathbb{R}^{+}\right)  }\\
\leq Ct^{\frac{1}{2}}\left\Vert \left\langle tx\right\rangle ^{-1}%
\int_{-\infty}^{\infty}\left\langle k\right\rangle ^{-1}\left(  \left\vert
\partial_{k}\phi\left(  k\right)  \right\vert +\left\vert \partial_{k}%
\phi\left(  -k\right)  \right\vert \right)  dk\right\Vert _{\mathbf{L}%
^{2}\left(  \mathbb{R}^{+}\right)  }\\
\leq Ct^{\frac{1}{2}}\left\Vert \left\langle tx\right\rangle ^{-1}\right\Vert
_{\mathbf{L}^{2}\left(  \mathbb{R}^{+}\right)  }\left\Vert \left\langle
k\right\rangle ^{-1}\right\Vert _{\mathbf{L}^{2}}\left\Vert \partial_{k}%
\phi\right\Vert _{\mathbf{L}^{2}}\leq C\left\Vert \partial_{k}\phi\right\Vert
_{\mathbf{L}^{2}}.
\end{array}
\right.  \label{4.35}%
\end{equation}
Therefore, from (\ref{4.36}), (\ref{4.33}), (\ref{4.34}) and (\ref{4.35}) we
conclude that
\begin{equation}
\left\Vert I_{1}\right\Vert _{\mathbf{L}^{2}\left(  \mathbb{R}^{+}\right)
}\leq C\left\Vert \phi\right\Vert _{\mathbf{H}^{1}}. \label{4.64}%
\end{equation}

Next, we turn to the term $I_{2}$. We have%
\begin{align*}
I_{2}  &  =\sqrt{\frac{i}{2\pi}}t^{\frac{3}{2}}\int_{0}^{2x}e^{-\frac{it}%
{2}\left(  x-k\right)  ^{2}}\left(  T\left(  k\right)  \phi\left(  k\right)
+R_{+}\left(  k\right)  \phi\left(  -k\right)  \right)  \left(  \partial
_{x}m_{+}\right)  \left(  tx,k\right)  dk\\
&  +\sqrt{\frac{i}{2\pi}}t^{\frac{3}{2}}\int_{2x}^{\infty}e^{-\frac{it}%
{2}\left(  x-k\right)  ^{2}}\left(  T\left(  k\right)  \phi\left(  k\right)
+R_{+}\left(  k\right)  \phi\left(  -k\right)  \right)  \left(  \partial
_{x}m_{+}\right)  \left(  tx,k\right)  dk\\
&  +\sqrt{\frac{i}{2\pi}}t^{\frac{3}{2}}\int_{-\infty}^{0}e^{-\frac{it}%
{2}\left(  x-k\right)  ^{2}}\left(  \partial_{x}m_{+}\right)  \left(
tx,k\right)  \phi\left(  k\right)  dk.
\end{align*}
Making the change of variables $k=2x-k^{\prime}$ in the third integral of the
right-hand side of the last relation we decompose%
\begin{equation}
I_{2}=I_{21}+I_{22}+I_{23}, \label{4.62}%
\end{equation}
where%
\[
I_{21}=\sqrt{\frac{i}{2\pi}}t^{\frac{3}{2}}\int_{0}^{2x}e^{-\frac{it}%
{2}\left(  x-k\right)  ^{2}}\Theta_{5}\left(  t,x,k\right)  dk
\]
with%
\[
\Theta_{5}\left(  t,x,k\right)  =\left(  T\left(  k\right)  \phi\left(
k\right)  +R_{+}\left(  k\right)  \phi\left(  -k\right)  \right)  \left(
\partial_{x}m_{+}\right)  \left(  tx,k\right)  ,
\]
and%
\[
I_{22}=\sqrt{\frac{i}{2\pi}}t^{\frac{3}{2}}\int_{2x}^{\infty}e^{-\frac{it}%
{2}\left(  x-k\right)  ^{2}}\Theta_{6}\left(  t,x,k\right)  dk,
\]
with%
\[
\Theta_{6}\left(  t,x,k\right)  =\left(  \partial_{x}m_{+}\right)  \left(
tx,2x-k\right)  \left(  \phi\left(  2x-k\right)  -\phi\left(  -k\right)
\right)
\]
and%
\[
I_{23}=\sqrt{\frac{i}{2\pi}}t^{\frac{3}{2}}\int_{2x}^{\infty}e^{-\frac{it}%
{2}\left(  x-k\right)  ^{2}}\Theta_{7}\left(  t,x,k\right)  dk,
\]
with%
\[
\Theta_{7}\left(  t,x,k\right)  =\left(  T\left(  k\right)  \phi\left(
k\right)  +R_{+}\left(  k\right)  \phi\left(  -k\right)  \right)  \left(
\partial_{x}m_{+}\right)  \left(  tx,k\right)  +\left(  \partial_{x}%
m_{+}\right)  \left(  tx,2x-k\right)  \phi\left(  -k\right)  .
\]
Using (\ref{3.4}) and (\ref{3.6}) with $\delta=0$ we get%
\[
\left\vert \Theta_{5}\left(  t,x,k\right)  \right\vert \leq C\left\vert
\left(  \partial_{x}m_{+}\right)  \left(  tx,k\right)  \right\vert \left(
\left\Vert T\right\Vert _{\mathbf{L}^{\infty}}+\left\Vert R\right\Vert
_{\mathbf{L}^{\infty}}\right)  \left(  \left\vert \phi\left(  k\right)
\right\vert +\left\vert \phi\left(  -k\right)  \right\vert \right)  \leq
C\left\langle tx\right\rangle ^{-2}\left\Vert \phi\right\Vert _{\mathbf{L}%
^{\infty}}.
\]
Thus, we obtain
\[
\left\vert I_{21}\right\vert \leq Ct^{\frac{3}{2}}\int_{0}^{2x}\left\vert
\Theta_{5}\left(  t,x,k\right)  \right\vert dk\leq Ct^{\frac{3}{2}}\left\vert
x\right\vert \left\langle tx\right\rangle ^{-2}\left\Vert \phi\right\Vert
_{\mathbf{L}^{\infty}},
\]
and therefore
\begin{equation}
\left\Vert I_{21}\right\Vert _{\mathbf{L}^{2}\left(  \mathbb{R}^{+}\right)
}\leq Ct^{\frac{3}{2}}\left\Vert \phi\right\Vert _{\mathbf{L}^{\infty}%
}\left\Vert \left\vert x\right\vert \left\langle tx\right\rangle
^{-2}\right\Vert _{\mathbf{L}^{2}\left(  \mathbb{R}^{+}\right)  }\leq
C\left\Vert \phi\right\Vert _{\mathbf{L}^{\infty}}. \label{4.59}%
\end{equation}
By Plancharel's theorem we have%
\[
\left\Vert \phi\left(  2x-k\right)  -\phi\left(  -k\right)  \right\Vert
_{\mathbf{L}^{2}}=\left\Vert \left(  e^{2ixy}-1\right)  \left(  \mathcal{F}%
_{0}\phi\right)  \left(  y\right)  \right\Vert _{\mathbf{L}^{2}}\leq
C\left\vert x\right\vert \left\Vert y\left(  \mathcal{F}_{0}\phi\right)
\left(  y\right)  \right\Vert _{\mathbf{L}^{2}}=C\left\vert x\right\vert
\left\Vert \partial_{k}\phi\right\Vert _{\mathbf{L}^{2}}.
\]
Then, using (\ref{3.6}) with $\delta=0,$ via Cauchy-Schwartz inequality it
follows%
\begin{align*}
\left\vert I_{22}\right\vert  &  \leq Ct^{\frac{3}{2}}\int_{2x}^{\infty
}\left\vert \Theta_{6}\left(  t,x,k\right)  \right\vert dk\leq Ct^{\frac{3}%
{2}}\int_{2x}^{\infty}\left\vert \left(  \partial_{x}m_{+}\right)  \left(
tx,2x-k\right)  \right\vert \left\vert \phi\left(  2x-k\right)  -\phi\left(
-k\right)  \right\vert dk\\
&  \leq Ct^{\frac{3}{2}}\left\langle tx\right\rangle ^{-2}\int_{2x}^{\infty
}\left\langle k-2x\right\rangle ^{-1}\left\vert \phi\left(  2x-k\right)
-\phi\left(  -k\right)  \right\vert dk\\
&  \leq Ct^{\frac{3}{2}}\left\langle tx\right\rangle ^{-2}\left\Vert
\left\langle k\right\rangle ^{-1}\right\Vert _{\mathbf{L}^{2}}\left\Vert
\phi\left(  2x-k\right)  -\phi\left(  -k\right)  \right\Vert _{\mathbf{L}^{2}%
}\leq Ct^{\frac{1}{2}}\left\langle tx\right\rangle ^{-1}\left\Vert
\partial_{k}\phi\right\Vert _{\mathbf{L}^{2}}.
\end{align*}
Hence
\begin{equation}
\left\Vert I_{22}\right\Vert _{\mathbf{L}^{2}\left(  \mathbb{R}^{+}\right)
}\leq Ct^{\frac{1}{2}}\left\Vert \partial_{k}\phi\right\Vert _{\mathbf{L}^{2}%
}\left\Vert \left\langle tx\right\rangle ^{-1}\right\Vert _{\mathbf{L}%
^{2}\left(  \mathbb{R}^{+}\right)  }\leq C\left\Vert \partial_{k}%
\phi\right\Vert _{\mathbf{L}^{2}}. \label{4.60}%
\end{equation}
Integrating by parts in $I_{23}$ we find%
\begin{equation}
\left.
\begin{array}
[c]{c}%
I_{23}=-\frac{t^{\frac{1}{2}}}{\sqrt{2\pi i}}%
%TCIMACRO{\dint \limits_{2x}^{\infty}}%
%BeginExpansion
{\displaystyle\int\limits_{2x}^{\infty}}
%EndExpansion
\partial_{k}e^{-\frac{it}{2}\left(  x-k\right)  ^{2}}\left(  k-x\right)
^{-1}\Theta_{7}\left(  t,x,k\right)  dk=\frac{t^{\frac{1}{2}}}{\sqrt{2\pi i}%
}\dfrac{e^{-\frac{it}{2}x^{2}}\Theta_{7}\left(  t,x,2x\right)  }{x}\\
-\frac{t^{\frac{1}{2}}}{\sqrt{2\pi i}}%
%TCIMACRO{\dint \limits_{2x}^{\infty}}%
%BeginExpansion
{\displaystyle\int\limits_{2x}^{\infty}}
%EndExpansion
e^{-\frac{it}{2}\left(  x-k\right)  ^{2}}\left(  k-x\right)  ^{-2}\Theta
_{7}\left(  t,x,k\right)  dk+\frac{t^{\frac{1}{2}}}{\sqrt{2\pi i}}%
%TCIMACRO{\dint \limits_{2x}^{\infty}}%
%BeginExpansion
{\displaystyle\int\limits_{2x}^{\infty}}
%EndExpansion
e^{-\frac{it}{2}\left(  x-k\right)  ^{2}}\left(  k-x\right)  ^{-1}\partial
_{k}\Theta_{7}\left(  t,x,k\right)  dk,
\end{array}
\right.  \label{4.56}%
\end{equation}
Let us now estimate $\Theta_{7}\left(  t,x,k\right)  .$ First, observe that by
(\ref{3.7}) with $\delta=0$%
\begin{equation}
\left\vert \left(  \partial_{x}m_{+}\right)  \left(  tx,2x-k\right)  -\left(
\partial_{x}m_{+}\right)  \left(  tx,k\right)  \right\vert =\left\vert
\int_{k}^{2x-k}\left(  \partial_{k}\partial_{x}m_{+}\right)  \left(
tx,k^{\prime}\right)  dk^{\prime}\right\vert \leq C\left\vert k-x\right\vert
\left\langle tx\right\rangle ^{-1}\leq C\frac{\left\vert k-x\right\vert
}{\left\langle k-x\right\rangle }\left\langle tx\right\rangle ^{-1},
\label{4.75}%
\end{equation}
for $\left\vert k-x\right\vert \leq1.$ Moreover, for $\left\vert
k-x\right\vert \geq1,$ by (\ref{3.6}) with $\delta=0$ we have%
\begin{equation}
\left\vert \left(  \partial_{x}m_{+}\right)  \left(  tx,2x-k\right)  -\left(
\partial_{x}m_{+}\right)  \left(  tx,k\right)  \right\vert \leq\left\vert
\left(  \partial_{x}m_{+}\right)  \left(  tx,2x-k\right)  \right\vert
+\left\vert \left(  \partial_{x}m_{+}\right)  \left(  tx,k\right)  \right\vert
\leq C\frac{\left\vert k-x\right\vert }{\left\langle k-x\right\rangle
}\left\langle tx\right\rangle ^{-2}. \label{4.76}%
\end{equation}
Thus, from (\ref{4.75}) and (\ref{4.76})
\begin{equation}
\left\vert \left(  \partial_{x}m_{+}\right)  \left(  tx,2x-k\right)  -\left(
\partial_{x}m_{+}\right)  \left(  tx,k\right)  \right\vert \leq C\frac
{\left\vert k-x\right\vert }{\left\langle k-x\right\rangle }\left\langle
tx\right\rangle ^{-1}. \label{4.39}%
\end{equation}
If $a=1,$ $\phi\ $is odd. When $a=-1,$ $\phi\ $is supposed to be even. Then,
\begin{equation}
\Theta_{7}\left(  t,x,k\right)  =\left(  \left(  T\left(  k\right)
\mp1\right)  \phi\left(  k\right)  +R_{+}\left(  k\right)  \phi\left(
-k\right)  \right)  \left(  \partial_{x}m_{+}\right)  \left(  tx,k\right)
+\left(  \left(  \partial_{x}m_{+}\right)  \left(  tx,2x-k\right)  -\left(
\partial_{x}m_{+}\right)  \left(  tx,k\right)  \right)  \phi\left(  -k\right)
, \label{4.37}%
\end{equation}
for $a=\pm1.$ By (\ref{3.4}), (\ref{3.6}) with $\delta=0,$ (\ref{3.3}) and
(\ref{3.12}) we estimate
\begin{equation}
\left\vert \left(  \partial_{x}m_{+}\right)  \left(  tx,k\right)  \right\vert
\left(  \left\vert T\left(  k\right)  \mp1\right\vert \left\vert \phi\left(
k\right)  \right\vert +\left\vert R_{+}\left(  k\right)  \right\vert
\left\vert \phi\left(  -k\right)  \right\vert \right)  \leq C\left\langle
tx\right\rangle ^{-1}\left\vert k\right\vert \left(  \left\vert \phi\left(
k\right)  \right\vert +\left\vert \phi\left(  -k\right)  \right\vert \right)
. \label{4.40}%
\end{equation}
From (\ref{4.39}) we see that%
\begin{equation}
\left\vert \left(  \partial_{x}m_{+}\right)  \left(  tx,2x-k\right)
-\partial_{x}m_{+}\left(  tx,k\right)  \right\vert \left\vert \phi\left(
k\right)  \right\vert \leq C\left\vert k-x\right\vert \left\langle
tx\right\rangle ^{-1}\left\vert \phi\left(  k\right)  \right\vert .
\label{4.41}%
\end{equation}
Thus, by (\ref{4.37}), (\ref{4.40}) and (\ref{4.41}) we deduce%
\begin{align*}
\left\vert \Theta_{7}\left(  t,x,k\right)  \right\vert  &  \leq\left\vert
\left(  \partial_{x}m_{+}\right)  \left(  tx,k\right)  \right\vert \left(
\left\vert T\left(  k\right)  \mp1\right\vert \left\vert \phi\left(  k\right)
\right\vert +\left\vert R_{+}\left(  k\right)  \right\vert \left\vert
\phi\left(  -k\right)  \right\vert \right)  +\left\vert \left(  \partial
_{x}m_{+}\right)  \left(  tx,2x-k\right)  -\partial_{x}m_{+}\left(
tx,k\right)  \right\vert \left\vert \phi\left(  -k\right)  \right\vert \\
&  \leq C\left\langle tx\right\rangle ^{-1}\left(  \left\vert k\right\vert
+\left\vert k-x\right\vert \right)  \left(  \left\vert \phi\left(  k\right)
\right\vert +\left\vert \phi\left(  -k\right)  \right\vert \right)  ,
\end{align*}
for $a=\pm1.$ Therefore, we get
\begin{align*}
&  \left\vert \frac{t^{\frac{1}{2}}}{\sqrt{2\pi i}}\frac{e^{-\frac{it}{2}%
x^{2}}\Theta_{7}\left(  t,x,2x\right)  }{x}\right\vert +\left\vert
\frac{t^{\frac{1}{2}}}{\sqrt{2\pi i}}\int_{2x}^{\infty}e^{-\frac{it}{2}\left(
x-k\right)  ^{2}}\left(  k-x\right)  ^{-2}\Theta_{7}\left(  t,x,k\right)
dk\right\vert \\
&  \leq Ct^{\frac{1}{2}}\frac{1}{x}\left\vert \Theta_{7}\left(  t,x,2x\right)
\right\vert +Ct^{\frac{1}{2}}%
%TCIMACRO{\dint \limits_{2x}^{\infty}}%
%BeginExpansion
{\displaystyle\int\limits_{2x}^{\infty}}
%EndExpansion
\left(  k-x\right)  ^{-2}\left\vert \Theta_{7}\left(  t,x,k\right)
\right\vert dk\\
&  \leq Ct^{\frac{1}{2}}\left\langle tx\right\rangle ^{-1}\left\Vert
\phi\right\Vert _{\mathbf{L}^{\infty}}+Ct^{\frac{1}{2}}\left\langle
tx\right\rangle ^{-1}%
%TCIMACRO{\dint \limits_{2x}^{\infty}}%
%BeginExpansion
{\displaystyle\int\limits_{2x}^{\infty}}
%EndExpansion
\left(  k-x\right)  ^{-1}\left(  \left\vert \phi\left(  k\right)  \right\vert
+\left\vert \phi\left(  -k\right)  \right\vert \right)  dk\\
&  \leq Ct^{\frac{1}{2}}\left\langle tx\right\rangle ^{-1}\left(  \left\Vert
\phi\right\Vert _{\mathbf{L}^{\infty}}+\left\Vert \phi\right\Vert
_{\mathbf{L}^{2}}\right)  +Ct^{\frac{1}{2}}\left\langle tx\right\rangle
^{-1}\left\Vert \phi\right\Vert _{\mathbf{L}^{\infty}}\log\left(  1+\left\vert
x\right\vert ^{-1}\right)  .
\end{align*}
Hence,%
\begin{equation}
\left.
\begin{array}
[c]{c}%
\left\Vert \frac{t^{\frac{1}{2}}}{\sqrt{2\pi i}}\dfrac{e^{-\frac{it}{2}x^{2}%
}\Theta_{7}\left(  t,x,2x\right)  }{x}\right\Vert _{\mathbf{L}^{2}\left(
\mathbb{R}^{+}\right)  }+\left\Vert \frac{t^{\frac{1}{2}}}{\sqrt{2\pi i}}%
%TCIMACRO{\dint \limits_{2x}^{\infty}}%
%BeginExpansion
{\displaystyle\int\limits_{2x}^{\infty}}
%EndExpansion
e^{-\frac{it}{2}\left(  x-k\right)  ^{2}}\left(  k-x\right)  ^{-2}\Theta
_{7}\left(  t,x,k\right)  dk\right\Vert _{\mathbf{L}^{2}\left(  \mathbb{R}%
^{+}\right)  }\\
\leq Ct^{\frac{1}{2}}\left(  \left\Vert \phi\right\Vert _{\mathbf{L}^{\infty}%
}+\left\Vert \phi\right\Vert _{\mathbf{L}^{2}}\right)  \left\Vert \left\langle
tx\right\rangle ^{-1}\right\Vert _{\mathbf{L}^{2}\left(  \mathbb{R}%
^{+}\right)  }+Ct^{\frac{1}{2}}\left\Vert \phi\right\Vert _{\mathbf{L}%
^{\infty}}\left\Vert \left\langle tx\right\rangle ^{-1}\log\left(
1+\left\vert x\right\vert ^{-1}\right)  \right\Vert _{\mathbf{L}^{2}\left(
\mathbb{R}^{+}\right)  }\leq C\left\Vert \phi\right\Vert _{\mathbf{L}^{\infty
}}\log\left\langle t\right\rangle +C\left\Vert \phi\right\Vert _{\mathbf{L}%
^{2}}.
\end{array}
\right.  \label{4.57}%
\end{equation}
Derivating (\ref{4.37}) with respect to $k$ we get%
\begin{equation}
\left.
\begin{array}
[c]{c}%
\partial_{k}\Theta_{7}\left(  t,x,k\right)  =\left(  \left(  T\left(
k\right)  \mp1\right)  \phi\left(  k\right)  +R_{+}\left(  k\right)
\phi\left(  -k\right)  \right)  \left(  \partial_{k}\partial_{x}m_{+}\right)
\left(  tx,k\right) \\
+\left(  \partial_{k}T\left(  k\right)  \phi\left(  k\right)  +\partial
_{k}R_{+}\left(  k\right)  \phi\left(  -k\right)  \right)  \left(
\partial_{x}m_{+}\right)  \left(  tx,k\right) \\
+\left(  \left(  T\left(  k\right)  \mp1\right)  \partial_{k}\phi\left(
k\right)  -R_{+}\left(  k\right)  \left(  \partial_{k}\phi\right)  \left(
-k\right)  \right)  \left(  \partial_{x}m_{+}\right)  \left(  tx,k\right) \\
-\left(  \left(  \partial_{x}m_{+}\right)  \left(  tx,2x-k\right)  -\left(
\partial_{x}m_{+}\right)  \left(  tx,k\right)  \right)  \left(  \partial
_{k}\phi\right)  \left(  -k\right) \\
-\left(  \left(  \partial_{k}\partial_{x}m_{+}\right)  \left(  tx,2x-k\right)
+\left(  \partial_{k}\partial_{x}m_{+}\right)  \left(  tx,k\right)  \right)
\phi\left(  -k\right)  .
\end{array}
\right.  \label{4.55}%
\end{equation}
Using (\ref{3.4}) and (\ref{3.7}) with $\delta=0$ we get%
\begin{equation}
\left\vert \left(  \left(  T\left(  k\right)  \mp1\right)  \phi\left(
k\right)  +R_{+}\left(  k\right)  \phi\left(  -k\right)  \right)  \left(
\partial_{k}\partial_{x}m_{+}\right)  \left(  tx,k\right)  \right\vert \leq
C\left\langle tx\right\rangle ^{-1}\left(  \left\vert \phi\left(  k\right)
\right\vert +\left\vert \phi\left(  -k\right)  \right\vert \right)  .
\label{4.50}%
\end{equation}
From (\ref{3.6}) with $\delta=0,$ (\ref{3.8}) and (\ref{3.17}) we derive%
\begin{equation}
\left\vert \left(  \partial_{k}T\left(  k\right)  \phi\left(  k\right)
+\partial_{k}R_{+}\left(  k\right)  \phi\left(  -k\right)  \right)  \left(
\partial_{x}m_{+}\right)  \left(  tx,k\right)  \right\vert \leq C\left\langle
tx\right\rangle ^{-1}\left(  \left\vert \phi\left(  k\right)  \right\vert
+\left\vert \phi\left(  -k\right)  \right\vert \right)  . \label{4.51}%
\end{equation}
By (\ref{3.4}), (\ref{3.6}) with $\delta=0,$ (\ref{3.3}) and (\ref{3.12})
\begin{equation}
\left\vert \left(  \left(  T\left(  k\right)  \mp1\right)  \partial_{k}%
\phi\left(  k\right)  -R_{+}\left(  k\right)  \left(  \partial_{k}\phi\right)
\left(  -k\right)  \right)  \left(  \partial_{x}m_{+}\right)  \left(
tx,k\right)  \right\vert \leq C\left\vert k\right\vert \left\langle
tx\right\rangle ^{-1}\left\langle k\right\rangle ^{-1}\left(  \left\vert
\partial_{k}\phi\left(  k\right)  \right\vert +\left\vert \partial_{k}%
\phi\left(  -k\right)  \right\vert \right)  . \label{4.52}%
\end{equation}
From (\ref{4.39}) we get%
\begin{equation}
\left\vert \left(  \left(  \partial_{x}m_{+}\right)  \left(  tx,2x-k\right)
-\left(  \partial_{x}m_{+}\right)  \left(  tx,k\right)  \right)  \partial
_{k}\phi\left(  -k\right)  \right\vert \leq C\frac{\left\vert k-x\right\vert
}{\left\langle k-x\right\rangle }\left\langle tx\right\rangle ^{-1}\left\vert
\partial_{k}\phi\left(  k\right)  \right\vert . \label{4.53}%
\end{equation}
Finally, by (\ref{3.7}) with $\delta=0$ we estimate%
\begin{equation}
\left\vert \left(  \left(  \partial_{k}\partial_{x}m_{+}\right)  \left(
tx,2x-k\right)  +\left(  \partial_{k}\partial_{x}m_{+}\right)  \left(
tx,k\right)  \right)  \phi\left(  -k\right)  \right\vert \leq C\left\langle
tx\right\rangle ^{-1}\left\vert \phi\left(  -k\right)  \right\vert .
\label{4.54}%
\end{equation}
Using (\ref{4.50})-(\ref{4.54}) in (\ref{4.55}) we deduce
\[
\left.  \left\vert \partial_{k}\Theta_{7}\left(  t,x,k\right)  \right\vert
\leq C\left\langle tx\right\rangle ^{-1}\left(  \left\vert \phi\left(
k\right)  \right\vert +\left\vert \phi\left(  -k\right)  \right\vert \right)
+C\left(  \frac{\left\vert k\right\vert }{\left\langle k\right\rangle }%
+\frac{\left\vert k-x\right\vert }{\left\langle k-x\right\rangle }\right)
\left\langle tx\right\rangle ^{-1}\left(  \left\vert \partial_{k}\phi\left(
k\right)  \right\vert +\left\vert \partial_{k}\phi\left(  -k\right)
\right\vert \right)  .\right.
\]
Hence,
\begin{align*}
&  \left\vert \frac{t^{\frac{1}{2}}}{\sqrt{2\pi i}}\int_{2x}^{\infty}%
e^{-\frac{it}{2}\left(  x-k\right)  ^{2}}\left(  k-x\right)  ^{-1}\partial
_{k}\Theta_{7}\left(  t,x,k\right)  dk\right\vert \\
&  \leq Ct^{\frac{1}{2}}\left\langle tx\right\rangle ^{-1}\int_{2x}^{\infty
}\left(  k-x\right)  ^{-1}\left(  \left\vert \phi\left(  k\right)  \right\vert
+\left\vert \phi\left(  -k\right)  \right\vert \right)  dk\\
&  +Ct^{\frac{1}{2}}\left\langle tx\right\rangle ^{-1}\int_{2x}^{\infty
}\left(  k-x\right)  ^{-1}\left(  \frac{\left\vert k\right\vert }{\left\langle
k\right\rangle }+\frac{\left\vert k-x\right\vert }{\left\langle
k-x\right\rangle }\right)  \left(  \left\vert \partial_{k}\phi\left(
k\right)  \right\vert +\left\vert \partial_{k}\phi\left(  -k\right)
\right\vert \right)  dk\\
&  \leq Ct^{\frac{1}{2}}\left\langle tx\right\rangle ^{-1}\left(  \left\Vert
\phi\right\Vert _{\mathbf{L}^{\infty}}\log\left(  1+\left\vert x\right\vert
^{-1}\right)  +\left\Vert \phi\right\Vert _{\mathbf{H}^{1}}\right)  ,
\end{align*}
and then
\begin{equation}
\left.
\begin{array}
[c]{c}%
\left\Vert \frac{t^{\frac{1}{2}}}{\sqrt{2\pi i}}\int_{2x}^{\infty}%
e^{-\frac{it}{2}\left(  x-k\right)  ^{2}}\left(  k-x\right)  ^{-1}\partial
_{k}\Theta_{7}\left(  t,x,k\right)  dk\right\Vert _{\mathbf{L}^{2}\left(
\mathbb{R}^{+}\right)  }\\
\leq Ct^{\frac{1}{2}}\left\Vert \phi\right\Vert _{\mathbf{H}^{1}}\left\Vert
\left\langle tx\right\rangle ^{-1}\right\Vert _{\mathbf{L}^{2}\left(
\mathbb{R}^{+}\right)  }+Ct^{\frac{1}{2}}\left\Vert \phi\right\Vert
_{\mathbf{L}^{\infty}}\left\Vert \left\langle tx\right\rangle ^{-1}\log\left(
1+\left\vert x\right\vert ^{-1}\right)  \right\Vert _{\mathbf{L}^{2}\left(
\mathbb{R}^{+}\right)  }\\
\leq C\left(  \left\Vert \phi\right\Vert _{\mathbf{L}^{\infty}}\log
\left\langle t\right\rangle +\left\Vert \phi\right\Vert _{\mathbf{H}^{1}%
}\right)  .
\end{array}
\right.  \label{4.58}%
\end{equation}
Using (\ref{4.57}) and (\ref{4.58}) in (\ref{4.56}) we arrive to
\begin{equation}
\left.  \left\Vert I_{23}\right\Vert _{\mathbf{L}^{2}\left(  \mathbb{R}%
^{+}\right)  }\leq C\left(  \left\Vert \phi\right\Vert _{\mathbf{L}^{\infty}%
}\log\left\langle t\right\rangle +\left\Vert \phi\right\Vert _{\mathbf{H}^{1}%
}\right)  \right.  \label{4.61}%
\end{equation}
Introducing the estimates (\ref{4.59}), (\ref{4.60}) and (\ref{4.61}) into
(\ref{4.62}) we obtain%
\begin{equation}
\left\Vert I_{2}\right\Vert _{\mathbf{L}^{2}\left(  \mathbb{R}^{+}\right)
}\leq C\left(  \left\Vert \phi\right\Vert _{\mathbf{L}^{\infty}}%
\log\left\langle t\right\rangle +\left\Vert \phi\right\Vert _{\mathbf{H}^{1}%
}\right)  , \label{4.63}%
\end{equation}
Therefore, from (\ref{4.65}), (\ref{4.64}) and (\ref{4.63}) we control
$\left\Vert \partial_{x}\mathcal{V}\left(  t\right)  \phi\right\Vert
_{\mathbf{L}^{2}\left(  \mathbb{R}^{+}\right)  }$ by $C\left(  \left\Vert
\phi\right\Vert _{\mathbf{L}^{\infty}}\log\left\langle t\right\rangle
+\left\Vert \phi\right\Vert _{\mathbf{H}^{1}}\right)  .$ Proceeding similarly
we estimate%
\[
\left\Vert \partial_{x}\mathcal{V}\left(  t\right)  \phi\right\Vert
_{\mathbf{L}^{2}\left(  \mathbb{R}^{-}\right)  }\leq C\left(  \left\Vert
\phi\right\Vert _{\mathbf{L}^{\infty}}\log\left\langle t\right\rangle
+\left\Vert \phi\right\Vert _{\mathbf{H}^{1}}\right)  .
\]
Hence, we attain (\ref{4.66}).
\end{proof}

\section{\label{S5}Estimates for the inverse operator $\mathcal{V}^{-1}$}

In this section we prove Lemma \ref{L2}. We want to obtain an asymptotic
expansion for $\mathcal{V}^{-1}\left(  t\right)  ,$ as $t\rightarrow\infty,$
as well as a control of the $\mathbf{L}^{2}-$norm of the derivative
$\partial_{k}\mathcal{V}^{-1}\left(  t\right)  .$ These results are presented
in Lemmas \ref{Lemma 5.1} and \ref{Lemma 5.2} below. Recall that%
\[
\Lambda\left(  k\right)  =\theta\left(  k\right)  R_{+}\left(  k\right)
+\theta\left(  -k\right)  R_{-}\left(  -k\right)  .
\]

\begin{lemma}
\label{Lemma 5.1}Suppose that (\ref{3.4}) and (\ref{3.2}) with $\delta>0$ are
verified. Then, the estimate%
\[
\left\Vert \mathcal{V}^{-1}\left(  t\right)  \phi-\overline{T\left(
\left\vert k\right\vert \right)  }\phi\left(  k\right)  -\overline
{\Lambda\left(  -k\right)  }\phi\left(  -k\right)  \right\Vert _{\mathbf{L}%
^{\infty}}\leq C\left\vert \phi\left(  0\right)  \right\vert +Ct^{-\frac{1}%
{4}}\left\Vert \phi\right\Vert _{\mathbf{H}^{1}},
\]
is true for all $t\geq1.$
\end{lemma}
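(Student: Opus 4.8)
The plan is to follow the scheme used for the direct operator $\mathcal{V}(t)$ in the proof of (\ref{4.20})--(\ref{4.20a}), transposing it to the representation (\ref{2.14}) of $\mathcal{V}^{-1}(t)$. Now the integration is carried out in the variable $x$, the oscillatory factor is $e^{\frac{it}{2}(k-x)^{2}}$, and the stationary point sits at $x=k$. First I would split the integral into $x\geq 0$ and $x\leq 0$ and insert, respectively, the representations (\ref{4.68}) and (\ref{4.69}) of $\overline{\Phi(tx,k)}$, which express $\overline{\Phi}$ through $m_{+}$ alone on the right half-line and through $m_{-}$ alone on the left one. This separates a \emph{transmission} contribution, carrying the factor $(\theta(k)\overline{T(k)}+\theta(-k))$ (resp. $(\theta(-k)\overline{T(-k)}+\theta(k))$) and the phase $e^{\frac{it}{2}(k-x)^{2}}$ stationary at $x=k$, from a \emph{reflection} contribution, carrying $\overline{R_{+}(-k)}$ (resp. $\overline{R_{-}(k)}$) together with the extra factor $e^{2iktx}$; completing the square, $\frac{it}{2}(k-x)^{2}+2iktx=\frac{it}{2}(k+x)^{2}$, so the reflection phase is stationary at $x=-k$. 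The leading values should then be $\overline{T(|k|)}\phi(k)$ from the transmission part and $\overline{\Lambda(-k)}\phi(-k)$ from the reflection part, matching (\ref{lamda}).

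The point that makes the weak hypotheses (\ref{3.2}) and (\ref{3.4}) sufficient---in contrast to the stronger list needed for $\mathcal{V}(t)$---is that here $m_{\pm}$ enters only through its first argument $tx$, so I never have to differentiate it. I would replace $m_{\pm}(tx,\cdot)$ by $1$ and control the error by the pointwise bound (\ref{3.2}): since $\sqrt{t}\int_{0}^{\infty}\langle tx\rangle^{-1-\delta}\,|\phi(x)|\,dx\leq \sqrt{t}\,\Vert\phi\Vert_{\mathbf{L}^{\infty}}\,\Vert\langle tx\rangle^{-1-\delta}\Vert_{\mathbf{L}^{1}(0,\infty)}\leq Ct^{-1/2}\Vert\phi\Vert_{\mathbf{L}^{\infty}}$ and $\Vert\phi\Vert_{\mathbf{L}^{\infty}}\leq C\Vert\phi\Vert_{\mathbf{H}^{1}}$, this costs only $O(t^{-1/2})\Vert\phi\Vert_{\mathbf{H}^{1}}$. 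After that the amplitude is just $\phi(x)$ times constants in $x$, and I would write $\phi(x)=\phi(x_{0})+(\phi(x)-\phi(x_{0}))$ at the relevant stationary value $x_{0}\in\{k,-k\}$; the difference is estimated by a single integration by parts in $x$, using the analogue $e^{\frac{it}{2}(k-x)^{2}}=(1+it(x-k)^{2})^{-1}\partial_{x}\big((x-k)e^{\frac{it}{2}(k-x)^{2}}\big)$ of (\ref{4.7}) together with the H\"{o}lder estimate (\ref{4.8}), which yields $O(t^{-1/4})\Vert\partial_{k}\phi\Vert_{\mathbf{L}^{2}}$ and touches $\phi$ only (the boundary term at $x=0$ is likewise $O(t^{-1/4})$). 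Thus neither $\partial_{k}m_{\pm}$, nor $\partial_{x}m_{\pm}$, nor $\partial_{k}T$, $\partial_{k}R$ are needed.

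The remaining Gaussian integrals reduce to half-line Fresnel factors $G_{\pm}(k)$ with $G_{+}+G_{-}=1$, and the transmission leading term becomes $\phi(k)\big(\overline{T(|k|)}G_{+}+G_{-}\big)$, whose discrepancy from $\overline{T(|k|)}\phi(k)$ equals $\phi(k)\,G_{-}\,(1-\overline{T(|k|)})$. Here the behaviour splits according to (\ref{3.3}): if $T(0)=1$ the factor $1-\overline{T(|k|)}$ vanishes like $|k|$ by (\ref{3.9}) and the discrepancy is $O(t^{-1/2})$; if $T(0)=-1$ it tends to $2$, and since the Fresnel tail $G_{-}$ is of size $O(1)$ only on $|k|\lesssim t^{-1/2}$, writing $\phi(k)=\phi(0)+(\phi(k)-\phi(0))$ and applying (\ref{4.8}) bounds this contribution by $C|\phi(0)|+Ct^{-1/4}\Vert\partial_{k}\phi\Vert_{\mathbf{L}^{2}}$. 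This is exactly where the term $C|\phi(0)|$ in the statement originates, and it is the only place it appears: the reflection part reassembles to $\overline{\Lambda(-k)}\phi(-k)$ (the stationary point $x=-k$ selecting $R_{+}(-k)$ for $k<0$ and $R_{-}(k)$ for $k>0$), and because $R_{\pm}(0)=0$ whenever $a=\pm1$ by (\ref{3.12}), its small-$k$ discrepancy already carries a factor $O(|k|)$ by (\ref{3.10}) and produces no $|\phi(0)|$.

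I expect the main obstacle to be precisely this uniform small-energy analysis near $k=0$, where the two stationary points $x=\pm k$ and the splitting boundary $x=0$ collide, so that the half-line Fresnel factors no longer decouple; one must verify that the only surviving obstruction is the transmission mismatch at $T(0)=-1$ and that everything else is absorbed into $Ct^{-1/4}\Vert\phi\Vert_{\mathbf{H}^{1}}$. Once the case $x\geq0$, $k\geq0$ is settled, the sign variants of $k$ and the companion region $x\leq0$ (handled with (\ref{4.69}) in place of (\ref{4.68})) follow by the same computation, giving the stated $\mathbf{L}^{\infty}$ bound for all $t\geq1$.
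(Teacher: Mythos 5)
Your proposal is correct in substance and shares the skeleton of the paper's proof --- split the integral over $x\gtrless 0$ via (\ref{4.68})/(\ref{4.69}), then discard $m_{\pm}-1$ using (\ref{3.2}) with $\delta>0$ at cost $O(t^{-1/2})\Vert\phi\Vert_{\mathbf{L}^{\infty}}$ (this is exactly the paper's remainder $R_{2}$, estimate (\ref{4.72})) --- but the endgame is genuinely different. The paper never performs stationary phase at $x=\pm k$ and never meets half-line Fresnel factors: it subtracts $\phi(0)$ so that each remaining amplitude is $\theta(\pm x)\left(  \phi(\pm x)-\phi(0)\right)$, which has no jump at the origin and whose derivative $\theta(\pm x)\phi'(\pm x)$ lies in $\mathbf{L}^{2}$; all the $\phi(0)$ pieces are collected into a remainder ($R_{1}$ in the paper) bounded crudely by $C\left\vert \phi(0)\right\vert$ via (\ref{3.4}); and the ready-made free estimate (\ref{2.3}) applied to $\mathcal{V}_{0}(-t)$ then replaces each oscillatory integral by its pointwise value with error $Ct^{-1/4}\Vert\partial_{k}\phi\Vert_{\mathbf{L}^{2}}$. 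The identities $\theta(k)\theta(-k)=0$ and the definition (\ref{lamda}) reassemble the leading term $\overline{T(\left\vert k\right\vert)}\phi(k)+\overline{\Lambda(-k)}\phi(-k)$ in one stroke. Your route --- centering $\phi$ at the stationary points, integrating by parts with the analogue of (\ref{4.7}), then reconciling the factor $\overline{T(\left\vert k\right\vert)}G_{+}+G_{-}$ --- reaches the same conclusion, but pays for it with exactly the near-$k=0$ Fresnel bookkeeping that the paper's $\phi(0)$-subtraction renders invisible. What your version buys is a more explicit picture of where the $C\left\vert \phi(0)\right\vert$ loss comes from; what the paper's version buys is brevity and uniformity in $k$.

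One flaw should be repaired: your treatment of the transmission mismatch $\phi(k)G_{-}\left(  1-\overline{T(\left\vert k\right\vert)}\right)$ splits into cases according to $T(0)=\pm1$ and invokes (\ref{3.3}), (\ref{3.9}), (\ref{3.10}), (\ref{3.12}), none of which is among the hypotheses of this lemma --- only (\ref{3.4}) and (\ref{3.2}) with $\delta>0$ are assumed, with no exceptionality or parity. Fortunately the case distinction is unnecessary: the argument you give in the $T(0)=-1$ branch (write $\phi(k)=\phi(0)+\left(  \phi(k)-\phi(0)\right)$, bound $\left\vert 1-\overline{T(\left\vert k\right\vert)}\right\vert$ by a constant via (\ref{3.4}), and combine (\ref{4.8}) with the decay $\left\vert G_{-}(k)\right\vert \leq C\langle\sqrt{t}k\rangle^{-1}$, so that $\left\vert k\right\vert ^{1/2}\left\vert G_{-}(k)\right\vert \leq Ct^{-1/4}$) works verbatim for every bounded $T$, and the same remark disposes of the reflection part without appealing to $R_{\pm}(0)=0$, since the statement permits a $C\left\vert \phi(0)\right\vert$ contribution there as well. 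Deleting the case analysis leaves a proof valid under exactly the stated hypotheses.
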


\begin{proof}
Recall that%
\begin{equation}
\mathcal{V}^{-1}\left(  t\right)  \phi=\sqrt{\frac{t}{2\pi i}}\int_{-\infty
}^{\infty}e^{\frac{it}{2}\left(  k-x\right)  ^{2}}\overline{\Phi\left(
tx,k\right)  }\phi\left(  x\right)  dx, \label{4.67}%
\end{equation}
where
\[
\overline{\Phi\left(  x,k\right)  }=\theta\left(  k\right)  \overline{T\left(
k\right)  }\overline{m_{+}\left(  x,k\right)  }+\theta\left(  -k\right)
\overline{T\left(  -k\right)  }\overline{m_{-}\left(  x,-k\right)  }.
\]
Using (\ref{4.68}) and (\ref{4.69}) in (\ref{4.67}), we have%
\begin{equation}
\left.
\begin{array}
[c]{c}%
\mathcal{V}^{-1}\left(  t\right)  \phi=\sqrt{\frac{t}{2\pi i}}\theta\left(
k\right)  \overline{T\left(  k\right)  }%
%TCIMACRO{\dint _{0}^{\infty}}%
%BeginExpansion
{\displaystyle\int_{0}^{\infty}}
%EndExpansion
e^{\frac{it}{2}\left(  k-x\right)  ^{2}}\overline{m_{+}\left(  tx,k\right)
}\phi\left(  x\right)  dx\\
+\sqrt{\frac{t}{2\pi i}}\theta\left(  -k\right)  \overline{R_{+}\left(
-k\right)  }%
%TCIMACRO{\dint _{-\infty}^{0}}%
%BeginExpansion
{\displaystyle\int_{-\infty}^{0}}
%EndExpansion
e^{\frac{it}{2}\left(  k-x\right)  ^{2}}\overline{m_{+}\left(  -tx,-k\right)
}\phi\left(  -x\right)  dx\\
+\sqrt{\frac{t}{2\pi i}}\theta\left(  -k\right)
%TCIMACRO{\dint _{0}^{\infty}}%
%BeginExpansion
{\displaystyle\int_{0}^{\infty}}
%EndExpansion
e^{\frac{it}{2}\left(  k-x\right)  ^{2}}\overline{m_{+}\left(  tx,k\right)
}\phi\left(  x\right)  dx\\
+\sqrt{\frac{t}{2\pi i}}\theta\left(  k\right)  \overline{R_{-}\left(
k\right)  }%
%TCIMACRO{\dint _{0}^{\infty}}%
%BeginExpansion
{\displaystyle\int_{0}^{\infty}}
%EndExpansion
e^{\frac{it}{2}\left(  k-x\right)  ^{2}}\overline{m_{-}\left(  -tx,k\right)
}\phi\left(  -x\right)  dx\\
+\sqrt{\frac{t}{2\pi i}}\theta\left(  k\right)
%TCIMACRO{\dint _{-\infty}^{0}}%
%BeginExpansion
{\displaystyle\int_{-\infty}^{0}}
%EndExpansion
e^{\frac{it}{2}\left(  k-x\right)  ^{2}}\overline{m_{-}\left(  tx,-k\right)
}\phi\left(  x\right)  dx\\
+\sqrt{\frac{t}{2\pi i}}\theta\left(  -k\right)  \overline{T\left(  -k\right)
}%
%TCIMACRO{\dint _{-\infty}^{0}}%
%BeginExpansion
{\displaystyle\int_{-\infty}^{0}}
%EndExpansion
e^{\frac{it}{2}\left(  k-x\right)  ^{2}}\overline{m_{-}\left(  tx,-k\right)
}\phi\left(  x\right)  dx.
\end{array}
\right.  \label{4.73}%
\end{equation}
Using the notation
\[
\mathcal{V}_{0}\left(  -t\right)  \phi=\sqrt{\frac{t}{2\pi i}}\int_{-\infty
}^{\infty}e^{\frac{it}{2}\left(  k-x\right)  ^{2}}\phi\left(  x\right)  dx
\]
we write%
\begin{align*}
\mathcal{V}^{-1}\left(  t\right)  \phi &  =\theta\left(  k\right)
\overline{T\left(  k\right)  }\mathcal{V}_{0}\left(  -t\right)  \left(
\theta\left(  x\right)  \left(  \phi\left(  x\right)  -\phi\left(  0\right)
\right)  \right) \\
&  +\theta\left(  -k\right)  \overline{R_{+}\left(  -k\right)  }%
\mathcal{V}_{0}\left(  -t\right)  \left(  \theta\left(  -x\right)  \left(
\phi\left(  -x\right)  -\phi\left(  0\right)  \right)  \right) \\
&  +\theta\left(  -k\right)  \mathcal{V}_{0}\left(  -t\right)  \left(
\theta\left(  x\right)  \left(  \phi\left(  x\right)  -\phi\left(  0\right)
\right)  \right) \\
&  +\theta\left(  k\right)  \overline{R_{-}\left(  k\right)  }\mathcal{V}%
_{0}\left(  -t\right)  \left(  \theta\left(  x\right)  \left(  \phi\left(
-x\right)  -\phi\left(  0\right)  \right)  \right) \\
&  +\theta\left(  k\right)  \mathcal{V}_{0}\left(  -t\right)  \left(
\theta\left(  -x\right)  \left(  \phi\left(  x\right)  -\phi\left(  0\right)
\right)  \right) \\
&  +\theta\left(  -k\right)  \overline{T\left(  -k\right)  }\mathcal{V}%
_{0}\left(  -t\right)  \left(  \theta\left(  -x\right)  \left(  \phi\left(
x\right)  -\phi\left(  0\right)  \right)  \right)  +R_{1}+R_{2},
\end{align*}
where%
\begin{align*}
R_{1}  &  =\phi\left(  0\right)  \theta\left(  k\right)  \overline{T\left(
k\right)  }\mathcal{V}_{0}\left(  -t\right)  \theta\left(  x\right)
+\phi\left(  0\right)  \theta\left(  -k\right)  \overline{R_{+}\left(
-k\right)  }\mathcal{V}_{0}\left(  -t\right)  \theta\left(  -x\right) \\
&  +\phi\left(  0\right)  \theta\left(  -k\right)  \mathcal{V}_{0}\left(
-t\right)  \theta\left(  x\right)  +\phi\left(  0\right)  \theta\left(
k\right)  \overline{R_{-}\left(  k\right)  }\mathcal{V}_{0}\left(  -t\right)
\theta\left(  x\right) \\
&  +\phi\left(  0\right)  \theta\left(  k\right)  \mathcal{V}_{0}\left(
-t\right)  \theta\left(  -x\right)  +\phi\left(  0\right)  \theta\left(
-k\right)  \overline{T\left(  -k\right)  }\mathcal{V}_{0}\left(  -t\right)
\theta\left(  -x\right)
\end{align*}
and%
\begin{align*}
R_{2}  &  =\sqrt{\frac{t}{2\pi i}}\theta\left(  k\right)  \overline{T\left(
k\right)  }\int_{0}^{\infty}e^{\frac{it}{2}\left(  k-x\right)  ^{2}}\left(
\overline{m_{+}\left(  tx,k\right)  }-1\right)  \phi\left(  x\right)  dx\\
&  +\sqrt{\frac{t}{2\pi i}}\theta\left(  -k\right)  \overline{R_{+}\left(
-k\right)  }\int_{-\infty}^{0}e^{\frac{it}{2}\left(  k-x\right)  ^{2}}\left(
\overline{m_{+}\left(  -tx,-k\right)  }-1\right)  \phi\left(  -x\right)  dx\\
&  +\sqrt{\frac{t}{2\pi i}}\theta\left(  -k\right)  \int_{0}^{\infty}%
e^{\frac{it}{2}\left(  k-x\right)  ^{2}}\left(  \overline{m_{+}\left(
tx,k\right)  }-1\right)  \phi\left(  x\right)  dx\\
&  +\sqrt{\frac{t}{2\pi i}}\theta\left(  k\right)  \overline{R_{-}\left(
k\right)  }\int_{0}^{\infty}e^{\frac{it}{2}\left(  k-x\right)  ^{2}}\left(
\overline{m_{-}\left(  -tx,k\right)  }-1\right)  \phi\left(  -x\right)  dx\\
&  +\sqrt{\frac{t}{2\pi i}}\theta\left(  k\right)  \int_{-\infty}^{0}%
e^{\frac{it}{2}\left(  k-x\right)  ^{2}}\left(  \overline{m_{-}\left(
tx,-k\right)  }-1\right)  \phi\left(  x\right)  dx\\
&  +\sqrt{\frac{t}{2\pi i}}\theta\left(  -k\right)  \overline{T\left(
-k\right)  }\int_{-\infty}^{0}e^{\frac{it}{2}\left(  k-x\right)  ^{2}}\left(
\overline{m_{-}\left(  tx,-k\right)  }-1\right)  \phi\left(  x\right)  dx.
\end{align*}
Using (\ref{2.3}), (\ref{3.4}) and $\theta\left(  k\right)  \theta\left(
-k\right)  =0$ for $k\neq0,$ we find%
\begin{equation}
\left\vert \mathcal{V}^{-1}\left(  t\right)  \phi-\overline{T\left(
\left\vert k\right\vert \right)  }\phi\left(  k\right)  -\overline
{\Lambda\left(  -k\right)  }\phi\left(  -k\right)  \right\vert \leq
C\left\vert \phi\left(  0\right)  \right\vert +Ct^{-\frac{1}{4}}\left\Vert
\phi\right\Vert _{\mathbf{H}^{1}}+R_{1}+R_{2}, \label{4.70}%
\end{equation}
It follows from (\ref{3.4}) that%
\begin{equation}
\left\Vert R_{1}\right\Vert _{\mathbf{L}^{\infty}}\leq C\left\vert \phi\left(
0\right)  \right\vert . \label{4.71}%
\end{equation}
In order to estimate $R_{2}$ we use (\ref{3.4}) and (\ref{3.2}) with
$\delta>0$ to obtain%
\begin{equation}
\left\Vert R_{2}\right\Vert _{\mathbf{L}^{\infty}}\leq Ct^{\frac{1}{2}%
}\left\Vert \phi\right\Vert _{\mathbf{L}^{\infty}}\int_{-\infty}^{\infty
}\left\langle tx\right\rangle ^{-1-\delta}dx\leq Ct^{-\frac{1}{2}}\left\Vert
\phi\right\Vert _{\mathbf{L}^{\infty}}. \label{4.72}%
\end{equation}
Therefore the result of the lemma follows from (\ref{4.70}), (\ref{4.71}) and
(\ref{4.72}).
\end{proof}

In the next lemma we estimate the derivative $\partial_{k}\mathcal{V}%
^{-1}\left(  t\right)  $.

\begin{lemma}
\label{Lemma 5.2}Suppose that (\ref{2.44}), (\ref{3.2}) with $\delta=0,$
(\ref{3.5}) with $\delta>\frac{1}{2},$ and (\ref{3.6}) with $\delta=0$ are
true. Also, assume that $m_{+}\left(  x,0\right)  =m_{-}\left(  -x,0\right)  $
and (\ref{3.19}), (\ref{3.8}), (\ref{3.3}), (\ref{3.12}), (\ref{3.17}). If
$a=1,$ let $\phi\in\mathbf{H}^{1}$ be odd and if $a=-1,$ suppose that $\phi$
is even. Then the estimate
\begin{equation}
\left\Vert \partial_{k}\mathcal{V}^{-1}\left(  t\right)  \phi\right\Vert
_{\mathbf{L}^{2}}\leq Ct^{\frac{1}{2}}\left\vert \phi\left(  0\right)
\right\vert +C\left\Vert \phi\right\Vert _{\mathbf{H}^{1}} \label{5.13}%
\end{equation}
is valid for all $t\geq1$.
\end{lemma}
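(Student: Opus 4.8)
The plan is to differentiate the explicit representation (\ref{4.73}) of $\mathcal{V}^{-1}(t)\phi$ under the integral sign and to organize the result according to whether $\partial_k$ falls on the Gaussian phase, on the Heaviside cut-offs, or on the smooth amplitude $\overline{\Phi(tx,k)}$ (i.e. on $T,R_\pm$ or on $m_\pm$). A point I would stress at the outset is that in (\ref{4.73}) every factor $e^{\pm 2iktx}$ arising from the scattering relations (\ref{4.68})--(\ref{4.69}) has already been absorbed into the Gaussian (turning $e^{\frac{it}{2}(k-x)^2}e^{2iktx}$ into $e^{\frac{it}{2}(k+x)^2}$) and then straightened by $x\mapsto-x$. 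Hence each of the six pieces has the clean form $\int_0^{\infty}$ or $\int_{-\infty}^{0}$ of $e^{\frac{it}{2}(k-x)^2}\overline{m_\pm(\pm tx,\pm k)}\phi(\pm x)\,dx$ with a bounded prefactor, so that differentiating the amplitude in $x$ will not create a spurious factor of $k$.

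When $\partial_k$ hits the phase it produces $it(k-x)e^{\frac{it}{2}(k-x)^2}=-\partial_x e^{\frac{it}{2}(k-x)^2}$, and I would integrate by parts in $x$ on each half-line. This splits that contribution into (a) a boundary term at $x=0$, (b) the piece $\mathcal{V}^{-1}(t)\phi'$ in which the $x$-derivative lands on $\phi$, and (c) the piece in which it lands on $\overline{m_\pm(\pm tx,\cdot)}$, producing the factor $t(\partial_x m_\pm)(\pm tx,\cdot)$. Part (b) is immediate from the $\mathbf{L}^2$-isometry of $\mathcal{V}^{-1}(t)=e^{\frac{it}{2}k^2}\mathcal{F}M\mathcal{D}_t$, giving $\|\mathcal{V}^{-1}(t)\phi'\|_{\mathbf{L}^2}=\|\phi'\|_{\mathbf{L}^2}\le\|\phi\|_{\mathbf{H}^1}$. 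For part (a), the six boundary contributions combine, for $k>0$, into $\sqrt{t/(2\pi i)}\,e^{\frac{it}{2}k^2}\phi(0)$ times the bracket $\overline{T(k)m_+(0,k)}+\overline{R_-(k)m_-(0,k)}-\overline{m_-(0,-k)}$ (and a mirror expression for $k<0$); using $T(k)=1+O(\langle k\rangle^{-1})$ and $R_\pm(k)=O(\langle k\rangle^{-1})$ from (\ref{3.18})--(\ref{3.19}) together with $m_\pm(0,k)=1+O(\langle k\rangle^{-1})$ from (\ref{2.44}), this bracket is $O(\langle k\rangle^{-1})$, hence lies in $\mathbf{L}^2(dk)$, and the boundary term is bounded by $Ct^{1/2}|\phi(0)|$.

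The crux is part (c), where a naive absolute-value estimate would cost a full power of $t$. Here I would write $\phi(x)=\phi(0)+(\phi(x)-\phi(0))$. For the $\phi(0)$-part, the $x$-integral of $|\partial_x m_\pm(tx,k)|\le C\langle k\rangle^{-1}\langle tx\rangle^{-2}$ (estimate (\ref{3.6}) with $\delta=0$) is $O(\langle k\rangle^{-1}t^{-1})$, so after the $t^{3/2}$ prefactor this part contributes $Ct^{1/2}|\phi(0)|\,\|\langle k\rangle^{-1}\|_{\mathbf{L}^2}$; for the remainder, the H\"older bound $|\phi(x)-\phi(0)|\le C|x|^{1/2}\|\partial_k\phi\|_{\mathbf{L}^2}$ of (\ref{4.8}) combines with $\langle tx\rangle^{-2}$ to give an $x$-integral of size $O(\langle k\rangle^{-1}t^{-1/2})\|\phi\|_{\mathbf{H}^1}$, so this part is $O(\|\phi\|_{\mathbf{H}^1})$ in $\mathbf{L}^2(dk)$. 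This is precisely where the generous weight $t^{1/2}|\phi(0)|$ on the right-hand side of (\ref{5.13}) is produced.

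It remains to treat the amplitude-derivative terms. When $\partial_k$ hits $T$ or $R_\pm$ I would split $m_\pm=1+(m_\pm-1)$: the constant part is $\mathcal{V}_0(-t)$ applied to $\theta\phi$, bounded in $\mathbf{L}^2$ by $\|\phi\|_{\mathbf{L}^2}$ via (\ref{2.11}) and then multiplied by $|\partial_k T|,|\partial_k R_\pm|\le C\langle k\rangle^{-1}$ from (\ref{3.8}) and (\ref{3.17}), while the remainder uses $|m_\pm-1|\le C\langle k\rangle^{-1}\langle tx\rangle^{-1}$ and Cauchy--Schwarz. When $\partial_k$ hits $m_\pm$ one invokes (\ref{3.5}) with $\delta>1/2$: since $\|\langle tx\rangle^{-\delta}\|_{\mathbf{L}^2(dx)}=Ct^{-1/2}$ exactly when $\delta>1/2$, the $\sqrt t$ prefactor is absorbed and these terms are $O(\|\phi\|_{\mathbf{L}^2})$. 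Finally, the terms where $\partial_k$ falls on a Heaviside function produce a residue supported at $k=0$; these must cancel, and this is where the parity hypotheses enter. Evaluating the residues at $k=0$ and using $T(0)=\pm1$, $R_\pm(0)=0$ (from (\ref{3.3})--(\ref{3.12}) with $a=\pm1$), the symmetry $m_+(x,0)=m_-(-x,0)$, and the parity of $\phi$ (odd if $a=1$, even if $a=-1$), the opposing residues coincide and sum to zero, exactly as the vanishing (\ref{7.60}) was used in the companion lemma for $\mathcal{F}$. Collecting all the bounds yields (\ref{5.13}). The main obstacle is part (c): without the $\phi(0)$-subtraction the factor $t\,\partial_x m_\pm$ is not controllable in the $\mathbf{L}^2$ norm, and it is the interplay of the H\"older regularity of $\phi$ with the spatial decay of $\partial_x m_\pm$ that both rescues a half-power of $t$ and isolates the $t^{1/2}|\phi(0)|$ defect.
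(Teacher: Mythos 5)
Your proposal is, in all essentials, the paper's own proof. The paper implements the differentiation after the change of variables $z=k-x$ (so the boundary terms at $x=0$ come from the moving limits rather than from an explicit integration by parts), but the resulting three classes of terms are identical to yours, and so are the key devices: parity together with $T(0)=\pm1$, $R_\pm(0)=0$ and $m_+(x,0)=m_-(-x,0)$ to remove the $\delta(k)$ contributions from the Heaviside factors (the paper phrases this as $I_\pm(0)=0$, see (\ref{5.25})--(\ref{5.26})); the $\phi(0)$-subtraction combined with the H\"older bound (\ref{4.8}) and the decay (\ref{3.6}) to tame the $t\,\partial_x m_\pm$ terms (the paper's $\Theta_3^{(+)}$ estimate (\ref{5.11})); and (\ref{3.5}) with $\delta>1/2$, (\ref{3.8}), (\ref{3.17}) plus $\mathcal{V}_0(-t)$-unitarity for the amplitude derivatives (the paper's $\Theta_1^{(+)}$). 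Two caveats. First, your claim that the terms where the derivative lands on $\phi$ assemble into exactly $\mathcal{V}^{-1}(t)\phi'$, so that isometry finishes them off, is false: in the two reflected pieces of (\ref{4.73}) (those carrying $R_\pm$ and $\phi(-x)$) one has $\partial_x\left[\phi(-x)\right]=-(\partial_x\phi)(-x)$, so the sum differs from $\mathcal{V}^{-1}(t)\phi'$ precisely by the sign of those pieces; this is why the paper's $\Theta_2^{(+)}$ carries a minus sign on the $\overline{R_-(k)}$ term and is estimated term by term, via the splitting $m_\pm=1+(m_\pm-1)$, the unitarity (\ref{2.11}) and Cauchy--Schwarz, in (\ref{5.8})--(\ref{5.9}). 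The bound $C\Vert\phi\Vert_{\mathbf{H}^1}$ still holds, by exactly the technique you deploy elsewhere, so this is a repairable slip rather than a fatal gap, but the identity you invoke is not available. Second, for the boundary bracket you appeal to (\ref{3.18}), which is not among the lemma's stated hypotheses; the paper instead collapses the bracket using the exact relation (\ref{3.1}) at $x=0$, obtaining $2\,\overline{R_-(k)}\,\overline{m_-(0,k)}$, and then applies (\ref{3.19}) and (\ref{2.44}) --- a cleaner route to the same $O(\langle k\rangle^{-1})$ bound and the resulting $Ct^{1/2}\vert\phi(0)\vert$ contribution.
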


\begin{proof}
Taking into account (\ref{4.73}), we split $\mathcal{V}^{-1}\left(  t\right)
$ in two parts:
\begin{equation}
\mathcal{V}^{-1}\left(  t\right)  \phi=\theta\left(  k\right)  I_{+}\left(
k\right)  +\theta\left(  -k\right)  I_{-}\left(  k\right)  , \label{5.1}%
\end{equation}
where
\begin{align*}
I_{+}\left(  k\right)   &  =\sqrt{\frac{t}{2\pi i}}\int_{0}^{\infty}%
e^{\frac{it}{2}\left(  k-x\right)  ^{2}}\overline{T\left(  k\right)
}\overline{m_{+}\left(  tx,k\right)  }\phi\left(  x\right)  dx\\
&  +\sqrt{\frac{t}{2\pi i}}\int_{0}^{\infty}e^{\frac{it}{2}\left(  k-x\right)
^{2}}\overline{R_{-}\left(  k\right)  }\overline{m_{-}\left(  -tx,k\right)
}\phi\left(  -x\right)  dx\\
&  +\sqrt{\frac{t}{2\pi i}}\int_{-\infty}^{0}e^{\frac{it}{2}\left(
k-x\right)  ^{2}}\overline{m_{-}\left(  tx,-k\right)  }\phi\left(  x\right)
dx
\end{align*}
and
\begin{align*}
&  I_{-}\left(  k\right)  =\sqrt{\frac{t}{2\pi i}}\int_{-\infty}^{0}%
e^{\frac{it}{2}\left(  k-x\right)  ^{2}}\overline{T\left(  -k\right)
}\overline{m_{-}\left(  tx,-k\right)  }\phi\left(  x\right)  dx\\
&  +\sqrt{\frac{t}{2\pi i}}\int_{-\infty}^{0}e^{\frac{it}{2}\left(
k-x\right)  ^{2}}\overline{R_{+}\left(  -k\right)  }\overline{m_{+}\left(
-tx,-k\right)  }\phi\left(  -x\right)  dx\\
&  +\sqrt{\frac{t}{2\pi i}}\int_{0}^{\infty}e^{\frac{it}{2}\left(  k-x\right)
^{2}}\overline{m_{+}\left(  tx,k\right)  }\phi\left(  x\right)  dx.
\end{align*}
Note that if $a=\pm1,$ by (\ref{3.3}) $T\left(  0\right)  =\pm1$ and by
(\ref{3.12}) $R_{\pm}\left(  0\right)  =0.$ Also, by assumption $m_{+}\left(
x,0\right)  =m_{-}\left(  -x,0\right)  .$ Then, using that $\phi\ $is odd when
$a=1$ and $\phi$ is even if $a=-1,$ we get%
\begin{equation}
I_{+}\left(  0\right)  =\sqrt{\frac{t}{2\pi i}}\int_{0}^{\infty}e^{\frac
{it}{2}x^{2}}\overline{m_{+}\left(  tx,0\right)  }\left(  a\phi\left(
x\right)  +\phi\left(  -x\right)  \right)  dx=0 \label{5.25}%
\end{equation}
and
\begin{equation}
I_{-}\left(  0\right)  =\sqrt{\frac{t}{2\pi i}}\int_{0}^{\infty}e^{\frac
{it}{2}x^{2}}\overline{m_{+}\left(  tx,0\right)  }\left(  a\phi\left(
-x\right)  +\phi\left(  x\right)  \right)  dx=0. \label{5.26}%
\end{equation}
Thus, derivating (\ref{5.1}) we get%
\[
\partial_{k}\mathcal{V}^{-1}\left(  t\right)  \phi=\theta\left(  k\right)
\partial_{k}I_{+}\left(  k\right)  +\theta\left(  -k\right)  \partial_{k}%
I_{-}\left(  k\right)  ,
\]
and hence,%
\begin{equation}
\left\Vert \partial_{k}\mathcal{V}^{-1}\left(  t\right)  \phi\right\Vert
_{L^{2}}\leq\left\Vert \partial_{k}I_{+}\left(  k\right)  \right\Vert
_{L^{2}\left(  \mathbb{R}^{+}\right)  }+\left\Vert \partial_{k}I_{+}\left(
k\right)  \right\Vert _{L^{2}\left(  \mathbb{R}^{-}\right)  }. \label{5.23}%
\end{equation}
Making the change $z=k-x\ $\ we get%
\begin{align*}
I_{+}\left(  k\right)   &  =\sqrt{\frac{t}{2\pi i}}\int_{-\infty}^{k}%
e^{\frac{it}{2}z^{2}}\overline{T\left(  k\right)  }\overline{m_{+}\left(
t\left(  k-z\right)  ,k\right)  }\phi\left(  k-z\right)  dz\\
&  +\sqrt{\frac{t}{2\pi i}}\int_{-\infty}^{k}e^{\frac{it}{2}z^{2}}%
\overline{R_{-}\left(  k\right)  }\overline{m_{-}\left(  -t\left(  k-z\right)
,k\right)  }\phi\left(  z-k\right)  dz\\
&  +\sqrt{\frac{t}{2\pi i}}\int_{k}^{\infty}e^{\frac{it}{2}z^{2}}%
\overline{m_{-}\left(  t\left(  k-z\right)  ,-k\right)  }\phi\left(
k-z\right)  dz
\end{align*}
and%
\begin{align*}
&  I_{-}\left(  k\right)  =\sqrt{\frac{t}{2\pi i}}\int_{k}^{\infty}%
e^{\frac{it}{2}z^{2}}\overline{T\left(  -k\right)  }\overline{m_{-}\left(
t\left(  k-z\right)  ,-k\right)  }\phi\left(  k-z\right)  dz\\
&  +\sqrt{\frac{t}{2\pi i}}\int_{k}^{\infty}e^{\frac{it}{2}z^{2}}%
\overline{R_{+}\left(  -k\right)  }\overline{m_{+}\left(  -t\left(
k-z\right)  ,-k\right)  }\phi\left(  z-k\right)  dz\\
&  +\sqrt{\frac{t}{2\pi i}}\int_{-\infty}^{k}e^{\frac{it}{2}z^{2}}%
\overline{m_{+}\left(  t\left(  k-z\right)  ,k\right)  }\phi\left(
k-z\right)  dz.
\end{align*}
For $\partial_{k}I_{+}\left(  k\right)  $ we have%
\begin{align*}
\partial_{k}I_{+}\left(  k\right)   &  =\sqrt{\frac{t}{2\pi i}}\int_{-\infty
}^{k}e^{\frac{it}{2}z^{2}}\partial_{k}\left(  \overline{T\left(  k\right)
}\overline{m_{+}\left(  t\left(  k-z\right)  ,k\right)  }\phi\left(
k-z\right)  \right)  dz\\
&  +\sqrt{\frac{t}{2\pi i}}\int_{-\infty}^{k}e^{\frac{it}{2}z^{2}}\partial
_{k}\left(  \overline{R_{-}\left(  k\right)  }\overline{m_{-}\left(  -t\left(
k-z\right)  ,k\right)  }\phi\left(  z-k\right)  \right)  dz\\
&  +\sqrt{\frac{t}{2\pi i}}\int_{k}^{\infty}e^{\frac{it}{2}z^{2}}\partial
_{k}\left(  \overline{m_{-}\left(  t\left(  k-z\right)  ,-k\right)  }%
\phi\left(  k-z\right)  \right)  dz\\
&  +\sqrt{\frac{t}{2\pi i}}e^{\frac{it}{2}k^{2}}\phi\left(  0\right)  \left(
\overline{T\left(  k\right)  }\overline{m_{+}\left(  0,k\right)  }%
+\overline{R_{-}\left(  k\right)  }\overline{m_{-}\left(  0,k\right)
}-\overline{m_{-}\left(  0,-k\right)  }\right)  .
\end{align*}
Note that by (\ref{3.1})%
\[
\overline{T\left(  k\right)  }\overline{m_{+}\left(  0,k\right)  }%
-\overline{m_{-}\left(  0,-k\right)  }=\overline{R_{-}\left(  k\right)
}\overline{m_{-}\left(  0,k\right)  }.
\]
Then, returning to the old variable of integration $x=k-z$ we get%
\begin{equation}
\left.
\begin{array}
[c]{c}%
\partial_{k}I_{+}\left(  k\right)  =\sqrt{\frac{t}{2\pi i}}%
%TCIMACRO{\dint \limits_{-\infty}^{\infty}}%
%BeginExpansion
{\displaystyle\int\limits_{-\infty}^{\infty}}
%EndExpansion
e^{\frac{it}{2}\left(  k-x\right)  ^{2}}\Theta_{1}^{\left(  +\right)  }\left(
t,x,k\right)  dx+\sqrt{\frac{t}{2\pi i}}%
%TCIMACRO{\dint \limits_{-\infty}^{\infty}}%
%BeginExpansion
{\displaystyle\int\limits_{-\infty}^{\infty}}
%EndExpansion
e^{\frac{it}{2}\left(  k-x\right)  ^{2}}\Theta_{2}^{\left(  +\right)  }\left(
t,x,k\right)  dx\\
+\sqrt{\frac{t}{2\pi i}}%
%TCIMACRO{\dint \limits_{-\infty}^{\infty}}%
%BeginExpansion
{\displaystyle\int\limits_{-\infty}^{\infty}}
%EndExpansion
e^{\frac{it}{2}\left(  k-x\right)  ^{2}}\Theta_{3}^{\left(  +\right)  }\left(
t,x,k\right)  dx+2\sqrt{\frac{t}{2\pi i}}e^{\frac{it}{2}k^{2}}\phi\left(
0\right)  \overline{R_{-}\left(  k\right)  }\overline{m_{-}\left(  0,k\right)
},
\end{array}
\right.  \label{5.2}%
\end{equation}
with%
\[
\Theta_{1}^{\left(  +\right)  }\left(  t,x,k\right)  =\theta\left(  x\right)
\partial_{k}\left(  \overline{T\left(  k\right)  }\overline{m_{+}\left(
tx,k\right)  }\phi\left(  x\right)  +\overline{R_{-}\left(  k\right)
}\overline{m_{-}\left(  -tx,k\right)  }\phi\left(  -x\right)  \right)
+\theta\left(  -x\right)  \partial_{k}\overline{m_{-}\left(  tx,-k\right)
}\phi\left(  x\right)
\]%
\[
\Theta_{2}^{\left(  +\right)  }\left(  t,x,k\right)  =\theta\left(  x\right)
\left(  \overline{T\left(  k\right)  }\overline{m_{+}\left(  tx,k\right)
}\partial_{x}\phi\left(  x\right)  -\overline{R_{-}\left(  k\right)
}\overline{m_{-}\left(  -tx,k\right)  }\left(  \partial_{x}\phi\right)
\left(  -x\right)  \right)  +\theta\left(  -x\right)  \overline{m_{-}\left(
tx,-k\right)  }\partial_{x}\phi\left(  x\right)
\]%
\[
\Theta_{3}^{\left(  +\right)  }\left(  t,x,k\right)  =\theta\left(  x\right)
\left(  \overline{T\left(  k\right)  }\partial_{x}\overline{m_{+}\left(
tx,k\right)  }\phi\left(  x\right)  +\overline{R_{-}\left(  k\right)
}\partial_{x}\overline{m_{-}\left(  -tx,k\right)  }\phi\left(  -x\right)
\right)  +\theta\left(  -x\right)  \partial_{x}\overline{m_{-}\left(
tx,-k\right)  }\phi\left(  x\right)
\]
We estimate the last term in (\ref{5.2}) by using (\ref{2.44}) and
(\ref{3.19})
\begin{equation}
\left.
\begin{array}
[c]{c}%
\left\Vert \sqrt{\frac{t}{2\pi i}}e^{\frac{it}{2}k^{2}}\phi\left(  0\right)
\overline{R_{-}\left(  k\right)  }\overline{m_{-}\left(  0,k\right)
}\right\Vert _{\mathbf{L}^{2}}\\
\leq Ct^{\frac{1}{2}}\left\vert \phi\left(  0\right)  \right\vert \left\Vert
\overline{R_{-}\left(  k\right)  }\overline{m_{-}\left(  0,k\right)
}\right\Vert _{\mathbf{L}^{2}}\leq Ct^{\frac{1}{2}}\left\vert \phi\left(
0\right)  \right\vert \left\Vert \left\langle k\right\rangle ^{-1}\right\Vert
_{\mathbf{L}^{2}}\leq Ct^{\frac{1}{2}}\left\vert \phi\left(  0\right)
\right\vert .
\end{array}
\right.  \label{5.12}%
\end{equation}
To estimate the term in (\ref{5.2}) containing $\Theta_{1}^{\left(  +\right)
}\left(  t,x,k\right)  $ we decompose%
\begin{equation}
\Theta_{1}^{\left(  +\right)  }\left(  t,x,k\right)  =\Theta_{11}^{\left(
+\right)  }\left(  x,k\right)  +\Theta_{12}^{\left(  +\right)  }\left(
t,x,k\right)  , \label{5.3}%
\end{equation}
where%
\[
\Theta_{11}^{\left(  +\right)  }\left(  x,k\right)  =\theta\left(  x\right)
\phi\left(  x\right)  \partial_{k}\overline{T\left(  k\right)  }+\theta\left(
x\right)  \phi\left(  -x\right)  \partial_{k}\overline{R_{-}\left(  k\right)
}%
\]
and%
\begin{align*}
\Theta_{12}^{\left(  +\right)  }\left(  t,x,k\right)   &  =\left(
\overline{m_{+}\left(  tx,k\right)  }-1\right)  \theta\left(  x\right)
\phi\left(  x\right)  \partial_{k}\overline{T\left(  k\right)  }+\left(
\overline{m_{-}\left(  -tx,k\right)  }-1\right)  \theta\left(  x\right)
\phi\left(  -x\right)  \partial_{k}\overline{R_{-}\left(  k\right)  }\\
&  +\overline{T\left(  k\right)  }\theta\left(  x\right)  \phi\left(
x\right)  \partial_{k}\overline{m_{+}\left(  tx,k\right)  }+\overline
{R_{-}\left(  k\right)  }\theta\left(  x\right)  \phi\left(  -x\right)
\partial_{k}\overline{m_{-}\left(  -tx,k\right)  }\\
&  +\theta\left(  -x\right)  \phi\left(  x\right)  \partial_{k}\overline
{m_{-}\left(  tx,-k\right)  }.
\end{align*}
Observe that
\[
\sqrt{\frac{t}{2\pi i}}\int_{-\infty}^{\infty}e^{\frac{it}{2}\left(
k-x\right)  ^{2}}\Theta_{11}^{\left(  +\right)  }\left(  x,k\right)
dx=\partial_{k}\overline{T\left(  k\right)  }\mathcal{V}_{0}\left(  -t\right)
\left(  \theta\left(  x\right)  \phi\left(  x\right)  \right)  +\partial
_{k}\overline{R_{-}\left(  k\right)  }\mathcal{V}_{0}\left(  -t\right)
\left(  \theta\left(  x\right)  \phi\left(  -x\right)  \right)  .
\]
Then, it follows from (\ref{2.11}), (\ref{3.8}) and (\ref{3.17}) that
\begin{equation}
\left.
\begin{array}
[c]{c}%
\left\Vert \sqrt{\frac{t}{2\pi i}}%
%TCIMACRO{\dint \limits_{-\infty}^{\infty}}%
%BeginExpansion
{\displaystyle\int\limits_{-\infty}^{\infty}}
%EndExpansion
e^{\frac{it}{2}\left(  k-x\right)  ^{2}}\Theta_{11}^{\left(  +\right)
}\left(  x,k\right)  dx\right\Vert _{\mathbf{L}^{2}\left(  \mathbb{R}%
^{+}\right)  }\\
\leq\left\Vert \left(  \partial_{k}\overline{T\left(  k\right)  }\right)
\mathcal{V}_{0}\left(  -t\right)  \left(  \theta\left(  x\right)  \phi\left(
x\right)  \right)  \right\Vert _{\mathbf{L}^{2}\left(  \mathbb{R}^{+}\right)
}+\left\Vert \left(  \partial_{k}\overline{R_{-}\left(  k\right)  }\right)
\mathcal{V}_{0}\left(  -t\right)  \left(  \theta\left(  x\right)  \phi\left(
-x\right)  \right)  \right\Vert _{\mathbf{L}^{2}\left(  \mathbb{R}^{+}\right)
}\leq C\left\Vert \phi\right\Vert _{\mathbf{L}^{2}}%
\end{array}
\right.  \label{5.4}%
\end{equation}
Moreover, using (\ref{3.4}), (\ref{3.2}) with $\delta=0,$ (\ref{3.5}) with
$\delta>\frac{1}{2},$ (\ref{3.8}) and (\ref{3.17}) we estimate%
\begin{align*}
\left\vert \Theta_{12}^{\left(  +\right)  }\left(  t,x,k\right)  \right\vert
&  \leq C\theta\left(  x\right)  \left(  \left\vert \overline{m_{+}\left(
tx,k\right)  }-1\right\vert +\left\vert \overline{m_{-}\left(  -tx,k\right)
}-1\right\vert \right)  \left(  \left\vert \phi\left(  x\right)  \right\vert
+\left\vert \phi\left(  -x\right)  \right\vert \right)  \left(  \left\Vert
\partial_{k}T\right\Vert _{\mathbf{L}^{\infty}}+\left\Vert \partial_{k}%
R_{-}\right\Vert _{\mathbf{L}^{\infty}}\right) \\
&  +C\theta\left(  x\right)  \left(  \left\vert \phi\left(  x\right)
\right\vert +\left\vert \phi\left(  -x\right)  \right\vert \right)  \left(
\left\Vert T\right\Vert _{\mathbf{L}^{\infty}}+\left\Vert R_{-}\right\Vert
_{\mathbf{L}^{\infty}}\right)  \left(  \left\vert \partial_{k}\overline
{m_{+}\left(  tx,k\right)  }\right\vert +\left\vert \partial_{k}%
\overline{m_{-}\left(  -tx,k\right)  }\right\vert \right) \\
&  +C\theta\left(  -x\right)  \left\vert \phi\left(  x\right)  \right\vert
\left\vert \partial_{k}\overline{m_{-}\left(  tx,-k\right)  }\right\vert \leq
C\left\langle k\right\rangle ^{-1}\left\langle tx\right\rangle ^{-\delta
}\left(  \left\vert \phi\left(  x\right)  \right\vert +\left\vert \phi\left(
-x\right)  \right\vert \right)  .
\end{align*}
Then,
\begin{equation}
\left.
\begin{array}
[c]{c}%
\left\Vert \sqrt{\frac{t}{2\pi i}}%
%TCIMACRO{\dint \limits_{-\infty}^{\infty}}%
%BeginExpansion
{\displaystyle\int\limits_{-\infty}^{\infty}}
%EndExpansion
e^{\frac{it}{2}\left(  k-x\right)  ^{2}}\Theta_{12}^{\left(  +\right)
}\left(  t,x,k\right)  dx\right\Vert _{\mathbf{L}^{2}\left(  \mathbb{R}%
^{+}\right)  }\leq Ct^{\frac{1}{2}}\left\Vert \left\langle k\right\rangle
^{-1}%
%TCIMACRO{\dint \limits_{-\infty}^{\infty}}%
%BeginExpansion
{\displaystyle\int\limits_{-\infty}^{\infty}}
%EndExpansion
\left\langle tx\right\rangle ^{-\delta}\left\vert \phi\left(  x\right)
\right\vert dx\right\Vert _{\mathbf{L}^{2}\left(  \mathbb{R}^{+}\right)  }\\
\leq Ct^{\frac{1}{2}}\left\Vert \left\langle tx\right\rangle ^{-\delta
}\right\Vert _{\mathbf{L}^{2}}\left\Vert \left\langle k\right\rangle
^{-1}\right\Vert _{\mathbf{L}^{2}}\left\Vert \phi\right\Vert _{\mathbf{L}^{2}%
}\leq C\left\Vert \phi\right\Vert _{\mathbf{L}^{2}}.
\end{array}
\right.  \label{5.5}%
\end{equation}
Combining (\ref{5.3}), (\ref{5.4}) and (\ref{5.5}) we get%
\begin{equation}
\left\Vert \sqrt{\frac{t}{2\pi i}}\int_{-\infty}^{\infty}e^{\frac{it}%
{2}\left(  k-x\right)  ^{2}}\Theta_{1}^{\left(  +\right)  }\left(
t,x,k\right)  dx\right\Vert _{\mathbf{L}^{2}\left(  \mathbb{R}^{+}\right)
}\leq C\left\Vert \phi\right\Vert _{\mathbf{L}^{2}}. \label{5.6}%
\end{equation}
Next we consider the term in (\ref{5.2}) containing $\Theta_{2}^{\left(
+\right)  }\left(  t,x,k\right)  .$ We split
\begin{equation}
\Theta_{2}^{\left(  +\right)  }\left(  t,x,k\right)  =\Theta_{21}^{\left(
+\right)  }\left(  x,k\right)  +\Theta_{22}^{\left(  +\right)  }\left(
t,x,k\right)  , \label{5.7}%
\end{equation}
where%
\[
\Theta_{21}^{\left(  +\right)  }\left(  x,k\right)  =\theta\left(  x\right)
\overline{T\left(  k\right)  }\partial_{x}\phi\left(  x\right)  -\theta\left(
x\right)  \overline{R_{-}\left(  k\right)  }\left(  \partial_{x}\phi\right)
\left(  -x\right)  +\theta\left(  -x\right)  \partial_{x}\phi\left(  x\right)
\]
and%
\begin{align*}
\Theta_{22}^{\left(  +\right)  }\left(  t,x,k\right)   &  =\theta\left(
x\right)  \overline{T\left(  k\right)  }\left(  \overline{m_{+}\left(
tx,k\right)  }-1\right)  \partial_{x}\phi\left(  x\right) \\
&  -\theta\left(  x\right)  \overline{R_{-}\left(  k\right)  }\left(
\overline{m_{-}\left(  -tx,k\right)  }-1\right)  \left(  \partial_{x}%
\phi\right)  \left(  -x\right) \\
&  +\theta\left(  -x\right)  \left(  \overline{m_{-}\left(  tx,-k\right)
}-1\right)  \partial_{x}\phi\left(  x\right)
\end{align*}
By using (\ref{2.11}) and (\ref{3.4}) we estimate%
\begin{equation}
\left.
\begin{array}
[c]{c}%
\left\Vert \sqrt{\frac{t}{2\pi i}}%
%TCIMACRO{\dint \limits_{-\infty}^{\infty}}%
%BeginExpansion
{\displaystyle\int\limits_{-\infty}^{\infty}}
%EndExpansion
e^{\frac{it}{2}\left(  k-x\right)  ^{2}}\Theta_{21}^{\left(  +\right)
}\left(  x,k\right)  dx\right\Vert _{\mathbf{L}^{2}\left(  \mathbb{R}%
^{+}\right)  }\leq\left\Vert \overline{T\left(  k\right)  }\mathcal{V}%
_{0}\left(  -t\right)  \left(  \theta\left(  x\right)  \partial_{x}\phi\left(
x\right)  \right)  \right\Vert _{\mathbf{L}^{2}\left(  \mathbb{R}^{+}\right)
}\\
+\left\Vert \overline{R_{-}\left(  k\right)  }\mathcal{V}_{0}\left(
-t\right)  \left(  \theta\left(  x\right)  \left(  \partial_{x}\phi\right)
\left(  -x\right)  \right)  \right\Vert _{\mathbf{L}^{2}\left(  \mathbb{R}%
^{+}\right)  }+\left\Vert \mathcal{V}_{0}\left(  -t\right)  \left(
\theta\left(  -x\right)  \partial_{x}\phi\left(  x\right)  \right)
\right\Vert _{\mathbf{L}^{2}\left(  \mathbb{R}^{+}\right)  }\leq C\left\Vert
\partial_{x}\phi\right\Vert _{\mathbf{L}^{2}},
\end{array}
\right.  \label{5.8}%
\end{equation}
Moreover, from relation (\ref{3.4}) and (\ref{3.2}) with $\delta=0$ we get
\[
\left.
\begin{array}
[c]{c}%
\left\vert \Theta_{22}^{\left(  +\right)  }\left(  t,x,k\right)  \right\vert
\leq\theta\left(  x\right)  \left\vert \overline{T\left(  k\right)
}\right\vert \left\vert \overline{m_{+}\left(  tx,k\right)  }-1\right\vert
\left\vert \partial_{x}\phi\left(  x\right)  \right\vert +\theta\left(
x\right)  \left\vert \overline{R_{-}\left(  k\right)  }\right\vert \left\vert
\overline{m_{-}\left(  -tx,k\right)  }-1\right\vert \left\vert \left(
\partial_{x}\phi\right)  \left(  -x\right)  \right\vert \\
+\theta\left(  -x\right)  \left\vert \overline{m_{-}\left(  tx,-k\right)
}-1\right\vert \left\vert \partial_{x}\phi\left(  x\right)  \right\vert \leq
C\left\langle k\right\rangle ^{-1}\left\langle tx\right\rangle ^{-1}\left(
\left\vert \partial_{x}\phi\left(  x\right)  \right\vert +\left\vert
\partial_{x}\phi\left(  -x\right)  \right\vert \right)  ,
\end{array}
\right.
\]
and hence,%
\begin{equation}
\left.
\begin{array}
[c]{c}%
\left\Vert \sqrt{\frac{t}{2\pi i}}%
%TCIMACRO{\dint \limits_{-\infty}^{\infty}}%
%BeginExpansion
{\displaystyle\int\limits_{-\infty}^{\infty}}
%EndExpansion
e^{\frac{it}{2}\left(  k-x\right)  ^{2}}\Theta_{22}^{\left(  +\right)
}\left(  t,x,k\right)  dx\right\Vert _{\mathbf{L}^{2}\left(  \mathbb{R}%
^{+}\right)  }\leq Ct^{\frac{1}{2}}\left\Vert \left\langle k\right\rangle
^{-1}%
%TCIMACRO{\dint \limits_{-\infty}^{\infty}}%
%BeginExpansion
{\displaystyle\int\limits_{-\infty}^{\infty}}
%EndExpansion
\left\langle tx\right\rangle ^{-1}\left\vert \partial_{x}\phi\left(  x\right)
\right\vert dx\right\Vert _{\mathbf{L}^{2}\left(  \mathbb{R}^{+}\right)  }\\
\leq Ct^{\frac{1}{2}}\left\Vert \left\langle k\right\rangle ^{-1}\right\Vert
_{\mathbf{L}^{2}}\left\Vert \left\langle tx\right\rangle ^{-1}\right\Vert
_{\mathbf{L}^{2}}\left\Vert \partial_{x}\phi\right\Vert _{\mathbf{L}^{2}}\leq
C\left\Vert \partial_{x}\phi\right\Vert _{\mathbf{L}^{2}}.
\end{array}
\right.  \label{5.9}%
\end{equation}
Using (\ref{5.7}), (\ref{5.8}) and (\ref{5.9}) we obtain%
\begin{equation}
\left\Vert \sqrt{\frac{t}{2\pi i}}%
%TCIMACRO{\dint \limits_{-\infty}^{\infty}}%
%BeginExpansion
{\displaystyle\int\limits_{-\infty}^{\infty}}
%EndExpansion
e^{\frac{it}{2}\left(  k-x\right)  ^{2}}\Theta_{2}^{\left(  +\right)  }\left(
t,x,k\right)  dx\right\Vert _{\mathbf{L}^{2}\left(  \mathbb{R}^{+}\right)
}\leq C\left\Vert \partial_{x}\phi\right\Vert _{\mathbf{L}^{2}}. \label{5.10}%
\end{equation}
Finally\ by (\ref{3.4}) and (\ref{3.6}) with $\delta=0$ we get%
\[
\left.
\begin{array}
[c]{c}%
\left\vert \Theta_{3}^{\left(  +\right)  }\left(  t,x,k\right)  \right\vert
\leq C\theta\left(  x\right)  \left\vert \overline{T\left(  k\right)
}\right\vert \left\vert \partial_{x}\overline{m_{+}\left(  tx,k\right)
}\right\vert \left\vert \phi\left(  x\right)  \right\vert +C\theta\left(
x\right)  \left\vert \overline{R_{-}\left(  k\right)  }\right\vert \left\vert
\partial_{x}\overline{m_{-}\left(  -tx,k\right)  }\right\vert \left\vert
\phi\left(  -x\right)  \right\vert \\
+C\theta\left(  -x\right)  \left\vert \partial_{x}\overline{m_{-}\left(
tx,-k\right)  }\right\vert \left\vert \phi\left(  x\right)  \right\vert \leq
Ct\left\langle k\right\rangle ^{-1}\left\langle tx\right\rangle ^{-2}\left(
\left\vert \phi\left(  x\right)  \right\vert +\left\vert \phi\left(
-x\right)  \right\vert \right)  .
\end{array}
\right.
\]
Therefore, using that $\left\vert \phi\left(  x\right)  -\phi\left(  0\right)
\right\vert \leq C\left\vert x\right\vert ^{\frac{1}{2}}\left\Vert
\partial_{k}\phi\right\Vert _{\mathbf{L}^{2}}$, we conclude%
\begin{equation}
\left.
\begin{array}
[c]{c}%
\left\Vert \sqrt{\frac{t}{2\pi i}}%
%TCIMACRO{\dint \limits_{-\infty}^{\infty}}%
%BeginExpansion
{\displaystyle\int\limits_{-\infty}^{\infty}}
%EndExpansion
e^{\frac{it}{2}\left(  k-x\right)  ^{2}}\Theta_{3}^{\left(  +\right)  }\left(
t,x,k\right)  dx\right\Vert _{\mathbf{L}^{2}\left(  \mathbb{R}^{+}\right)
}\leq Ct^{\frac{3}{2}}\left\Vert \left\langle k\right\rangle ^{-1}\right\Vert
_{\mathbf{L}^{2}\left(  \mathbb{R}^{+}\right)  }%
%TCIMACRO{\dint \limits_{-\infty}^{\infty}}%
%BeginExpansion
{\displaystyle\int\limits_{-\infty}^{\infty}}
%EndExpansion
\left\langle tx\right\rangle ^{-2}\left\vert \phi\left(  x\right)  \right\vert
dx\\
\leq Ct^{\frac{3}{2}}\left\Vert \partial_{x}\phi\right\Vert _{\mathbf{L}^{2}%
}\left\Vert \left\langle tx\right\rangle ^{-2}\left\vert x\right\vert
^{\frac{1}{2}}\right\Vert _{\mathbf{L}^{1}}+Ct^{\frac{3}{2}}\left\vert
\phi\left(  0\right)  \right\vert \left\Vert \left\langle tx\right\rangle
^{-2}\right\Vert _{\mathbf{L}^{1}}\leq Ct^{\frac{1}{2}}\left\vert \phi\left(
0\right)  \right\vert +C\left\Vert \partial_{x}\phi\right\Vert _{\mathbf{L}%
^{2}}.
\end{array}
\right.  \label{5.11}%
\end{equation}
Introducing (\ref{5.12}), (\ref{5.6}), (\ref{5.10}) and (\ref{5.11}) into
(\ref{5.2}) we obtain%
\[
\left\Vert \partial_{k}I_{+}\left(  k\right)  \right\Vert _{\mathbf{L}%
^{2}\left(  \mathbb{R}^{+}\right)  }\leq Ct^{\frac{1}{2}}\left\vert
\phi\left(  0\right)  \right\vert +C\left\Vert \phi\right\Vert _{\mathbf{H}%
^{1}}.
\]
Proceeding similarly to estimate $\partial_{k}I_{-},$ from (\ref{5.23}) we
attain (\ref{5.13}).
\end{proof}

\end{document}